\documentclass[12pt,leqno]{amsart}
\usepackage[utf8]{inputenc} 
\usepackage{amssymb}
\usepackage{enumerate}
\usepackage[all]{xy}
\usepackage{tikz-cd}
\usepackage{hyperref,cleveref,graphics,mathrsfs}
\usepackage{lmodern}
\usepackage{amsmath}
\usepackage{mathtools}
\usepackage{commath}
\usepackage{amsfonts}
\usepackage{dsfont}
\usepackage{graphicx}
\usepackage{amsthm}
\usepackage{mathabx}
\usepackage{comment}
\usepackage{vmargin} 
    \setmargins{2.5cm}{1.5cm}{16.5cm}{23.42cm}{10pt}{1cm}{0pt}{2cm}

\numberwithin{equation}{section}

\newtheorem{thm}{Theorem}[section]
\newtheorem{cor}[thm]{Corollary}
\newtheorem{lem}[thm]{Lemma}
\newtheorem{prop}[thm]{Proposition}
\theoremstyle{definition}
\newtheorem{defn}[thm]{Definition}

\newtheorem{rem}[thm]{Remark}

\newtheorem{example}[thm]{Example}
\numberwithin{equation}{section}

\newcommand{\eps}{\varepsilon}
\def\epsilon{\varepsilon}
\newcommand{\N}{\mathbb{N}}
\newcommand{\Z}{\mathbb{Z}}

\newcommand{\R}{\mathbb{R}}

\newcommand{\uno}{\mathds{1}}
\newcommand{\Ccal}{\mathcal{C}}
\newcommand{\Hcal}{\mathcal{H}}

\DeclarePairedDelimiterX{\normiii}[1]
{\vvvert}
{\vvvert}
{\ifblank{#1}{\:\cdot\:}{#1}}

\usepackage{bbm}
\def\one{\mathbbm 1}

\DeclareMathOperator{\spn}{span}

\DeclareMathOperator{\sgn}{sgn}

\title{Convexity and concavity in Banach lattices}
\author[E.~Garc\'ia-S\'anchez]{Enrique Garc\'ia-S\'anchez}
\address{Instituto de Ciencias Matem\'aticas (CSIC-UAM-UC3M-UCM)\\
Consejo Superior de Investigaciones Cient\'ificas\\
C/ Nicol\'as Cabrera, 13--15, Campus de Cantoblanco UAM\\
28049 Madrid, Spain.}
\email{enrique.garcia@icmat.es}

\date{\today}
\subjclass[2020]{46B42 (primary); 46B20, 46E30, 47B60, 47A68 (secondary)} 

\keywords{$p$-convexity; $p$-concavity; $L_p$; weak-$L_p$; factorization theory.}

\begin{document}
\begin{abstract}
These notes are a detailed, self-contained introductory course on convexity and concavity in Banach lattices, suitable for both experts and beginners. We revisit, from a modern perspective, the classical notions of $(p,q)$-convexity, $(p,q)$-concavity and upper and lower $p$-estimates, and the main relations between these properties, integrating more recent developments in the area. We explain in full detail the $p$-convexification and $p$-concavification techniques. We also provide a comprehensive exposition of the main factorization results for $(p,q)$-convex and $(p,q)$-concave operators, including well-known results from Krivine, Maurey--Nikishin, Pietsch and Pisier, and their applications to the renorming and the representation of convex and concave Banach lattices.
\end{abstract}

\maketitle
\allowdisplaybreaks
\tableofcontents

\section{Introduction}\label{sec: intro}

There is a vast literature on the study of convexity properties of the norm of a Banach space. For instance, Milman--Pettis Theorem \cite{Milman, Pettis} establishes that uniformly convex spaces are necessarily reflexive, while Enflo--James Theorem \cite{Enflo} (see also \cite[Theorem 9.14]{FHHMZBanachSpaceTheory}) asserts that a Banach space is super-reflexive if and only if it admits an equivalent uniformly convex renorming. The related properties of (Rademacher) type and cotype have also provided remarkable results, such as Kwapień's Theorem \cite{Kwapien}, that characterizes the spaces isomorphic to a Hilbert space as those that simultaneously have type and cotype equal to 2. \\

In Banach lattices, additional tools that exploit the lattice structure are available, such as $p$-convexity and $p$-concavity. The motivation for these notions is to measure how close a Banach lattice is to an $L_p$-space, in terms of the interaction between its norm and its lattice structure. Indeed, an elementary but useful fact about the $L_p$-norm is that whenever $f_1,\ldots, f_n\in L_p(\mu)$, we have
\[\norm[3]{\intoo[3]{\sum_{i=1}^n |f_i|^p}^\frac{1}{p}}_{L_p}= \intoo[3]{\sum_{i=1}^n \|f_i\|_{L_p}^p}^\frac{1}{p}.\]
This equality asserts that the $p$-sum of functions in $L_p$ using the lattice structure accumulates the same norm as the $p$-sum of the norms. In general, we can study how close a Banach lattice $X$ is to satisfying one of the two inequalities that appear in the equality above. That is, given arbitrary $x_1,\ldots, x_n\in X$, we wish to compare 
\[\norm[3]{\intoo[3]{\sum_{i=1}^n |x_i|^p}^\frac{1}{p}}\quad \text{and} \quad  \intoo[3]{\sum_{i=1}^n \|x_i\|^p}^\frac{1}{p}\]
(a precise meaning for the expression $(\sum_{i=1}^n |x_i|^p)^\frac{1}{p}$ will be given in the next section; meanwhile, it can be understood as the generalization of the pointwise $p$-sum of functions).
If
\[\norm[3]{\intoo[3]{\sum_{i=1}^n |x_i|^p}^\frac{1}{p}}\leq K  \intoo[3]{\sum_{i=1}^n \|x_i\|^p}^\frac{1}{p},\]
for some uniform constant $K>0$, this means that the norm of $X$ is at least as convex as the $L_p$-norm. In this case, we say that $X$ is $p$-convex. Conversely, if there is a constant $K>0$ such that
\[ \intoo[3]{\sum_{i=1}^n \|x_i\|^p}^\frac{1}{p}\leq K\norm[3]{\intoo[3]{\sum_{i=1}^n |x_i|^p}^\frac{1}{p}},\]
for every $x_1,\ldots, x_n\in X$, we can conclude that the $p$-sum of the elements captures at least as much norm as the $p$-sum of the norms. We then say that $X$ is $p$-concave. As one could expect, these notions are dual to each other: a Banach lattice is $p$-convex (respectively, $p$-concave) if and only if its dual is $p^*$-concave (respectively, $p^*$-convex), where $p^*$ denotes the conjugate exponent of $p$. \\

There are several related notions that extend the above definitions: On the one hand, when the inequalities above are witnessed only on pairwise disjoint sequences of vectors, we say that the Banach lattice satisfies an upper (respectively, lower) $p$-estimate. On the other hand, it is natural to replace the $p$-sums of elements in $X$ by $q$-sums with a different exponent $q$, giving rise to the concepts of $(p,q)$-convexity and $(p,q)$-concavity. It turns out that both generalizations are actually equivalent (up to constants). Namely, a Banach lattice satisfies an upper $p$-estimate if and only if it is $(p,q)$-convex for some $p<q\leq \infty$, and similarly, it satisfies a lower $p$-estimate if and only if it is $(p,q)$-concave for some $1\leq q<p$. Similar definitions can also be considered for operators. In fact, $(p,q)$-convex (respectively, $(p,q)$-concave) operators will be precisely the operators that factor through $(p,q)$-convex (respectively, $(p,q)$-concave) Banach lattices. \\

As we will see in this memoir, the interest of $(p,q)$-convex and $(p,q)$-concave operators and spaces is diverse. They were first introduced by Krivine \cite{KrivineExp2223} and Maurey \cite{MaureyExp2425} (type $\geq (q,p)$ and $\leq (q,p)$ operators), with an aim towards studying the properties of factorization through $L_p$ of certain operators, the connections with other relevant classes of operators, such as $(p,q)$-summing operators, and the similarities with other geometric properties such as (Rademacher) type and cotype. The convexity and concavity properties of Banach lattices can also capture their $L_p$ behavior. For instance, we will deduce from Kakutani's Representation Theorem for $AL_p$-spaces (cf. \cite[Theorem 2.7.1]{MN}) and the renorming results from \Cref{sec: abstract factorizations} that a Banach lattice is lattice isomorphic to an $L_p$-space if and only if it satisfies an upper and a lower $p$-estimate (see \Cref{cor: isomorphic ALp-spaces}). In other words, if a Banach lattice has the same convexity and concavity index, then it has to be $L_p$, up to some renorming.

\subsection*{Content of the paper}
This memoir originates as an introductory course that the author was invited to teach at the Department of Mathematical and Statistical Sciences of the University of Alberta during a research stay in fall 2024. Therefore, its purpose is to gather the main results of the topic and present them in a rigorous and detailed way, rather than reviewing the most recent developments in the area. To this day, the standard references for specialists have been the textbooks of J. Diestel, H. Jarchow and A. Tonge \cite{DJT}, and J. Lindenstrauss and L. Tzafriri \cite{LT2}, which devote several sections to exploring the operator theoretical and the geometrical aspects of the subject, respectively. However, their approaches leave some major topics uncovered, which were dispersed in several textbooks \cite{AK, MN, Schwarz} and research papers \cite{KrivineExp2223, MaureyExp2425, MaureyFactorization, Pisier86, Reisner, RT}. \\

Therefore, this work is conceived as an attempt to gather a (non-exhaustive) selection of topics that the author considers relevant and fundamental for the theory of $p$-convex and $p$-concave Banach lattices. It aims to present them in a format that serves beginners, with detailed proofs, examples and a self-contained exposition that only requires the reader to be familiar with the basic notions of Banach lattice theory, but also specialists, proposing a modern point of view of several classical results and introducing recent developments.\\

The paper is structured as follows: First, in \Cref{sec: preliminaries} we recall basic notions from Banach lattice theory and establish several properties of $p$-sums in Banach lattices, as these expressions play a central role in the theory. Then we proceed to introducing the main definitions in \Cref{sec: main definitions}. We illustrate these concepts with examples in the family of Lorentz spaces, putting a special focus on weak $L_p$-spaces, which serve as a model space for upper $p$-estimates. \\

Next, we establish the core results of the theory: in \Cref{sec: duality} we show that convexity and concavity satisfy a duality relation, while in \Cref{sec: basic properties} we establish the basic implications between the properties previously defined. Namely, we show that $p$-convexity implies $q$-convexity for any $q<p$ and that upper $p$-estimates are equivalent to $(p,\infty)$-convexity (and analogously in the concave setting). We also characterize $(p,q)$-concave operators in terms of $(p,q)$-summing operators on $C(K)$-spaces, which will allow us to establish in \Cref{sec: concave on C(K)} the equivalence between $(p,\infty)$-convexity and $(p,q)$-convexity for any other $q>p$. \\

\Cref{sec: convexification and concavification} is devoted to introducing in full detail the techniques of $p$-convexification and $p$-concavification of a Banach lattice and showing some simple applications. We continue with \Cref{sec: abstract factorizations}, which clarifies the connection between convex and concave operators and spaces. We show that every $(p,q)$-convex (respectively, $(p,q)$-concave) operator factors canonically through a $(p,q)$-convex (respectively, $(p,q)$-concave) Banach lattice, obtaining as a consequence a renorming result that allow us to reduce the convexity and concavity constants of any Banach lattice to one. Next, in \Cref{sec: concrete factorizations}, we prove several classical factorization results due to Krivine, Maurey--Nikishin, Pisier and Pietsch, that allow us to factor certain operators with convexity or concavity properties through $L_p$, $L_{p,\infty}$ and $L_{p,1}$. Finally, in \Cref{sec: representations} we apply the factorization theorems from the previous sections to deduce representation results for convex and concave Banach lattices.

\subsection*{Notation}

\begin{itemize}
    \item[$\bullet$] $E,F,G$ will generally denote Banach spaces, whereas $X,Y,Z$ will be used for Banach lattices.
    \item[$\bullet$] $B_E$ and $S_E$ denote the unit ball and the unit sphere of a Banach space $E$, respectively.
    \item[$\bullet$] $X_+$ denotes the positive cone of a Banach lattice $X$, that is, the set of positive elements.
    \item[$\bullet$] $\spn(A)$ denotes the subspace generated by the set $A$.
    \item[$\bullet$] An operator is a linear and bounded map between Banach spaces.
    \item[$\bullet$] An embedding is an invertible operator with continuous inverse, that is, an isomorphism onto its image.
    \item[$\bullet$] $i_E:E\rightarrow E$ denotes the identity operator on a Banach space $E$.
    \item[$\bullet$] $E^*$ denotes the dual of a Banach space $E$, and $E^{**}$ the bidual. $J_E:E\rightarrow E^{**}$ is the canonical isometric embedding from $E$ into its bidual.
    \item[$\bullet$] $C(K)$ denotes the space of continuous functions over a compact Hausdorff space $K$, and $\uno$ or $\uno_K$ will denote the constant one function on $K$.
    \item[$\bullet$] If $1\leq p\leq \infty$, $p^*$ denotes its conjugate exponent, defined by $\frac{1}{p}+\frac{1}{p^*}=1$.
    \item[$\bullet$] The norm in $L_p(\mu)$ will be indistinctly denoted by $\|\cdot\|_{L_p(\mu)}$, $\|\cdot\|_{L_p}$ or $\|\cdot \|_p$.
\end{itemize}

\section{Preliminaries}\label{sec: preliminaries}
\subsection{Banach lattices}
A \textit{(real) vector lattice} is a real vector space $X$ endowed with a partial order $\leq$ that satisfies the following properties: 
\begin{enumerate}
    \item \textbf{compatibility with the linear operations:} whenever $x,y\in X$ with $x\leq y$, we have $x+z\leq y+z$ for every $z\in X$ and $\lambda x\leq \lambda y$ for every scalar $\lambda>0$.
    \item \textbf{existence of lattice operations:} for every $x,y\in X$ there exists the least upper bound and the greatest lower bound in $X$ of the set $\{x,y\}$, called the \textit{supremum} and \textit{infimum} and denoted $x\vee y$ and $x\wedge y$, respectively.
\end{enumerate}
We denote by $X_+:=\{x\in X:x\geq 0\}$ the positive cone of $X$. A \textit{(real) Banach lattice} is a real vector lattice $(X,\leq)$ endowed with a complete norm $\|\cdot \|$ satisfying the additional property of compatibility with the order:
\begin{enumerate}
    \item[(3)] \textbf{monotonicity of the norm:} whenever $x,y\in X$ satisfy $|x|\leq |y|$, where the absolute value of an element $x$ is defined as $|x|=x\vee (-x)$, we have that $\|x\|\leq \|y\|$.
\end{enumerate}
The main focus of these notes lies in Banach lattices, so most of the definitions will be stated in this setting, even though they might hold in a more general setting, such as vector lattices. In a Banach lattice, there is a large diversity of subspaces besides linear subspaces. We say that a subspace $Y$ of a Banach lattice $X$ is: a \textit{sublattice} if it is closed under lattice operations; an \textit{ideal} if it is \textit{solid}, that is, if whenever $x\in Y$ and $|y|\leq |x|$ implies that $y\in Y$; a \textit{band} if whenever $x=\sup A$ with $A\subseteq Y$, then $x\in Y$; and a \textit{projection band} if it is a band and there exists a linear projection $P:X\rightarrow X$ such that $P(X)=Y$ and $0\leq Px\leq x$ for every $x\in X_+$. There are also several relevant classes of operators between Banach lattices, such as:
\begin{enumerate}
    \item \textbf{Positive operators:} An operator $T:X\rightarrow Y$ is \textit{positive} if $T(X_+)\subseteq Y_+$.
    \item \textbf{Regular operators:} An operator $T:X\rightarrow Y$ is \textit{regular} if it is the difference of two positive operators.
    \item \textbf{Lattice homomorphisms:} An operator $T:X\rightarrow Y$ is a \textit{lattice homomorphism} if it preserves lattice operations, that is, if $T(x\vee y)=(Tx)\vee (Ty)$ for every $x,y\in X$ (equivalently, $T(x\wedge y)=(Tx)\wedge (Ty)$, or $T|x|=|Tx|$).
    \item \textbf{(Almost) interval preserving operators:} A positive operator $T:X\rightarrow Y$ is \textit{(almost) interval preserving} if $T[0,x]=[0,Tx]$ (respectively, $T[0,x]$ is dense in $[0,Tx]$) for ever $x\in X$, where $[0,x]:=\{y\in X:0\leq y\leq x\}$ is an \textit{order interval}.
\end{enumerate}
Observe that the image of a lattice homomorphism is a sublattice and its kernel is an ideal, while the image of an interval preserving operator is an ideal. It should also be noted (cf. \cite[Theorem 1.4.19]{MN}) that the adjoint of a lattice homomorphism is an interval preserving operator, and the adjoint of an almost interval preserving operator is a lattice homomorphism.\\

Throughout these notes, we will constantly use the following property, commonly known as \textit{Krivine's positively homogeneous functional calculus}. It is worth mentioning that this feature is not exclusive to Banach lattices, but can actually be obtained for uniformly complete Archimedean vector lattices. Let us consider $\Hcal^n$, the set of continuous positively homogeneous functions over $\R^n$, i.e., the functions $f:\R^n\rightarrow \R$ such that $f(\lambda t_1,\ldots, \lambda t_n)=\lambda f(t_1,\ldots, t_n)$ for every $\lambda \geq 0$ and $t_1,\ldots,t_n\in \R$. Note that $\Hcal^n$ can be identified with $C(S_{\ell_\infty^n})$, the space of continuous functions on $S_{\ell_\infty^n}$, the unit sphere of $\ell_\infty^n$, since by homogeneity the values of any function in $\Hcal^n$ are prescribed by its values on $S_{\ell_\infty^n}$. Therefore, we can endow $\Hcal^n$ with the norm $\|\cdot\|_\infty$ of the supremum on $S_{\ell_\infty^n}$. Then for every $x_1,\ldots,x_n \in X$ and $f\in \Hcal^n$ there exists a unique way of defining the expression $f(x_1,\ldots,x_n)$ in $X$, in such a way that the operator $V:(\Hcal^n,\|\cdot\|_\infty)\rightarrow X$ that assigns to every $f\in \Hcal^n$ the element $Vf=f(x_1,\ldots,x_n)$ is a lattice homomorphism, with $\|V\|\leq \|\bigvee_{i=1}^n|x_i|\|$. More precisely (cf. \cite[Theorem 2.1.20]{MN}, see also \cite[Section 1.d]{LT2} or \cite[Chapter 16]{DJT}):

\begin{thm}\label{thm: functional calculus}
    Let $X$ be a Banach lattice, and $\overline{x}=(x_1,\ldots,x_n)\subseteq X$. For $i=1,\ldots,n$, let $p_i\in \Hcal^n$ denote the projection onto the $i$-th coordinate of $\R^n$. Then, there exists a unique lattice homomorphism $V_{\overline{x}}:\Hcal^n \rightarrow X$ such that $V_{\overline{x}}p_i=x_i$ for each $i=1,\ldots,n$. Moreover,
    \[|V_{\overline{x}}f|\leq \|f\|_\infty \bigvee_{i=1}^n |x_i|.\] 
\end{thm}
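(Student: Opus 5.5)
Set $e:=\bigvee_{i=1}^n |x_i|$ and work in the closed ideal $I_e$ generated by $e$ in $X$, equipped with the order-unit norm $\|y\|_e:=\inf\{\lambda\geq 0: |y|\leq \lambda e\}$. Since $X$ is a Banach lattice, $(I_e,\|\cdot\|_e)$ is an AM-space with strong unit $e$, and it is complete: a $\|\cdot\|_e$-Cauchy sequence is norm Cauchy, and its norm limit stays in $I_e$ and is the $\|\cdot\|_e$-limit, because order intervals are norm closed. By Kakutani's representation theorem there is a compact Hausdorff space $K$ and a lattice isometry $\Phi:I_e\to C(K)$ with $\Phi e=\uno_K$. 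This reduces the construction to a computation with functions.

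\textbf{Existence.} Put $\xi_i:=\Phi x_i\in C(K)$. Then $\bigvee_i|\xi_i|=\uno_K$, so for every $k\in K$ the vector $\overline{\xi}(k):=(\xi_1(k),\ldots,\xi_n(k))$ lies in $S_{\ell_\infty^n}$. For $f\in\Hcal^n$ set $W f:=f\circ\overline{\xi}\in C(K)$ and $V_{\overline{x}}f:=\Phi^{-1}(Wf)$. The lattice operations in $C(K)$ are pointwise, so $W$ is linear and a lattice homomorphism. Clearly $Wp_i=\xi_i$, and $|Wf|\leq\|f\|_\infty\uno_K$ because $\overline{\xi}(k)\in S_{\ell_\infty^n}$. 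Applying $\Phi^{-1}$, which is a lattice homomorphism mapping $\uno_K$ to $e$, gives $V_{\overline{x}}p_i=x_i$ and $|V_{\overline{x}}f|\leq\|f\|_\infty e$. This also shows $\|V_{\overline{x}}\|\leq\|e\|$, so $V_{\overline{x}}$ is bounded into $X$.

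\textbf{Uniqueness.} Any lattice homomorphism $V$ with $Vp_i=x_i$ agrees with $V_{\overline{x}}$ on the vector sublattice $L\subseteq\Hcal^n$ generated by $p_1,\ldots,p_n$. By the lattice version of Stone–Weierstrass (Kakutani–Krein), $L$ is $\|\cdot\|_\infty$-dense in $C(S_{\ell_\infty^n})\cong\Hcal^n$. Indeed, $L$ separates points of $S_{\ell_\infty^n}$ via the coordinates, and for two distinct points $s,t$ and reals $a,b$ some element of $L$ takes the values $a,b$ there. The only subtle case is $t=\lambda s$, where positive homogeneity could obstruct interpolation. On the sphere, however, $\lambda s\in S_{\ell_\infty^n}$ forces $\lambda=\pm1$, and $\lambda=-1$ is handled by combining $p_i^+$ and $p_i^-$. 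Finally, $L$ contains $\bigvee_i|p_i|\equiv 1$ on the sphere, so the lattice Stone–Weierstrass theorem applies. It remains to check that $V$ is continuous, since a lattice homomorphism from $\Hcal^n$ need not a priori be bounded. For this, note that $|f|\leq\|f\|_\infty\bigvee_i|p_i|$ holds pointwise on $\R^n$ by homogeneity. Hence $|Vf|=V|f|\leq\|f\|_\infty e$, so $V$ is bounded, and it agrees with $V_{\overline{x}}$ on a dense set, hence everywhere.

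I expect the main obstacle to be the two functional-analytic inputs: completeness of $I_e$ together with the Kakutani representation, and the density of $L$ in $\Hcal^n$. Both are classical, and I would cite them rather than reprove them. The density step is where care about positive homogeneity is needed, and I would prove it by hand via the lattice Stone–Weierstrass two-point interpolation criterion as described above.
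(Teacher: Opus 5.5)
Your proof is correct and is essentially the route the paper indicates: the paper cites the result and sketches the same construction, representing the principal ideal $I_e$, $e=\bigvee_{i=1}^n|x_i|$, as a $C(K)$ via Kakutani and defining $f(x_1,\ldots,x_n)$ pointwise, while your uniqueness step via lattice Stone--Weierstrass density and automatic boundedness is the standard argument in the cited references. One wording fix: $I_e$ should be the principal ideal $\{y\in X:|y|\leq\lambda e \text{ for some } \lambda\geq 0\}$, which is complete for $\|\cdot\|_e$ but generally not norm-closed in $X$; also, since $\bigvee_i|p_i|\equiv 1$ on the sphere already puts the constants in your sublattice, your separate treatment of the case $t=\lambda s$ is unnecessary.
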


Note that, if $T:X\rightarrow Y$ is a lattice homomorphism between Banach lattices, then for every $x_1,\ldots,x_n\in X$, the uniqueness of the functional calculus yields that $V_{T\overline{x}}=T\circ V_{\overline{x}}$. In other words, lattice homomorphisms preserve functional calculus. A procedure for constructing such expressions $f(x_1,\ldots,x_n)$ is given by Kakutani's Representation Theorem for $AM$-spaces (cf. \cite[Theorem 2.1.3]{MN}). Namely, if $e=\bigvee_{i=1}^n|x_i|$, we can represent the ideal $I_e$ generated by $e$ as a space $C(K)$ for a certain compact Hausdorff space $K$. There, the function $f(x_1,\ldots,x_n)(t)=f(x_1(t),\ldots,x_n(t))$ can be defined for any continuous function $f$ of $n$ variables, and it can be shown that the corresponding element in $I_e$ does not depend on the choice of $e$ whenever $f$ is positively homogeneous.\\ 

\subsection{Properties of \texorpdfstring{$p$}{}-sums}

Functional calculus allows us to provide meaning in arbitrary Banach lattices to expressions such as the geometric means $|x_1|^{\theta_1}\cdot\ldots\cdot |x_n|^{\theta_n}$, where $\theta_i\in (0,1)$ are such that $\theta_1+\ldots+\theta_n=1$ or the $p$-sums $\intoo[0]{\sum_{i=1}^n |x_i|^p}^\frac{1}{p}$ for $0<p\leq \infty$ (for $p=\infty$ we mean $\bigvee_{i=1}^n|x_i|$). As one can expect, $p$-sums will be central in the theory of convexity and concavity that we will develop in the next sections, so let us recall some of their properties. First of all, note that for $1\leq p\leq \infty$, we can additionally represent the $p$-sums as follows:

\begin{prop}\label{prop: p-sum as supremum}
    Let $x_1,\ldots,x_n\in X$, $1\leq p\leq \infty$ and $\frac{1}{p}+\frac{1}{p^*}=1$. Then
    \[\intoo[3]{\sum_{i=1}^n |x_i|^p}^\frac{1}{p}=\sup \cbr{\sum_{i=1}^n a_ix_i:  (a_i)_{i=1}^n\in B_{\ell_{p^*}^n}}.\]
\end{prop}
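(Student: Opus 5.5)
The plan is to transfer the statement to the scalar identity on $\R^n$ via the functional calculus of \Cref{thm: functional calculus}, since lattice homomorphisms preserve finite suprema. In $\R^n$ we have, by Hölder's inequality and its equality case,
\[\Bigl(\sum_{i=1}^n |t_i|^p\Bigr)^{\frac1p}=\max\Bigl\{\sum_{i=1}^n a_it_i : (a_i)\in B_{\ell_{p^*}^n}\Bigr\}\qquad (t\in\R^n).\]
The difficulty is that the supremum over the infinite set $B_{\ell_{p^*}^n}$ need not be preserved by $V_{\overline{x}}$, which a priori only preserves finite lattice operations. So I will approximate with finite subsets.

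First, the easy inequality. For each $a\in B_{\ell_{p^*}^n}$, the function $g_a(t)=(\sum|t_i|^p)^{1/p}-\sum a_it_i$ lies in $\Hcal^n$ and is nonnegative by Hölder. Since $V_{\overline{x}}$ is a lattice homomorphism, it is positive, so $\sum a_ix_i\le (\sum|x_i|^p)^{1/p}$. Thus the $p$-sum is an upper bound of the set.

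Next, the least upper bound property. Let $y\in X$ be any upper bound of $\{\sum a_ix_i\}$. Fix $\varepsilon>0$ and, by compactness of $B_{\ell_{p^*}^n}$ (finite dimension), choose a finite $\varepsilon$-net $a^{(1)},\dots,a^{(m)}$ in the $\ell_{p^*}$-norm. Set $h(t)=\bigvee_k \sum_i a^{(k)}_it_i\in\Hcal^n$. For $t\in S_{\ell_\infty^n}$, pick $a$ attaining the maximum and $a^{(k)}$ near it. Hölder then gives
\[(\sum|t_i|^p)^{1/p}-h(t)\le \|a-a^{(k)}\|_{p^*}\|t\|_p\le \varepsilon n^{1/p}.\]
So $0\le \|(\sum|t_i|^p)^{1/p}-h\|_\infty\le \varepsilon n^{1/p}$. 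Applying \Cref{thm: functional calculus} to this difference yields
\[(\sum|x_i|^p)^{1/p}\le h(\overline{x})+\varepsilon n^{1/p}e,\qquad e=\bigvee|x_i|,\]
where $h(\overline{x})=\bigvee_k\sum_i a^{(k)}_ix_i\le y$ because $V_{\overline{x}}$ preserves finite suprema. Hence $(\sum|x_i|^p)^{1/p}-y\le \varepsilon n^{1/p}e$ for all $\varepsilon>0$. Its positive part therefore satisfies $0\le z^+\le \varepsilon n^{1/p} e$ for every $\varepsilon$, and since Banach lattices are Archimedean, $z^+=0$. This gives $(\sum|x_i|^p)^{1/p}\le y$.

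For $p=\infty$ (so $p^*=1$) the same argument works, and it is even simpler: the extreme points $\pm e_i$ already give $\bigvee|x_i|=\bigvee_i (x_i\vee -x_i)$ exactly, with no approximation needed. The main step is the approximation argument above, which turns the infinite supremum into a finite one with a uniform error controlled by the norm estimate in \Cref{thm: functional calculus}.
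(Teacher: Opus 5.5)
Your proof is correct, and it takes a genuinely different route from the paper's.

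The paper's argument works as follows. It represents the principal ideal $I_e$ generated by $e=(\sum_{i}|x_i|^p)^{1/p}$ as a $C(K)$-space, where the functional calculus acts pointwise. There the scalar H\"older identity makes $e$ the pointwise supremum of the set, hence the supremum in $C(K)$, and this is transferred back through the lattice isomorphism $C(K)\cong I_e$.

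Your argument never leaves $X$. It uses only what \Cref{thm: functional calculus} records, namely that $V_{\overline{x}}$ is a lattice homomorphism with $|V_{\overline{x}}f|\le\|f\|_\infty\bigvee_i|x_i|$, together with the Archimedean property. You then replace the infinite supremum by a finite one over an $\varepsilon$-net. Choose that net inside $B_{\ell_{p^*}^n}$, so that each $\sum_i a^{(k)}_ix_i$ actually lies in the set.

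The paper's proof is shorter and reduces everything to a pointwise identity. However, it relies on the pointwise description of the functional calculus in the Kakutani representation. It also tacitly uses that a supremum computed inside the ideal $I_e$ is the supremum in $X$; this holds here because $0$ is in the set, so any upper bound $y$ may be replaced by $y\wedge e\in I_e$.

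Your argument avoids both points and yields more: $0\le e-\bigvee_k\sum_i a^{(k)}_ix_i\le \varepsilon n^{1/p}\bigvee_i|x_i|$. So $e$ is a relatively uniform, hence norm, limit of finite suprema of elements of the set. This quantitative form is useful later, for instance in \Cref{cor: RK for p sums}, where interchanging the supremum with evaluation at $x$ then follows directly from norm convergence.
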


\begin{proof}
    Let $e=\intoo[0]{\sum_{i=1}^n |x_i|^p}^\frac{1}{p}\in X$ and $A=\cbr{\sum_{i=1}^n a_ix_i: (a_i)_{i=1}^n\in B_{\ell_{p^*}^n}}$. We need to show that $e=\sup A$. To do so, consider the ideal $I_e$ generated by $e$ and represent it as a $C(K)$ space. Since the equality 
    \[\intoo[3]{\sum_{i=1}^n |t_i|^p}^\frac{1}{p}=\sup \cbr{\sum_{i=1}^n a_it_i: (a_i)_{i=1}^n\in B_{\ell_{p^*}^n}}\]
    holds for any choice $t_1,\ldots,t_n\in\R$, and in $C(K)$ the value at $s\in K$ of the function $e=\intoo[0]{\sum_{i=1}^n |x_i|^p}^\frac{1}{p}$ is given by $\intoo[0]{\sum_{i=1}^n |x_i(s)|^p}^\frac{1}{p}$, it follows that the proposition holds in $C(K)$. Since lattice isomorphisms preserve arbitrary suprema, the same equality holds in $X$.
\end{proof}

In particular, $p$-sums are preserved by positive operators:

\begin{lem}\label{lem: positive operators and functional calculus}
    Let $X$ and $Y$ be Banach lattices, $T:X\rightarrow Y$ a positive operator and $1\leq p\leq \infty$. Then, for every $x_1,\ldots,x_n\in X$
    \[\intoo[3]{\sum_{i=1}^n |Tx_i|^p}^\frac{1}{p}\leq T \intoo[3]{\sum_{i=1}^n |x_i|^p}^\frac{1}{p}.\]
\end{lem}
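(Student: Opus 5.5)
The plan is to use the supremum representation of $p$-sums from \Cref{prop: p-sum as supremum} on both sides, together with positivity and linearity of $T$. Put $e=\intoo[0]{\sum_{i=1}^n |x_i|^p}^\frac{1}{p}\in X$. By \Cref{prop: p-sum as supremum} applied in $X$, $e$ is the supremum of $\{\sum_i a_ix_i : (a_i)\in B_{\ell_{p^*}^n}\}$. In particular, $\sum_{i=1}^n a_ix_i\leq e$ for every $(a_i)_{i=1}^n\in B_{\ell_{p^*}^n}$.

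Since $T$ is positive, it is order preserving. Applying $T$ and using linearity gives
\[\sum_{i=1}^n a_i Tx_i = T\Bigl(\sum_{i=1}^n a_ix_i\Bigr)\leq Te \qquad\text{for every } (a_i)_{i=1}^n\in B_{\ell_{p^*}^n}.\]
Thus $Te$ is an upper bound in $Y$ of the set $\{\sum_i a_iTx_i : (a_i)\in B_{\ell_{p^*}^n}\}$.

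Now apply \Cref{prop: p-sum as supremum} in $Y$ to the vectors $Tx_1,\ldots,Tx_n$. It says that $\intoo[0]{\sum_{i=1}^n |Tx_i|^p}^\frac{1}{p}$ is the least upper bound of exactly this set. Since $Te$ is an upper bound, it dominates the least one, which is the claimed inequality.

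There is no real obstacle here. The only point to watch is that we use only the inequality ``$\sum a_ix_i\le e$'' in $X$ and the supremum property in $Y$. We never need $T$ to preserve suprema, which a merely positive operator need not do. The case $p=\infty$, where $p^*=1$, is covered by the same argument, since \Cref{prop: p-sum as supremum} is stated for $1\le p\le\infty$.
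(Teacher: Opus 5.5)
Your proof is correct and follows essentially the same route as the paper's. Both use the supremum representation of $p$-sums in $X$ together with positivity to get $\sum_{i=1}^n a_iTx_i\leq T\bigl(\sum_{i=1}^n |x_i|^p\bigr)^{\frac{1}{p}}$ for every $(a_i)_{i=1}^n\in B_{\ell_{p^*}^n}$, and then apply the same representation in $Y$ to conclude.
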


\begin{proof}
    Note that, by \Cref{prop: p-sum as supremum}, for every $(a_i)_{i=1}^n\in B_{\ell_{p^*}^n}$ we have
    \[\sum_{i=1}^n a_i Tx_i= T\intoo[3]{\sum_{i=1}^n a_i x_i}\leq T \intoo[3]{\sum_{i=1}^n |x_i|^p}^\frac{1}{p},\]
    so again by \Cref{prop: p-sum as supremum} we conclude that
    \[\intoo[3]{\sum_{i=1}^n |Tx_i|^p}^\frac{1}{p}\leq T \intoo[3]{\sum_{i=1}^n |x_i|^p}^\frac{1}{p}. \qedhere\]
\end{proof}

We will eventually need to evaluate $p$-sums of disjoint elements, that is, elements $x,y\in X$ such that $|x|\wedge |y|=0$. This task is much easier, as the following lemma shows.

\begin{lem}\label{lem: disjoint p sums}
    Let $x_1,\ldots,x_n\in X$ be pairwise disjoint elements, and $1<p<\infty$. Then
    \[\sum_{i=1}^n |x_i| = \intoo[3]{\sum_{i=1}^n |x_i|^p}^\frac{1}{p} = \bigvee_{i=1}^n |x_i|. \]
\end{lem}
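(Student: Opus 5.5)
The plan is to reduce everything to a pointwise computation in a $C(K)$-space, exactly as in the proof of \Cref{prop: p-sum as supremum}. Set $e=\bigvee_{i=1}^n|x_i|$; since each $|x_i|\le e$, every $x_i$ lies in the principal ideal $I_e$. By Kakutani's Representation Theorem for $AM$-spaces we may regard $I_e$ as a space $C(K)$, with the lattice operations on $I_e$ inherited from $X$. Three facts then make the computation valid in $C(K)$: the ideal $I_e$ is a sublattice of $X$; the three expressions $\sum_i|x_i|$, $\intoo[0]{\sum_i|x_i|^p}^{1/p}$ and $\bigvee_i|x_i|$ are all dominated by $n e$, so they lie in $I_e$; and by the remark after \Cref{thm: functional calculus} the functional calculus of $x_1,\ldots,x_n$ computed in $I_e$ agrees with the one computed in $X$. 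The last point follows because the inclusion $I_e\hookrightarrow X$ is a lattice homomorphism and lattice homomorphisms preserve functional calculus. It therefore suffices to prove the equalities in $C(K)$.

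Next, disjointness transfers to $C(K)$: since the representation is a lattice isomorphism, $|x_i|\wedge|x_j|=0$ in $X$ implies $\min(|x_i(s)|,|x_j(s)|)=0$ for every $s\in K$ and $i\neq j$. So at each point $s\in K$ at most one of the numbers $x_1(s),\ldots,x_n(s)$ is nonzero. In $C(K)$ the three elements are the functions
\[
s\mapsto\sum_{i=1}^n |x_i(s)|,\qquad s\mapsto\intoo[3]{\sum_{i=1}^n |x_i(s)|^p}^{\frac1p},\qquad s\mapsto\max_{i}|x_i(s)|.
\]
When at most one term is nonzero, each of these equals $|x_{i_0}(s)|$, where $i_0$ is the index of that term, or $0$ if there is none. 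Hence the three functions coincide pointwise, so the three elements are equal in $I_e$, and therefore in $X$.

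The only delicate step is justifying that the functional-calculus expressions, and the sum $\sum_i |x_i|$, are really evaluated pointwise in the $C(K)$ representation. This is exactly the description of the functional calculus given after \Cref{thm: functional calculus}. For the finite sum and the supremum it is immediate, since the representation is a linear lattice isomorphism. The rest is routine; note also that the hypothesis $1<p<\infty$ plays no special role beyond making the middle expression a genuine $p$-sum.
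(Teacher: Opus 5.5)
Your proof is correct, but it is organized differently from the paper's. The paper never transfers disjointness into a $C(K)$ representation. It first uses the norm-one inclusions $\ell_1^n\subseteq\ell_p^n\subseteq\ell_\infty^n$, i.e.\ the scalar inequalities $\sum_i|t_i|\geq(\sum_i|t_i|^p)^{1/p}\geq\max_i|t_i|$, together with positivity of the functional calculus. This gives the chain $\sum_i|x_i|\geq(\sum_i|x_i|^p)^{1/p}\geq\bigvee_i|x_i|$ for \emph{arbitrary} $x_1,\ldots,x_n$. It then collapses the chain with the intrinsic vector-lattice identity $\bigvee_i|x_i|=\sum_i|x_i|$ for pairwise disjoint elements.

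That route is shorter and needs no bookkeeping about computing in $I_e$ versus $X$. It also isolates exactly where disjointness enters: only in the equality of the two outer terms.

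Your argument instead carries disjointness into the representation, via $\min(|x_i(s)|,|x_j(s)|)=0$ for $i\neq j$. This costs the Kakutani representation and the check that the functional calculus in $I_e$ agrees with that in $X$, which you handle correctly through uniqueness and the inclusion being a lattice homomorphism. In return it gives more. The same pointwise computation shows $f(x_1,\ldots,x_n)=\sum_i f(0,\ldots,0,x_i,0,\ldots,0)$ for every $f\in\Hcal^n$ and pairwise disjoint $x_i$, using only $f(0)=0$. It therefore covers more than $p$-sums and needs no sandwiching inequality.
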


\begin{proof}
    The norm one inclusions $\ell_1^n\subseteq \ell_p^n \subseteq \ell_\infty^n$ imply that
    \[\sum_{i=1}^n |x_i| \geq  \intoo[3]{\sum_{i=1}^n |x_i|^p}^\frac{1}{p} \geq \bigvee_{i=1}^n |x_i|. \]
    Now, since the vectors $x_1,\ldots,x_n$ are disjoint, it follows that $\bigvee_{i=1}^n |x_i|=\sum_{i=1}^n |x_i|$, so the statement follows.
\end{proof}

The monotonicity of $p$-sums also holds in Banach lattices.

\begin{prop}\label{prop: monotonicity of p-sums}
    Let $0<p\leq \infty$ and $x_1,\ldots,x_n,y_1,\ldots,y_n\in X$ such that $|x_i|\leq |y_i|$ for every $i=1,\ldots,n$. Then $\intoo{\sum_{i=1}^n |x_i|^p}^\frac{1}{p}\leq \intoo{\sum_{i=1}^n |y_i|^p}^\frac{1}{p}$.
\end{prop}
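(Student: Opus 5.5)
The plan is to reduce the statement to a pointwise inequality of real numbers inside a $C(K)$-representation, in the same way as the proof of \Cref{prop: p-sum as supremum}. This is needed because for $p<1$ the supremum formula is unavailable, so a functional-calculus argument that covers all $0<p\leq\infty$ is required.

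Set
\[e=\bigvee_{i=1}^n |y_i| \in X_+ .\]
Since $|x_i|\leq |y_i|\leq e$ for every $i$, all of $x_1,\ldots,x_n,y_1,\ldots,y_n$ lie in the principal ideal $I_e$. By Kakutani's Representation Theorem for $AM$-spaces, $(I_e,\|\cdot\|_e)$ is lattice isometric to some $C(K)$, via a map $\Phi$. This $\Phi$ is a lattice isomorphism onto $C(K)$, and the inclusion $I_e\hookrightarrow X$ is an injective lattice homomorphism.

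Next I apply \Cref{thm: functional calculus} to the $2n$-tuple $(x_1,\ldots,x_n,y_1,\ldots,y_n)$. The function
\[g(s,t)=\Big(\sum_{i=1}^n |t_i|^p\Big)^{1/p}-\Big(\sum_{i=1}^n |s_i|^p\Big)^{1/p}\]
lies in $\Hcal^{2n}$, since each $p$-sum is continuous and positively homogeneous for every $0<p\leq\infty$. Lattice homomorphisms preserve functional calculus, by the uniqueness remark after \Cref{thm: functional calculus}. Hence the element
\[g(x_1,\ldots,x_n,y_1,\ldots,y_n)=\Big(\sum_i|y_i|^p\Big)^{1/p}-\Big(\sum_i|x_i|^p\Big)^{1/p}\]
corresponds, in $C(K)$, to the function $k\mapsto g(x_1(k),\ldots,y_n(k))$, where $x_i(k)$ denotes $(\Phi x_i)(k)$. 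One point needs care here: the functional calculus computed in $I_e$ must agree with the one computed in $X$. This holds because the inclusion of $I_e$ into $X$ is a lattice homomorphism, and the $C(K)$-computation is justified by the same uniqueness principle.

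It remains to check the pointwise inequality. Since $\Phi$ preserves order, $|x_i(k)|\leq|y_i(k)|$ holds for every $k\in K$. The real function $t\mapsto (\sum_i |t_i|^p)^{1/p}$ is monotone in each $|t_i|$, and this includes the case $p=\infty$, where it is the maximum. Therefore $g(x(k),y(k))\geq 0$ for all $k$, so the element above is positive in $C(K)$. Pulling this back through the lattice isomorphism $\Phi$ and then pushing it into $X$ by the positive inclusion gives positivity in $X$, which is exactly the claimed inequality.

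The main obstacle is the identification step: justifying that the functional calculus evaluated in $X$ agrees with pointwise evaluation in the $C(K)$-representation of $I_e$. I would handle this entirely through the uniqueness in \Cref{thm: functional calculus}, exactly as that identification is used in the proof of \Cref{prop: p-sum as supremum}.
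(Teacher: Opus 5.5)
Your proof is correct and follows essentially the same route as the paper. You take $e=\bigvee_{i=1}^n|y_i|$, represent $I_e$ as a $C(K)$-space, and transfer the coordinatewise monotonicity of $(t_i)\mapsto(\sum_i|t_i|^p)^{1/p}$ via the uniqueness of the functional calculus. Packaging both $p$-sums into a single element of $\Hcal^{2n}$ is a cosmetic variation; it tacitly uses that the calculus of the $2n$-tuple, restricted to functions of only the last $n$ variables, agrees with the calculus of $(y_i)$, which again follows from uniqueness.
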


\begin{proof}
   Observe that the function of real variables $f(t_1,\ldots,t_n)=\intoo{\sum_{i=1}^n |t_i|^p}^\frac{1}{p}$ satisfies $f(s_1,\ldots,s_n)\leq f(t_1,\ldots,t_n)$ whenever $0\leq s_i\leq t_i$ for every $i=1,\ldots,n$, so the proposition is true in every $C(K)$ space as well. Now, let $e=\bigvee_{i=1}^n |y_i|$ and note that every $x_i$ and $y_i$ belongs to the ideal $I_e$ generated by $e$. Kakutani's Representation Theorem for $AM$-spaces and the uniqueness of the functional calculus yield the result.
\end{proof}

\begin{prop}\label{prop: Holder in BL}
    Let $X$ be a Banach lattice and $1\leq p\leq \infty$. Given $x_1,\ldots,x_n\in X$ and $x_1^*,\ldots,x_n^*\in X^*$, one has
    \[\left|\sum_{i=1}^n x_i^*(x_i) \right|\leq \intoo[3]{\sum_{i=1}^n |x_i^*|^{p^*}}^\frac{1}{p^*} \intoo[4]{\intoo[3]{\sum_{i=1}^n |x_i|^p}^\frac{1}{p}}. \]
\end{prop}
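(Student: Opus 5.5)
The plan is to reduce everything to \Cref{prop: p-sum as supremum}, applied in $X^*$ to the functionals $x_1^*,\ldots,x_n^*$ with exponent $p^*$, plus the standard pointwise description of $|x^*|$ on positive elements. Set $e=\intoo[0]{\sum_{i=1}^n |x_i|^p}^\frac{1}{p}\in X_+$ and $e^*=\intoo[0]{\sum_{i=1}^n |x_i^*|^{p^*}}^\frac{1}{p^*}\in X^*_+$; the right-hand side is $e^*(e)$. The goal is $|\sum_i x_i^*(x_i)|\le e^*(e)$.

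\textbf{Step 1: a pointwise bound.} Work in the ideal $I_e$ generated by $e$, represented as $C(K)$ through Kakutani's theorem. Each $x_i$ lies in $I_e$, since $|x_i|\le e$ by \Cref{prop: monotonicity of p-sums} (compare with the sum where only one term is nonzero). Fix $\varepsilon>0$. By compactness of $K$ and continuity of the real-variable identity underlying \Cref{prop: p-sum as supremum}, I will find a partition of unity (or, more simply, finitely many coefficient vectors) that approximates $x_i$ as
\[x_i\approx \sum_{k} a_i^{(k)} u_k, \qquad u_k\ge 0 \text{ pairwise disjoint or forming a partition of unity times } e,\]
with $\sum_k u_k\le e$ and $\|(a_i^{(k)})_i\|_{\ell_p^n}\le 1$ for each $k$.

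A cleaner alternative is to apply Hölder directly in the scalar form. For a positive $u\in X_+$ and a vector $(a_i)_i$, \Cref{prop: p-sum as supremum} in $X^*$ gives
\[\sum_i a_i x_i^*(u)\le \sum_i |a_i|\,|x_i^*|(u)\le \|(a_i)\|_{\ell_p}\intoo[3]{\sum_i (|x_i^*|(u))^{p^*}}^{\frac1{p^*}} \le \|(a_i)\|_{\ell_p}\, e^*(u).\]
The last inequality uses $\sum_i b_i|x_i^*|\le e^*$ for $\|b\|_{p}\le1$, evaluated at $u\ge0$.

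\textbf{Step 2: summing.} Summing over $k$ gives
\[\sum_i x_i^*(x_i)\approx\sum_k\sum_i a_i^{(k)}x_i^*(u_k)\le \sum_k e^*(u_k)\le e^*(e).\]
Letting $\varepsilon\to0$, and using norm continuity together with $|x_i^*(y)|\le\|x_i^*\|\|y\|$, yields the bound. The same argument with $-x_i$ gives the absolute value.

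\textbf{Main obstacle.} The hard part is the approximation in Step 1. It needs a finite partition of unity $\{\phi_k\}$ on $K$ subordinate to small sets where $(x_i(t)/e(t))_i$ is nearly constant, with $u_k=\phi_k e$ and $a^{(k)}$ equal to that nearly constant value, which has $\ell_p$-norm at most $1$. Two points need care here. First, points where $e(t)$ is small have to be handled, and this works because $|x_i|\le e$ makes the error controlled by $\varepsilon e$ in norm. Second, the $C(K)$-norm in $I_e$ dominates the $X$-norm, which follows from $|y|\le \|y\|_{I_e}e$. If this becomes too messy, the fallback is to cite the known description of $X^*$-functionals on $I_e$ as measures on $K$ (Riesz). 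Then the inequality reduces to the scalar Hölder inequality integrated against the positive measure representing $e^*$, after showing $|x_i^*|\restriction_{I_e}$ corresponds to $|\mu_i|$ and $\sum_i g_i\,d\mu_i$ is dominated appropriately.
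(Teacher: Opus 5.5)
Your argument is correct, but it takes a different route from the paper's. The paper first transfers everything to $C(K)$. It uses that $J^*$ is a lattice homomorphism (since $J\colon C(K)\to I_e$ is interval preserving) to carry the $p^*$-sum of the $x_i^*$ to a $p^*$-sum of measures $\mu_i=J^*x_i^*$. It then dominates these by a single measure $\mu$ and passes to Radon--Nikodym densities $h_i$, identifying $\bigl(\sum_i|\mu_i|^{p^*}\bigr)^{1/p^*}$ with the density $\bigl(\sum_i|h_i|^{p^*}\bigr)^{1/p^*}$. Finally it integrates the pointwise scalar H\"older inequality.

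You never identify $e^*$ concretely. On the dual side your only input is \Cref{prop: p-sum as supremum} in $X^*$, namely $\sum_i a_ix_i^*\le e^*$ for $a\in B_{\ell_p^n}$, tested on positive vectors. On the primal side you discretize $(x_i)_i$ inside $I_e\cong C(K)$ by a partition of unity. In effect you run the computation from the proof of \Cref{cor: RK for p sums} backwards: you approximate $(x_i)_i$ by families $\bigl(\sum_k a_i^{(k)}u_k\bigr)_i$ with $u_k\ge 0$, $\sum_k u_k=e$ and $\|a^{(k)}\|_{\ell_p^n}\le 1$. This gives a proof free of Riesz representation, Radon--Nikodym and the identification $B_\mu\cong L_1(\mu)$, at the price of an $\varepsilon$-approximation. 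The paper's argument gives the more transparent pointwise picture.

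Two remarks on your write-up. First, drop the ``pairwise disjoint $u_k$'' option in Step 1, since it cannot work in general. If $K$ is connected (e.g.\ $X=C[0,1]$), two or more disjointly supported continuous functions must vanish simultaneously somewhere. A single one forces the vector $(x_i(t))_i$ to point in a fixed direction. So the partition of unity is the right tool.

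Second, your concern about points where $e(t)$ is small is vacuous. Kakutani's representation sends $e$ to $\uno$, so $\bigl\|(f_i(t))_i\bigr\|_{\ell_p^n}=1$ for every $t\in K$. Take a finite cover by sets on which each $f_i$ oscillates by less than $\varepsilon$ around a centre $t_k$, and put $a^{(k)}=(f_i(t_k))_i$. Then $\|f_i-\sum_k a_i^{(k)}\phi_k\|_\infty\le\varepsilon$, and the error in the final inequality is at most $\varepsilon\|e\|\sum_i\|x_i^*\|$.
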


\begin{proof}
    Let $e\in X$ be such that $x_1,\ldots,x_n\in I_e$, and let us endow $I_e$ with the principal ideal norm $\|x\|_e=\inf\{\lambda>0:|x|\leq \lambda e$ for every $x\in I_e$. Note that $\|x\|\leq \lambda \|e\|$ whenever $|x|\leq \lambda e$, so $\|x\|\leq \|x\|_e \|e\|$. This implies that the restrictions of any functional $x^*\in X^*$ to $I_e$ is continuous with respect to the principal ideal norm:
    \[|x^*(x)|\leq \|x^*\|\|x\|\leq \|e\|\|x^*\|\|x\|_e.\]
    
    We claim that without loss of generality we can assume $X=(I_e,\|\cdot\|_e)$. Indeed, let $J:C(K)\rightarrow I_e\subseteq X$ be the isometric lattice identification of $I_e$ with a $C(K)$ space for some Hausdorff compact space $K$. By the above, we have that for every $f\in C(K)$
    \[\|Jf\|\leq \|e\|\|Jf\|_e =\|e\|\|f\|_\infty,\]
    so $J$ is a bounded operator from $C(K)$ into $X$. Note that $J^*:X^*\rightarrow C(K)^*$ behaves as the restriction operator to $I_e$, since $J^*x^*(f)=x^*(Jf)$ for every $f\in C(K)$ and $x^*\in X^*$. Moreover, since $J$ is an interval preserving lattice homomorphism ($J(C(K))=I_e$ is an ideal of $X$), $J^*$ is again an interval preservin lattice homomorphism. In particular, both $J$ and $J^*$ preserve the functional calculus of $C(K)$ and $X^*$, respectively. Let $f_1,\ldots,f_n\in C(K)$ be such that $Jf_i=x_i$, and denote by $\mu_i=J^*x_i^*$ for every $i=1,\ldots,n$. Then
    \begin{align*}
         \intoo[3]{\sum_{i=1}^n |x_i^*|^{p^*}}^\frac{1}{p^*} \intoo[4]{\intoo[3]{\sum_{i=1}^n |x_i|^p}^\frac{1}{p}} & = \intoo[3]{\sum_{i=1}^n |x_i^*|^{p^*}}^\frac{1}{p^*} \intoo[4]{\intoo[3]{\sum_{i=1}^n |J f_i|^p}^\frac{1}{p}} \\
        =  \intoo[3]{\sum_{i=1}^n |x_i^*|^{p^*}}^\frac{1}{p^*} \intoo[4]{J\intoo[3]{\sum_{i=1}^n |f_i|^p}^\frac{1}{p}} & =  J^*\intoo[3]{\sum_{i=1}^n |x_i^*|^{p^*}}^\frac{1}{p^*} \intoo[4]{\intoo[3]{\sum_{i=1}^n |f_i|^p}^\frac{1}{p}}\\
        = \intoo[3]{\sum_{i=1}^n |J^*x_i^*|^{p^*}}^\frac{1}{p^*} \intoo[4]{\intoo[3]{\sum_{i=1}^n |f_i|^p}^\frac{1}{p}} &=\intoo[3]{\sum_{i=1}^n |\mu_i|^{p^*}}^\frac{1}{p^*} \intoo[4]{\intoo[3]{\sum_{i=1}^n |f_i|^p}^\frac{1}{p}}.
    \end{align*}
    Therefore, it suffices to show the statement when $X=C(K)$. Let $f_1,\ldots,f_n\in C(K)$ and $\mu_1,\ldots,\mu_n \in C(K)^*$, which we can identify with measures over $C(K)$ by Riesz Representation Theorem. Let $\mu\in C(K)^*_+$ be a measure such that $|\mu_i|\leq \mu$ for every $i=1,\ldots,n$, so in particular $\mu_i\ll \mu$, and we can apply Radon--Nikodym Theorem to obtain a function $h_i\in L_1(\mu)$, the Radon--Nikodym derivative of $\mu_i$ with respect to $\mu$, such that $\mu_i(f)=\int_K f \,d\mu_i=\int_K f h_i \,d\mu$ for every $f\in C(K)$. Moreover, the identification of a measure in $B_\mu$, the band generated by $\mu$ in $C(K)^*$, with its Radon--Nikodym derivative in $L_1(\mu)$ is a lattice isometry. In particular, the derivative of $\intoo{\sum_{i=1}^n |\mu_i|^{p^*}}^\frac{1}{p^*}$ corresponds to $\intoo[3]{\sum_{i=1}^n |h_i|^{p^*}}^\frac{1}{p^*}$, so
    \begin{align*}
        \left|\sum_{i=1}^n \mu_i^*(f_i) \right| &= \left|\sum_{i=1}^n\int_K  f_i h_i \,d\mu  \right| \leq \int_K  \sum_{i=1}^n |f_i h_i| \,d\mu \\
        & \leq \int_K  \intoo[3]{\sum_{i=1}^n |h_i|^{p^*}}^\frac{1}{p^*} \intoo[3]{\sum_{i=1}^n |f_i|^p}^\frac{1}{p} \,d\mu \\
        & =
        \intoo[3]{\sum_{i=1}^n |\mu_i|^{p^*}}^\frac{1}{p^*} \intoo[4]{\intoo[3]{\sum_{i=1}^n |f_i|^p}^\frac{1}{p}},
    \end{align*}
    as we wanted to show.
\end{proof}

As a corollary, we get the following formula:

\begin{cor}\label{cor: RK for p sums}
    Let $X$ be a Banach lattice, $1\leq p\leq \infty$, and $x_1^*,\ldots,x_n^*\in X^*$. Then for every $x\in X_+$ we have
    \[\intoo[3]{\sum_{i=1}^n |x_i^*|^{p^*}}^\frac{1}{p^*}(x) =  \sup\cbr{\sum_{i=1}^n x_i^*(x_i): x_1,\ldots,x_n\in X, \intoo[3]{\sum_{i=1}^n |x_i|^{p}}^\frac{1}{p}\leq x}.\]
\end{cor}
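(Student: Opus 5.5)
Write $\phi=\intoo[0]{\sum_{i=1}^n |x_i^*|^{p^*}}^{1/p^*}\in X^*_+$. The inequality ``$\geq$'' comes straight from \Cref{prop: Holder in BL}. The real work is ``$\leq$'', which needs, for each $x\ge0$ and $\varepsilon>0$, an admissible tuple $(x_i)$ whose pairing is close to $\phi(x)$. I plan to reduce to $C(K)$ and use measure theory there.

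For ``$\geq$'', take $x_1,\ldots,x_n$ with $\intoo[0]{\sum|x_i|^p}^{1/p}\leq x$. \Cref{prop: Holder in BL} gives $\sum x_i^*(x_i)\leq \phi\big(\intoo[0]{\sum|x_i|^p}^{1/p}\big)$. Since $\phi\ge0$, this is at most $\phi(x)$, so the supremum is $\le\phi(x)$.

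For ``$\leq$'', fix $x\in X_+$ and set $e=x$. I repeat the reduction from the proof of \Cref{prop: Holder in BL}. Let $J:C(K)\to I_x\subseteq X$ be the lattice isometry onto $(I_x,\|\cdot\|_x)$, with $J\uno=x$, and put $\mu_i=J^*x_i^*$. Because $J^*$ is a lattice homomorphism, $J^*\phi=\intoo[0]{\sum|\mu_i|^{p^*}}^{1/p^*}$, and hence $\phi(x)=(J^*\phi)(\uno)$. Because $J$ preserves the functional calculus, any $f_i\in C(K)$ with $\intoo[0]{\sum|f_i|^p}^{1/p}\le\uno$ give $x_i=Jf_i$ with $\intoo[0]{\sum|x_i|^p}^{1/p}\le x$ and $\sum x_i^*(x_i)=\sum\mu_i(f_i)$. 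So it suffices to prove that in $C(K)$
\[
\int_K\intoo[3]{\sum_{i=1}^n |h_i|^{p^*}}^{1/p^*}d\mu=\sup\cbr{\sum_i\int_K f_ih_i\,d\mu:\ \intoo[3]{\sum_i|f_i|^p}^{1/p}\le\uno}.
\]
Here $\mu\ge|\mu_i|$ and $h_i=d\mu_i/d\mu$ as before, and the left side is $(J^*\phi)(\uno)$ by the Radon--Nikodym lattice isometry.

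In $L_1(\mu)$ there is a pointwise extremizer $g_i$. Write $H=\intoo[0]{\sum|h_i|^{p^*}}^{1/p^*}$ and, for $1<p<\infty$, set $g_i=\sgn(h_i)|h_i|^{p^*-1}/H^{p^*-1}$ on $\{H>0\}$ and $0$ elsewhere. Then $\intoo[0]{\sum|g_i|^p}^{1/p}\le1$ pointwise and $\sum g_ih_i=H$. For $p=1$ ($p^*=\infty$), pick the $g_i$ as indicators of a measurable partition according to which $|h_i|$ attains the maximum, with the sign of $h_i$. For $p=\infty$, take $g_i=\sgn h_i$. The $g_i$ are bounded and measurable, but we need continuous functions.

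The main obstacle is approximating $(g_i)$ by continuous $(f_i)$ that satisfy the constraint $\intoo[0]{\sum|f_i|^p}^{1/p}\le\uno$ exactly. I will use Lusin's theorem for the regular measure $\mu$: approximate $g=(g_i)$ by a continuous map $F:K\to\R^n$ that agrees with $g$ off a set of small $\mu$-measure. Then compose with the metric retraction $r$ of $\R^n$ onto the closed unit ball of $\ell_p^n$, which is continuous by strict convexity for $1<p<\infty$; for $p=1$ or $p=\infty$, use the radial retraction $t\mapsto t/\max(1,\|t\|_p)$, which is continuous in every case and can serve throughout. Set $f=r\circ F$. It is continuous, satisfies the constraint, agrees with $g$ outside a small set, and satisfies $\|f_i\|_\infty\le1$. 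Therefore $\big|\sum\int(f_i-g_i)h_i\,d\mu\big|\le 2\int_{E}\sum|h_i|\,d\mu$, where $E$ is that small set. This tends to $0$ by absolute continuity of the integral, which gives the supremum.
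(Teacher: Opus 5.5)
Your argument is correct, but for the nontrivial inequality it takes a different route from the paper. The easy direction, via \Cref{prop: Holder in BL} and positivity of $\phi$, is the same in both.

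The paper never leaves the lattice setting. By \Cref{prop: p-sum as supremum} applied in $X^*$, it writes $\phi$ as the supremum of the upward directed family of finite maxima $\bigvee_{j=1}^m \sum_i a_{ij}x_i^*$, with $(a_{ij})_i\in B_{\ell_p^n}$. It evaluates this at $x$ using that suprema of directed families in $X^*$ are computed pointwise on $X_+$. It then applies the Riesz--Kantorovich formula $(\bigvee_j y_j^*)(x)=\sup\{\sum_j y_j^*(z_j): z_j\geq 0,\ \sum_j z_j=x\}$. Finally it takes $x_i=\sum_j a_{ij}z_j$, which satisfy $(\sum_i|x_i|^p)^{1/p}\leq\sum_j z_j\|(a_{ij})_i\|_{\ell_p^n}\leq x$ by the triangle inequality of $\ell_p^n$ transported through the functional calculus.

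In effect, the decomposition $x=\sum_j z_j$ together with the finitely many coefficient vectors $(a_{ij})_i$ plays the role of your pointwise extremizer $g=(g_i)$ and its continuous approximation. It is a discrete, order-theoretic substitute for that construction.

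Each route has its own advantage:
\begin{itemize}
\item The paper's argument treats all $1\leq p\leq\infty$ uniformly, with no case split for $p\in\{1,\infty\}$. It needs neither the Riesz representation theorem, nor Radon--Nikodym, nor Lusin's theorem for a Radon measure on a possibly non-metrizable $K$.
\item Your argument makes visible that the corollary is exactly the equality case of \Cref{prop: Holder in BL}. After the same reduction to $C(K)$ and $L_1(\mu)$ used there, both inequalities become the classical pointwise $\ell_p^n$--$\ell_{p^*}^n$ duality integrated against $\mu$.
\end{itemize}
The cost of your route is heavier measure-theoretic machinery plus the approximation step (Lusin followed by the radial retraction $t\mapsto t/\max(1,\|t\|_p)$), which you handle correctly.
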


\begin{proof}
    Let $x\in X_+$. First, assume that $x_1,\ldots,x_n\in X$ are such that $\intoo{\sum_{i=1}^n |x_i|^{p}}^\frac{1}{p}\leq x$. Then, by \Cref{prop: Holder in BL} it follows that
    \[\left|\sum_{i=1}^n x_i^*(x_i)\right| \leq \intoo[3]{\sum_{i=1}^n |x_i^*|^{p^*}}^\frac{1}{p^*} \intoo[4]{\intoo[3]{\sum_{i=1}^n |x_i|^p}^\frac{1}{p}} \leq  \intoo[3]{\sum_{i=1}^n |x_i^*|^{p^*}}^\frac{1}{p^*}(x). \]
    To prove the reverse inequality, we apply \Cref{prop: p-sum as supremum} and the Riesz--Kantorovich formula (cf. \cite[Corollary 1.3.4]{MN}) to obtain
    \begin{align*}
        \intoo[3]{\sum_{i=1}^n |x_i^*|^{p^*}}^\frac{1}{p^*}(x) &= \sup\cbr{\intoo[3]{\sum_{i=1}^n a_{i}x_i^*}: (a_{i})_{i=1}^n\in B_{\ell_p^n}}(x)\\
        &= \sup\cbr{\bigvee_{j=1}^m \intoo[3]{\sum_{i=1}^n a_{ij}x_i^*}: (a_{ij})_{i=1}^n\in B_{\ell_p^n}, j=1,\ldots,m, m\in \N}(x)\\
        & = \sup\cbr{\intoo[3]{\bigvee_{j=1}^m \intoo[3]{\sum_{i=1}^n a_{ij}x_i^*}}(x): (a_{ij})_{i=1}^n\in B_{\ell_p^n}}\\
        & = \sup\cbr{\sum_{j=1}^m \intoo[3]{\sum_{i=1}^n a_{ij}x_i^*}(z_j): z_j\geq 0,\sum_{j=1}^m z_j=x, (a_{ij})_{i=1}^n\in B_{\ell_p^n}}\\
        & = \sup\cbr{\sum_{i=1}^n x_i^*\intoo{\sum_{j=1}^m a_{ij}z_j}: z_j\geq 0,\sum_{j=1}^m z_j=x, (a_{ij})_{i=1}^n\in B_{\ell_p^n}}.
    \end{align*}
    Note that the vectors $\sum_{j=1}^m a_{ij}z_j$, $1\leq i\leq n$ with $z_j\geq 0,\sum_{j=1}^m z_j=x, (a_{ij})_{i=1}^n\in B_{\ell_p^n}$ satisfy
    \[\intoo{\sum_{i=1}^n \left|\sum_{j=1}^m a_{ij}z_j\right|^{p}}^\frac{1}{p}\leq \sum_{j=1}^m \intoo{\sum_{i=1}^n | a_{ij}z_j|^{p}}^\frac{1}{p} =\sum_{j=1}^m z_j \intoo{\sum_{i=1}^n | a_{ij}|^{p}}^\frac{1}{p} \leq \sum_{j=1}^m z_j = x, \]
    where the first inequality follows from the triangular inequality of the norm of $\ell_p^n$.  Therefore, we conclude that 
    \[\intoo[3]{\sum_{i=1}^n |x_i^*|^{p^*}}^\frac{1}{p^*}(x) \leq  \sup\cbr{\sum_{i=1}^n x_i^*(x_i): x_1,\ldots,x_n\in X, \intoo[3]{\sum_{i=1}^n |x_i|^{p}}^\frac{1}{p}\leq x},\]
    so the statement holds.
\end{proof}

We will eventually need to use geometric means as well. The following lemma establishes some useful properties of these expressions.

\begin{lem}\label{lem: geometric mean}
    Let $X$ be a Banach lattice.
    \begin{enumerate}
        \item If $0<\theta<1$ and $x,y\in X$, then
        \[\||x|^\theta|y|^{1-\theta}\|\leq \|x\|^\theta \|y\|^{1-\theta}. \]
        \item Let $1\leq p<r<q\leq \infty$, $0<\theta<1$ given by $\frac{1}{r}=\frac{\theta}{p}+\frac{1-\theta}{q}$, $x_1,\ldots,x_n\in X$ and $(a_i)_{i=1}^n$ positive scalars. Then,
        \[\norm[3]{\intoo[3]{\sum_{i=1}^n a_i|x_i|^{r}}^\frac{1}{r}}\leq \norm[3]{\intoo[3]{\sum_{i=1}^n a_i|x_i|^{p}}^\frac{1}{p}}^\theta \norm[3]{\intoo[3]{\sum_{i=1}^n a_i|x_i|^{q}}^\frac{1}{q}}^{1-\theta}.\]
    \end{enumerate}
\end{lem}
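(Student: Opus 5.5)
For part (1), I will use the inequality of arithmetic and geometric means inside the functional calculus. For real numbers $s,t\ge 0$ and $\lambda>0$ we have $s^\theta t^{1-\theta}\le \theta \lambda^{-(1-\theta)} s + (1-\theta)\lambda^{\theta} t$. This is just the weighted AM--GM inequality applied to $\lambda^{-(1-\theta)}s$ and $\lambda^{\theta}t$. Both sides are continuous positively homogeneous functions of $(s,t)$, so, exactly as in the proofs of \Cref{prop: p-sum as supremum} and \Cref{prop: monotonicity of p-sums}, the inequality transfers to $X$ by representing the ideal generated by $|x|\vee|y|$ as a $C(K)$-space (Kakutani's Representation Theorem together with the uniqueness in \Cref{thm: functional calculus}). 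This gives the lattice inequality
\[|x|^\theta|y|^{1-\theta}\le \theta\lambda^{-(1-\theta)}|x|+(1-\theta)\lambda^\theta|y|.\]
By monotonicity of the norm and the triangle inequality, $\||x|^\theta|y|^{1-\theta}\|\le \theta\lambda^{-(1-\theta)}\|x\|+(1-\theta)\lambda^\theta\|y\|$. If $x=0$ or $y=0$, both sides of (1) vanish. Otherwise I choose $\lambda=\|x\|/\|y\|$, and the right-hand side becomes $\|x\|^\theta\|y\|^{1-\theta}$.

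For part (2), I reduce to part (1). The key is a pointwise scalar inequality: for $t_i\ge0$ and $a_i>0$,
\[\Bigl(\sum a_i t_i^r\Bigr)^{1/r}\le \Bigl(\sum a_i t_i^p\Bigr)^{\theta/p}\Bigl(\sum a_i t_i^q\Bigr)^{(1-\theta)/q}.\]
This is the classical Hölder/log-convexity interpolation of $\ell_p$ norms with weights. To prove it, write $t_i^r=t_i^{r\theta}t_i^{r(1-\theta)}$ and apply Hölder's inequality with exponents $p/(r\theta)$ and $q/(r(1-\theta))$, which are conjugate because $\frac{r\theta}{p}+\frac{r(1-\theta)}{q}=1$. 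When $q=\infty$, the factor $\bigl(\sum a_i t_i^q\bigr)^{1/q}$ is read as $\max_i t_i$. In that case the weights do not appear in the $q$-factor, and the inequality follows from $t_i^r\le t_i^p\max_j t_j^{r-p}$.

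Both sides of this scalar inequality are continuous positively homogeneous functions of $(t_1,\ldots,t_n)$. I evaluate them on $|x_1|,\ldots,|x_n|$ and, as in part (1), transfer the inequality to $X$ via the $C(K)$ representation. This yields
\[\Bigl(\sum a_i|x_i|^r\Bigr)^{1/r}\le u^\theta v^{1-\theta},\]
where $u=(\sum a_i|x_i|^p)^{1/p}$ and $v=(\sum a_i|x_i|^q)^{1/q}$. Monotonicity of the norm followed by part (1) applied to $u$ and $v$ then finishes the proof.

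The only delicate point is justifying that the composite expression $u^\theta v^{1-\theta}$ computed in $X$ agrees with the pointwise expression in $C(K)$. This follows because $u$, $v$ and $u^\theta v^{1-\theta}$ are all obtained from $|x_1|,\ldots,|x_n|$ by positively homogeneous functions, and the lattice homomorphism $V_{\overline{x}}$ preserves composition. So the uniqueness part of \Cref{thm: functional calculus} identifies the two. The remaining steps are routine.
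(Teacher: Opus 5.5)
Your proof is correct and follows the paper's argument: part (1) transfers the weighted AM--GM (Young) inequality with a free scaling parameter to $X$ through the functional calculus and then optimizes the parameter, and part (2) transfers the weighted Hölder interpolation inequality (exponent $p/(\theta r)$) to $X$ and then applies part (1) to $u$ and $v$. Your proof also treats explicitly three points the paper leaves implicit: the degenerate case $x=0$ or $y=0$ (where the paper's choice of $c$ is undefined), the case $q=\infty$, and the composition property of the functional calculus needed to identify $u^\theta v^{1-\theta}$.
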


\begin{proof}
    $(1)$ From Young's inequality in $\R^2$
    \[|s|^\theta|t|^{1-\theta}\leq \theta |s|+(1-\theta)|t|\]
    it follows that for every $c>0$ we have
    \[ \||x|^\theta|y|^{1-\theta}\|=\||c^\frac{1}{\theta}x|^\theta|c^{-\frac{1}{1-\theta}}y|^{1-\theta}\|\leq \theta c^\frac{1}{\theta}\|x\|+(1-\theta) c^{-\frac{1}{1-\theta}}\|y\|. \]
    In particular, evaluating at $c=\intoo[1]{\frac{\|y\|}{\|x\|}}^{\theta(1-\theta)}$ we obtain the desired inequality.\\

    $(2)$ Hölder's inequality with exponent $s=\frac{p}{\theta r}$ yields
    \[\intoo[3]{\sum_{i=1}^n a_i|t_i|^{r}}^\frac{1}{r}=\intoo[3]{\sum_{i=1}^n a_i^{\frac{1}{s}+\frac{1}{s^*}}|t_i|^{r\theta +r(1-\theta)}}^\frac{1}{r}\leq \intoo[3]{\sum_{i=1}^n a_i|t_i|^{p}}^\frac{\theta}{p} \intoo[3]{\sum_{i=1}^n a_i|t_i|^{q}}^\frac{1-\theta}{q}\]
    for every $t_1,\ldots,t_n\in \R$. Therefore, the same inequality holds in $X$, writing $x_1,\ldots,x_n$ instead. Applying $(1)$ we obtain $(2)$.
\end{proof}

\section{Convexity and concavity}\label{sec: main definitions}
\subsection{Main definitions}
Now that we have reviewed the basic notions of Banach lattice theory and we have cleared the meaning of $p$-sums in arbitrary Banach lattices, we can introduce the main concepts of this memoir: $(p,q)$-convexity and $(p,q)$-concavity. 

\begin{defn}\label{defn: convexity and concavity}
    Let $X$ be a Banach lattice, $E$ a Banach space and $1\leq p,q\leq \infty$.
    \begin{enumerate}
        \item An operator $T:E\rightarrow X$ is said \emph{$(p,q)$-convex} if there exists a constant $K>0$ such that
        \begin{equation}\label{eq: (p,q)-convex operator}
            \norm[3]{\intoo[3]{\sum_{i=1}^n |Tu_i|^q}^\frac{1}{q}}\leq K \intoo[3]{\sum_{i=1}^n \|u_i\|^p}^\frac{1}{p}
        \end{equation}
        for every $u_1,\ldots,u_n\in E$ and $n\in \N$. The best constant $K$ satisfying \eqref{eq: (p,q)-convex operator} is called the \emph{$(p,q)$-convexity constant of $T$} and is denoted $K^{(p,q)}(T)$.
        \item An operator $T:X\rightarrow E$ is said \emph{$(p,q)$-concave} if there exists a constant $K>0$ such that
        \begin{equation}\label{eq: (p,q)-concave operator}
            \intoo[3]{\sum_{i=1}^n \|Tx_i\|^p}^\frac{1}{p} \leq K \norm[3]{\intoo[3]{\sum_{i=1}^n |x_i|^q}^\frac{1}{q}}
        \end{equation}
          for every $x_1,\ldots,x_n\in X$ and $n\in \N$. The best constant $K$ satisfying \eqref{eq: (p,q)-concave operator} is called the \emph{$(p,q)$-concavity constant of $T$} and is denoted $K_{(p,q)}(T)$.
        \item A Banach lattice $X$ is said \emph{$(p,q)$-convex} (respectively, \emph{$(p,q)$-concave}) if the identity operator $i_X:X\rightarrow X$ is $(p,q)$-convex (respectively, $(p,q)$-concave), and $K^{(p,q)}(X):=K^{(p,q)}(i_X)$ is called the \emph{$(p,q)$-convexity constant of $X$} (respectively, $K_{(p,q)}(X):=K_{(p,q)}(i_X)$ is called the \emph{$(p,q)$-concavity constant of $X$}).
        \item If $p=q$, we will simply say that $T$ or $X$ is $p$-convex (respectively, $p$-concave) and write $K^{(p)}(T)$ or $K^{(p)}(X)$ (respectively, $K_{(p)}(T)$ or $K_{(p)}(X)$) instead.
    \end{enumerate}
\end{defn}

A similar property can be introduced for Banach lattices when we require the inequalities above to be witnessed only on disjoint vectors.

\begin{defn}\label{defn: upe and lpe}
    Let $X$ be a Banach lattice and $1\leq p\leq \infty$.
    \begin{enumerate}
        \item $X$ is said to satisfy an \emph{upper $p$-estimate} if there exists a constant $K\geq 1$ such that
        \begin{equation}\label{eq: upe}
            \norm[3]{\sum_{i=1}^n x_i}\leq K \intoo[3]{\sum_{i=1}^n \|x_i\|^p}^\frac{1}{p}
        \end{equation}
        for every (positive) pairwise disjoint $x_1,\ldots,x_n\in X$ and $n\in \N$. The best constant $K$ satisfying \eqref{eq: upe} is called the \emph{upper $p$-estimates constant of $X$} and is denoted $K^{(\uparrow p)}(X)$.
        \item $X$ is said to satisfy a \emph{lower $p$-estimate} if there exists a constant $K\geq 1$ such that
        \begin{equation}\label{eq: lpe}
            \intoo[3]{\sum_{i=1}^n \|x_i\|^p}^\frac{1}{p} \leq K \norm[3]{\sum_{i=1}^n x_i}
        \end{equation}
        for every (positive) pairwise disjoint $x_1,\ldots,x_n\in X$ and $n\in \N$. The best constant $K$ satisfying \eqref{eq: lpe} is called the \emph{lower $p$-estimates constant of $X$} and is denoted $K_{(\downarrow p)}(X)$.
    \end{enumerate}
\end{defn}

Note that, by \Cref{lem: disjoint p sums}, 
\[\sum_{i=1}^n |x_i| = \intoo[3]{\sum_{i=1}^n |x_i|^p}^\frac{1}{p} = \bigvee_{i=1}^n |x_i|, \]
whenever $x_1,\ldots, x_n$ are pairwise disjoint. Therefore, satisfying \eqref{eq: upe} is actually the same as satisfying the $p$-convexity condition only on disjoint sequences of vectors. Likewise, lower $p$-estimates correspond to witnessing the $p$-convexity condition only on disjoint vectors.

\begin{rem}\label{rem: notation}
    The terminology introduced in the previous definitions is consistent with that of standard textbooks such as \cite[Chapter 16]{DJT}, and \cite[Sections 1.d and 1.f]{LT2}. It is worth mentioning that the notation for this topic in the literature has changed several times before a standard terminology was finally adopted. The original notation that Krivine \cite{KrivineExp2223} used to introduce $p$-convex and $p$-concave operators was type $\geq p$ and type $\leq p$, respectively. The definition was later extended by Maurey \cite{MaureyExp2425} to type $\geq (q,p)$ and type $\leq (q,p)$ operators, which correspond to the modern terminology of $(p,q)$-convex and $(p,q)$-concave operators, respectively. Note, in particular, that the order of the indices is reversed in both cases. This also happens in Schwarz's book \cite{Schwarz}, for $(p,q)$-convexity only. Additionally, in Meyer-Nieberg's book \cite[Section 2.8]{MN}, a lattice norm satisfying an upper (respectively, lower) $p$-estimate with constant one is called $p$-subadditive (respectively, $p$-superadditive). In this same reference, $(p,\infty)$-convex and $(p,1)$-concave operators are called $p$-majorizing and cone $p$-summing operators, while in \cite{Schwarz} they are called $p$-subadditive and $p$-superadditive, respectively.
\end{rem}

\begin{rem}\label{rem: facts about convexity}
    Let us make some comments about Definitions \ref{defn: convexity and concavity} and \ref{defn: upe and lpe}:
    \begin{enumerate}
        \item Every $(p,q)$-convex or $(p,q)$-concave operator must be bounded, and its operator norm is bounded by the corresponding $(p,q)$-convexity or $(p,q)$-concavity constant. 
        \item Every bounded operator $T:E\rightarrow X$ is $1$-convex with $K^{(1)}(T)=\|T\|$, as every Banach lattice satisfies the triangular inequality, and every operator $T:X\rightarrow E$ is $\infty$-concave with $K_{(\infty)}(T)= \|T\|$, since the norm in $X$ is monotone with respect to the order. The same argument yields that every Banach lattice satisfies an upper $1$-estimate and a lower $\infty$-estimate. Moreover, in \Cref{sec: abstract factorizations} we will see that upper $\infty$-estimates and $\infty$-convexity, as well as lower $1$-estimates and $1$-concavity, are actually equivalent (and they are related to $AM$ and $AL$-spaces, respectively), so \Cref{defn: upe and lpe} is only relevant for $1<p<\infty$.
        \item It is clear that every $p$-convex ($p$-concave) Banach lattice satisfies an upper (lower) $p$-estimate with the same constant. It is also true (see \Cref{thm: upe equivalent to mixed convexity}) that for Banach lattices the notions of upper $p$-estimates and $(p,\infty)$-convexity coincide, as well as lower $p$-estimates and $(p,1)$-concavity. \item The inequality $\intoo{\sum_{i=1}^n |t_i|^s}^\frac{1}{s}\leq \intoo{\sum_{i=1}^n |t_i|^r}^\frac{1}{r}$ whenever $1\leq r\leq s\leq \infty$, yields that any $(p,q)$-convex operator is $(r,s)$-convex for every $r\leq p$ and $s\geq q$, any $(p,q)$-concave operator is $(r,s)$-concave for every $r\geq p$ and $s\leq q$, any Banach lattice with an upper $p$-estimate satisfies an upper $q$-estimate for every $q< p$ and any Banach lattice with a lower $p$-estimate satisfies a lower $q$-estimate for every $q> p$, and the corresponding constants do not increase. 
        \item If $q<p$ and $T:E\rightarrow X$ is $(p,q)$-convex, then necessarily $T=0$. Indeed, for any $u\in E$, evaluating \eqref{eq: (p,q)-convex operator} with $n$ copies of the vector $u$, it follows that $n^\frac{1}{q}\|Tu\|\leq n^\frac{1}{p} K^{(p,q)}(T) \|u\|$ for every $n\in \N$, so $Tx=0$. Similarly, if $p<q$ and $T:X\rightarrow E$ is $(p,q)$-concave, then $T=0$. Actually, we will show in \Cref{thm: relevant cases of concavity} and \Cref{cor: relevant cases of convexity} that the only relevant cases for $(p,q)$-convexity are $q=p$ and $q=\infty$, and $q=p$ and $q=1$ for $(p,q)$-concavity.
        \item If $T:E\rightarrow X$ is $(p,q)$-convex and $S:F\rightarrow E$ is bounded, then $TS$ is also $(p,q)$-convex with constant $\|S\|K^{(p,q)}(T)$. Similarly, if $T:X\rightarrow E$ is $(p,q)$-concave and $S:E\rightarrow F$ is bounded, then $ST$ is $(p,q)$-concave with constant $\|S\|K_{(p,q)}(T)$.
        \item Using \Cref{lem: positive operators and functional calculus}, it also follows that if $T:E\rightarrow X$ is $(p,q)$-convex and $R:X\rightarrow Y$ is a positive operator between Banach lattices, then $RT$ is $(p,q)$-convex with constant $\|R\|K^{(p,q)}(T)$. Analogously, if $T:X\rightarrow E$ is $(p,q)$-concave and $R:Y\rightarrow X$ is positive, then $TR$ is $(p,q)$-concave with constant $\|R\|K_{(p,q)}(T)$.
        \item Observe that $(p,q)$-convexity, $(p,q)$-concavity, upper $p$-estimates and lower $p$-estimates are lattice isomorphic properties, and they are inherited by closed sublattices and lattice quotients. Moreover, an arbitrary $\ell_\infty$ sum of $(p,q)$-convex Banach lattices (respectively, Banach lattices with upper $p$-estimates) with uniformly bounded constant is again $(p,q)$-convex (satisfies an upper $p$-estimate). Similarly, an arbitrary $\ell_1$ sum of $(p,q)$-concave Banach lattices (respectively, Banach lattices with lower $p$-estimates) with uniformly bounded constant is again $(p,q)$-concave (satisfies a lower $p$-estimate). As we will see in Theorems \ref{thm: representation infty sums} and \ref{thm: representation 1 sums}, Banach lattices with convexity properties can be represented as sublattices of $\ell_\infty$ sums of certain model spaces, while concave Banach lattices are in correspondence with quotients of $\ell_1$ sums. 
    \end{enumerate}
\end{rem}

\subsection{\texorpdfstring{The spaces $L_p$, $L_{p,\infty}$, $L_{p,1}$}{}, and other examples}\label{sec: weak Lp}

Precisely, $L_p(\mu)$ is the prototypical example of $p$-convexity and $p$-concavity, as it satisfies both properties with constant one. From \Cref{thm: p-conv implies q-conv}, we will deduce that $L_p(\mu)$ is also $r$-convex for every $1\leq r<p$ and $s$-concave for every $p<s\leq \infty$. In fact, this is optimal: if $L_p(\mu)$ is infinite dimensional, then it is $r$-convex only for $1\leq r\leq p$, and $s$-concave for $p\leq s\leq \infty$. This follows from the fact that if an infinite dimensional Banach lattice $X$ satisfies an upper $r$-estimate and a lower $s$-estimate, then necessarily $r\leq s$. Indeed, for any $n\in\N$ let us fix $n$ normalized disjoint vectors $x_1,\ldots,x_n\in X_+$. Then, by \Cref{lem: disjoint p sums},
\begin{align*}
    n^\frac{1}{s}& =\intoo[3]{\sum_{i=1}^n \|x_i\|^s}^\frac{1}{s} \leq K_{(\downarrow s)}(X) \norm[3]{\sum_{i=1}^n x_i}\\
    & \leq  K_{(\downarrow s)}(X) K^{(\uparrow r)}(X)\intoo[3]{\sum_{i=1}^n \|x_i\|^r}^\frac{1}{r}= K_{(\downarrow s)}(X) K^{(\uparrow r)}(X) n^\frac{1}{r}.
\end{align*}
If we assume that $r>s$, then making $n$ arbitrarily big we get a contradiction.\\

The family of Lorentz spaces also provides interesting examples. In fact, the Lorentz spaces $L_{p,\infty}(\mu)$, sometimes called weak $L_p$ spaces, and $L_{p,1}(\mu)$, are the canonical spaces for upper and lower $p$-estimates, respectively. They satisfy upper $p$-estimates (respectively, lower $p$-estimates) but are not $p$-convex (respectively, $p$-concave). \\

Let us recall the definition of the Lorentz spaces. Let $(\Omega,\Sigma,\mu)$ be a measure space and $f$ a measurable function on $\Omega$. We define its decreasing rearrangement $f^*:[0,\infty)\rightarrow [0,\infty]$ by
\[f^*(t):=\inf\{\lambda>0 : \mu\{|f|>\lambda\}\leq t\}.\]
Given $1<p< \infty$ and $1\leq q\leq \infty$, the Lorentz space $L_{p,q}(\mu)$ is the set of all measurable functions $f:\Omega\rightarrow \R$ such that the quasinorm $\vvvert f\vvvert_{L_{p,q}}$ is finite, where
\[\vvvert f\vvvert_{L_{p,q}}:=\intoo[3]{\int_0^\infty (t^\frac{1}{p}f^*(t))^q\frac{dt}{t}}^\frac{1}{q}\]
for $1\leq q<\infty$ and 
\[ \vvvert f\vvvert_{L_{p,\infty}}: = \sup_{t>0} t^\frac{1}{p} f^*(t)=\sup_{\lambda>0} \lambda\mu(\{|f|>\lambda\})^{\frac{1}{p}} \]
when $q=\infty$. Recall that when $q<\infty$ the dual of $L_{p,q}$ is $L_{p^*,q^*}$. In particular, $L_{p,\infty}$ is the dual of $L_{p^*,1}$. In \cite{Creekmore}, the optimal convexity and concavity of the Lorentz spaces $L_{p,q}$ was computed:

\begin{thm}
    Let $(\Omega,\Sigma ,\mu)$ be a measure space with $\mu$ a non-atomic $\sigma$-finite measure.
    \begin{enumerate}
        \item Let $1\leq q<p$. Then $L_{p,q}$ is not $p$-concave, but satisfies a lower $p$-estimate with constant one. Furthermore, it is $q$-convex with constant one.
        \item Let $p<q\leq \infty$. Then $L_{p,q}$ is not $p$-convex, but satisfies an upper $p$-estimate with constant one. Furthermore, it is $q$-concave with constant one.
    \end{enumerate}
\end{thm}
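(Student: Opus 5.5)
The plan is to handle the three assertions of each part separately: upper and lower $p$-estimates via the distribution function, $q$-convexity and $q$-concavity via a rewriting as an $L_{r,1}$-quasinorm, and the negative statements via one explicit family of rotated power functions. All estimates work directly with the quasinorm $\normiii{\cdot}_{L_{p,q}}$. Since $\mu$ is non-atomic and $\sigma$-finite, a set of positive finite measure carries a measure-preserving copy of $[0,1]$. Hence, for the negative statements it suffices to construct counterexamples in $L_{p,q}[0,1]$, since this embeds as a sublattice with the same quasinorm.

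\textbf{Estimates on disjoint vectors.} Let $d_f(\lambda)=\mu\{|f|>\lambda\}$. The first step is to verify the layer-cake identity
\[\normiii{f}_{L_{p,q}}^q = p\int_0^\infty \bigl(\lambda^p d_f(\lambda)\bigr)^{q/p}\frac{d\lambda}{\lambda}.\]
Setting $h_f(\lambda)=\lambda^p d_f(\lambda)$, this reads $\normiii{f}^p_{L_{p,q}}=\|h_f\|_{L_{q/p}(p\,d\lambda/\lambda)}$. For pairwise disjoint $f_1,\dots,f_n$ we have $d_{\sum f_i}=\sum d_{f_i}$, so $h_{\sum f_i}=\sum_i h_{f_i}$.
\begin{itemize}
\item In case (2), $q/p\ge 1$, and Minkowski's inequality gives $\normiii{\sum f_i}^p\le \sum\normiii{f_i}^p$. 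This is the upper $p$-estimate with constant one.
\item In case (1), $q/p<1$, and the reverse Minkowski inequality for nonnegative functions in $L_s$, $s<1$, gives the lower $p$-estimate with constant one.
\end{itemize}

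\textbf{$q$-convexity and $q$-concavity.} Observe that $(f^q)^*=(f^*)^q$. Hence, for $f\ge0$,
\[\normiii{f}^q_{L_{p,q}}=\Phi(f^q), \qquad \Phi(g)=\int_0^\infty g^*(t)\,w(t)\,dt,\quad w(t)=t^{q/p-1}.\]
The inequality to prove is then $\Phi(\sum_i g_i)\le \sum_i\Phi(g_i)$ in case (1) and the reverse inequality in case (2), applied to $g_i=|f_i|^q$.
\begin{itemize}
\item In case (1), $w$ is decreasing. By the Hardy--Littlewood rearrangement inequality on a non-atomic space, $\Phi(g)=\sup\int g\,(w\circ\sigma)\,d\mu$ over measure-preserving maps $\sigma$. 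A supremum of linear functionals is subadditive.
\item In case (2), $w$ is increasing. The same rearrangement argument gives $\Phi(g)=\inf\int g\,(w\circ\sigma)\,d\mu$, which is superadditive.
\end{itemize}
I expect the main technical point to be justifying these sup/inf representations on a general $\sigma$-finite non-atomic space. I would prove them first for simple functions, using measure-preserving rearrangements of finitely many level sets, and then pass to the limit by monotone convergence. This yields $q$-convexity in case (1) and $q$-concavity in case (2), both with constant one.

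\textbf{Failure of $p$-concavity and $p$-convexity.} Work on $[0,1)$ viewed as the circle, with addition $\oplus$ taken modulo $1$. Let $g=x^{-1/p}\one_{[2^{-n},1)}$ and $g_k(x)=g(x\oplus k2^{-n})$ for $k=0,\dots,2^n-1$. Each $g_k$ is a rearrangement of $g$, and a direct computation gives $\normiii{g_k}_{L_{p,q}}\asymp n^{1/q}$, read as $\asymp 1$ when $q=\infty$. On the other hand, $\sum_k g_k^p(x)$ is a Riemann sum of $t^{-1}\one_{[2^{-n},1)}$ at mesh $2^{-n}$, scaled by $2^n$. Comparing a monotone Riemann sum with the integral, the error is at most a constant times $2^n$. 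Hence $\sum_k g_k^p\asymp 2^n n$ uniformly on $[0,1)$, and $\normiii{(\sum_k g_k^p)^{1/p}}\asymp 2^{n/p}n^{1/p}$. Meanwhile $(\sum_k\normiii{g_k}^p)^{1/p}\asymp 2^{n/p}n^{1/q}$.
\begin{itemize}
\item In case (1), $q<p$, the ratio of the sum of norms to the norm of the $p$-sum is $\asymp n^{1/q-1/p}\to\infty$, so $L_{p,q}$ is not $p$-concave.
\item In case (2), $p<q\le\infty$, the ratio of the norm of the $p$-sum to the sum of norms is $\asymp n^{1/p-1/q}\to\infty$, so $L_{p,q}$ is not $p$-convex.
\end{itemize}
Since $\normiii{\cdot}$ is equivalent to a lattice norm (for $1<p<\infty$), these ratio estimates transfer to that norm up to constants.
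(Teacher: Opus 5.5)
Your argument is correct. The paper does not prove this theorem itself; it is quoted from \cite{Creekmore}. So there is no in-paper proof to compare against, and your proposal is a self-contained route.

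The nearest arguments the paper does give concern only $q=\infty$. There, failure of $p$-convexity is obtained by embedding $\ell_{p,\infty}^n$ through disjoint characteristic functions and taking cyclic permutations of the coefficients $\alpha_k=k^{1/p^*}-(k-1)^{1/p^*}\approx k^{-1/p}$ (\Cref{prop: uniform copies of fd weak Lp}). The upper $p$-estimate is proved for the renormings $\|\cdot\|_{L_{p,\infty}^{[r]}}$ via H\"older's inequality (\Cref{prop: wlp upe and r convex}).

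Your rotated copies of $x^{-1/p}\one_{[2^{-n},1)}$ on the circle are the continuous analogue of that cyclic-permutation trick. They handle every $q\neq p$ at once, working directly with the quasinorm and without identifying finite-dimensional $\ell_{p,q}^n$-copies.

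On the positive side, your two identities amount to $\vvvert f\vvvert_{L_{p,q}}^p=\vvvert |f|^p\vvvert_{L_{1,q/p}}$ and $\vvvert f\vvvert_{L_{p,q}}^q=\vvvert |f|^q\vvvert_{L_{p/q,1}}$. In other words, at the level of quasinorms, $L_{p,q}$ is the $p$-convexification of $L_{1,q/p}$ and the $q$-convexification of $L_{p/q,1}$. The constant-one estimates are therefore the mechanism of \Cref{prop: properties of p-convexification}, transported to quasi-Banach lattices where \Cref{sec: convexification and concavification} does not apply. That is why you must run (reverse) Minkowski and sub/superadditivity of $\Phi$ by hand. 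In part (2) the constant-one claims can only refer to the quasinorm, since $\vvvert\cdot\vvvert_{L_{p,q}}$ is not a norm for $q>p$; that is exactly what you prove.

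Two small points. First, for $q=\infty$ in part (2), read your layer-cake identity as $\vvvert f\vvvert_{L_{p,\infty}}^p=\sup_{\lambda>0}h_f(\lambda)$, which makes Minkowski trivial. Also, $\infty$-concavity is just monotonicity of the quasinorm, so $\Phi$ (undefined there) is not needed.

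Second, the step you flag as the main technical point can be bypassed. For simple $g_1,g_2\geq 0$ with supports of finite measure, $\int_0^t (g_1+g_2)^*\le\int_0^t (g_1^*+g_2^*)$ for all $t$, with equality as $t\to\infty$. Integrating against the monotone weight $w$ gives what you need: apply Hardy's lemma to the heads when $w$ decreases, and to the tails when $w$ increases. This yields subadditivity of $\Phi$ in part (1) and superadditivity in part (2) without any measure-preserving maps. Your monotone-convergence passage to general $f_i$ then finishes as you describe, and non-atomicity is needed only for the counterexamples.
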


Let us focus our attention on weak $L_p$ spaces. They arise naturally in harmonic analysis as an enlarged target space for certain operators that are not bounded into $L_p$. It should be noted that the quasinorm $\vvvert \cdot\vvvert_{L_{p,\infty}}$ is not a norm in general. However, since $1<p<\infty$, for each $1\leq r<p$, we can equip $L_{p,\infty}(\mu)$ with the equivalent norm \cite[Exercise 1.1.12]{GrafakosCFA}
\[    \|f\|_{L_{p,\infty}^{[r]}}:=\sup_{0<\mu(A)<\infty} \mu(A)^{\frac{1}{p}-\frac{1}{r}} \left(\int_A |f|^r\,d\mu\right)^{\frac{1}{r}}. \]

\begin{lem}\label{lem: renorming weak Lp}
    Let $f\in L_{p,\infty}(\mu)$ and $1\leq r<p$. Then
    \[\vvvert f\vvvert_{L_{p,\infty}} \leq \|f\|_{L_{p,\infty}^{[r]}} \leq \left(\frac{p}{p-r}\right)^{\frac{1}{r}}  \vvvert f\vvvert_{L_{p,\infty}}. \]
\end{lem}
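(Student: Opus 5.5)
The plan is to prove the two inequalities separately, in each case reducing to estimates on sets of finite measure. We use the distribution-function description $\vvvert f\vvvert_{L_{p,\infty}}=\sup_{\lambda>0}\lambda\,\mu(\{|f|>\lambda\})^{1/p}$.

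\emph{Lower bound.} Fix $\lambda>0$ and put $A_\lambda=\{|f|>\lambda\}$. Since $f\in L_{p,\infty}$, the set $A_\lambda$ has finite measure. If $\mu(A_\lambda)=0$ there is nothing to prove. Otherwise $A_\lambda$ is admissible in the supremum defining $\|f\|_{L^{[r]}_{p,\infty}}$, and $|f|>\lambda$ on $A_\lambda$ gives
\[\mu(A_\lambda)^{\frac1p-\frac1r}\Big(\int_{A_\lambda}|f|^r\,d\mu\Big)^{\frac1r}\ \ge\ \mu(A_\lambda)^{\frac1p-\frac1r}\,\lambda\,\mu(A_\lambda)^{\frac1r}=\lambda\,\mu(A_\lambda)^{\frac1p}.\]
Taking the supremum over $\lambda$ yields $\vvvert f\vvvert\le\|f\|^{[r]}$.

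\emph{Upper bound.} Normalize $\vvvert f\vvvert_{L_{p,\infty}}=1$, so that $\mu(\{|f|>\lambda\})\le\lambda^{-p}$ for all $\lambda>0$. Fix $A$ with $0<\mu(A)=a<\infty$. By the layer-cake formula,
\[\int_A|f|^r\,d\mu=\int_0^\infty r\lambda^{r-1}\mu(A\cap\{|f|>\lambda\})\,d\lambda\le\int_0^\infty r\lambda^{r-1}\min(a,\lambda^{-p})\,d\lambda.\]
Split the integral at $\lambda_0=a^{-1/p}$, the point where the two terms in the minimum agree. Below $\lambda_0$ the contribution is $a\lambda_0^r=a^{1-r/p}$. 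Above $\lambda_0$, since $r<p$ the tail is integrable, and the contribution is $\frac{r}{p-r}\lambda_0^{r-p}=\frac{r}{p-r}a^{1-r/p}$. The total is therefore $\frac{p}{p-r}a^{1-r/p}$.

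Raising to the power $1/r$ and multiplying by $a^{\frac1p-\frac1r}$ cancels the powers of $a$. Hence $\mu(A)^{\frac1p-\frac1r}\big(\int_A|f|^r\,d\mu\big)^{1/r}\le\big(\frac{p}{p-r}\big)^{1/r}$. Taking the supremum over $A$ and then removing the normalization by homogeneity gives the right-hand inequality.

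The only delicate point is choosing the split of the layer-cake integral at $\lambda_0=\mu(A)^{-1/p}$. The bound $\mu(A\cap\{|f|>\lambda\})\le\min(a,\lambda^{-p})$ has to be used in full, and it is exactly the balance between its two terms at $\lambda_0$ that makes the $a$-dependence cancel and produces the constant $p/(p-r)$. The assumption $r<p$ is used precisely to make the tail integral converge. The remaining steps are routine.
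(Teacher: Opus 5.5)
Your proof is correct and follows the paper's argument. The lower bound uses the admissible set $\{|f|>\lambda\}$, and the upper bound applies the layer-cake formula with $\mu(A\cap\{|f|>\lambda\})\le\min\{\mu(A),\lambda^{-p}\vvvert f\vvvert_{L_{p,\infty}}^p\}$, splitting at $\mu(A)^{-1/p}\vvvert f\vvvert_{L_{p,\infty}}$. Your version is slightly cleaner, since the paper's extra truncation at a level $\alpha$ is superfluous; before normalizing, just note the trivial case $\vvvert f\vvvert_{L_{p,\infty}}=0$, where $f=0$ a.e.
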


\begin{proof}
    The first inequality follows from observing that for every $t>0$ we have
    \begin{align*}
        t\mu(\{|f|>t\})^{\frac{1}{p}} & =\mu(\{|f|>t\})^{\frac{1}{p}-\frac{1}{r}}\intoo[3]{\int_{\{|f|>t\}}t^r \,d\mu}^\frac{1}{r}\\
        & \leq \mu(\{|f|>t\})^{\frac{1}{p}-\frac{1}{r}}\intoo[3]{\int_{\{|f|>t\}}|f|^r \,d\mu}^\frac{1}{r} \leq \|f\|_{L_{p,\infty}^{[r]}}.
    \end{align*}
    In order to prove the second inequality, we start by proving that for any measurable set $A$ with $0<\mu(A)<\infty$ we have
    \[\int_A |f|^r \,d\mu \leq \frac{p}{p-r}\mu(A)^{1-\frac{r}{p}}\vvvert f\vvvert_{L_{p,\infty}}^r.\]
    To do so, observe first that, by the definition of the quasinorm $\vvvert \cdot \vvvert_{L_{p,\infty}}$, it follows that
    \[\mu (A\cap\{|f|>\alpha\})\leq \min \{\mu(A), \alpha^{-p}\vvvert f\vvvert_{L_{p,\infty}}^p\}\]
    for every $\alpha>0$. Now, we decompose the integral over $A$ as
    \[\int_A |f|^r \,d\mu=\int_{A\cap\{|f|\leq\alpha\}} |f|^r \,d\mu+\int_{A\cap\{|f|>\alpha\}} |f|^r \,d\mu.\]
    The first term is easy to bound:
    \[\int_{A\cap\{|f|\leq\alpha\}} |f|^r \,d\mu\leq \alpha^r \mu(A\cap\{|f|\leq\alpha\})\leq \alpha^r \mu(A).\]
    In order to estimate the second term, we use a Fubini-type argument:
    \begin{align*}
        \int_{A\cap\{|f|>\alpha\}} |f|^r \,d\mu& =\int_{\{|f|>\alpha\}} \chi_A(\omega) \int_0^{|f(\omega)|}rt^{r-1} \,dt \,d\mu(\omega)\\
        & =\int_0^\infty rt^{r-1} \int_{\{|f|>\max\{t,\alpha\}\}} \chi_A(\omega) \,d\mu(\omega) \,dt\\
        & =\int_0^\infty rt^{r-1} \mu(A\cap\{|f|>\max\{t,\alpha\}\})  \,dt\\
        & \leq \int_0^\infty rt^{r-1} \min \{\mu(A), \max\{t,\alpha\}^{-p}\vvvert f\vvvert_{L_{p,\infty}}^p\} \,dt.
    \end{align*}
    If $\alpha<\mu(A)^{-\frac{1}{p}}\vvvert f\vvvert_{L_{p,\infty}}$, we can split this integral in two terms to obtain the desired inequality:
    \begin{align*}
        \int_{A\cap\{|f|>\alpha\}} |f|^r \,d\mu& \leq \int_0^\infty rt^{r-1} \min \{\mu(A), \max\{t,\alpha\}^{-p}\vvvert f\vvvert_{L_{p,\infty}}^p\} \,dt\\
        & =  \int_0^{\mu(A)^{-\frac{1}{p}}\vvvert f\vvvert_{L_{p,\infty}}} rt^{r-1}\mu(A) \,dt  +\int_{\mu(A)^{-\frac{1}{p}}\vvvert f\vvvert_{L_{p,\infty}}}^\infty rt^{r-1} t^{-p}\vvvert f\vvvert_{L_{p,\infty}}^p \,dt \\
        &= \mu(A)^{1-\frac{r}{p}} \vvvert f\vvvert_{L_{p,\infty}}^r+\frac{r}{p-r} \mu(A)^{1-\frac{r}{p}} \vvvert f\vvvert_{L_{p,\infty}}^r = \frac{p}{p-r} \mu(A)^{1-\frac{r}{p}} \vvvert f\vvvert_{L_{p,\infty}}^r.
    \end{align*}
    Rearranging the terms and taking the supremum over all sets $A$ of positive finite measure we obtain the second inequality of the statement. 
\end{proof}

Given $f\in L_{p,\infty}(\mu)$, the norm $\|f\|_{L_{p,\infty}^{[r]}}$ is increasing in $r\in [1,p)$. Indeed, if $1\leq r<s<p$, then for every set $A$ with $0<\mu(A)<\infty$ we can apply Hölder's inequality with exponent $\frac{s}{r}$ to obtain that
\[\mu(A)^{\frac{1}{p}-\frac{1}{r}} \left(\int_A |f|^r\,d\mu\right)^{\frac{1}{r}}\leq \mu(A)^{\frac{1}{p}-\frac{1}{s}} \left(\int_A |f|^s\,d\mu\right)^{\frac{1}{s}}.\]

For each $1\leq r<p$, $(L_{p,\infty}(\mu),\|\cdot\|_{L_{p,\infty}^{[r]}})$ is clearly a Banach lattice. As we mentioned earlier, it is the dual of $L_{p^*,1}(\mu)$, up to renorming. It might be convenient to compute the relation between these norms explicitly. If we denote the dual norm induced by $L_{p^*,1}$ by 
\[\vvvert f \vvvert_{(L_{p^*,1})^*}:=\sup_{\vvvert g \vvvert_{L_{p^*,1}}\leq 1} \abs[3]{\int f g \, d\mu },\]
then, we have that for every normalized $g\in L_{p^*,1}$,
\[\abs[3]{\int f g \, d\mu }\leq \int_0^\infty f^*(t)g^*(t) \, dt \leq  \int_0^\infty t^\frac{1}{p} f^*(t) t^{\frac{1}{p^*}-1}g^*(t) \, dt \leq \vvvert f \vvvert_{L_{p,\infty}} \vvvert g \vvvert_{L_{p^*,1}} =\vvvert f \vvvert_{L_{p,\infty}},\]
so $\vvvert f \vvvert_{(L_{p^*,1})^*}\leq \vvvert f \vvvert_{L_{p,\infty}}$. On the other hand, given any measurable subset $A$ with $0<\mu (A)<\infty$, it is easy to see that $(\chi_A)^*=\chi_{[0,\mu(A)]}$, so $\vvvert \chi_A \vvvert_{L_{p^*,1}}=p^* \mu(A)^\frac{1}{p^*}$, so
\[\mu(A)^{\frac{1}{p}-1} \int_A |f|\,d\mu = \mu(A)^{-\frac{1}{p^*}} \int |f| \chi_A\,d\mu \leq \mu(A)^{-\frac{1}{p^*}} \vvvert \chi_A \vvvert_{L_{p^*,1}} \vvvert f \vvvert_{(L_{p^*,1})^*} = p^* \vvvert f \vvvert_{(L_{p^*,1})^*} . \]
Taking the supremum over all subsets $A$, we conclude that
\[ \vvvert \cdot \vvvert_{(L_{p^*,1})^*} \leq \vvvert \cdot \vvvert_{L_{p,\infty}} \leq \|\cdot\|_{L_{p,\infty}^{[1]}} \leq p^* \vvvert \cdot \vvvert_{(L_{p^*,1})^*}.\]
From now on, unless stated otherwise, we will assume that $L_{p,\infty}(\mu)$ is endowed with the norm $\|\cdot\|_{L_{p,\infty}^{[1]}}$, and we will simply denote this norm by $\|\cdot\|_{L_{p,\infty}}$ or $\|\cdot\|_{p,\infty}$. As usual, when the measure space is the natural numbers with the counting measure, we will write $\ell_{p,\infty}$.\\

Weak $L_p$ spaces are the canonical example of Banach lattices satisfying upper $p$-estimates. As we will show in \Cref{cor: relevant cases of convexity}, every Banach lattice with upper $p$-estimates is $r$-convex for every $1\leq r<p$. However, for $L_{p,\infty}(\mu)$ this is straightforward to check:

\begin{prop}\label{prop: wlp upe and r convex}
    Let $1\leq r<p$. Then, $(L_{p,\infty}(\mu),\|\cdot\|_{L_{p,\infty}^{[r]}})$ satisfies an upper $p$-estimate and is $r$-convex, both with constant one.
\end{prop}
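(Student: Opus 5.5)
The plan is to prove the two properties separately. Both come from working with the explicit formula
\[\|f\|_{L_{p,\infty}^{[r]}}=\sup_{0<\mu(A)<\infty}\mu(A)^{\frac1p-\frac1r}\Big(\int_A|f|^r\,d\mu\Big)^{\frac1r}.\]
In a function lattice the $r$-sum $(\sum|f_i|^r)^{1/r}$ is computed pointwise a.e., so both verifications reduce to inequalities for integrals over a fixed set $A$.

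\emph{$r$-convexity.} Fix $f_1,\ldots,f_n$ and a set $A$ with $0<\mu(A)<\infty$. Since $\big((\sum|f_i|^r)^{1/r}\big)^r=\sum_i|f_i|^r$ pointwise, we get
\[\mu(A)^{\frac rp-1}\int_A\sum_{i=1}^n|f_i|^r\,d\mu=\sum_{i=1}^n\mu(A)^{\frac rp-1}\int_A|f_i|^r\,d\mu\le\sum_{i=1}^n\|f_i\|_{L_{p,\infty}^{[r]}}^r.\]
Taking $r$-th roots and then the supremum over $A$ gives $r$-convexity with constant one. This step is immediate.

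\emph{Upper $p$-estimate.} Let $f_1,\ldots,f_n$ be pairwise disjoint, supported on disjoint sets $B_i$. Fix $A$ and put $A_i=A\cap B_i$ and $a_i=\mu(A_i)$, so that $\sum_i a_i\le\mu(A)$. When $a_i>0$, the definition of the norm gives
\[\int_{A_i}|f_i|^r\,d\mu\le a_i^{1-\frac rp}\,\|f_i\|^r,\]
and this bound holds trivially when $a_i=0$. Therefore
\[\int_A\Big|\sum_{i=1}^n f_i\Big|^r\,d\mu=\sum_{i=1}^n\int_{A_i}|f_i|^r\,d\mu\le\sum_{i=1}^n a_i^{1-\frac rp}\|f_i\|^r.\]
Next apply Hölder's inequality with exponents $s=\frac{p}{p-r}$ and $s^*=\frac pr$:
\[\sum_{i=1}^n a_i^{1-\frac rp}\|f_i\|^r\le\Big(\sum_{i=1}^n a_i\Big)^{1-\frac rp}\Big(\sum_{i=1}^n\|f_i\|^p\Big)^{\frac rp}\le\mu(A)^{1-\frac rp}\Big(\sum_{i=1}^n\|f_i\|^p\Big)^{\frac rp}.\]
Multiplying by $\mu(A)^{\frac rp-1}$, taking $r$-th roots and then the supremum over $A$ gives
\[\Big\|\sum_{i=1}^n f_i\Big\|\le\Big(\sum_{i=1}^n\|f_i\|^p\Big)^{\frac1p}.\]
This is the upper $p$-estimate with constant one.

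The only step needing care is choosing the Hölder exponents so that the measure factors $a_i$ combine into $\sum_i a_i\le\mu(A)$. That is exactly where disjointness enters and where the restriction $r<p$ is used, since it is needed for $s$ to be finite.
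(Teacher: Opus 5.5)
Your proof is correct and follows essentially the same route as the paper. The $r$-convexity comes from splitting $\int_A\sum_i|f_i|^r\,d\mu$ termwise, and the upper $p$-estimate comes from the bound $\int_{A_i}|f_i|^r\,d\mu\le\mu(A_i)^{1-\frac rp}\|f_i\|^r$ on $A_i=A\cap\text{supp}f_i$, followed by H\"older's inequality with exponents $\frac{p}{p-r}$ and $\frac pr$ and the fact that $\sum_i\mu(A_i)\le\mu(A)$.
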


\begin{proof}
To check the first assertion, let $f_1,\ldots,f_n\in L_{p,\infty}(\mu)_+$ be disjoint functions. Given any measurable set $A$ with $0<\mu(A)<\infty$, let us denote by $A_i=A\cap \text{supp}f_i$, $i=1,\ldots,n$, so that
\begin{align*}
    &\mu(A)^{\frac{1}{p}-\frac{1}{r}} \left(\int_A \intoo[3]{\sum_{i=1}^n |f_i|}^r\,d\mu\right)^{\frac{1}{r}}  = \mu(A)^{\frac{1}{p}-\frac{1}{r}} \left(\sum_{i=1}^n\int_{A_i} |f_i|^r\,d\mu\right)^{\frac{1}{r}}\\
    & \leq  \left(\sum_{i=1}^n \left(\frac{\mu(A_i)}{\mu(A)}\right)^{1-\frac{r}{p}} \|f\|_{L_{p,\infty}^{[r]}}^r\right)^{\frac{1}{r}} \leq  \left(\sum_{i=1}^n \frac{\mu(A_i)}{\mu(A)} \right)^{\frac{p-r}{pr}} \left(\sum_{i=1}^n \|f\|_{L_{p,\infty}^{[r]}}^p\right)^{\frac{1}{p}}  \leq  \left(\sum_{i=1}^n \|f\|_{L_{p,\infty}^{[r]}}^p\right)^{\frac{1}{p}},
\end{align*}
where in the second inequality we have used Hölder's inequality with exponent $\frac{p}{r}$. Therefore, $(L_{p,\infty}(\mu),\|\cdot\|_{L_{p,\infty}^{[r]}})$ satisfies an upper $p$-estimate with constant one.\\ 

Now, given $f_1,\ldots,f_n \in L_{p,\infty}(\mu)$ and a measurable set $A$ with $0<\mu(A)<\infty$, it follows that
\[\mu(A)^{\frac{1}{p}-\frac{1}{r}} \intoo[3]{\int_A \sum_{i=1}^n|f_i|^r\, d\mu}^\frac{1}{r}=  \intoo[3]{ \sum_{i=1}^n \intoo[3]{\mu(A)^{\frac{1}{p}-\frac{1}{r}}\intoo[3]{\int_A|f_i|^r\, d\mu}^\frac{1}{r}}^r}^\frac{1}{r} \leq  \intoo[3]{ \sum_{i=1}^n \|f\|_{L_{p,\infty}^{[r]}}^r}^\frac{1}{r}.\]
Taking the supremum over all the sets $A$, we obtain
\[ \norm[3]{\intoo[3]{ \sum_{i=1}^n |f_i|^r}^\frac{1}{r}}_{L_{p,\infty}^{[r]}} \leq  \intoo[3]{ \sum_{i=1}^n \|f\|_{L_{p,\infty}^{[r]}}^r}^\frac{1}{r},\] 
so $(L_{p,\infty}(\mu),\|\cdot\|_{L_{p,\infty}^{[r]}})$ is $r$-convex with constant one.
\end{proof}

Weak $L_p$ spaces provide examples of Banach lattices that satisfy upper $p$-estimates but are not $p$-convex. In fact, we have the following:

\begin{prop}\label{prop: uniform copies of fd weak Lp}
    Let $X$ be a Banach lattice and $1<p<\infty$. Assume that $X$ contains uniform lattice isomorphic copies of $\ell_{p,\infty}^n$, that is, for every $n\in \N$ there exists a lattice isomorphic embedding $T_n:\ell_{p,\infty}^n \rightarrow X$ such that $\sup_n \|T_n\|\|T_n^{-1}\|<\infty$. Then $X$ is not $p$-convex.
\end{prop}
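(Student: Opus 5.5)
Argue by contradiction: suppose $X$ is $p$-convex with constant $K=K^{(p)}(X)$. Since $T_n$ is a lattice homomorphism, it preserves the functional calculus, so $T_n\big((\sum|f_i|^p)^{1/p}\big)=(\sum|T_nf_i|^p)^{1/p}$. Combining this with $\|T_n^{-1}\|$ and $p$-convexity of $X$ gives, for all $f_1,\ldots,f_m\in\ell_{p,\infty}^n$,
\[
\norm[3]{\intoo[3]{\sum_{i=1}^m |f_i|^p}^\frac{1}{p}}_{\ell_{p,\infty}^n}\leq \|T_n^{-1}\|\,K\intoo[3]{\sum_{i=1}^m \|T_nf_i\|^p}^\frac{1}{p}\leq C K\intoo[3]{\sum_{i=1}^m \|f_i\|_{\ell_{p,\infty}^n}^p}^\frac{1}{p},
\]
where $C=\sup_n\|T_n\|\|T_n^{-1}\|$. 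Hence the spaces $\ell_{p,\infty}^n$ would be uniformly $p$-convex, and it suffices to show that $K^{(p)}(\ell_{p,\infty}^n)\to\infty$. By \Cref{lem: renorming weak Lp}, we may compute norms with the quasinorm $\sup_k k^{1/p}f^*(k)$ at the cost of a fixed constant.

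The test vectors are the unit vectors $f_i=i^{-1/p}e_i$, $i=1,\ldots,n$, with weights chosen so that the left side stays comparable to a $p$-sum while the right side picks up a logarithm. Since $f_i$ is supported on a single point, $\|f_i\|\approx i^{-1/p}$. The right-hand side is then $\approx(\sum_{i\le n} i^{-1})^{1/p}\approx(\log n)^{1/p}$. The vectors are disjoint, so the left-hand side is $\|\sum_i i^{-1/p}e_i\|_{\ell_{p,\infty}^n}$. This is the sequence $(k^{-1/p})_k$, whose weak norm $\sup_k k^{1/p}k^{-1/p}=1$ is bounded, which gives the wrong direction.

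So the test vectors must be reversed: the inequality must fail with a large left-hand side. I will take $f_i=e_i$ for $i\le n$. The left-hand side is then $\|\sum e_i\|=n^{1/p}$ and the right-hand side is $n^{1/p}$, which gives no contradiction either; disjoint vectors cannot work, since $\ell_{p,\infty}$ does satisfy an upper $p$-estimate. I will therefore use non-disjoint vectors, following the standard trick of taking block "staircase" vectors $g_j=2^{-j/p}\chi_{[1,2^j]}$ for $j=1,\ldots,N$ in $\ell_{p,\infty}^{2^N}$. Each has $\|g_j\|\approx 1$, so the right-hand side is $\approx N^{1/p}$. On the left, $\sum_j|g_j|^p=\sum_j 2^{-j}\chi_{[1,2^j]}$, which at a point $k\approx 2^m$ equals $\sum_{j\ge m}2^{-j}\approx 2^{-m}\approx 1/k$. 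That gives $(\sum|g_j|^p)^{1/p}\approx k^{-1/p}$, of weak norm $\approx 1$, again the wrong direction.

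Since both attempts point the wrong way, the contradiction must come from a dual configuration. I will use $g_j=\chi_{A_j}$ with $A_j$ disjoint blocks of sizes $2^j$, which is disjoint again and therefore useless, so instead I will pass to duality. By the duality between convexity and concavity (\Cref{sec: duality}), uniform $p$-convexity of $\ell_{p,\infty}^n$ is equivalent to uniform $p^*$-concavity of $\ell_{p^*,1}^n$. For that I will take $h_j=2^{-j/p^*}\chi_{[1,2^j]}$, $j\le N$, in $\ell_{p^*,1}$, each of norm $\approx1$, so that $(\sum\|h_j\|^{p^*})^{1/p^*}\approx N^{1/p^*}$. Meanwhile $(\sum|h_j|^{p^*})^{1/p^*}\approx k^{-1/p^*}$ at $k$, whose $\ell_{p^*,1}^{2^N}$ norm is $\sum_k k^{1/p^*-1}k^{-1/p^*}\approx\log 2^N\approx N$. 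Then $N^{1/p^*}\le C'N$ holds, and no contradiction follows. The hardest step is therefore finding the right extremal configuration. My next attempt is the weak-norm computation with the staircase adjusted to $g_j=\chi_{[1,2^j]}\,2^{-j/p}$ summed without the $p$-power normalisation, tuning exponents until the left side is $\gtrsim N^{1/p}\cdot(\log)^{\epsilon}$; the expected obstacle is producing a growing ratio at all.
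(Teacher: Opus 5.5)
\textbf{There is a gap: the proposal never proves its main claim.} Your reduction is correct and matches the paper's first step. Each $T_n$ is a lattice homomorphism, so it preserves $p$-sums, and $p$-convexity of $X$ would then force $K^{(p)}(\ell^n_{p,\infty})\le CK$ uniformly in $n$. But the proposal never shows that $K^{(p)}(\ell^n_{p,\infty})\to\infty$. Every configuration you test either satisfies the inequality (disjoint vectors, the staircase $2^{-j/p}\chi_{[1,2^j]}$) or gives no contradiction (the dual staircase in $\ell_{p^*,1}$). You end with an open-ended ``next attempt''. Finding a bad configuration is the entire content of the proposition, so as written there is no proof.

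\textbf{The missing idea is to use the $n$ cyclic shifts of one extremal vector, so that the $p$-sum is flat.} Let $\alpha_k=k^{1/p^*}-(k-1)^{1/p^*}$ for $1\le k\le n$. This sequence is decreasing, since $t^{1/p^*}$ is concave, and $\sum_{k\le m}\alpha_k=m^{1/p^*}$. Hence $a=(\alpha_k)_k$ has norm exactly $1$ in $\ell^n_{p,\infty}$ for the norm $\|\cdot\|_{L^{[1]}_{p,\infty}}$: the supremum over sets of size $m$ is attained at $\{1,\dots,m\}$. Let $a^{(0)},\dots,a^{(n-1)}$ be the cyclic permutations of $a$. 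At each coordinate, the values of $a^{(0)},\dots,a^{(n-1)}$ run through all of $\alpha_1,\dots,\alpha_n$. Therefore
\[
\Bigl(\sum_{j=0}^{n-1}|a^{(j)}|^p\Bigr)^{1/p}=A_n\,\mathbf{1},\qquad A_n=\Bigl(\sum_{k=1}^n\alpha_k^p\Bigr)^{1/p},\qquad \|\mathbf{1}\|_{\ell^n_{p,\infty}}=n^{1/p}.
\]
Uniform $p$-convexity would give $A_n n^{1/p}\le CK\,n^{1/p}$, that is, $A_n\le CK$.

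However $0<k^{1/p}\alpha_k\le 1$ and $k^{1/p}\alpha_k\to 1/p^*$. So $\alpha_k$ is comparable to $k^{-1/p}$ uniformly in $k$, and $A_n$ is comparable to $\bigl(\sum_{k\le n}k^{-1}\bigr)^{1/p}\to\infty$, which is the contradiction.

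This also shows why your attempts failed. Your $p$-sums had a decreasing profile like $k^{-1/p}$, on which the weak norm is small. What you need is summands of weak norm $1$ but large $\ell_p$-norm, combined so that the $p$-sum is flat. On flat vectors the weak norm equals the $\ell_p$-norm, so the $p$-sum recovers $\bigl(\sum_j\|a^{(j)}\|_{\ell_p}^p\bigr)^{1/p}=n^{1/p}A_n$ rather than $\bigl(\sum_j\|a^{(j)}\|_{\ell^n_{p,\infty}}^p\bigr)^{1/p}=n^{1/p}$.
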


\begin{proof}
Fix $n\in \N$ and let $(e_k)_{k=0}^{n-1}$ be the canonical basis of $\ell_{p,\infty}^n$. For $j=0,\ldots,n-1$, we write $a^{(j)}=\sum_{k=1}^n \alpha_k e_{(k+j)_n}$ for the cyclic permutations of the coefficients $\alpha_k=k^\frac{1}{p^*}-(k-1)^\frac{1}{p^*}$, $1\leq k\leq n$, over the basis $(e_k)_{k=0}^{n-1}$. Here, by $i_n$ we mean the remainder of $i$ divided by $n$. It is clear that $\|a^{(j)}\|_{\ell_{p,\infty}^n}=1$ for every $j$. Moreover,
\[\intoo[3]{\sum_{j=0}^{n-1} |a^{(j)}|^p}^\frac{1}{p}=\sum_{j=0}^{n-1} \intoo[3]{\sum_{k=1}^n \alpha_k^p}^\frac{1}{p} e_{j},\]
so the $p$-sum of the vectors $a^{(j)}$ is a multiple of the constant vector. Let $A_n=\intoo{\sum_{k=1}^n \alpha_k^p}^\frac{1}{p}$. We claim that $A_n$ diverges as $n \rightarrow \infty$. More specifically, let us show that $\alpha_k \approx k^{-\frac{1}{p}}$ uniformly on $k$, so that $A_n\approx \intoo{\sum_{k=1}^n k^{-1}}^\frac{1}{p}$, which diverges. Indeed, let $f(t)=t-t^\frac{1}{p}(t-1)^\frac{1}{p^*} = t^\frac{1}{p}(t^\frac{1}{p^*}-(t-1)^\frac{1}{p^*})$, which is continuous on $[1,\infty)$ and satisfies $f(k)=k^\frac{1}{p}\alpha_k$. We have $0<f(f)\leq t-(t-1)=1$. Moreover, making use of L'Hôpital rule, it is easy to show that
\[\lim_{t\rightarrow \infty}f(t)=\lim_{t\rightarrow \infty}\frac{t^\frac{1}{p^*}-(t-1)^\frac{1}{p^*}}{t^{-\frac{1}{p}}}=\frac{1}{p^*}.\]
Therefore, $f(t)$ is uniformly bounded from below on the interval $[1,\infty)$ by a constant $c>0$, so $c k^{-\frac{1}{p}} \leq \alpha_k \leq k^{-\frac{1}{p}}$ for every $k\geq 1$. \\

To conclude the argument, assume that $X$ is $p$-convex with constant $K>0$, and let $T_n:\ell_{p,\infty}^n \rightarrow X$ be lattice isomorphic embeddings with $M=\sup_n \|T_n\|\|T_n^{-1}\|<\infty$. Since lattice homomorphisms preserve $p$-sums, for every $n$ we have
\begin{align*}
    A_n n^\frac{1}{p} &= \norm[3]{\intoo[3]{\sum_{j=0}^{n-1} |a^{(j)}|^p}^\frac{1}{p}}_{\ell_{p,\infty}^n} \leq \|T_n^{-1}\| \norm[3]{\intoo[3]{\sum_{j=0}^{n-1} |T_na^{(j)}|^p}^\frac{1}{p}}_X \\
   & \leq \|T_n^{-1}\|  K \intoo[3]{\sum_{j=0}^{n-1} \|T_n a^{(j)}\|_X^p}^\frac{1}{p} \leq \|T_n\|\|T_n^{-1}\|  K \intoo[3]{\sum_{j=0}^{n-1} \|a^{(j)}\|_{\ell_{p,\infty}^n}^p}^\frac{1}{p} \leq M K n^\frac{1}{p},
\end{align*}
so $A_n\leq MK$ for every $n$, which is false.
\end{proof}

The following computation is constantly used to construct isometric copies of $\ell_{p,\infty}^n$:

\begin{lem}\label{lem: span of disjoint characteristics is wlp}
    Let $(\Omega,\Sigma,\mu)$ be a measure space, and let $B_1,\ldots,B_n$ be pairwise disjoint measurable subsets of finite measure. Then $\spn\cbr{\chi_{B_i}}_{i=1}^n$ is lattice isomorphic to $\ell_{p,\infty}^n$. In particular, if all the subsets have the same measure, the isomorphism becomes an isometry.
\end{lem}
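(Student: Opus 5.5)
The plan is to compute the $L_{p,\infty}$ norm of an arbitrary element $f=\sum_{i=1}^n c_i\chi_{B_i}$ of the span and compare it with the norm of $(c_i)_{i=1}^n$ in $\ell_{p,\infty}^n$. Here we may work with the quasinorm $\vvvert\cdot\vvvert_{L_{p,\infty}}$ and then pass to the norm $\|\cdot\|_{L_{p,\infty}}=\|\cdot\|_{L_{p,\infty}^{[1]}}$ using \Cref{lem: renorming weak Lp}. Discarding sets of measure zero (which contribute nothing), we may assume $0<\mu(B_i)<\infty$ for every $i$.

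First, define $T:\ell_{p,\infty}^n\to\spn\{\chi_{B_i}\}$ by $Te_i=\chi_{B_i}$. Because the $B_i$ are disjoint, we have $|Tc|=\sum_i|c_i|\chi_{B_i}=T|c|$, so $T$ is a bijective lattice homomorphism onto the span, and it remains to prove two-sided norm bounds. Next, compute the distribution function: $\mu(\{|f|>\lambda\})=\sum_{i:|c_i|>\lambda}\mu(B_i)$. Put $m=\min_i\mu(B_i)$ and $M=\max_i\mu(B_i)$. Then
\[
m\,\#\{i:|c_i|>\lambda\}\leq \mu(\{|f|>\lambda\})\leq M\,\#\{i:|c_i|>\lambda\}.
\]
Raising to the power $1/p$, multiplying by $\lambda$, and taking the supremum over $\lambda$ gives
\[
m^{1/p}\vvvert c\vvvert_{\ell_{p,\infty}^n}\leq\vvvert f\vvvert_{L_{p,\infty}}\leq M^{1/p}\vvvert c\vvvert_{\ell_{p,\infty}^n},
\]
since the counting measure gives exactly the $\ell_{p,\infty}^n$ quasinorm. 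Combining this with the equivalence between the quasinorm and the norm $\|\cdot\|_{p,\infty}$ on both sides (with constants $1$ and $p^*$) shows that $T$ is a lattice isomorphism.

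For the equal-measure case $\mu(B_i)=\beta$ for all $i$, the isometry should be taken with respect to the Banach norm $\|\cdot\|_{L_{p,\infty}^{[1]}}$, after rescaling $T$ by $\beta^{-1/p}$. This is where I expect the main obstacle: a direct computation of the $[1]$-norm is needed, not just the quasinorm comparison. For a set $A$ of finite positive measure, write $a_i=\mu(A\cap B_i)\in[0,\beta]$ and set $g=\chi_{\bigcup_i B_i}$. We want to show that
\[
\mu(A)^{1/p-1}\int_A|f|\,d\mu\leq\Bigl(\sum_i a_i\Bigr)^{1/p-1}\sum_i a_i|c_i|,
\]
since $\mu(A)\geq\sum_i a_i$ and $1/p-1<0$. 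The right-hand side is a function of $(a_i)\in[0,\beta]^n$ that is homogeneous of degree $1/p$. I would maximize it by rearrangement: put the mass on the largest $|c_i|$ first, and check the concavity in each coordinate separately to conclude that the maximum is attained at vertices, i.e. $a_i\in\{0,\beta\}$. This reduces to $\beta^{1/p}\,k^{1/p-1}\sum_{j\leq k}c_j^*$, which is exactly $\beta^{1/p}$ times the $[1]$-norm of $c$ in $\ell_{p,\infty}^n$ (computed by the same formula with counting measure, where a subset of size $k$ is the extremal choice). The reverse inequality follows by taking $A$ to be the union of the corresponding $B_i$. Hence $\beta^{-1/p}T$ is an isometry.
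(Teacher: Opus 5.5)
You correctly isolate the crux of the isometric claim, but the justification you give for it does not work. The crux is to show that
\[\Phi(a)=\Bigl(\sum_i a_i\Bigr)^{-\frac{1}{p^*}}\sum_i a_i|c_i|,\qquad a\in[0,\beta]^n,\]
attains its supremum at a vertex of the cube. ``Concavity in each coordinate'' cannot give this, because a concave function on an interval may attain its maximum in the interior. The claim is also false here. The section of $\Phi$ in the $i$-th coordinate is $F(t)=(t+s)^{-1/p^*}(bt+c)$, with $b=|c_i|$, $s=\sum_{j\neq i}a_j$ and $c=\sum_{j\neq i}a_j|c_j|$. For $b=0<c$ this section is strictly convex.

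What is true, and what the paper verifies, is that $F$ has no interior maximum. Indeed,
\[F'(t)=(t+s)^{-\frac{1}{p^*}-1}\Bigl(\frac{b}{p}\,t+bs-\frac{c}{p^*}\Bigr),\]
and the bracket is nondecreasing in $t$. So $F'$ changes sign at most once, from negative to positive, and therefore $\max_{[0,\beta]}F=\max\{F(0),F(\beta)\}$. In the degenerate case $s=0$ we also have $c=0$, and then $F(t)=bt^{1/p}$ is increasing. Updating one coordinate at a time then moves any $a$ to a vertex without decreasing $\Phi$.

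Your rearrangement idea does not bypass this step. For a fixed total mass $\sum_i a_i$, the greedy filling still leaves one coordinate strictly between $0$ and $\beta$, and removing it needs exactly this one-variable analysis. With the fix in place, the rest of your isometric argument coincides with the paper's. That rest is: the reduction to $A\subseteq\bigcup_i B_i$, keeping the $k$ largest $|c_i|$, the comparison with the counting-measure formula, and the reverse inequality via $A=\bigcup_{i\in I}B_i$.

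Your proof of the isomorphic claim is correct and takes a genuinely different, more elementary route. You compare distribution functions to get the two-sided quasinorm bound with constants $m^{1/p}$ and $M^{1/p}$. You then transfer it to the norms with \Cref{lem: renorming weak Lp} for $r=1$, applied both in $L_{p,\infty}(\mu)$ and in $\ell_{p,\infty}^n$. This needs no optimization over sets, at the price of a distortion of order $p^*(M/m)^{1/p}$.

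The paper instead computes the norm on the span exactly, identifying it isometrically with a weighted $\ell_{p,\infty}^n((w_i)_{i=1}^n)$. From that single formula it gets both the isomorphism, with constants $(w_-/w_+)^{1/p^*}$ and $(w_+/w_-)^{1/p^*}$, and the isometric case. Your route pays the vertex-reduction cost only in the equal-measure case.
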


\begin{proof}
    Let $w_i=\mu(B_i)$ and $g_i=w_i^{-\frac{1}{p}}\chi_{B_i}$, so that $(g_i)_{i=1}^n$ is a normalized basis of $\spn\cbr{\chi_{B_i}}_{i=1}^n$. Given $f=\sum_{i=1}^n a_i g_i$, we have that
    \begin{align*}
        \|f\|_{L_{p,\infty}}&=\sup_{0<\mu(A)<\infty} \mu(A)^{-\frac{1}{p^*}} \sum_{i=1}^n |a_i| \frac{\mu(A\cap B_i)}{w_i^\frac{1}{p}}\\
        &= \sup_{\substack{0<\mu(A)<\infty\\ A\subseteq \bigcup_i B_i}} \intoo[3]{\sum_{i=1}^n\mu(A\cap B_i)}^{-\frac{1}{p^*}} \sum_{i=1}^n |a_i| \frac{\mu(A\cap B_i)}{w_i^\frac{1}{p}}.
    \end{align*}
    
    Next, let us consider the family of functions of the form
    \[F(t)=(t+a)^{-\frac{1}{p^*}}(bt+c)\]
    defined on $[0,d]$, with $a,b,c\geq 0$ (with the additional restriction that if $a=0$, then $c=0$ too) and $d>0$. We claim that all these functions attain their maximum in the interval $[0,d]$ either at $0$ or at $d$. On the one hand, if $a=0=c$, it is clear that $F(t)=bt^\frac{1}{p}$ is increasing, so its maximum is attained at $d$. On the other hand, if $a>0$, then $F$ is differentiable on $[0,d]$, so a standard optimization argument yields that either $F'$ is non-zero in the whole interval $[0,d]$, which means that $F$ is strictly monotone, or $F$ has only a local minimum in $[0,d]$. In both cases, it follows that $F$ attains its maximum at one of the extreme points of the interval.\\

    Now that the claim has been proved, we can simplify the expression for the norm of $f=\sum_{i=1}^n a_i g_i$. More specifically, given any measurable subset $A\subseteq \bigcup_{i=1}^n B_i$ with finite measure, we can find a set of indices $I\subseteq \{1,\ldots,n\}$ such that $\bigcup_{i\in I} B_i$ approximates the norm of $f$ better than $A$. Indeed, fixing the parameters $a=\sum_{i=2}^n\mu(A\cap B_i)$, $b=\frac{|a_1|}{w_1}$, $c= \sum_{i=2}^n |a_i| \frac{\mu(A\cap B_i)}{w_i^\frac{1}{p}}$ and $d=w_1$, it follows that
    \[\intoo[3]{\sum_{i=1}^n\mu(A\cap B_i)}^{-\frac{1}{p^*}} \sum_{i=1}^n |a_i| \frac{\mu(A\cap B_i)}{w_i^\frac{1}{p}} =F(\mu(A\cap B_1))\leq \max \{F(0),F(w_1)\}=F(\theta_1w_1)\]
    for some $\theta_1\in \{0,1\}$. We can inductively apply this argument to obtain a sequence $(\theta_i)_{i=1}^n\subseteq \{0,1\}$ such that
    \begin{align*}
        \intoo[3]{\sum_{i=1}^n\mu(A\cap B_i)}^{-\frac{1}{p^*}} & \sum_{i=1}^n |a_i| \frac{\mu(A\cap B_i)}{w_i^\frac{1}{p}} \\ 
        & \leq \intoo[3]{\theta_1 w_1+\sum_{i=2}^n\mu(A\cap B_i)}^{-\frac{1}{p^*}} \intoo[3]{\theta_1 |a_1|w_1^\frac{1}{p^*}+\sum_{i=2}^n |a_i| \frac{\mu(A\cap B_i)}{w_i^\frac{1}{p}}}\\
        & \leq \intoo[3]{\sum_{i=1}^k\theta_i w_i+\sum_{i=k+1}^n\mu(A\cap B_i)}^{-\frac{1}{p^*}} \intoo[3]{\sum_{i=1}^k\theta_i |a_i|w_i^\frac{1}{p^*}+\sum_{i=k+1}^n |a_i| \frac{\mu(A\cap B_i)}{w_i^\frac{1}{p}}} \\
        & \leq \intoo[3]{\sum_{i=1}^n\theta_i w_i}^{-\frac{1}{p^*}} \intoo[3]{\sum_{i=1}^n\theta_i |a_i|w_i^\frac{1}{p^*}}\\
        & \leq \intoo[3]{\sum_{i\in I}w_i}^{-\frac{1}{p^*}} \intoo[3]{\sum_{i\in I } |a_i|w_i^\frac{1}{p^*}},
    \end{align*}
    where $I=\{i:\theta_i=1\}$. Therefore, we conclude that
    \[\|f\|_{L_{p,\infty}} = \sup_{I\subseteq \{1,\ldots,n\}} \intoo[3]{\sum_{i\in I}w_i}^{-\frac{1}{p^*}} \intoo[3]{\sum_{i\in I } |a_i|w_i^\frac{1}{p^*}},\]
    which is precisely the norm in the weighted finite-dimensional $\ell_{p,\infty}^n(w_i)_{i=1}^n$ of the sequence $\sum_{i=1}^n a_i e_i$, where $e_i=w_i^{-\frac{1}{p}}\chi_{\{i\}}$ is the normalized canonical basis. Therefore, $\spn\cbr{\chi_{B_i}}_{i=1}^n$ is lattice isometric to $\ell_{p,\infty}^n((w_i)_{i=1}^n)$. Finally, if we denote by $w_-=\min_i w_i$ and $w_+=\max_i w_i$, it is easy to see that
    \[\intoo[2]{\frac{w_-}{w_+}}^\frac{1}{p^*}\|(a_i)_{i=1}^n\|_{\ell_{p,\infty}^n} \leq \|f\|_{L_{p,\infty}} \leq \intoo[2]{\frac{w_+}{w_-}}^\frac{1}{p^*}\|(a_i)_{i=1}^n\|_{\ell_{p,\infty}^n}, \]
    so $\spn\cbr{\chi_{B_i}}_{i=1}^n$ is lattice isomorphic to $\ell_{p,\infty}^n$.
\end{proof}

\begin{example}\label{example: not p-convex wlp}
We can fully characterize the weak $L_p$ spaces that are not $p$-convex. In fact, either a weak $L_p$ space is not $p$-convex or it is $\infty$-convex, that is, it has the maximal convexity:
\begin{enumerate}
    \item Let $(\Omega,\Sigma,\mu)$ be a measure space with non-atomic part. Then, $L_{p,\infty}(\mu)$ is not $p$-convex. Indeed, for every $n\in \N$ we can find $n$ measurable subsets of $\Omega$ with the same measure, so, by \Cref{lem: span of disjoint characteristics is wlp}, $L_{p,\infty}(\mu)$ contains a sublattice isometric to $\ell^n_{p,\infty}$. \Cref{prop: uniform copies of fd weak Lp} yields that $L_{p,\infty}(\mu)$ is not $p$-convex.
    \item Let $\Gamma$ be a set and $w: \Gamma\rightarrow (0,\infty)$ be a collection of weights over $\Gamma$. Denote again by $w$ the measure that it induces on $\mathcal{P}(\Gamma)$, and consider the atomic space $\ell_{p,\infty}(\Gamma, w)$. Let $\Gamma_m=\{\gamma\in \Gamma: 2^m\leq w(\gamma)<2^{m+1}\}$. Using the ideas of \cite{Leung96}, we can distinguish two cases:
    \begin{enumerate}[(a)]
        \item If $\sup_m |\Gamma_m|=\infty$, then for every $n\in \N$ there exists $m\in \N$ such that $|\Gamma_m|\geq n$, and hence there exist $n$ distinct points $\gamma_1\ldots, \gamma_n\in\Gamma_m$ such that $\frac{1}{2}\leq \frac{w(\gamma_i)}{w(\gamma_j)}\leq 2$ whenever $i\neq j$. By \Cref{lem: span of disjoint characteristics is wlp}, $\ell_{p,\infty}(\Gamma, w)$ contains uniform lattice isomorphic copies of $\ell^n_{p,\infty}$, so \Cref{prop: uniform copies of fd weak Lp} implies that $\ell_{p,\infty}(\Gamma, w)$ is not $p$-convex.
        \item If $\sup_m |\Gamma_m|<\infty$, let $N=\sup_m |\Gamma_m|$. Observe that, in particular, $\Gamma$ is countable, and can be identified with a subset of $\Z\times \{1,\ldots,N\}$, so that each $\Gamma_m$ corresponds to the subset $\{m\}\times \{1,\ldots,|\Gamma_m|\}$. Then, following the proof of \cite[Theorem 6]{Leung96}, it is easy to see that $\ell_{p,\infty}(\Gamma, w)$ embeds lattice isomorphically into the $\ell_\infty$ sum of $N$ copies of the space $\ell_{p,\infty}(\Z,\sigma)$, where $\sigma(m)=2^m$ for every $m\in \Z$. Since $\ell_{p,\infty}(\Z,\sigma)$ is lattice isomorphic to $\ell_\infty (\Z)$ via the operator
        \[\fullfunction{T}{\ell_{p,\infty}(\Z,\sigma)}{\ell_\infty}{(a_m)_m}{(2^\frac{m}{p}a_m)_m}\]
        (see \cite[Proposition 5]{Leung96}), it follows that $\ell_{p,\infty}(\Gamma, w)$ is lattice isomorphic to a sublattice of $\ell_\infty$, and hence it must be $\infty$-convex. \Cref{thm: p-conv implies q-conv} will then imply that $\ell_{p,\infty}(\Gamma, w)$ is $p$-convex.
    \end{enumerate}
\end{enumerate}
\end{example}

As we will see in the next section, the fact that $L_{p,\infty}(\mu)$ satisfies an upper $p$-estimate with constant one but is not $p$-convex implies that its predual $L_{p^*,1}(\mu)$ satisfies a lower $p^*$-estimate but is not $p^*$-concave.

\section{General facts}\label{sec: general facts}
\subsection{Duality}\label{sec: duality}

As one could expect from \Cref{rem: facts about convexity}, convexity and concavity properties are related through duality. Before stating the results, let us recall that, given a Banach space $E$, an integer $n$ and an exponent $1\leq p\leq \infty$, if we denote by $\ell_p^n(E)$ the space of sequences of $E$ of $n$ elements, endowed with the norm $\|(u_i)_{i=1}^n\|_{\ell_p^n(E)}=\intoo{\sum_{i=1}^n\|u_i\|^p}^\frac{1}{p}$, then its dual space can be isometrically identified with $\ell_{p^*}^n(E^*)$, with the duality action given by
\[\langle (u_i^*)_{i=1}^n,(u_i)_{i=1}^n\rangle=\sum_{i=1}^n u_i^*(u_i).\]

\begin{thm}\label{thm: duality convexity concavity}
    Let $X$ be a Banach lattice, $E$ a Banach space and $1\leq p, q\leq \infty$. Then:
    \begin{enumerate}
        \item $T:E\rightarrow X$ is $(p,q)$-convex if and only if $T^*:X^*\rightarrow E^*$ is $(p^*,q^*)$-concave, and $K_{(p^*,q^*)}(T^*)=K^{(p,q)}(T)$.
        \item $S:X\rightarrow E$ is $(p,q)$-concave if and only if $S^*:E^*\rightarrow X^*$ is $(p^*,q^*)$-convex, and $K^{(p^*q^*)}(S^*)=K_{(p,q)}(S)$.
    \end{enumerate}
\end{thm}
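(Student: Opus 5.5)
Both parts come down to a duality formula for the $q$-sum norm in a Banach lattice, combined with the duality $\ell_p^n(E)^*=\ell_{p^*}^n(E^*)$ recalled just before the statement. The formula is: for $x_1,\ldots,x_n\in X$,
\[\norm[3]{\intoo[3]{\sum_{i=1}^n |x_i|^q}^\frac{1}{q}}=\sup\cbr{\sum_{i=1}^n x_i^*(x_i): \norm[3]{\intoo[3]{\sum_{i=1}^n |x_i^*|^{q^*}}^\frac{1}{q^*}}\leq 1},\]
and, symmetrically, for $x_1^*,\ldots,x_n^*\in X^*$ the norm of their $q^*$-sum is the supremum of $\sum x_i^*(x_i)$ over $x_i\in X$ whose $q$-sum has norm at most one. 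The inequality ``$\geq$'' in both is immediate from \Cref{prop: Holder in BL}. For the dual statement, ``$\leq$'' follows from \Cref{cor: RK for p sums}: the $q^*$-sum of the $x_i^*$ is positive, so its norm is approximated by evaluating it at some $x\in B_X\cap X_+$, and the corollary expresses that value as a supremum over $(x_i)$ with $q$-sum $\leq x$, which have norm $\leq 1$.

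The first formula, ``$\leq$'' direction, is where I expect the main difficulty. My plan is to use \Cref{prop: p-sum as supremum} to realize the $q$-sum $e$ as the supremum of $\sum a_ix_i$ with $(a_i)\in B_{\ell_{q^*}^n}$, and to pick $x^*\in S_{X^*}$, $x^*\geq0$, with $x^*(e)=\|e\|$. I then want functionals $x_i^*$ with $\sum x_i^*(x_i)=x^*(e)$ and $q^*$-sum $\le x^*$. To get them, I pass to the ideal $I_e\cong C(K)$ as in the proof of \Cref{prop: Holder in BL} and restrict $x^*$ to a measure $\mu\ge0$ on $K$. There I set $h_i=\sgn(f_i)|f_i|^{q-1}e^{1-q}$ (the usual modifications apply when $q=1,\infty$, with $\infty$ requiring a choice of maximizing index on a partition), so that $\sum f_ih_i=e$ and $(\sum|h_i|^{q^*})^{1/q^*}\le 1$ pointwise. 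The functionals $h_i\mu$ then have to be extended back to $X$ with control on their $q^*$-sum. Alternatively, I would avoid this construction by using Hahn--Banach on the Banach space of $n$-tuples normed by the $q$-sum and identifying its dual via \Cref{cor: RK for p sums}, approximating with $\eps$ and finite-dimensional pieces as needed.

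With the formulas in hand, (1) is routine. Suppose $T$ is $(p,q)$-convex and take $x_1^*,\ldots,x_n^*\in X^*$. By duality in $\ell_{p^*}^n(E^*)$, the quantity $(\sum\|T^*x_i^*\|^{p^*})^{1/p^*}$ is the supremum of $\sum x_i^*(Tu_i)$ over $(u_i)\in B_{\ell_p^n(E)}$. By \Cref{prop: Holder in BL}, each such sum is at most the $q^*$-sum norm of the $x_i^*$ times the $q$-sum norm of the $Tu_i$, and the latter is at most $K^{(p,q)}(T)$. This gives $K_{(p^*,q^*)}(T^*)\le K^{(p,q)}(T)$. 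Conversely, the $q$-sum norm of the $Tu_i$ equals, by the first formula, the supremum of $\sum (T^*x_i^*)(u_i)$ over $x_i^*$ with $q^*$-sum norm at most one. Each such sum is bounded by $(\sum\|T^*x_i^*\|^{p^*})^{1/p^*}(\sum\|u_i\|^p)^{1/p}$ via Hölder, and the first factor is at most $K_{(p^*,q^*)}(T^*)$.

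Part (2) is the same argument with the roles exchanged. The estimate $K^{(p^*,q^*)}(S^*)\le K_{(p,q)}(S)$ uses the dual-side formula: the $q^*$-sum norm of the $S^*u_i^*$ is a supremum of $\sum u_i^*(Sx_i)$ over $(x_i)$ with $q$-sum norm $\le1$, which Hölder bounds by $K_{(p,q)}(S)(\sum\|u_i^*\|^{p^*})^{1/p^*}$. For the reverse estimate, write $(\sum\|Sx_i\|^p)^{1/p}$ as a supremum over $(u_i^*)\in B_{\ell_{p^*}^n(E^*)}$, which is legitimate by norming the finite tuple. Then bound $\sum u_i^*(Sx_i)=\sum (S^*u_i^*)(x_i)$ using \Cref{prop: Holder in BL} and the convexity of $S^*$.
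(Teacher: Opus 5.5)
Your forward implications are exactly the paper's: $T$ convex $\Rightarrow$ $T^*$ concave via \Cref{prop: Holder in BL} and $\ell_p^n(E)^*=\ell_{p^*}^n(E^*)$, and $S$ concave $\Rightarrow$ $S^*$ convex via \Cref{cor: RK for p sums}. The difference is in the converses.

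\textbf{How the paper does the converses.} It never proves your ``first formula''. It applies the two forward implications once more, to $T^*$ and to $S^*$. This shows that $T^{**}$ is $(p,q)$-convex and $S^{**}$ is $(p,q)$-concave. It then restricts through $T^{**}J_E=J_XT$ and $S^{**}J_X=J_ES$, using that $J_X$ is a lattice isometry and so preserves $q$-sums and norms.

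\textbf{Your converse for (2).} You norm $(\|Sx_i\|)_i$ by some $(u_i^*)\in B_{\ell_{p^*}^n(E^*)}$, pass to $S^*$, and apply \Cref{prop: Holder in BL}. This is complete, and in fact shorter than the bidual detour.

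\textbf{Your converse for (1).} Here you need the norming identity
$\|(\sum_i|x_i|^q)^{1/q}\|=\sup\{\sum_i x_i^*(x_i)\}$, the supremum taken over $x_i^*$ whose $q^*$-sum lies in $B_{X^*}$. This identifies the dual of $X^n$ under the $q$-sum norm isometrically with $n$-tuples in $X^*$ under the $q^*$-sum norm. That is genuinely more than the paper uses. What it buys you is a bidual-free proof and a duality statement of independent interest. What the paper's route buys is that it needs nothing beyond \Cref{prop: Holder in BL} and \Cref{cor: RK for p sums}.

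\textbf{On proving that formula.} Your primary $C(K)$ route stalls exactly at the step you flagged. If you extend each $h_i\mu$ separately from $I_e$ to $X$, for instance by Hahn--Banach under $z\mapsto x^*(|z|)$, you only get $|x_i^*|\le x^*$ for each $i$. That loses a factor $n^{1/q^*}$ in the $q^*$-sum.

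What is needed is a joint extension, which is your alternative, and it works as follows.
\begin{itemize}
\item Put $N(z_1,\ldots,z_n)=\|(\sum_i|z_i|^q)^{1/q}\|$. This is a norm on $X^n$, by the Minkowski inequality transported through the functional calculus.
\item By Hahn--Banach, take $\Lambda$ with $\|\Lambda\|\le 1$ and $\Lambda(x_1,\ldots,x_n)=N(x_1,\ldots,x_n)$.
\item Set $x_i^*(z)=\Lambda(0,\ldots,z,\ldots,0)$. Each $x_i^*$ is bounded by $\|z\|$.
\item By \Cref{cor: RK for p sums}, for $x\in X_+$ you get $(\sum_i|x_i^*|^{q^*})^{1/q^*}(x)=\sup\{\Lambda(z_1,\ldots,z_n):(\sum_i|z_i|^q)^{1/q}\le x\}\le\|x\|$.
\end{itemize}
No $\varepsilon$-approximation is needed, and the $C(K)$ construction becomes superfluous. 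With this step in place, your proposal is a correct proof.
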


\begin{proof}
    Let us consider first a $(p,q)$-convex operator $T:E\rightarrow X$, and fix $x_1^*,\ldots, x_n^*\in X^*$. Then, for every sequence $(u_i)_{i=1}^n\in S_{\ell_p^n(E)}$, using \Cref{prop: Holder in BL} we have that
    \begin{align*}
        \sum_{i=1}^nT^*x_i^*(u_i)&= \sum_{i=1}^nx_i^*(Tu_i)\leq \intoo[3]{\sum_{i=1}^n |x_i^*|^{q^*}}^\frac{1}{q^*} \intoo[4]{\intoo[3]{\sum_{i=1}^n |Tu_i|^q}^\frac{1}{q}}\\
        & \leq \norm[3]{\intoo[3]{\sum_{i=1}^n |x_i^*|^{q^*}}^\frac{1}{q^*}} \norm[3]{\intoo[3]{\sum_{i=1}^n |Tu_i|^q}^\frac{1}{q}}\\
        & \leq \norm[3]{\intoo[3]{\sum_{i=1}^n |x_i^*|^{q^*}}^\frac{1}{q^*}} K^{(p,q)}(T) \intoo[3]{\sum_{i=1}^n \|u_i\|^p}^\frac{1}{p} = K^{(p,q)}(T) \norm[3]{\intoo[3]{\sum_{i=1}^n |x_i^*|^{q^*}}^\frac{1}{q^*}}.
    \end{align*}
    Taking the supremum over all $(u_i)_{i=1}^n\in S_{\ell_p^n(E)}$ we conclude that $T^*$ is $(p^*,q^*)$-concave with constant $K^{(p,q)}(T)$.\\

    Next, let us consider a $(p,q)$-concave operator $S:X\rightarrow E$ and $u_1^*,\ldots, u_n^*\in E^*$. Then, for any $x\in X_+$, we know from \Cref{cor: RK for p sums} that
    \[\intoo[3]{\sum_{i=1}^n |S^*u_i^*|^{q^*}}^\frac{1}{q^*}(x) =  \sup\cbr{\sum_{i=1}^n S^*u_i^*(x_i): x_1,\ldots,x_n\in X, \intoo[3]{\sum_{i=1}^n |x_i|^{q}}^\frac{1}{q}\leq x}.\]
    Therefore, let us fix $x_1,\ldots,x_n\in X$ such that $\intoo{\sum_{i=1}^n |x_i|^{q}}^\frac{1}{q}\leq x$. Then,
    \begin{align*}
        \sum_{i=1}^n S^*u_i^*(x_i) & = \sum_{i=1}^n u_i^*(Sx_i) \leq \intoo[3]{\sum_{i=1}^n \|u_i^*\|^{p^*}}^\frac{1}{p^*} \intoo[3]{\sum_{i=1}^n \|Sx_i\|^p}^\frac{1}{p}\\
        &\leq \intoo[3]{\sum_{i=1}^n \|u_i^*\|^{p^*}}^\frac{1}{p^*} K_{(p,q)}(S) \norm[3]{\intoo[3]{\sum_{i=1}^n |x_i|^q}^\frac{1}{q}} \\
        &\leq K_{(p,q)}(S) \intoo[3]{\sum_{i=1}^n \|u_i^*\|^{p^*}}^\frac{1}{p^*} \|x\|.
    \end{align*}
    Taking the supremum over all the possible choices of $x_1,\ldots,x_n\in X$ we conclude that $S^*$ is $(p^*,q^*)$-convex with constant $K_{(p,q)}(S)$.\\

    Now, let $T:E\rightarrow X$ and assume that $T^*$ is $(p^*,q^*)$-concave. Then, $T^{**}$ is $(p,q)$-convex with constant $K_{(p^*,q^*)}(T^*)$, and so is $T^{**}J_E$, where $J_E:E\rightarrow E^{**}$ denotes the canonical isometric embedding of $E$ into its bidual. Since $T^{**}J_E=J_XT$ and $J_X$ is a lattice isometry from $X$ into $J_X(X)\subseteq X$, it follows from \Cref{rem: facts about convexity}(7) that $T=J_X^{-1}T^{**}J_E$ is $(p,q)$-convex with constant $K_{(p^*,q^*)}(T^*)$.\\

    Similarly, if $S:X\rightarrow E$ is such that $S^*$ is $(p^*,q^*)$-convex, then $S^{**}$ is $(p,q)$-concave with constant $K^{(p^*q^*)}(S^*)$, and so is $S=J_E^{-1}S^{**}J_X$.
\end{proof}

A similar statement holds for upper and lower $p$-estimates. To prove it, we will need the following lemma.

\begin{lem}\label{lem: orthogonal and disjoint system}
    Let $x_1^*,\ldots,x_n^*\in X^*_+$ be non-zero pairwise disjoint functionals, $x_1,\ldots,x_n\in X_+$ and $\delta>0$. Then, there exist pairwise disjoint $z_i\in[0,x_i]$ such that $x_i^*(z_i)\geq x_i^*(x_i)-\delta$ and $x_j^*(z_i)\leq \delta$ for every $j\neq i$.
\end{lem}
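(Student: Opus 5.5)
Since the $x_j^*$ are pairwise disjoint positive functionals, I will use the Riesz--Kantorovich formula for infima: for $x\in X_+$,
\[(x_i^*\wedge x_j^*)(x)=\inf\{x_i^*(y)+x_j^*(x-y): y\in[0,x]\}=0 .\]
More generally, for each $i$, the functional $x_i^*\wedge \sum_{j\neq i}x_j^*$ also vanishes. Indeed, disjointness of $x_i^*$ from each $x_j^*$, $j\neq i$, gives disjointness from their sum, because the disjoint complement of an element is a linear subspace (indeed an ideal) in $X^*$. Applying the formula to $x_i$, we obtain $y_i\in[0,x_i]$ such that
\[x_i^*(x_i-y_i)+\sum_{j\ne i}x_j^*(y_i)\le \eta\]
for a small $\eta>0$ to be chosen. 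Then $w_i:=x_i-y_i$ satisfies $x_i^*(w_i)\ge x_i^*(x_i)-\eta$ and $x_j^*(w_i)\le \eta$ for $j\ne i$. So each $w_i$ already has the required "almost orthogonality", but the $w_i$ need not be disjoint.

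To force disjointness, I will pass to a disjoint modification. A natural candidate is
\[z_i:=\Bigl(w_i-\sum_{j\ne i} N w_j\Bigr)^+ \wedge x_i ,\]
or, more simply, the Gram--Schmidt-like construction $z_i:=\bigl(w_i - \bigvee_{j\ne i} w_j\bigr)^+$. The choice $z_i=(w_i-\bigvee_{j\neq i}w_j)^+$ is disjoint across $i$: if $z_i(t)>0$ and $z_k(t)>0$ in a $C(K)$ representation of the principal ideal generated by $\sum x_i$, then $w_i>w_k$ and $w_k>w_i$, which is impossible. Moreover $0\le z_i\le w_i\le x_i$ and $x_j^*(z_i)\le x_j^*(w_i)\le\eta$, so two of the three conditions are automatic.

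It remains to estimate $x_i^*(z_i)$ from below. Since $w_i-z_i=w_i\wedge\bigvee_{j\ne i}w_j\le \sum_{j\ne i} w_j$, we get
\[x_i^*(w_i-z_i)\le \sum_{j\neq i}x_i^*(w_j)\le (n-1)\eta .\]
Hence $x_i^*(z_i)\ge x_i^*(x_i)-n\eta$. Choosing $\eta=\delta/n$ finishes the proof.

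The only step requiring care is the first: justifying that $x_i^*\wedge\sum_{j\ne i}x_j^*=0$ and invoking the Riesz--Kantorovich infimum formula (cf. \cite[Corollary 1.3.4]{MN}) to produce the $y_i$. Everything after that is elementary lattice algebra, checkable pointwise via Kakutani's representation of a principal ideal, as done repeatedly in \Cref{sec: preliminaries}. The hypothesis that the functionals are nonzero is not needed for the argument.
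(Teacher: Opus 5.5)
Your argument is correct and is essentially the paper's proof. You apply Riesz--Kantorovich to $x_i^*\wedge\sum_{j\neq i}x_j^*=0$ to get almost-orthogonal pieces and then disjointify with a positive part; your $\bigvee_{j\neq i}w_j$ plays the role of the paper's $\sum_{j\neq i}y_j$ and gives the same $(n-1)\eta$ loss.

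There is one labeling slip. From the inequality you display, it is $y_i$ itself, not $x_i-y_i$, that satisfies $x_i^*(y_i)\geq x_i^*(x_i)-\eta$ and $x_j^*(y_i)\leq\eta$. So either set $w_i:=y_i$, or instantiate your Riesz--Kantorovich formula as $x_i^*(y_i)+\sum_{j\neq i}x_j^*(x_i-y_i)\leq\eta$, after which $w_i=x_i-y_i$ is correct.
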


\begin{proof}
    Since $x_i^*\wedge (\sum_{j\neq i}x_j^*)=0$ for every $i$, Riesz--Kantorovich formulas imply that there exists a vector $y_i\in [0,x_i]$ such that $x_i^*(x_i-y_i)+ \sum_{j\neq i}x_j^*(y_i)\leq \frac{\delta}{n}$, so $x_i^*(y_i)\geq x_i^*(x_i)-\frac{\delta}{n}$, and $x_k^*(y_i)\leq \sum_{j\neq i}x_j^*(y_i)\leq \frac{\delta}{n}$ for every $k\neq i$. Put $z_i=(y_i-\sum_{j\neq i}y_j)_+\in [0,x_i]$ for every $i$, so that $y_i-\sum_{j\neq i}y_j \leq z_i\leq y_i$ and thus $x_i^*(z_i)\geq x_i^*(x_i)-\delta$ and $x_k^*(z_i)\leq x_k^*(y_i)\leq \delta$ for every $k\neq i$. Note that if $i\neq k$, then $0\leq z_i\wedge z_k\leq (y_i-y_k)_+\wedge (y_k-y_i)_+=0$, so the vectors $z_1,\ldots,z_n$ are pairwise disjoint.
\end{proof}

\begin{thm}\label{thm: duality upe lpe}
    Let $X$ be a Banach lattice and $1\leq  p\leq \infty$. Then:
    \begin{enumerate}
        \item $X$ satisfies an upper $p$-estimate if and only if $X^*$ satisfies a lower $p^*$-estimate, and $K_{(\downarrow p^*)}(X^*)=K^{(\uparrow p)}(X)$.
        \item $X$ satisfies a lower $p$-estimate if and only if $X^*$ satisfies an upper $p^*$-estimate, and $K^{(\uparrow p^*)}(X^*)=K_{(\downarrow p)}(X)$.
    \end{enumerate}
\end{thm}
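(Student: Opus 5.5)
We will prove four inequalities: $K_{(\downarrow p^*)}(X^*)\leq K^{(\uparrow p)}(X)$, $K^{(\uparrow p)}(X)\leq K_{(\downarrow p^*)}(X^*)$, $K^{(\uparrow p^*)}(X^*)\leq K_{(\downarrow p)}(X)$ and $K_{(\downarrow p)}(X)\leq K^{(\uparrow p^*)}(X^*)$. In each case we need only consider positive pairwise disjoint elements, since $|x_i|$ are disjoint whenever $x_i$ are and norms depend only on $|x_i|$.

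\emph{Upper estimate of $X$ gives lower estimate of $X^*$.} Let $x_1^*,\ldots,x_n^*\in X^*_+$ be disjoint and nonzero, and let $\eps>0$. Choose $x_i\in X_+$ with $\|x_i\|\leq 1$ and $x_i^*(x_i)\geq\|x_i^*\|-\eps$. Pick scalars $b_i\geq 0$ with $\sum b_i^p=1$ and $\sum b_i\|x_i^*\|=(\sum\|x_i^*\|^{p^*})^{1/p^*}$. Apply \Cref{lem: orthogonal and disjoint system} to $b_ix_i$ with a small $\delta$; this produces disjoint $z_i\in[0,b_ix_i]$. Using the upper $p$-estimate,
\[\sum_i b_i\|x_i^*\|-O(\eps+n\delta)\leq \sum_i x_i^*(z_i)\leq \Big(\sum_j x_j^*\Big)\Big(\sum_i z_i\Big)\leq \Big\|\sum_j x_j^*\Big\|\, K^{(\uparrow p)}(X)\Big(\sum_i b_i^p\Big)^{1/p}.\]
Letting $\eps,\delta\to0$ gives the claim. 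This disjointification is the main obstacle, and it is exactly what \Cref{lem: orthogonal and disjoint system} provides.

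\emph{Lower estimate of $X$ gives upper estimate of $X^*$.} Let $x_i^*\geq 0$ be disjoint. Their sum equals $\bigvee x_i^*$, so by Riesz--Kantorovich its norm is approximated by $\sum_i x_i^*(z_i)$, where $z_i\geq 0$ and $\|\sum z_i\|\leq 1$ (and $z_i$ need not be disjoint). Apply \Cref{lem: orthogonal and disjoint system} to replace the $z_i$ by disjoint $w_i\leq z_i$ with $x_i^*(w_i)\geq x_i^*(z_i)-\delta$. Then $\|\sum w_i\|\leq\|\sum z_i\|\leq 1$, and by Hölder and the lower $p$-estimate,
\[\sum_i x_i^*(w_i)\leq\Big(\sum_i\|x_i^*\|^{p^*}\Big)^{1/p^*}\Big(\sum_i\|w_i\|^p\Big)^{1/p}\leq K_{(\downarrow p)}(X)\Big(\sum_i\|x_i^*\|^{p^*}\Big)^{1/p^*}.\]

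\emph{Reverse implications.} Apply the two implications just proved to $X^*$ to obtain the estimate on $X^{**}$ with the same constant. Then restrict to $J_X(X)$, which is a lattice-isometric copy of $X$ and in which disjointness is preserved, as in the end of the proof of \Cref{thm: duality convexity concavity}. For example, if $X^*$ satisfies a lower $p^*$-estimate, then by the second implication applied to $X^*$ the bidual $X^{**}$ satisfies an upper $p$-estimate with constant $K_{(\downarrow p^*)}(X^*)$, and hence so does $X$. The argument for the remaining direction is symmetric. The endpoint cases $p=1$ or $p=\infty$ need only the obvious modifications of the Hölder step.
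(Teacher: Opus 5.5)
Your proposal is correct and follows essentially the same route as the paper. Both arguments disjointify with \Cref{lem: orthogonal and disjoint system} to prove the two implications from $X$ to $X^*$, and both obtain the converse implications by applying these to $X^*$ and restricting from $X^{**}$ to $J_X(X)$. The differences are cosmetic. In the first implication, the paper scales the disjoint $z_i$ by arbitrary $a_i\in B_{\ell_p^n}$ after applying the lemma and only then takes the supremum. In the second, it applies the lemma with $x_i=z$ for all $i$, so $\sum_i z_i=\bigvee_i z_i\leq z$ holds directly, whereas you first split $z$ via the Riesz--Kantorovich formula.
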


\begin{proof}
    We start proving that if $X$ satisfies an upper $p$-estimate, then $X^*$ satisfies a lower $p^*$-estimate. To do so, let $x_1^*,\ldots,x_n^*\in X^*_+$ be disjoint non-zero functionals. Fix $\varepsilon>0$ and choose positive $x_1,\ldots,x_n\in B_X$ such that $x_i^*(x_i)\geq (1-\varepsilon)\|x_i^*\|$ for every $i=1,\ldots,n$. Applying \Cref{lem: orthogonal and disjoint system} with $\delta = \varepsilon \min_{1\leq j\leq n}\|x_j^*\|$ we can find pairwise disjoint $z_i\in [0,x_i]$ such that $x_i^*(z_i)\geq (1-2\varepsilon)\|x_i^*\|$. Let us consider arbitrary positive scalars $(a_i)_{i=1}^n\in B_{\ell_{p}^n}$. Since $X$ satisfies an upper $p$-estimate, it follows that
    \begin{align*}
        \sum_{i=1}^n a_i\|x_i^*\| & \leq \frac{1}{1-2\varepsilon} \sum_{i=1}^n  a_i x_i^*(z_i)\leq \frac{1}{1-2\varepsilon} \sum_{i=1}^n  \intoo[3]{\sum_{j=1}^n  x_j^*}(a_iz_i)\\
        & \leq \frac{1}{1-2\varepsilon}   \norm[3]{\sum_{j=1}^n  x_j^*}\norm[3]{\sum_{i=1}^na_iz_i} \leq  \frac{K^{(\uparrow p)}(X)}{1-2\varepsilon} \norm[3]{\sum_{j=1}^n  x_j^*}.
    \end{align*}
    Recall that
    \[\intoo[3]{\sum_{i=1}^n \|x_i^*\|^{p^*}}^\frac{1}{p^*}=\sup \cbr{\sum_{i=1}^n a_i\|x_i^*\|: a_i\geq 0, (a_i)_{i=1}^n\in B_{\ell_{p}^n}},\]
    so 
    \[\intoo[3]{\sum_{i=1}^n \|x_i^*\|^{p^*}}^\frac{1}{p^*}\leq \frac{K^{(\uparrow p)}(X)}{1-2\varepsilon} \norm[3]{\sum_{j=1}^n  x_j^*}\]
    for every $\varepsilon>0$, and therefore $X^*$ satisfies a lower $p^*$-estimate with $K_{(\downarrow p^*)}(X^*)\leq K^{(\uparrow p)}(X)$.\\

    In particular, if $X^*$ satisfies an upper $p^*$-estimate, it follows that $X^{**}$ satisfies a lower $p$-estimate, and therefore $X$ satisfies a lower $p$-estimate too. Moreover, $K_{(\downarrow p)}(X)\leq K^{(\uparrow p^*)}(X^*)$.\\

    Next, we prove that if $X$ satisfies a lower $p$-estimate, then $X^*$ satisfies an upper $p^*$-estimate. Again, fix disjoint non-zero functionals $x_1^*,\ldots,x_n^*\in X^*_+$ and $\varepsilon>0$, and choose a positive $z\in B_X$ such that $\|\sum_{i=1}^n  x_i^*\|\leq \sum_{i=1}^n x_i^*(z) +\varepsilon$. Using \Cref{lem: orthogonal and disjoint system} with $x_i=z$ for every $i=1,\ldots,n$ and $\delta=\frac{\varepsilon}{n}$, we obtain pairwise disjoint vectors $z_1,\ldots,z_n\in [0,z]$ such that $x_i^*(z_i)\geq x_i^*(z)-\frac{\varepsilon}{n}$. Therefore, the lower $p$-estimate of $X$ yields
    \begin{align*}
        \norm[3]{\sum_{j=1}^n  x_j^*} & \leq \sum_{i=1}^n x_i^*(z) +\varepsilon\leq \sum_{i=1}^n x_i^*(z_i) +2\varepsilon\leq \sum_{i=1}^n \|x_i^*\|\|z_i\| +2\varepsilon\\
        & \leq \intoo[3]{\sum_{i=1}^n \|x_i^*\|^{p^*}}^\frac{1}{p^*} \intoo[3]{\sum_{i=1}^n \|z_i\|^{p}}^\frac{1}{p} + 2\varepsilon\\
        & \leq K_{(\downarrow p)}(X)  \intoo[3]{\sum_{i=1}^n \|x_i^*\|^{p^*}}^\frac{1}{p^*}\norm[3]{\sum_{j=1}^n z_i} + 2\varepsilon\\
        & \leq K_{(\downarrow p)}(X)  \intoo[3]{\sum_{i=1}^n \|x_i^*\|^{p^*}}^\frac{1}{p^*} + 2\varepsilon.
    \end{align*}
    Taking $\varepsilon$ arbitrarily small we recover that $X^*$ satisfies an upper $p^*$-estimate with $K^{(\uparrow p^*)}(X^*)\leq K_{(\downarrow p)}(X)$.\\

    Finally, when $X^*$ satisfies a lower $p^*$-estimate, we get that $X^{**}$ satisfies an upper $p$-estimate, and it passes to $X$. Additionally, $K^{(\uparrow p)}(X)\leq K_{(\downarrow p^*)}(X^*)$.
\end{proof}

\subsection{Basic properties}\label{sec: basic properties}
In this section, we will study the connections between convexity and concavity properties, as well as with other geometrical properties and operator theory notions. The duality tools developed in the previous subsection will allow us to simplify many of the proofs, as we will only have to deal with the simplest case. Take as an example the following result:

\begin{thm}\label{thm: p-conv implies q-conv}
    Let $X$ be a Banach lattice and $E$ a Banach space. Then:
    \begin{enumerate}
        \item If $T:E\rightarrow X$ is $p$-convex for some $1<p\leq \infty$, then $T$ is $q$-convex for every $1\leq q\leq p$. Moreover, $K^{(q)}(T)$ is non-decreasing and continuous on any interval where it is finite.
        \item If $T:X\rightarrow E$ is $p$-concave for some $1\leq p<\infty$, then $T$ is $q$-concave for every $p\leq q\leq \infty$. Moreover, $K_{(q)}(T)$ is non-increasing and continuous on any interval where it is finite.
    \end{enumerate}
\end{thm}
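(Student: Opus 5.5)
Using \Cref{thm: duality convexity concavity}, part (2) will follow from part (1): if $T:X\rightarrow E$ is $p$-concave, then $T^*$ is $p^*$-convex with $K^{(p^*)}(T^*)=K_{(p)}(T)$. Part (1) then makes $T^*$ $q^*$-convex for every $q^*\leq p^*$, i.e.\ $q\geq p$, and applying duality again gives $K_{(q)}(T)=K^{(q^*)}(T^*)$. The monotonicity and continuity properties transfer, because $q\mapsto q^*$ is a decreasing homeomorphism. So I will concentrate on (1).

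For (1), I plan to interpolate between $1$-convexity and $p$-convexity. By \Cref{rem: facts about convexity}(2), $T$ is $1$-convex with $K^{(1)}(T)=\|T\|$. Fix $1<q<p$ and let $\theta\in(0,1)$ satisfy $\frac1q=\frac{\theta}{1}+\frac{1-\theta}{p}$. Given $u_1,\ldots,u_n\in E$, I cannot immediately apply \Cref{lem: geometric mean}(2) to the vectors $Tu_i$, because the two factors on the right would involve different weights of the $\|u_i\|$. The fix is to introduce weights. Normalize so that $\sum\|u_i\|^q=1$, with all $u_i\neq0$. Put $v_i=u_i/\|u_i\|^{\alpha}$ and $a_i=\|u_i\|^{\beta}$, chosen so that $a_i|Tv_i|^r$ reproduces $|Tu_i|^r$ in the right homogeneity. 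Concretely, I want $\bigl(\sum|Tu_i|^q\bigr)^{1/q}=\bigl(\sum a_i|Tw_i|^q\bigr)^{1/q}$ with $a_i=\|u_i\|^q$ and $w_i=u_i/\|u_i\|$. Then \Cref{lem: geometric mean}(2), applied with exponents $1<q<p$ and positive weights $a_i$, gives
\[\Bigl\|\Bigl(\sum a_i|Tw_i|^q\Bigr)^{1/q}\Bigr\|\leq \Bigl\|\sum a_i|Tw_i|\Bigr\|^{\theta}\,\Bigl\|\Bigl(\sum a_i|Tw_i|^p\Bigr)^{1/p}\Bigr\|^{1-\theta}.\]
I bound the first factor by $\|T\|\sum a_i=\|T\|$ using the triangle inequality. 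For the second factor, I pull the weights inside: $a_i|Tw_i|^p=|T(a_i^{1/p}w_i)|^p$, so $p$-convexity bounds it by $K^{(p)}(T)\bigl(\sum a_i\bigr)^{1/p}=K^{(p)}(T)$. Hence $K^{(q)}(T)\leq \|T\|^{\theta}K^{(p)}(T)^{1-\theta}$. The case $p=\infty$ goes the same way, with the $\infty$-sum $\bigvee_i|Tw_i|$ in place of the $p$-sum; I expect the weights to be harmless there as well, via $a_i\leq1$.

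For monotonicity I apply the same argument with $1$ replaced by any $r<q<p$ and with $r$-convexity in place of $1$-convexity. This gives the log-convexity type bound $K^{(q)}\leq (K^{(r)})^{\theta}(K^{(p)})^{1-\theta}$, where $\frac1q=\frac\theta r+\frac{1-\theta}p$. Non-decreasing should follow directly: take $r<q$ with $u_i$ scaled so that the $\ell_r$ and $\ell_q$ norms are compared. Since $\ell_q$-sums are dominated by $\ell_r$-sums, $K^{(r)}\leq K^{(q)}\cdot\sup\bigl(\|\cdot\|_{\ell_q}/\|\cdot\|_{\ell_r}\bigr)$, hmm; a cleaner route is the same weighting trick using the $q$-convexity estimate. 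Continuity is where I expect the main obstacle. Writing $\phi(1/q)=\log K^{(q)}(T)$, the interpolation inequality says that $\phi$ is convex in $1/q$ on the interval where it is finite, so it is continuous in the interior. At the endpoint $q=p$ I need separate care: lower semicontinuity there follows because each finite-$n$ inequality is continuous in $q$, so the supremum over $n$ is lower semicontinuous, and together with monotonicity this gives left-continuity at $p$.
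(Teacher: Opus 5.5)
Your core argument is the paper's. You rewrite $K^{(q)}(T)$ with weights $a_i=\|u_i\|^q$ and unit vectors $w_i$, apply \Cref{lem: geometric mean}(2), and absorb $a_i^{1/p}$ into the vectors. This gives $K^{(q)}(T)\le \|T\|^{\theta}K^{(p)}(T)^{1-\theta}$, and with $r$ in place of $1$ it gives the log-convexity bound. Part (2) then follows by duality exactly as you describe. The case $p=\infty$ is indeed harmless, since the second factor becomes $\|\bigvee_i|Tw_i|\|\le K^{(\infty)}(T)$.

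What you do not actually prove is monotonicity. Your first attempt cannot work. For $s<q$, both the left and the right side of the $s$-convexity inequality dominate the corresponding sides of the $q$-convexity inequality, so comparing $\ell_s$- and $\ell_q$-sums pointwise yields nothing. The missing ingredient is \Cref{rem: facts about convexity}(1), namely $\|T\|=K^{(1)}(T)\le K^{(q)}(T)$. Inserted into your own bound with $q$ in the role of $p$, it gives $K^{(s)}(T)\le \|T\|^{\theta}K^{(q)}(T)^{1-\theta}\le K^{(q)}(T)$, which is how the paper argues. Alternatively, your ``weighting trick'' does work once made explicit through the weighted power-mean inequality. For $a_i\ge 0$ with $\sum_i a_i\le 1$ one has $(\sum_i a_i|t_i|^s)^{1/s}\le(\sum_i a_i|t_i|^q)^{1/q}$. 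Hence $\|(\sum_i a_i|Tw_i|^s)^{1/s}\|\le\|(\sum_i|T(a_i^{1/q}w_i)|^q)^{1/q}\|\le K^{(q)}(T)$. This gives $K^{(s)}(T)\le K^{(q)}(T)$ directly, and even makes the interpolation unnecessary for the first assertion of (1).

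On continuity you are more careful than the paper. The paper infers continuity on the closed interval $[\frac1p,1]$ from convexity of $\alpha\mapsto\log K^{(1/\alpha)}(T)$. Convexity only gives continuity in the interior and upper semicontinuity at the endpoints. At $q=p$, monotonicity also only gives $K^{(q)}(T)\le K^{(p)}(T)$, so a lower-semicontinuity argument like yours is genuinely needed there. State it as a supremum over all finite families $(u_i)$, not just over $n$. Each ratio $\|(\sum_i|Tu_i|^q)^{1/q}\|/(\sum_i\|u_i\|^q)^{1/q}$ is continuous in $q$ by the estimate $|V_{\overline{x}}f|\le\|f\|_\infty\bigvee_i|x_i|$ of \Cref{thm: functional calculus}, since the $\ell_q^n$-norms converge uniformly on $S_{\ell_\infty^n}$. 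You also skipped the endpoint $q=1$. There lower semicontinuity is again automatic, and upper semicontinuity follows from $K^{(q)}(T)\le\|T\|^{\theta}K^{(p)}(T)^{1-\theta}$ with $\theta\to1$ as $q\to1$.
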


\begin{proof}
    By \Cref{thm: duality convexity concavity}, it suffices to prove $(1)$, since $(2)$ follows by duality.\\

    Let $T:E\rightarrow X$ be an operator, and note that if $K^{(s)}(T)<\infty$, then
    \begin{align*}
        K^{(s)}(T)&=\sup \cbr[3]{ \norm[3]{\intoo[3]{\sum_{i=1}^n |Tx_i|^{s}}^\frac{1}{s}}: n\in \N, (x_i)_{i=1}^n\subseteq E, \sum_{i=1}^n \|x_i\|^s\leq 1}\\
        &=\sup \cbr[3]{ \norm[3]{\intoo[3]{\sum_{i=1}^n a_i|Ty_i|^{s}}^\frac{1}{s}}: n\in \N, (y_i)_{i=1}^n\subseteq E, \|y_i\|=1, a_i\geq 0, \sum_{i=1}^n a_i\leq 1}.
    \end{align*}
    Applying \Cref{lem: geometric mean}(2) for any choice $(y_i)_{i=1}^n\subseteq S_E$ and scalars $a_i\geq 0$ with $\sum_{i=1}^n a_i\leq 1$, and taking the supremum of all such quantities, we conclude that $K^{(s)}(T)\leq K^{(r)}(T)^\theta K^{(q)}(T)^{1-\theta}$ whenever $1\leq r<s<q\leq \infty$ and $\frac{1}{s}=\frac{\theta}{r}+\frac{1-\theta}{q}$, provided $K^{(r)}(T), K^{(q)}(T)<\infty$. In particular, we know that $K^{(1)}(T)=\|T\|$ and $K^{(p)}(T)<\infty$, so $K^{(s)}(T)<\infty$ for every $1\leq s \leq p$. Moreover, since $K^{(1)}(T)=\|T\|\leq K^{(q)}(T)$ for every $1<q\leq p$, we see that $K^{(s)}(T)\leq K^{(q)}(T)$ whenever $s<q$. Finally, we observe that the function $\varphi(\alpha)=\log K^{\intoo[0]{\frac{1}{\alpha}}}(T)$ is convex (and hence, continuous) in $\intcc[1]{\frac{1}{p},1}$, so $K^{(q)}(T)$ is continuous in $[1,p]$.
\end{proof}

Another remarkable fact is that upper $p$-estimates are just an equivalent formulation of $(p,\infty)$-convexity for Banach lattices.

\begin{thm}\label{thm: upe equivalent to mixed convexity}
    Let $X$ be a Banach lattice and $1\leq p\leq \infty$. Then:
    \begin{enumerate}
        \item $X$ satisfies an upper $p$-estimate if and only if $X$ is $(p,\infty)$-convex, and $K^{(p,\infty)}(X)=K^{(\uparrow p)}(X)$.
        \item $X$ satisfies a lower $p$-estimate if and only if $X$ is $(p,1)$-concave, and $K_{(p,1)}(X)=K_{(\downarrow p)}(X)$.
    \end{enumerate}
\end{thm}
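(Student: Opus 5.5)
For (1) I would prove the two inequalities between the constants separately, and then get (2) from (1) by duality.

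\emph{Easy direction of (1).} Suppose $X$ is $(p,\infty)$-convex. For pairwise disjoint $x_1,\ldots,x_n$, \Cref{lem: disjoint p sums} gives $\sum_{i=1}^n |x_i|=\bigvee_{i=1}^n|x_i|$. Hence \eqref{eq: upe} follows directly from the $(p,\infty)$-convexity inequality, and $K^{(\uparrow p)}(X)\leq K^{(p,\infty)}(X)$.

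\emph{Hard direction of (1).} Now assume $X$ satisfies an upper $p$-estimate with constant $K$. Fix $x_1,\ldots,x_n\in X$. By monotonicity of the norm we may replace each $x_i$ by $|x_i|$, so we assume $x_i\geq 0$. Put $y=\bigvee_{i=1}^n x_i$. The difficulty is that the $x_i$ are not disjoint, and $X$ need not have enough band projections to split $y$ into disjoint pieces.

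To get band projections I would pass to the bidual, which is Dedekind complete. Applying \Cref{thm: duality upe lpe}(1) to $X$ and then \Cref{thm: duality upe lpe}(2) to $X^*$ shows that $X^{**}$ satisfies an upper $p$-estimate with constant $K$. Since $J_X$ is a lattice isometry, it suffices to bound $\|y\|$ computed in $X^{**}$.

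\emph{Disjointification in $X^{**}$.} Fix $\varepsilon\in(0,1)$. Let $B_i$ be the band generated by $(x_i-(1-\varepsilon)y)_+$, with band projection $R_i$. Define disjoint band projections by $P_1=R_1$ and, for $i\geq 2$, $P_i$ the projection onto $B_i\cap(B_1+\cdots+B_{i-1})^d$. Let $Q=I-\sum_i P_i$, the projection onto $(B_1+\cdots+B_n)^d$. I would then check three things.
\begin{enumerate}
\item On $B_i$ we have $x_i\geq(1-\varepsilon)y$, because the negative part $(x_i-(1-\varepsilon)y)_-$ is disjoint from $B_i$. Therefore $P_ix_i\geq(1-\varepsilon)P_iy$.
\item $Q$ annihilates every $(x_i-(1-\varepsilon)y)_+$, so $Qx_i\leq(1-\varepsilon)Qy$ for every $i$. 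Band projections are lattice homomorphisms, so $Qy=\bigvee_i Qx_i\leq(1-\varepsilon)Qy$, which forces $Qy=0$.
\item Combining the two, $(1-\varepsilon)y=(1-\varepsilon)\sum_iP_iy\leq\sum_iP_ix_i$.
\end{enumerate}
The elements $P_ix_i$ are pairwise disjoint and satisfy $\|P_ix_i\|\leq\|x_i\|$. The upper $p$-estimate of $X^{**}$ then gives $(1-\varepsilon)\|y\|\leq K\bigl(\sum_i\|x_i\|^p\bigr)^{1/p}$. Letting $\varepsilon\to 0$ yields $K^{(p,\infty)}(X)\leq K^{(\uparrow p)}(X)$.

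The main obstacle I expect is exactly this disjointification step, in particular verifying the band-projection inequalities and that $Qy=0$. The detour through $X^{**}$, with its constants preserved, is what makes it possible.

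\emph{Part (2).} I would derive it by duality. If $X$ satisfies a lower $p$-estimate, then $X^*$ satisfies an upper $p^*$-estimate with the same constant by \Cref{thm: duality upe lpe}. By (1), $X^*$ is $(p^*,\infty)$-convex with that constant. \Cref{thm: duality convexity concavity}(1), applied to $T=i_{X^*}$, then makes $i_{X^{**}}$ $(p,1)$-concave, and restricting to $J_X(X)$ gives $K_{(p,1)}(X)\leq K_{(\downarrow p)}(X)$. The reverse inequality is again immediate from \Cref{lem: disjoint p sums}.
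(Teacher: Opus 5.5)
Your proof is correct and follows the paper's route: transfer the upper $p$-estimate to the order complete bidual $X^{**}$ via \Cref{thm: duality upe lpe}, disjointify $\bigvee_i x_i$ there with band projections, apply the estimate, come back to $X$ through $J_X$, and obtain (2) by duality. The only real difference is the disjointification: the paper uses the band projections $P_i$ onto the bands generated by $(\bigvee_{j\neq i}x_j-x_i)_+$ and sets $y_i=P_1\cdots P_{i-1}(I-P_i)\bigvee_j x_j$, which gives an exact decomposition $\bigvee_j x_j=\sum_i y_i$ with pairwise disjoint $y_i\in[0,x_i]$, so your $\varepsilon$-slack and the limit $\varepsilon\to 0$ are not needed.
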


\begin{proof}
    We begin by proving $(1)$.\\

    $(\Leftarrow)$ Let $x_1,\ldots,x_n\in X_+$ be disjoint elements. Then $\sum_{i=1}^n |x_i| = \bigvee_{i=1}^n |x_i|$ by \Cref{lem: disjoint p sums}, so the upper $p$-estimates inequality is just a particular instance of the $(p,\infty)$-convexity inequality, and $K^{(\uparrow p)}(X)\leq K^{(p,\infty)}(X)$.\\

    $(\Rightarrow)$ Let us first assume that $X$ is order complete, that is, that every order bounded set has a supremum in $X$. Let $x_1,\ldots,x_n\in X$. We can assume without loss of generality that they are non-negative. Recall that in an order complete Banach lattice every band is a projection band (cf. \cite[Theorem 1.2.9]{MN}), so let us denote by $z_i=\intoo{\bigvee_{j\neq i}x_j - x_i}_+$ and by $P_i=P_{z_i}$ the band projection over $B_{z_i}$, the band generated by $z_i$, for $i=1,\ldots, n$. Let $z=\bigvee_{j=1}^nx_j$, $y_1=(i_X-P_1)z$ and $y_i=P_1\ldots P_{i-1}(i_X-P_i)z$ for $2\leq i\leq n$. Note that 
    \[\bigwedge_{i=1}^n z_i= \bigwedge_{i=1}^n\intoo[3]{x_i\vee \bigvee_{j\neq i}x_j - x_i}=\bigvee_{j=1}^nx_j+\bigwedge_{i=1}^n(- x_i)=0,\]
    so $P_1\ldots P_n=P_{\bigwedge_{i} z_i}=0$. Therefore, we can iteratively check that $\sum_{i=1}^n y_i=z$. Moreover, the vectors $y_i$ are pairwise disjoint. Indeed, let $i<k$, and recall that band projections commute and the ranges of $P_i$ and $i_X-P_i$ are disjoint, so $y_i\in (i_X-P_i)(X)$ and $y_k\in P_i(X)$ must be disjoint. Finally, we want to check that $y_i\in [0, x_i]$ for every $i=1,\ldots,n$. Since $0\leq P_j\leq i_X$ for every $j$, it suffices to show that $(i_X-P_i)z\leq x_i$. Recall that $P_i$ has an explicit expression given by $P_ix=\sup_{m\in \N} x\wedge (mz_i)$ for any $x\geq 0$, so $z-x_i=z_i= z\wedge z_i\leq P_i z$, as we wanted to show. Therefore, we can apply the upper $p$-estimate condition to conclude that 
    \[\norm[3]{\bigvee_{i=1}^nx_i}=\|z\|=\norm[3]{\sum_{i=1}^n y_i}\leq K^{(\uparrow p)}(X) \intoo[3]{\sum_{i=1}^n \|y_i\|^{p}}^\frac{1}{p} \leq K^{(\uparrow p)}(X) \intoo[3]{\sum_{i=1}^n \|x_i\|^{p}}^\frac{1}{p}, \]
    and thus $K^{(p,\infty)}(X)\leq K^{(\uparrow p)}(X)$.\\

    Now, if $X$ is an arbitrary Banach lattice with upper $p$-estimates, then by \Cref{thm: duality upe lpe} $X^{**}$ also has upper $p$-estimates with the same constant. Since $X^{**}$ is order complete, by the previous part it is $(p,\infty)$-convex. As $X$ is a closed sublattice of $X^{**}$, it inherits the $(p,\infty)$-convexity.\\

    Statement $(2)$ now follows automatically by duality (Theorems \ref{thm: duality convexity concavity} and \ref{thm: duality upe lpe}): $X$ satisfies a lower $p$-estimate if and only if $X^*$ satisfies an upper $p^*$-estimate, if and only if $X^*$ is $(p^*,\infty)$-convex, if and only if $X$ is $(p,1)$-concave, and the constant remains the same throughout the process.
\end{proof}

As it was mentioned in the introduction, one of the original motivations for studying $(p,q)$-convex and $(p,q)$-concave operators was their connection with $(p,q)$-summing operators \cite{MaureyExp2425} (see also the approach followed in \cite{DJT}). Let us recall the definition of such a relevant class of operators:

\begin{defn}\label{defn: (pq)-summing operator}
    Let $1\leq p,q<\infty$. An operator $T:E\rightarrow F$ between Banach spaces is \emph{$(p,q)$-summing} if there exists a constant $K>0$ such that 
    \begin{equation}\label{eq: (p,q)-summing operator}
        \intoo[3]{\sum_{i=1}^n \|Tx_i\|^p}^\frac{1}{p} \leq K \sup_{x^*\in B_{E^*}} \intoo[3]{\sum_{i=1}^n |x^*(x_i)|^q}^\frac{1}{q}
    \end{equation}
    for every $x_1,\ldots,x_n\in E$. The best constant $K$ satisfying \eqref{eq: (p,q)-summing operator} is called the \emph{$(p,q)$-summing norm of $T$} and is denoted $\pi_{p,q}(T)$. When $p=q$, we will simply say $T$ is $p$-summing and write $\pi_p(T)$.
\end{defn}

The set $\Pi_{p,q}(E,F)$ of all $(p,q)$-summing operators from $E$ to $F$ endowed with the norm $\pi_{p,q}(\cdot)$ is a Banach space. These spaces play a fundamental role in the theory of operator ideals (cf. \cite{DJT,TJ}) and are strongly connected to Banach lattice theory, as we will see next. Note that given $x_1,\ldots,x_n\in E$ and $1\leq q<\infty$, they induce the maps
\[\fullfunction{S}{\ell_{q^*}^n}{X}{a}{\sum_{i=1}^n a_ix_i} \quad \text{and} \quad \fullfunction{S^*}{X^*}{\ell_q^n}{x^*}{(x^*(x_i))_{i=1}^n}.\]
These maps allow us to represent the right-hand side of the inequality \eqref{eq: (p,q)-summing operator}, called the \textit{weak $q$-summing norm} of the sequence $(x_i)_{i=1}^n$, in an alternative way:
\[\|(x_i)_{i=1}^n\|_{q,w} := \sup_{x^*\in B_{E^*}} \intoo[3]{\sum_{i=1}^n |x^*(x_i)|^q}^\frac{1}{q}= \|S^*\| = \|S\|= \sup_{a\in B_{\ell_{q^*}^n}}\norm[3]{\sum_{i=1}^n a_ix_i}\]
In particular, if $X$ is a Banach lattice, by \Cref{prop: p-sum as supremum} we have that for every $x_1,\ldots,x_n\in X$
\begin{equation}\label{eq: weak summing and lattice multinorms}
    \|(x_i)_{i=1}^n\|_{q,w}= \sup_{a\in B_{\ell_{q^*}^n}}\norm[3]{\sum_{i=1}^n a_ix_i}\leq \norm[3]{\intoo[3]{\sum_{i=1}^n |x_i|^q}^\frac{1}{q}}.
\end{equation}
Hence, every $(p,q)$-summing operator $T:X\rightarrow E$ from a Banach lattice into a Banach space is $(p,q)$-concave, with $K_{(p,q)}(T)\leq \pi_{p,q}(T)$. The converse is true when $X$ is a $C(K)$ space:

\begin{prop}\label{prop: weak p-summing norm in C(K)}
    Let $K$ be a compact Hausdorff space and $f_1,\ldots,f_n\in C(K)$. Then
    \[\norm[3]{\intoo[3]{\sum_{i=1}^n |f_i|^q}^\frac{1}{q}}=\|(f_i)_{i=1}^n\|_{q,w}\]
\end{prop}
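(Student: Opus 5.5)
One inequality is already available: by \eqref{eq: weak summing and lattice multinorms}, which holds in any Banach lattice, we have $\|(f_i)_{i=1}^n\|_{q,w}\leq \norm[1]{\intoo[0]{\sum_{i=1}^n |f_i|^q}^{1/q}}$. So the plan is to prove only the reverse inequality, using the concrete structure of $C(K)$. There the functional calculus is pointwise, so $\intoo[0]{\sum_{i=1}^n |f_i|^q}^{1/q}$ is the function $s\mapsto \intoo[0]{\sum_{i=1}^n |f_i(s)|^q}^{1/q}$, and its sup norm is attained at some $s_0\in K$ by compactness and continuity.

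The key step is to test the weak $q$-summing norm against the point evaluation $\delta_{s_0}\in B_{C(K)^*}$. This gives
\[\|(f_i)_{i=1}^n\|_{q,w}\geq \intoo[3]{\sum_{i=1}^n |\delta_{s_0}(f_i)|^q}^\frac{1}{q}=\intoo[3]{\sum_{i=1}^n |f_i(s_0)|^q}^\frac{1}{q}=\norm[3]{\intoo[3]{\sum_{i=1}^n |f_i|^q}^\frac{1}{q}}_\infty,\]
which closes the argument. Alternatively, one could use the formula $\sup_{a\in B_{\ell_{q^*}^n}}\|\sum_i a_if_i\|_\infty$ and swap the two suprema: for each $s$, $\sup_a\sum_i a_if_i(s)=\intoo[0]{\sum_i |f_i(s)|^q}^{1/q}$ by duality of $\ell_q^n$ and $\ell_{q^*}^n$. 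This version avoids needing the maximum to be attained.

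There is no real obstacle. The only point needing care is justifying that the lattice $q$-sum in $C(K)$ coincides with the pointwise expression. This follows from the uniqueness in \Cref{thm: functional calculus}, since pointwise evaluation defines a lattice homomorphism $\Hcal^n\to C(K)$ sending each coordinate projection $p_i$ to $f_i$.
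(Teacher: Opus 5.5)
Your proposal is correct and follows essentially the paper's argument. The inequality $\|(f_i)_{i=1}^n\|_{q,w}\leq \|(\sum_{i=1}^n |f_i|^q)^{1/q}\|$ is the general Banach lattice fact, and the reverse inequality comes from testing the weak $q$-summing norm against point evaluations $\delta_t\in B_{C(K)^*}$, which commute with the functional calculus because they are lattice homomorphisms. The paper takes the supremum over all $t\in K$ instead of using a maximizer $s_0$, which makes no difference.
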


\begin{proof}
    It suffices to prove that the left-hand side term is smaller than the right-hand side term. Since the evaluation functionals $\delta_t$ in $C(K)^*$, with $t\in K$, are normalized lattice homomorphism, they preserve functional calculus, so
    \begin{align*}
        \norm[3]{\intoo[3]{\sum_{i=1}^n |f_i|^q}^\frac{1}{q}} & =\sup_{t\in K} \intoo[3]{\sum_{i=1}^n |f_i|^q}^\frac{1}{q}(t) = \sup_{t\in K} \delta_t \intoo[3]{\intoo[3]{\sum_{i=1}^n |f_i|^q}^\frac{1}{q}}\\
        & = \sup_{t\in K} \intoo[3]{\sum_{i=1}^n |\delta_t (f_i)|^q}^\frac{1}{q} \leq \sup_{\mu\in B_{C(K)^*}} \intoo[3]{\sum_{i=1}^n |\mu (f_i)|^q}^\frac{1}{q} =\|(f_i)_{i=1}^n\|_{q,w}. \qedhere
    \end{align*}
\end{proof}

Actually, $(p,q)$-summing operators defined on $C(K)$ spaces can be used to provide a characterization of $(p,q)$-concave operators. 

\begin{thm}\label{thm: characterization (pq)-concave and (pq)-summing}
     Let $X$ be a Banach lattice, $E$ a Banach space, $1\leq q\leq p< \infty$ and $M>0$. Then $T:X\rightarrow E$ is $(p,q)$-concave with $K_{(p,q)}(T)\leq M$ if and only if for every compact Hausdorff space $K$ and every positive operator $S:C(K)\rightarrow X$, $TS:C(K)\rightarrow E$ is $(p,q)$-summing with $\pi_{p,q}(TS)\leq M\|S\|$.
\end{thm}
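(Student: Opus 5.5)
The forward implication is a direct chain of earlier results. Assume $T$ is $(p,q)$-concave with $K_{(p,q)}(T)\leq M$, and let $S:C(K)\rightarrow X$ be positive. By \Cref{rem: facts about convexity}(7), $TS$ is $(p,q)$-concave with $K_{(p,q)}(TS)\leq M\|S\|$. For $f_1,\ldots,f_n\in C(K)$, \Cref{prop: weak p-summing norm in C(K)} identifies the lattice $q$-sum with the weak $q$-summing norm:
\[\intoo[3]{\sum_{i=1}^n\|TSf_i\|^p}^\frac{1}{p}\leq M\|S\|\norm[3]{\intoo[3]{\sum_{i=1}^n|f_i|^q}^\frac{1}{q}}_\infty = M\|S\|\,\|(f_i)_{i=1}^n\|_{q,w}.\]
Hence $\pi_{p,q}(TS)\leq M\|S\|$.

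For the converse, fix $x_1,\ldots,x_n\in X$. The plan is to realize them inside a $C(K)$ through a positive map whose norm is controlled by their $q$-sum. Put $e=\intoo{\sum_{i=1}^n|x_i|^q}^{1/q}\in X_+$, assume $e\neq 0$ (otherwise all $x_i=0$), and let $I_e$ be the principal ideal with norm $\|\cdot\|_e$. Kakutani's representation gives an isometric lattice isomorphism $J:C(K)\rightarrow (I_e,\|\cdot\|_e)$ with $J\uno=e$, exactly as in the proof of \Cref{prop: Holder in BL}. Regarded as a map into $X$, $J$ is a positive operator with
\[\|J\|\leq\|e\|,\]
because $|Jf|\leq\|f\|_\infty e$; in fact $\|J\|=\|e\|$, since $\|J\uno\|=\|e\|$.

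Now take $f_i\in C(K)$ with $Jf_i=x_i$. The map $J$ is a lattice homomorphism, so it preserves functional calculus, and
\[J\intoo[3]{\sum_{i=1}^n|f_i|^q}^\frac{1}{q}=e=J\uno,\]
which gives $\intoo{\sum|f_i|^q}^{1/q}=\uno$ and therefore $\|(f_i)\|_{q,w}=1$ by \Cref{prop: weak p-summing norm in C(K)}. Applying the hypothesis to $S=J$ yields
\[\intoo[3]{\sum_{i=1}^n\|Tx_i\|^p}^\frac{1}{p}=\intoo[3]{\sum_{i=1}^n\|TJf_i\|^p}^\frac{1}{p}\leq M\|J\|\cdot 1\leq M\|e\|,\]
which is precisely the $(p,q)$-concavity inequality with constant $M$.

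The only delicate point is the normalization: the representing ideal must be generated by the $q$-sum $e$ itself rather than by, say, $\bigvee_i|x_i|$. With this choice $\|J\|$ equals $\|e\|$ and the weak $q$-norm of $(f_i)$ is exactly $1$, so no constant is lost. The other ingredient, that $J$ commutes with the functional calculus, is already available because $J$ is a lattice homomorphism.
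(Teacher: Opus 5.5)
Your proof is correct and follows the paper's argument. The forward direction goes through Remark~\ref{rem: facts about convexity}(7) and Proposition~\ref{prop: weak p-summing norm in C(K)}, and the converse represents the principal ideal generated by $e=\bigl(\sum_i|x_i|^q\bigr)^{1/q}$ as a $C(K)$ with $e$ corresponding to $\uno$, then applies the hypothesis to that positive embedding. The only cosmetic difference is that the paper first normalizes to $\|e\|=1$, whereas you carry the bound $\|J\|\leq\|e\|$ through directly.
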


\begin{proof}
    $(\Rightarrow)$ Let $K$ be a compact Hausdorff space and $S:C(K)\rightarrow X$ be a positive operator. Then, using that, by \Cref{rem: facts about convexity}(7), $TS$ is also $(p,q)$-concave with constant $M\|S\|$ of $T$, so using \Cref{prop: weak p-summing norm in C(K)} it follows that for every $f_1,\ldots,f_n \in C(K)$ we have that
    \[\intoo[3]{\sum_{i=1}^n \|TSf_i\|^p}^\frac{1}{p}  \leq  M\|S\|  \norm[3]{\intoo[3]{\sum_{i=1}^n |f_i|^q}^\frac{1}{q}} = M\|S\| \|(f_i)_{i=1}^n\|_{q,w},\]
    and hence $TS$ is $(p,q)$-summing.\\

    $(\Leftarrow)$ Let $x_1,\ldots,x_n\in X$, and assume without loss of generality that $e=\intoo{\sum_{i=1}^n |x_i|^q}^\frac{1}{q}$ has norm one. We identify $I_e$ with some $C(K)$ space using Kakutani's Representation Theorem for $AM$-spaces, and let $j:C(K)\rightarrow I_e\subseteq X$ be the corresponding lattice injection, which has norm one, since $\|jf\|\leq \|e\| \|jf\|_e=\|f\|_\infty$ for every $f\in C(K)$, and satisfies $j \uno =e$, where $\uno$ denotes the constant one function. By the assumption, $Tj$ is $(p,q)$-summing, so if $f_i\in C(K)$ are such that $x_i=jf_i$, it follows that
    \[\intoo[3]{\sum_{i=1}^n \|Tx_i\|^p}^\frac{1}{p} =\intoo[3]{\sum_{i=1}^n \|Tjf_i\|^p}^\frac{1}{p} \leq \pi_{p,q}(Tj) \norm[3]{\intoo[3]{\sum_{i=1}^n |f_i|^q}^\frac{1}{q}}_\infty  = \pi_{p,q}(Tj) \|\uno\|_\infty \leq M\|j\| =M.\]
    We conclude that $T$ is $(p,q)$-concave.
\end{proof}

This connection between $(p,q)$-concave operators and $(p,q)$-summing operators on $C(K)$-spaces can be exploited to obtain that an operator is $(p,1)$-concave if and only if it is $(p,q)$-concave for every $1\leq q<p$, or that every $(p,1)$-concave operator is $q$-concave for every $p<q\leq \infty$ (see \Cref{thm: relevant cases of concavity}), and the corresponding dual statements for convex operators (\Cref{cor: relevant cases of convexity}). However, we will postpone the proof of these facts, as it requires a generalization of Pietsch's Factorization theorem to the setting of $(p,q)$-summing operators due to Pisier \cite{Pisier86}, which fits better in the exposition of \Cref{sec: factorization function spaces}.\\

Finally, we conclude this section by stating the connection of concavity with other relevant properties of Banach lattices that reflect the behavior of the norm with respect to the order. Recall that a Banach lattice $X$ is said \textit{order continuous} if every downward directed set $A\subseteq X_+$ whose infimum is $0$ satisfies that $\inf\{\|x\|:x\in A\}=0$, and it is called a \textit{KB-space} if every monotone sequence in $B_X$ is convergent. These two properties have multiple equivalent characterizations (see for instance \cite[Section 2.4]{MN}).

\begin{lem}\label{lem: lpe implies KB space}
    Let $X$ be a Banach lattice with a lower $p$-estimate for some $1\leq p<\infty$. Then $X$ is a KB-space. In particular, $X$ is order continuous.
\end{lem}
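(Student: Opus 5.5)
Since $X$ satisfies a lower $p$-estimate, \Cref{thm: upe equivalent to mixed convexity}(2) makes $X$ $(p,1)$-concave. It is simpler, though, to work directly with the disjoint-vector inequality \eqref{eq: lpe}. The plan uses the standard characterization of KB-spaces (cf. \cite[Section 2.4]{MN}): a Banach lattice is a KB-space if and only if it contains no sublattice (equivalently, no subspace) isomorphic to $c_0$. An equivalent, more hands-on criterion is that every increasing norm-bounded sequence in $X_+$ is Cauchy. I will prove the latter directly by contradiction. Order continuity then follows, since every KB-space is order continuous \cite[Section 2.4]{MN}.

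Let $0\leq x_1\leq x_2\leq\cdots$ with $\|x_n\|\leq 1$, and suppose the sequence is not Cauchy. Passing to a subsequence, we get $\varepsilon>0$ with $\|x_{n_{k+1}}-x_{n_k}\|\geq\varepsilon$ for all $k$. Set $u_k=x_{n_{k+1}}-x_{n_k}\geq 0$. Then $\|u_k\|\geq\varepsilon$ and, for every $m$, $\sum_{k=1}^m u_k\leq x_{n_{m+1}}$, so $\|\sum_{k=1}^m u_k\|\leq 1$. If the $u_k$ were pairwise disjoint, the lower $p$-estimate would give $m^{1/p}\varepsilon\leq K_{(\downarrow p)}(X)$ for every $m$, which is absurd since $p<\infty$.

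The main obstacle is therefore to disjointify the $u_k$ while keeping their norms bounded below and their sum dominated. I would use the standard disjointification lemma for positive sequences with bounded partial sums: from a positive sequence $(u_k)$ with $\sup_m\|\sum_{k\leq m}u_k\|<\infty$ and $\|u_k\|\geq\varepsilon$, one can extract a subsequence $(u_{k_j})$ and pairwise disjoint $0\leq v_j\leq u_{k_j}$ with $\|v_j\|\geq\varepsilon/2$. The construction takes $v_j=\bigl(u_{k_j}-2^{j}\sum_{i\neq j,\, i\leq N}u_{k_i}\bigr)_+$-type expressions. Disjointness comes from an argument like the one in \Cref{lem: orthogonal and disjoint system}, and the norm loss is controlled because $\|u_{k_j}\wedge c\sum_{i<j}u_{k_i}\|$ must become small along a subsequence (otherwise the partial sums would blow up).

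If this becomes too technical, I would fall back on a cleaner route. A lower $p$-estimate rules out uniform lattice copies of $\ell_\infty^n$: for disjoint normalized $x_1,\ldots,x_n$ spanning a sublattice $C$-isomorphic to $\ell_\infty^n$, one gets $n^{1/p}\leq K_{(\downarrow p)}(X)\,\|\sum_i x_i\|\leq K_{(\downarrow p)}(X)\,C$. Hence $X$ contains no lattice copy of $c_0$. The known theorem (Meyer-Nieberg/Lozanovskii, \cite[Theorem 2.4.12]{MN}) says that a Banach lattice is a KB-space if and only if no closed sublattice is isomorphic to $c_0$, and this gives the conclusion.

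Finally, the ``in particular'' clause follows because KB-spaces are order continuous.
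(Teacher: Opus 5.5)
Your fallback argument is correct and is exactly the paper's proof. The images of the unit vectors under a lattice embedding of $c_0$ are positive, pairwise disjoint and bounded below in norm. The lower $p$-estimate then forces $n^{1/p}\lesssim K_{(\downarrow p)}(X)$ for every $n$, and \cite[Theorem 2.4.12]{MN} finishes.

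Your primary route, however, has a genuine gap. Everything hinges on the claim that $\|u_{k_j}\wedge c\sum_{i<j}u_{k_i}\|$ becomes small along a subsequence, ``since otherwise the partial sums would blow up''. The same claim is also what you would need to control the terms with $i>j$ in $v_j$. Bounded partial sums alone do not give this. In $\ell_\infty$, take $u_1=(1,1,\ldots)$ and $u_k=e_k$ for $k\geq 2$: then $\|u_k\wedge u_1\|=1$ for all $k\geq 2$, while $\|\sum_{k\le m}u_k\|\leq 2$.

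So the lower $p$-estimate would have to cause the blow-up. But it only applies to disjoint vectors, and the $u_k\wedge w$ are not disjoint. Your justification presupposes a lower estimate for non-disjoint vectors, which is precisely what the disjointification was meant to avoid.

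The irony is that you already have that estimate in your first sentence. By \Cref{thm: upe equivalent to mixed convexity}(2), $X$ is $(p,1)$-concave with constant $K_{(\downarrow p)}(X)$, and this applies to arbitrary positive vectors. Hence
\[
m^{1/p}\varepsilon\leq\Bigl(\sum_{k=1}^m\|u_k\|^p\Bigr)^{1/p}\leq K_{(\downarrow p)}(X)\Bigl\|\sum_{k=1}^m u_k\Bigr\|\leq K_{(\downarrow p)}(X)
\]
for every $m$, a contradiction with no disjointification at all. This version proves the KB property directly from its definition and avoids the Lozanovskii--Meyer-Nieberg characterization of KB-spaces that the paper cites. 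The price is relying on the earlier (and non-circular) equivalence between lower $p$-estimates and $(p,1)$-concavity, whose proof uses duality and a band-projection argument.
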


\begin{proof}
    By \cite[Theorem 2.4.12]{MN}, $X$ is a KB-space if and only if $X$ does not contain any sublattice lattice isomorphic to $c_0$. Assume there exists a positive disjoint sequence $(x_k)_{k=1}^\infty$ that is equivalent to the canonical basis of $c_0$. Then, there exists a constant $C>0$ such that for every sequence $(a_k)_{k=1}^\infty\in c_0$,
    \[C^{-1}\sup_k |a_k|\leq \norm[3]{\sum_{k=1}^\infty a_kx_k}\leq C\sup_k |a_k|.\]
    In particular, the lower $p$-estimate of $X$ yields that 
    \[C^{-1} n^\frac{1}{p}\leq  \intoo[3]{\sum_{k=1}^n \|x_k\|^p}^\frac{1}{p}\leq K_{(\downarrow p)}(X)\norm[3]{\sum_{k=1}^n x_k}\leq K_{(\downarrow p)}(X)C\]
    for every $n\in \N$, which leads to a contradiction.
\end{proof}

We can summarize the main results of this section (together with \Cref{thm: relevant cases of concavity} and \Cref{cor: relevant cases of convexity}) in the following diagrams. If $X$ is a Banach lattice and $1\leq q<p<r<\infty$, we have this chain of properties:
\[p\text{-convexity }\Rightarrow \text{ upper }p\text{-estimates } \Leftrightarrow (p,\infty)\text{-convexity } \Leftrightarrow  (p,r)\text{-convexity }   \Rightarrow  q\text{-convexity.}\]
On the other hand, for $1< r<p<q\leq\infty$, we have:
\[p\text{-concavity }\Rightarrow \text{ lower }p\text{-estimates } \Leftrightarrow (p,1)\text{-concavity } \Leftrightarrow  (p,r)\text{-concavity }   \Rightarrow  q\text{-concavity.}\]


\section{The \texorpdfstring{$p$}{}-convexification and \texorpdfstring{$p$}{}-concavification of a Banach lattice}\label{sec: convexification and concavification}
The aim of this section is to describe the processes of $p$-convexification and $p$-concavification of a Banach lattice, expanding the details given in \cite[Section 1.d]{LT2}. The $p$-convexification and $p$-concavification techniques are an abstract description of the process that transforms $L_r(\mu)$ into $L_{\sigma r}(\mu)$ for any $0< r<\infty$ and $\sigma >0$ by means of the map $f\mapsto \sgn (f) |f|^\frac{1}{\sigma}$. In an arbitrary Banach lattice $X$, the expression $|x|^\frac{1}{\sigma}$ is not necessarily well defined, as we discussed in \Cref{sec: intro}. Therefore, we need to modify the algebraic operations of our Banach lattice using functional calculus to generalize the map above.\\

Given $0<\sigma<\infty$, we define the function $G_\sigma:\R\rightarrow \R$ by $G_\sigma(t):=\sgn (t) |t|^\sigma$, and $F_\sigma :\R^2\rightarrow \R$ by $F_\sigma (t_1,t_2):= G_\frac{1}{\sigma}(G_\sigma(t_1)+G_\sigma(t_2))$. Note that $G_\sigma$ is a continuous and increasing function such that $G_\frac{1}{\sigma}(G_\sigma(t))=t$ and $G_\sigma(t_1t_2)=G_\sigma(t_1)G_\sigma(t_2)$, and thus $F_\sigma$ is continuous and homogeneous, i.e., $F_\sigma(\alpha t_1,\alpha t_2)=\alpha F_\sigma(t_1,t_2)$ for every $\alpha \in \R$. In particular, $F_\sigma$ is positively homogeneous. Sometimes we will denote $G_\sigma(t)=t^\sigma$ and $F_\sigma(t_1,t_2)=(t_1^\sigma+t_2^\sigma)^\frac{1}{\sigma}$ for the sake of simplicity (note, for instance, that with this notation $t^2$ can take negative values, in contrast with $|t|^2$, which coincides with the usual notion of ``square of a number''). We observe that $F_\sigma$ satisfies all the properties of an abelian group operation:
\begin{enumerate}
    \item Associativity: $F_\sigma(F_\sigma(t_1,t_2),t_3)= F_\sigma(t_1,F_\sigma(t_2,t_3))$,
    \item Commutativity: $F_\sigma(t_1,t_2)=F_\sigma(t_2,t_1)$
    \item Zero element: $F_\sigma(t,0)=t$,
    \item Opposite element: $F_\sigma(t,-t)=0$.
\end{enumerate}

Given a Banach lattice $X$, with the sum $+$, the product $\cdot$, the norm $\|\cdot\|$, the partial order $\leq$ and the lattice operations $\vee$ and $\wedge$, we define new algebraic operations on $X$ by putting $x\oplus_\sigma y:=F_\sigma(x,y) = (x^\sigma+y^\sigma)^\frac{1}{\sigma}$ and $\alpha \odot_\sigma x:= G_\frac{1}{\sigma}(\alpha)\cdot x = \alpha^\frac{1}{\sigma} \cdot x$, for $x,y\in X$ and $\alpha \in \R$. \Cref{thm: functional calculus} warrants that these operations are well defined on $X$. 

\begin{prop}\label{prop: new operations on vector lattices}
    The set $X$, endowed with the sum $\oplus_\sigma$, the product $\odot_\sigma$ and the partial order $\leq$, is a vector lattice, with the lattice operations given by $\vee$ and $\wedge$.
\end{prop}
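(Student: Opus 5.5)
The plan is to check the vector space axioms for $(X,\oplus_\sigma,\odot_\sigma)$ by transferring identities from $\R$ to $X$ through the functional calculus of \Cref{thm: functional calculus}. Then I check that $\leq$ is compatible with the new operations; the lattice structure itself is unchanged, since $\vee$ and $\wedge$ depend only on the order. The basic principle is the following. Any finite family $x_1,\ldots,x_n\in X$ lies in a principal ideal $I_e$ with $e=\bigvee_i|x_i|$, and by Kakutani's Representation Theorem $I_e$ is lattice isomorphic to some $C(K)$. In $C(K)$ the expressions $F_\sigma(x,y)$ and $G_{1/\sigma}(\alpha)x$ are computed pointwise. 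Moreover, nested expressions such as $F_\sigma(F_\sigma(x,y),z)$ are again values of positively homogeneous continuous functions of $(x,y,z)$ ($F_\sigma$ is homogeneous and compositions of such functions stay in $\Hcal^3$). By uniqueness of the calculus, they coincide with the pointwise composition in $C(K)$.

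The key steps, in order, are as follows.

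First, I show that $\oplus_\sigma$ is an abelian group operation. Associativity, commutativity, the zero element $0$ and the opposite $-x$ follow from the four listed identities for $F_\sigma$ on $\R$. These hold pointwise in $C(K)$ and hence in $I_e\subseteq X$. Note that the opposite of $x$ under $\oplus_\sigma$ is the usual $-x$, since $G_\sigma$ is odd.

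Second, I verify the scalar axioms. $1\odot_\sigma x=x$ is immediate. $(\alpha\beta)\odot_\sigma x=\alpha\odot_\sigma(\beta\odot_\sigma x)$ holds because $G_{1/\sigma}$ is multiplicative. For $\alpha\odot_\sigma(x\oplus_\sigma y)=(\alpha\odot_\sigma x)\oplus_\sigma(\alpha\odot_\sigma y)$, I use the homogeneity $F_\sigma(\lambda t_1,\lambda t_2)=\lambda F_\sigma(t_1,t_2)$ for all real $\lambda$ with $\lambda=G_{1/\sigma}(\alpha)$. For $(\alpha+\beta)\odot_\sigma x=(\alpha\odot_\sigma x)\oplus_\sigma(\beta\odot_\sigma x)$, I check on $\R$ that $F_\sigma(\alpha^{1/\sigma}t,\beta^{1/\sigma}t)=G_{1/\sigma}(\alpha+\beta)\,t$. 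This holds since $G_\sigma(\alpha^{1/\sigma}t)=\alpha G_\sigma(t)$. Both sides are then the functional calculus of one positively homogeneous function of $x$, so they agree in $X$.

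Third, I establish order compatibility. If $x\leq y$, then $x\oplus_\sigma z\leq y\oplus_\sigma z$: in $C(K)$ (taking $e=|x|\vee|y|\vee|z|$) this is pointwise monotonicity of $t\mapsto F_\sigma(t,s)$, which holds because $G_\sigma$ and $G_{1/\sigma}$ are increasing. Since the lattice isomorphism preserves order, the inequality holds in $X$. For $\alpha>0$, $\alpha\odot_\sigma x=\alpha^{1/\sigma}x\leq\alpha^{1/\sigma}y$ by the original compatibility. Finally, the order is the same partial order as before, so suprema and infima of pairs exist and equal $x\vee y$ and $x\wedge y$.

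The main obstacle is the rigorous justification of the transfer principle for nested expressions. One needs that the element $F_\sigma(F_\sigma(x,y),z)$, defined by applying the calculus twice (first to $(x,y)$, then to the result together with $z$), equals $H(x,y,z)$ for the composed function $H$. I would handle this by noting that the principal ideal for $(x,y,z)$ contains $F_\sigma(x,y)$, because $|F_\sigma(x,y)|\leq 2^{1/\sigma}(|x|\vee|y|)$. Inside its $C(K)$ representation both constructions are pointwise, and uniqueness in \Cref{thm: functional calculus} identifies them. Everything else is routine real-variable algebra.
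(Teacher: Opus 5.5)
Your proposal is correct and follows the same route as the paper: you transfer the real identities for $F_\sigma$ and $G_\sigma$ to $X$ through the functional calculus (Kakutani representation of a principal ideal plus uniqueness), and you get order compatibility from pointwise monotonicity, exactly as in \Cref{prop: monotonicity of p-sums}. Your version is in fact more complete than the paper's, which leaves out the second distributive law $(\alpha+\beta)\odot_\sigma x=(\alpha\odot_\sigma x)\oplus_\sigma(\beta\odot_\sigma x)$ and does not justify why nested calculus expressions may be computed pointwise.
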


We will sometimes denote by $X_\sigma$ the vector lattice $(X,\oplus_\sigma,\odot_\sigma,\leq)$.

\begin{proof}
    First, we observe that the properties of $F_\sigma$ described above, together with the fact that $G_\frac{1}{\sigma}(-1)=-1$, imply that $\oplus_\sigma$ is an associative and commutative operation on $X$ that has a zero element given by the zero element of $(X,+,\cdot)$, $0\in X$, and a opposite element for every $x\in X$ given by the opposite element of $(X,+,\cdot)$, $-x=(-1)\cdot x=(-1)\odot_\sigma x$. Moreover, $F_\sigma$ is homogeneous, so in particular, if we fix $\alpha \in R$, we have
    \[F_\sigma(G_\frac{1}{\sigma}(\alpha) t_1,G_\frac{1}{\sigma}(\alpha) t_2)= G_\frac{1}{\sigma}(\alpha)F_\sigma(t_1,t_2)\]
    for every $t_1,t_2 \in R$. Therefore, for every $x,y\in X$
    \[(\alpha\odot_\sigma x)\oplus_\sigma (\alpha\odot_\sigma y)=F_\sigma(G_\frac{1}{\sigma}(\alpha) \cdot x,G_\frac{1}{\sigma}(\alpha) \cdot y)= G_\frac{1}{\sigma}(\alpha)\cdot F_\sigma(x,y)=\alpha \odot_\sigma (x\oplus_\sigma y),\]
    so the product $\odot_\sigma$ is distributive and $(X,\oplus_\sigma, \odot_\sigma)$ is a vector space. Next, let us check that the order $\leq$ is compatible with the linear structure. Proceeding as in the proof of \Cref{prop: monotonicity of p-sums}, we can show that given $x,y,z\in X$ with $x\leq y$, we have $x\oplus_\sigma z\leq y\oplus_\sigma z$. Moreover, if $x,y\in X$ are such that $x\leq y$ and $\alpha \in [0,\infty)$, then
    \[\alpha \odot_\sigma x=|\alpha|^\frac{1}{\sigma}\cdot x\leq |\alpha|^\frac{1}{\sigma}\cdot y=\alpha \odot_\sigma y.\]
    Finally, the order $\leq$ is a lattice order, with lattice operations $\vee$ and $\wedge$, so $(X,\oplus_\sigma, \odot_\sigma,\leq)$ is a vector lattice.
\end{proof}

Next, we will determine when the vector lattice $X_\sigma$ can be endowed with a complete lattice norm, so that it becomes a Banach lattice. To do so, we will need the following inequalities:

\begin{lem}\label{lem: inequalities of new sums}
    Let $1<p<\infty$. Then
    \begin{enumerate}
        \item There is a constant $C_p>0$ such that
        \[|F_\frac{1}{p}(t_1,-t_2)|=|t_1^\frac{1}{p}-t_2^\frac{1}{p}|^p\leq C_p |t_1-t_2|\]
        for every $t_1,t_2\in \R$.
        \item There is a constant $D_p>0$ such that
        \[|F_p(t_1,-t_2)|=|t_1^p-t_2^p|^\frac{1}{p}\leq D_p (|t_1|\vee |t_2|)^\frac{1}{p^*} |t_1-t_2|^\frac{1}{p}\]
        for every $t_1,t_2\in \R$.
    \end{enumerate}
\end{lem}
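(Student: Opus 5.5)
Both inequalities are statements about real numbers, and both sides are invariant under simultaneous sign change $(t_1,t_2)\mapsto(-t_1,-t_2)$ and under swapping $t_1,t_2$. They are also positively homogeneous of the same degree (degree one in (1), since $|t_1^{1/p}-t_2^{1/p}|^p$ is $1$-homogeneous; degree one in (2), since the right side has degree $\frac{1}{p^*}+\frac{1}{p}=1$). My plan is to normalize by homogeneity and symmetry, reduce each inequality to a bound on a continuous function of a single variable on a compact set, and handle the only problematic points by a local analysis. The trivial cases $t_1=t_2$ and $t_1t_2=0$ are immediate (in (1) with $C_p\ge1$, in (2) with $D_p\ge1$, since then $|t_1-t_2|=|t_1|\vee|t_2|$).

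For (1), I will assume $|t_1|\ge|t_2|$ and, by homogeneity, $t_1=1$, $t_2=s\in[-1,1)$. The claim becomes
\[
\varphi(s):=\frac{|1-G_{1/p}(s)|^p}{1-s}\le C_p .
\]
This $\varphi$ is continuous on $[-1,1)$, so the only issue is the behaviour as $s\to1^-$. There the mean value theorem gives $1-s^{1/p}\sim\frac{1}{p}(1-s)$, hence $\varphi(s)\sim p^{-p}(1-s)^{p-1}\to0$. Therefore $\varphi$ extends continuously to $[-1,1]$ and is bounded. Alternatively, one can use that $G_{1/p}$ is $\frac1p$-Hölder with constant $2^{1-1/p}$ (concavity of $u\mapsto u^{1/p}$ on each half-line, with the sign-change case handled by the triangle inequality plus the power-mean inequality). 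That gives $|t_1^{1/p}-t_2^{1/p}|\le 2^{1-1/p}|t_1-t_2|^{1/p}$ and so $C_p=2^{p-1}$. I would present this explicit version.

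For (2), I normalize the same way to $t_1=1$, $t_2=s\in[-1,1)$. The claim becomes
\[
|1-G_p(s)|\le D_p^p\,(1-s).
\]
By the mean value theorem, $|1-G_p(s)|=p|\xi|^{p-1}(1-s)$ for some $\xi$ between $s$ and $1$, so $|\xi|\le1$. Hence $D_p=p^{1/p}$ works. Undoing the normalization gives
\[
|t_1^p-t_2^p|\le p\,(|t_1|\vee|t_2|)^{p-1}|t_1-t_2|,
\]
and taking $p$-th roots yields the stated bound with exponent $\frac{p-1}{p}=\frac1{p^*}$.

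The only step needing care is the sign-change case in (1), when $t_1$ and $t_2$ have opposite signs. If the explicit Hölder estimate becomes messy there, I will fall back on the compactness argument above. The step I expect to be the main obstacle is the limit near $s=1$, since that is where continuity alone does not give boundedness.
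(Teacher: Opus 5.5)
Your proof is correct. Part (2) is the paper's argument: the mean value theorem for $G_p$ gives $D_p=p^{1/p}$, and your normalization $t_1=1$ is harmless but unnecessary.

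Part (1) takes a slightly different route. The paper also separates the same-sign and opposite-sign cases, and it treats the opposite-sign case exactly as you do: $t_1^{1/p}+|t_2|^{1/p}\le 2^{1/p^*}(t_1+|t_2|)^{1/p}$. For $0\le t_2<t_1$, however, the paper splits further:
\begin{itemize}
\item for $t_2<t_1/2$ it uses the crude bound $t_1^{1/p}\le 2^{1/p}(t_1-t_2)^{1/p}$;
\item for $t_2\ge t_1/2$ it uses the mean value theorem.
\end{itemize}
Your subadditivity step $t_1^{1/p}\le (t_1-t_2)^{1/p}+t_2^{1/p}$, which follows from concavity with value $0$ at $0$, replaces both subcases in one line with constant $1$. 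You therefore get $C_p=2^{p-1}$, which is optimal (take $t_2=-t_1$). The paper's case split only yields $\max\{2,2^{p-1}\}$.

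Your compactness alternative is also sound: reduce by symmetry and homogeneity to $\varphi$ on $[-1,1)$, then check via the mean value theorem that $\varphi(s)\to 0$ as $s\to 1^-$. It gives no explicit constant. That suffices here, since the lemma is only used to prove completeness of $X^{(p)}$ and $X_{(p)}$. The explicit version is preferable only because it is shorter and sharp.
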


\begin{proof}
    $(1)$ Without loss of generality, we can suppose that $t_2<t_1$. Assume first that $0\leq t_2 < t_1$. If $t_2<\frac{t_1}{2}$, then
    \[0\leq t_1^\frac{1}{p}-t_2^\frac{1}{p}\leq t_1^\frac{1}{p}=2^\frac{1}{p}\intoo[2]{\frac{t_1}{2}}^\frac{1}{p}\leq 2^\frac{1}{p}|t_1-t_2|^\frac{1}{p}.\]
    On the other hand, if $t_2\geq\frac{t_1}{2}$, by the Mean Value Theorem there is some $t_2<s<t_1$ such that
    \[|t_1^\frac{1}{p}-t_2^\frac{1}{p}|=\frac{1}{p}s^{-\frac{1}{p^*}}|t_1-t_2|=\frac{1}{p}\intoo[2]{\frac{|t_1-t_2|}{s}}^{\frac{1}{p^*}}|t_1-t_2|^\frac{1}{p}\leq \frac{1}{p}|t_1-t_2|^\frac{1}{p}.\]
    If $t_2<t_1\leq 0$, the situation is similar. Finally, if $t_2<0<t_1$, then
    \[|t_1^\frac{1}{p}-t_2^\frac{1}{p}|=t_1^\frac{1}{p}+|t_2|^\frac{1}{p}\leq 2^\frac{1}{p^*}(t_1+|t_2|)^\frac{1}{p}=2^\frac{1}{p^*}|t_1-t_2|^\frac{1}{p}.\]

    $(2)$ Again, we can assume that $t_2<t_1$. Since $G_p$ is differentiable in $\R$, by the Mean Value Theorem there exists $t_2<s<t_1$ such that
    \[|t_1^p-t_2^p|^\frac{1}{p}=p^\frac{1}{p} |s|^\frac{p-1}{p} |t_1-t_2|^\frac{1}{p}\leq p^\frac{1}{p} (|t_1|\vee |t_2|)^\frac{1}{p^*} |t_1-t_2|^\frac{1}{p}.\qedhere\]
\end{proof}

Let us fix $1<p<\infty$, and define for every $x\in X$ the function $\normiii{x}=\|x\|^\frac{1}{p}$.

\begin{prop}\label{prop: p-convexification}
    The vector lattice $X_\frac{1}{p}=(X,\oplus_\frac{1}{p}, \odot_\frac{1}{p},\leq)$, endowed with $\normiii{\cdot}$, is a Banach lattice, denoted $X^{(p)}$.
\end{prop}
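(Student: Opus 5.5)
We need to check three things: $\normiii{\cdot}$ is a norm on $X_{1/p}$, it is a lattice norm, and it is complete. Here $\oplus=\oplus_{1/p}$ gives $x\oplus y=(x^{1/p}+y^{1/p})^p$, and $\alpha\odot x=\alpha^p\cdot x$, with $\alpha^p=G_p(\alpha)$. By \Cref{prop: new operations on vector lattices} we already know $X_{1/p}$ is a vector lattice with the same order and lattice operations $\vee,\wedge$. Its absolute value is therefore the old $|x|=x\vee(-x)$, since the opposite of $x$ in $X_{1/p}$ is again $-x$.

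\emph{Lattice norm and easy axioms.} Monotonicity is immediate: if $|x|\le|y|$ then $\normiii{x}=\|x\|^{1/p}\le\|y\|^{1/p}=\normiii{y}$. Homogeneity follows from $\normiii{\alpha\odot x}=\||\alpha|^p x\|^{1/p}=|\alpha|\,\normiii{x}$. Definiteness is inherited from $\|\cdot\|$.

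\emph{Triangle inequality (the main obstacle).} We must show $\|(x^{1/p}+y^{1/p})^p\|^{1/p}\le\|x\|^{1/p}+\|y\|^{1/p}$. Since $|x\oplus y|\le |x|\oplus|y|$ (scalar inequality $|(s^{1/p}+t^{1/p})^p|\le(|s|^{1/p}+|t|^{1/p})^p$, transferred via functional calculus as in \Cref{prop: monotonicity of p-sums}), it suffices to treat $x,y\ge0$. By homogeneity we may normalize and write $a=\|x\|^{1/p}$, $b=\|y\|^{1/p}$ with $a+b=1$, putting $\theta=a$. We then use the pointwise scalar convexity identity
\[(s^{1/p}+t^{1/p})^p=\Bigl(\theta\,(s/\theta^p)^{1/p}+(1-\theta)(t/(1-\theta)^p)^{1/p}\Bigr)^p\le \theta\,\frac{s}{\theta^p}+(1-\theta)\frac{t}{(1-\theta)^p},\]
valid for $s,t\ge0$ by convexity of $u\mapsto u^p$. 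Through functional calculus (positive homogeneity in $(s,t)$ holds) this gives $x\oplus y\le \theta^{1-p}x+(1-\theta)^{1-p}y$ in $X$. Taking norms,
\[\|x\oplus y\|\le \theta^{1-p}a^p+(1-\theta)^{1-p}b^p=a+b=1.\]
Rescaling yields the triangle inequality. The degenerate cases $a=0$ or $b=0$ are trivial.

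\emph{Completeness.} Let $(x_n)$ be $\normiii{\cdot}$-Cauchy, i.e.\ $\|x_n\ominus x_m\|\to0$, where $x\ominus y=F_{1/p}(x,-y)=(x^{1/p}-y^{1/p})^p$. Set $u_n=G_{1/p}(x_n)=x_n^{1/p}$; this is not an element of $X$ in general, so we argue with \Cref{lem: inequalities of new sums} instead. Part (2), applied to the pair $s_1=x_n^{1/p}$, $s_2=x_m^{1/p}$ as functional calculus in $x_n,x_m$, gives
\[|x_n-x_m|=|F_p(x_n^{1/p},-x_m^{1/p})|^{p}\ \text{-type bound:}\quad |x_n-x_m|\le D_p^p\,(|x_n|\vee|x_m|)^{1/p^*}\,|x_n\ominus x_m|^{1/p}.\]
Here the right side is a geometric mean, so by \Cref{lem: geometric mean}(1), $\|x_n-x_m\|\le D_p^p\,\||x_n|\vee|x_m|\|^{1/p^*}\|x_n\ominus x_m\|^{1/p}$. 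Cauchy sequences are $\normiii{\cdot}$-bounded, hence $\|\cdot\|$-bounded, so $(x_n)$ is $\|\cdot\|$-Cauchy and converges to some $x$ in $X$. Conversely, part (1) gives $|x_n\ominus x|\le C_p|x_n-x|$ in $X$, so $\normiii{x_n\ominus x}^p=\|x_n\ominus x\|\le C_p\|x_n-x\|\to0$. Each of these steps uses the standard transfer of scalar inequalities between positively homogeneous functions through the $C(K)$ representation of the principal ideal.
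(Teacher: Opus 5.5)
Your proof is correct and follows the paper's argument essentially step for step. The triangle inequality rests on the same pointwise bound $(|s|^{1/p}+|t|^{1/p})^p\le \theta^{1-p}|s|+(1-\theta)^{1-p}|t|$ with $\theta=\|x\|^{1/p}/(\|x\|^{1/p}+\|y\|^{1/p})$: the paper derives it from H\"older with weights $\alpha^{p^*}=\theta$, you from convexity of $u\mapsto u^p$. Completeness is obtained exactly as in the paper, from both parts of \Cref{lem: inequalities of new sums} together with \Cref{lem: geometric mean}(1).
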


\begin{proof}
    We first prove that $\normiii{\cdot}$ is a norm on $X_\frac{1}{p}$. Indeed, given $x\in X$ and $\alpha\in \R$, it is clear that $\normiii{x}=0$ if and only if $x=0$, and
    \[\normiii{\alpha \odot_\frac{1}{p} x}=\|\alpha^p \cdot x\|^\frac{1}{p}=|\alpha|\|x\|^\frac{1}{p}=|\alpha|\normiii{x}.\]
    To prove the triangular inequality, note that for every $t_1,t_2\in \R$ and $\alpha,\beta\in (0,\infty)$ with $\alpha^{p^*}+\beta^{p^*}=1$, we have
    \[|F_\frac{1}{p}(t_1,t_2)|\leq (|t_1|^\frac{1}{p}+|t_2|^\frac{1}{p})^p\leq \frac{|t_1|}{\alpha^p}+\frac{|t_2|}{\beta^p}, \]
    therefore the same inequality holds when we replace $t_1,t_2\in \R$ by $x,y\in X$. In particular, taking 
    \[\alpha^p=\frac{\|x\|^\frac{1}{p^*}}{(\|x\|^\frac{1}{p}+\|y\|^\frac{1}{p})^\frac{p}{p^*}}\quad \text{and} \quad \beta^p=\frac{\|y\|^\frac{1}{p^*}}{(\|x\|^\frac{1}{p}+\|y\|^\frac{1}{p})^\frac{p}{p^*}},\]
    we obtain
    \[\normiii{x\oplus_\frac{1}{p} y}=\| F_\frac{1}{p}(x,y)\|^\frac{1}{p}\leq \norm[3]{\frac{|x|}{\alpha^p}+\frac{|y|}{\beta^p}}^\frac{1}{p} \leq \intoo[3]{\frac{\|x\|}{\alpha^p}+\frac{\|y\|}{\beta^p}}^\frac{1}{p}= \normiii{x}+\normiii{y}.\]
    Moreover, $\normiii{\cdot}$ is a lattice norm. Indeed, if $|x|\leq |y|$, then $\normiii{x}=\|x\|^\frac{1}{p}\leq \|y\|^\frac{1}{p}=\normiii{y}$. Therefore, $X^{(p)}=(X_\frac{1}{p},\normiii{\cdot})$ is a normed vector lattice. \\

    Finally, let us check that $\normiii{\cdot}$ is a complete norm. Let $(x_n)_{n=1}^\infty\subseteq X$ be a Cauchy sequence in $X^{(p)}$. In particular, it is bounded, so there is some $M>0$ such that $\normiii{x_n}=\|x_n\|^\frac{1}{p}\leq M$ for every $n\in \N$. Our aim is to show that $(x_n)_{n=1}^\infty$ converges to some $x\in X$ with respect to the norm $\normiii{\cdot}$. To do so, we first prove that $(x_n)_{n=1}^\infty$ is a Cauchy sequence in $X$ (with its usual norm). Observe that, by \Cref{lem: inequalities of new sums}(2), for every $t_1,t_2\in \R$ we have
    \[|t_1-t_2|=|(t_1^\frac{1}{p})^p-(t_2^\frac{1}{p})^p|\leq D_p^p(|t_1|\vee |t_2|)^\frac{1}{p^*}|(t_1^\frac{1}{p}-t_2^\frac{1}{p})^p|^\frac{1}{p}.\]
    In particular, using \Cref{lem: geometric mean}(1), we get that for any $n,m\in \N$
    \[\|x_n-x_m\|\leq D_p^p \||x_n|\vee |x_m|\|^\frac{1}{p^*}\|x_n\ominus_\frac{1}{p} x_m\|^\frac{1}{p}\leq D_p^p (2M^p)^\frac{1}{p^*} \normiii{x_n\ominus_\frac{1}{p} x_m},\]
    so $(x_n)_{n=1}^\infty$ is a Cauchy sequence in $X$ for the norm $\|\cdot\|$. Since $X$ is a Banach lattice, there exists a vector $x\in X$ such that $\|x-x_n\|\rightarrow 0$ as $n \rightarrow\infty$. By \cref{lem: inequalities of new sums}(1), it follows that
    \[|x\ominus_\frac{1}{p}x_n|=|F_\frac{1}{p}(x,-x_n)|\leq C_p |x-x_n|,\]
    so
    \[\normiii{x\ominus_\frac{1}{p}x_n}=\|x\ominus_\frac{1}{p}x_n\|^\frac{1}{p}\leq C_p^\frac{1}{p} \|x-x_n\|^\frac{1}{p} \xrightarrow[n\rightarrow \infty]{}0\]
    and $X^{(p)}$ is complete.
\end{proof}

The Banach lattice $X^{(p)}=(X_\frac{1}{p},\normiii{\cdot})$ is known as the \textit{$p$-convexification} of $X$. Providing a norm for $X_\sigma$ when $\sigma=p>1$ is slightly more involved. Indeed, we need the Banach lattice $X$ to be $p$-convex.

\begin{prop}\label{prop: p-concavification}
    Let $1<p<\infty$ and $X$ be a $p$-convex Banach lattice, with norm $\|\cdot\|$. Then, the vector lattice $X_p=(X,\oplus_p, \odot_p,\leq)$ can be endowed with a complete lattice norm $\normiii{\cdot}$ such that $\normiii{\cdot}\leq \|\cdot\|^p\leq K^{(p)}(X)^p\normiii{\cdot}$.
\end{prop}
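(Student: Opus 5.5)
The natural candidate $x\mapsto\|x\|^p$ is positively homogeneous for $\odot_p$: indeed $\|\alpha\odot_p x\|^p=\||\alpha|^{1/p}x\|^p=|\alpha|\,\|x\|^p$. It is also monotone. What fails in general is the triangle inequality. So I define $\normiii{\cdot}$ as the largest seminorm below $\|\cdot\|^p$, via a Minkowski-type infimum over decompositions:
\[
\normiii{x}:=\inf\cbr[3]{\sum_{i=1}^n \|x_i\|^p : n\in\N,\ |x|\leq \intoo[3]{\sum_{i=1}^n |x_i|^p}^{\frac{1}{p}}}.
\]
Note that the constraint is just $|x|\leq |x_1|\oplus_p\cdots\oplus_p|x_n|$ in $X_p$. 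The inequality $\normiii{x}\leq\|x\|^p$ follows by taking $n=1$, $x_1=x$. The reverse bound uses $p$-convexity: if $|x|\leq(\sum|x_i|^p)^{1/p}$, then
\[
\|x\|\leq \norm[3]{\intoo[3]{\sum|x_i|^p}^{\frac{1}{p}}}\leq K^{(p)}(X)\intoo[3]{\sum\|x_i\|^p}^{\frac{1}{p}},
\]
so $\|x\|^p\leq K^{(p)}(X)^p\normiii{x}$. In particular $\normiii{x}=0$ forces $x=0$.

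Next I verify the norm axioms on $X_p$. Homogeneity holds because scaling each $x_i$ by $|\alpha|^{1/p}$ transforms admissible decompositions of $x$ into admissible decompositions of $\alpha\odot_p x$. For the same reason the lattice property is immediate: if $|x|\leq|y|$, every decomposition admissible for $y$ is admissible for $x$.

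For the triangle inequality, take decompositions $(x_i)$ of $x$ and $(y_j)$ of $y$ that are within $\varepsilon$ of optimal. I then need
\[
|x\oplus_p y|\leq \intoo[3]{\sum|x_i|^p+\sum|y_j|^p}^{\frac{1}{p}}.
\]
This reduces to the scalar inequality $|(s^p+t^p)^{1/p}|\leq(|s|^p+|t|^p)^{1/p}$ (with signed powers), combined with the monotonicity of $(a,b)\mapsto(a^p+b^p)^{1/p}$ on $\R_+^2$. Both transfer to $X$ by the functional calculus, arguing exactly as in \Cref{prop: monotonicity of p-sums}: one works in a principal ideal $I_e\cong C(K)$ and checks the inequality pointwise, where it reduces to real numbers. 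Concatenating the two decompositions gives $\normiii{x\oplus_p y}\leq\normiii{x}+\normiii{y}+2\varepsilon$.

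Completeness is the step I expect to be the main obstacle, since the vector operations differ from those of $X$ and the two norms are not linked linearly. The approach is to mimic \Cref{prop: p-convexification}, using \Cref{lem: inequalities of new sums} with the roles of $p$ and $1/p$ exchanged. Let $(x_n)$ be $\normiii{\cdot}$-Cauchy. Writing $x_n\ominus_p x_m=F_p(x_n,-x_m)$, part (1) of that lemma (applied to $t_i^p$) gives $|x_n-x_m|\leq C_p^{1/p}\,|x_n\ominus_p x_m|$. Hence
\[
\|x_n-x_m\|\leq C\,\|x_n\ominus_p x_m\|\leq C'\normiii{x_n\ominus_p x_m}^{1/p},
\]
so $(x_n)$ is $\|\cdot\|$-Cauchy and converges in $X$ to some $x$. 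For the converse direction, part (2) of the lemma gives
\[
|x\ominus_p x_n|\leq D_p\,(|x|\vee|x_n|)^{1/p^*}|x-x_n|^{1/p},
\]
and \Cref{lem: geometric mean}(1) then yields $\|x\ominus_p x_n\|\leq D_p\,\||x|\vee|x_n|\|^{1/p^*}\|x-x_n\|^{1/p}\to 0$, using that $(x_n)$ is bounded in $X$. Finally $\normiii{x\ominus_p x_n}\leq\|x\ominus_p x_n\|^p\to0$, so $x_n\to x$ in $X_p$.
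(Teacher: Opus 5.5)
Your proof is correct and follows the paper's argument: the same Minkowski-type infimum of $\sum_i\|x_i\|^p$, the same use of $p$-convexity for the lower bound, the same concatenation of near-optimal decompositions for the triangle inequality, and the same completeness argument via \Cref{lem: inequalities of new sums} and \Cref{lem: geometric mean}. The only difference is that you take the infimum over decompositions with $|x|\leq\bigoplus_{i}|x_i|$ instead of the paper's $|x|=\bigoplus_{i}|x_i|$; this makes monotonicity immediate and removes the paper's appeal to the Riesz decomposition property of $X_p$, and the two infima coincide precisely because of that property.
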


The Banach lattice $X_{(p)}=(X_p,\normiii{\cdot})$ is known as the \textit{$p$-concavification} of $X$.

\begin{proof}
    Given $x\in X$, let us define $\normiii{x}_0=\|x\|^p$ and
    \[\normiii{x}=\inf \cbr[3]{\sum_{i=1}^n \normiii{x_i}_0: n\in \N, (x_i)_{i=1}^n\subseteq X,|x|=\bigoplus_{i=1}^n |x_i| },\]
    where $\bigoplus_{i=1}^n |x_i|= |x_1|\oplus_p\ldots\oplus_p |x_n|$. It is clear that $\normiii{x}\leq \normiii{x}_0$. Moreover, the $p$-convexity of $X$ yields that for every $(x_i)_{i=1}^n\subseteq X$ such that $|x|=\bigoplus_{i=1}^n |x_i|$,
    \[\normiii{x}_0=\|x\|^p \leq K^{(p)}(X)^p \sum_{i=1}^n \|x_i\|^p,\]
    so taking the infimum over all possible decompositions we get that $\normiii{x}_0\leq K^{(p)}(X)^p\normiii{x}$. Let us check that $\normiii{\cdot}$ is a norm. Clearly, $\normiii{x}=0$ if and only if $\|x\|=0$, if and only if $x=0$. Moreover, for every $\alpha\in \R$ and $x\in X$,
    \[\normiii{\alpha \odot_p}_0=\|\alpha^\frac{1}{p}\cdot x\|^p=|\alpha|\|x\|^p=|\alpha|\normiii{x}_0,\]
    and from the definition of $\normiii{\cdot}$ we see that $\normiii{\alpha \odot_p}=|\alpha|\normiii{x}$. Before proving the triangular inequality, let us check that if $|x|\leq |y|$, then $\normiii{x}\leq \normiii{y}$. Indeed, if $|y|=\bigoplus_{i=1}^n |y_i|$ is any decomposition, then, since $X_p$ is a vector lattice, it has the Riesz decomposition property, so there exist $(x_i)_{i=1}^n\subseteq X$ such that $0\leq x_i\leq |y_i|$ and $|x|=\bigoplus_{i=1}^n x_i$. In particular, since $\|\cdot\|$, and thus $\normiii{\cdot}_0$, is monotone, we get
    \[\normiii{x}\leq \sum_{i=1}^n \normiii{x_i}_0\leq \sum_{i=1}^n \normiii{y_i}_0,\]
    so taking the infimum over all decompositions of $|y|$ we obtain the desired inequality. Now, in order to prove the triangular inequality, note that for every $t_1,t_2\in \R$,
    \[|F_p(t_1,t_2)|\leq F_p(|t_1|,|t_2|),\]
    and for any $s_1,\ldots,s_m,t_1,\ldots,t_n\in\R$,
    \[F_p(F_p(s_1,\ldots,s_m),F_p(t_1,\ldots,t_n))= F_p(s_1,\ldots,s_m,t_1,\ldots,t_n),\]
    where $F_p(r_1,\ldots,r_k)=G_\frac{1}{p}\intoo{\sum_{i=1}^k G_p(r_i)}$ is defined for any number $k\in \N$ of arguments. In particular, for any $x, y\in X$ and any decompositions $|x|=\bigoplus_{i=1}^m |x_i|$ and $|y|=\bigoplus_{j=1}^n |y_j|$ we have
    \begin{align*}
        |x\oplus_p y| &\leq |x|\oplus_p |y| =\intoo[3]{\bigoplus_{i=1}^m |x_i|}\oplus_p \intoo[3]{\bigoplus_{j=1}^n |y_j|}\\
        & =|x_1|\oplus_p\ldots \oplus_p |x_m|\oplus_p |y_1|\oplus_p\ldots\oplus_p |y_n|,
    \end{align*}
    so
    \[\normiii{x\oplus_p y}\leq \sum_{i=1}^m \normiii{x_i}_0+\sum_{j=1}^n \normiii{y_j}_0.\]
    Taking the infimum over all possible decompositions for $x$ and $y$ we obtain the triangular inequality. Therefore, $X_{(p)}=(X_p,\normiii{\cdot})$ is a normed vector lattice. \\
    
    Finally, let us check that the norm $\normiii{\cdot}$ is complete. As in the proof of \Cref{prop: p-convexification}, let $(x_n)_{n=1}^\infty\subseteq X$ be a Cauchy sequence for the norm $\normiii{\cdot}$. We will show that $(x_n)_{n=1}^\infty$ converges in $X_{(p)}$ to some $x\in X$. First, note that applying \Cref{lem: inequalities of new sums}(1) we get that for every $t_1,t_2\in \R$,
    \[|t_1-t_2|=|(t_1^p)^\frac{1}{p}-(t_2^p)^\frac{1}{p}|\leq C_p^\frac{1}{p}|t_1^p-t_2^p|^\frac{1}{p},\]
    so
    \[\|x_n-x_m\|\leq C_p^\frac{1}{p}\|x_n\ominus_p x_m\|\]
    for every $n,m\in \N$. Thus, $(x_n)_{n=1}^\infty$ is a Cauchy sequence in $X$ (with its usual norm). In particular, it is convergent and bounded. Let $x\in X$ be the limit of the sequence $(x_n)_{n=1}^\infty$, and let $M>0$ be such that $\|x\|,\|x_n\|\leq M$ for every $n\in \N$. Then, by \Cref{lem: inequalities of new sums}(2) and \Cref{lem: geometric mean} we get that
    \begin{align*}
        \normiii{x\ominus_p x_n}& \leq \normiii{x\ominus_p x_n}_0=\|x\ominus_p x_n\|^p \leq D_p^p \| (|x|\vee|x_n|)^\frac{1}{p^*}|x-x_n|^\frac{1}{p}\|^p \\
        & \leq D_p^p \| |x|\vee|x_n|\|^\frac{p}{p^*}\|x-x_n\| \leq  D_p^p (2M)^\frac{p}{p^*}\|x-x_n\| \xrightarrow[n\rightarrow \infty]{} 0,
    \end{align*}
    as we wanted to show.
\end{proof}

As the names indicate, the $p$-convexification and $p$-concavification of a Banach lattice are interesting because of their convexity and concavity properties. Therefore, we need to understand how the functional calculus operates in the vector lattices $X_\sigma$. To do so, we first study the case $X=\Hcal^n$:

\begin{prop}\label{prop: functional calculus in Hcal}
    Let $n\in \N$, $S=S_{B_{\ell_\infty}^n}$ and consider $\Hcal^n$ endowed with the norm $\|f\|=\|\eval[0]{f}_S\|_\infty$ for every $f\in \Hcal^n$. Then, for any $0<\sigma<\infty$:
    \begin{enumerate}
        \item The expression $\normiii{\cdot}=\|\cdot\|^\sigma$ defines a (complete) lattice norm in $\Hcal^n_\sigma$. In particular, for every $\overline{f}=(f_i)_{i=1}^n\subseteq \Hcal^n_\sigma<$, $\Hcal^n_\sigma$ admits a functional calculus $U_{\overline{f}}^\sigma:\Hcal\rightarrow \Hcal^n_\sigma$ associated to $\overline{f}$.
        \item If $\phi_\sigma:\R^n\rightarrow \R^n$ is given by $\phi_\sigma(t_1,\ldots,t_n)=(G_\sigma(t_i))_{i=1}^n=(t_i^\sigma)_{i=1}^n$, then the operator
        \[\fullfunction{U}{\Hcal^n}{\Hcal^n_\sigma}{f}{Uf=G_\frac{1}{\sigma}\circ f\circ \phi_\sigma}\]
        is an isometric lattice homomorphism such that $Up_i=p_i$ for every $i=1,\ldots,n$, where $p_i\in \Hcal^n$ denotes the projection onto the $i$-th coordinate. In particular, if $\overline{p}=(p_i)_{i=1}^n$, then $U_{\overline{p}}^\sigma=U$.
    \end{enumerate}
\end{prop}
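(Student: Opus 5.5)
The natural approach is to realize $\Hcal^n_\sigma$ concretely as a $C(S)$-type space, and to check that $U$ is a bijective, order-preserving, linear, norm-preserving map $(\Hcal^n,+,\cdot)\to(\Hcal^n_\sigma,\oplus_\sigma,\odot_\sigma)$. Since $\Hcal^n$ is a lattice of real-valued functions with pointwise order, the operations of $\Hcal^n_\sigma$ are pointwise too. Indeed, Krivine's calculus in $\Hcal^n$ is just composition, because evaluation at a point of $\R^n$ is a lattice homomorphism on $\Hcal^n$ and such maps preserve functional calculus. Hence $(f\oplus_\sigma g)(t)=F_\sigma(f(t),g(t))$ and $(\alpha\odot_\sigma f)(t)=\alpha^{1/\sigma}f(t)$.

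First I will show that $U$ is well defined, i.e. that $Uf$ is continuous and positively homogeneous. Continuity is clear. For $\lambda\ge0$, $\phi_\sigma(\lambda t)=\lambda^\sigma\phi_\sigma(t)$, so
\[Uf(\lambda t)=G_{1/\sigma}(\lambda^\sigma f(\phi_\sigma t))=\lambda\, Uf(t).\]
The same computation shows that $g\mapsto G_\sigma\circ g\circ\phi_{1/\sigma}$ maps $\Hcal^n_\sigma$ into $\Hcal^n$ and is inverse to $U$, so $U$ is bijective.

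Next I will check linearity with respect to the new operations. Pointwise,
\[U(f+g)=G_{1/\sigma}\big(G_\sigma(Uf)+G_\sigma(Ug)\big)=Uf\oplus_\sigma Ug,\]
and $U(\alpha f)=G_{1/\sigma}(\alpha)\,Uf=\alpha\odot_\sigma Uf$, using the multiplicativity of $G_{1/\sigma}$. Because $G_{1/\sigma}$ is increasing and $\phi_\sigma$ is merely a reparametrization of the domain, $U$ is a bijective order isomorphism, and therefore a lattice isomorphism. Moreover, $Up_i=G_{1/\sigma}\circ G_\sigma(t_i)=p_i$.

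For the norm, $\phi_\sigma$ maps $S$ onto $S$: $\max_i|t_i|=1$ if and only if $\max_i|t_i|^\sigma=1$. Hence
\[\|Uf\|=\sup_S|f\circ\phi_\sigma|^{1/\sigma}=\|f\|^{1/\sigma},\]
so $\|Uf\|^\sigma=\|f\|$. Defining $\normiii{g}=\|g\|^\sigma$ on $\Hcal^n_\sigma$, this says precisely that $\normiii{Uf}=\|f\|$. Transporting the complete lattice norm of $\Hcal^n$ through the lattice isomorphism $U$ then shows at once that $\normiii{\cdot}$ is a complete lattice norm on $\Hcal^n_\sigma$, and that $U$ is an isometric lattice homomorphism. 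This settles (1), since Theorem~\ref{thm: functional calculus} now provides $U^\sigma_{\overline f}$.

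Finally, $U^\sigma_{\overline p}=U$ follows from the uniqueness part of Theorem~\ref{thm: functional calculus}, since $U$ is a lattice homomorphism sending $p_i$ to $p_i$. The only delicate step is justifying that the abstract operations $\oplus_\sigma,\odot_\sigma$, defined through Krivine's calculus, coincide with the pointwise formulas on $\Hcal^n$. I would handle this with the evaluation-homomorphism argument described above, which uses the remark that lattice homomorphisms preserve functional calculus.
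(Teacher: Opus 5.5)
Your proof is correct, and part (2) is essentially the paper's own verification: well-definedness from $\phi_\sigma(\lambda t)=\lambda^\sigma\phi_\sigma(t)$, the identities $U(f+g)=Uf\oplus_\sigma Ug$ and $U(\alpha f)=\alpha\odot_\sigma Uf$, $\phi_\sigma(S)=S$ for the norm, and uniqueness of the calculus for $U^\sigma_{\overline p}=U$. Part (1), however, you obtain by a genuinely different route.

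The paper proves (1) before and independently of $U$, splitting into cases:
\begin{itemize}
\item for $0<\sigma<1$, it identifies $\|\cdot\|^\sigma$ with the norm of the $\frac{1}{\sigma}$-convexification (\Cref{prop: p-convexification});
\item for $\sigma>1$, it uses that $\Hcal^n$ is $\infty$-convex, hence $\sigma$-convex with constant one by \Cref{thm: p-conv implies q-conv}, so the $\sigma$-concavification norm of \Cref{prop: p-concavification} is exactly $\|\cdot\|^\sigma$.
\end{itemize}

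You instead exhibit the inverse $g\mapsto G_\sigma\circ g\circ\phi_{1/\sigma}$. This makes $U$ a surjective linear order isomorphism with $\normiii{Uf}=\|f\|$, so $\normiii{\cdot}=\|U^{-1}(\cdot)\|$ is just the norm of $\Hcal^n$ transported through $U$. What this buys:
\begin{itemize}
\item It handles all $\sigma$ at once and avoids the convexification/concavification propositions and \Cref{thm: p-conv implies q-conv}.
\item It even makes \Cref{prop: new operations on vector lattices} unnecessary here, since the vector lattice axioms also transfer through the bijection $U$.
\item It yields the stronger fact that $U$ is onto, i.e.\ $\Hcal^n_\sigma$ is lattice isometric to $\Hcal^n$.
\end{itemize}
Your evaluation-functional argument also makes explicit that $\oplus_\sigma$ acts pointwise on $\Hcal^n$, which the paper uses tacitly in its linearity check.

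In exchange, the paper's route presents $\normiii{\cdot}$ as a concrete instance of the general constructions of that section. That is conceptually coherent, but it is not logically needed for this statement.
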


\begin{proof}
    $(1)$ If $\sigma=1$, there is nothing to prove, as $X_1$ coincides with $X$ as a vector lattice. If $0<\sigma<1$, by \Cref{prop: p-convexification} $\normiii{\cdot}$ is the norm of the $\frac{1}{\sigma}$-convexification of $\Hcal^n$. If $1<\sigma<\infty$, note that $\Hcal^n$ is $\infty$-convex with constant one, so by \Cref{thm: p-conv implies q-conv} $\Hcal^n$ is $\sigma$-convex with constant one. In particular, by \Cref{prop: p-concavification} $\normiii{\cdot}$ is the norm of the $\sigma$-concavification of $\Hcal^n$. In every case, $\normiii{\cdot}$ is a complete lattice norm on $X_\sigma$. Therefore, \Cref{thm: functional calculus} provides a functional calculus operator $U_{\overline{f}}^\sigma:\Hcal\rightarrow \Hcal^n_\sigma$ for every finite sequence $\overline{f}$. \\

    $(2)$ Note that $\phi_\sigma$ is continuous and bijective, with inverse given by $\phi_\frac{1}{\sigma}$, which is also continuous. Moreover, $\phi_\sigma(S)=S$: if $(t_i)_{i=1}^n\in S$, there exists some index $j$ such that $|t_i|\leq 1=|t_j|$ for every $i=1,\ldots,n$, so $|t_i^\sigma|\leq 1=|t_j^\sigma|$. Hence, $\phi_\sigma(t_1,\ldots,t_n)\in S$. Lastly, for every $\alpha \geq 0$, $\phi_\sigma(\alpha t_1,\ldots,\alpha t_n)=\alpha^\sigma \phi_\sigma(t_1,\ldots,t_n)$. Therefore, for every $f\in \Hcal^n$ the function $Uf$ is continuous and positively homogeneous, so $U$ is well defined. Next, we check that $U$ is linear:
    \[U(\alpha \cdot f)=G_\frac{1}{\sigma}((\alpha \cdot f)\circ \phi_\sigma)= G_\frac{1}{\sigma}(\alpha \cdot (f\circ \phi_\sigma))=G_\frac{1}{\sigma}(\alpha)\cdot G_\frac{1}{\sigma}(f\circ \phi_\sigma)= \alpha \odot_\sigma Uf\]
    and
    \[U(f+g)=G_\frac{1}{\sigma}((f+g)\circ \phi_\sigma)= G_\frac{1}{\sigma}(G_\sigma\circ G_\frac{1}{\sigma}\circ f\circ \phi_\sigma +G_\sigma\circ G_\frac{1}{\sigma}\circ g\circ \phi_\sigma)=(Uf)\oplus_\sigma (Ug).\]
    Clearly,
    \[|Uf|=|f\circ\phi_\sigma|^\frac{1}{\sigma}=|f|^\frac{1}{\sigma}\circ\phi_\sigma = U|f|\]
    and, since $S=\phi_\frac{1}{\sigma}(S)$,
    \[\normiii{Uf}=\left(\sup_S |G_\frac{1}{\sigma}\circ f\circ \phi_\sigma|\right)^\sigma = \sup_{\phi_\frac{1}{\sigma}(S)} |G_\frac{1}{\sigma}\circ f\circ \phi_\sigma|^\sigma =\sup_S | f|=\|f\|,\]
    so $U$ is an isometric lattice homomorphism. Finally,
    \[Up_i=G_\frac{1}{\sigma}\circ p_i\circ \phi_\sigma = G_\frac{1}{\sigma}\circ G_\sigma \circ p_i = p_i,\]
    so $U$ satisfies the properties that uniquely define the functional calculus associated to $U_{\overline{p}}^\sigma:\Hcal\rightarrow \Hcal^n_\sigma$, according to \Cref{thm: functional calculus}. Therefore, both operators are the same.
\end{proof}

Note that in particular, we can compute for any $0<q<\infty$ the $q$-sums in $\Hcal^n_\sigma$ using the operator $U_{\overline{p}}^\sigma$. Indeed, if we write $F_q(t_1,\ldots,t_n)= \intoo{\sum_{i=1}^n t_i^q}^\frac{1}{q}$ and $H_q(t_1,\ldots,t_n)=\intoo{\sum_{i=1}^n |t_i|^q}^\frac{1}{q}$, i.e., $F_q= G_\frac{1}{q}\intoo{\sum_{i=1}^n G_q\circ p_i}$ and $H_q=G_\frac{1}{q}\intoo{\sum_{i=1}^n G_q\circ |p_i|}$, then
\[ U_{\overline{p}}^\sigma F_q= G_\frac{1}{\sigma}\circ F_q\circ \phi_\sigma = G_\frac{1}{\sigma}\circ G_\frac{1}{q}\intoo[3]{\sum_{i=1}^n G_q\circ p_i\circ \phi_\sigma}=  G_\frac{1}{\sigma q}\intoo[3]{\sum_{i=1}^n G_q\circ G_\sigma\circ p_i}= F_{\sigma q},\]
and similarly $U_{\overline{p}}^\sigma H_q= H_{\sigma q}$. \\

Now, we want to extend this explicit way of computing the functional calculus of $\Hcal^n_\sigma$ to any $X_\sigma$. In order to do so, note the following:

\begin{lem}\label{lem: lattice homomorphism with new operations}
    Let $T:X\rightarrow Y$ be a lattice homomorphism between Banach lattices. Then, for every $0<\sigma<\infty$, the map $T:X_\sigma\rightarrow Y_\sigma$ is a lattice homomorphism with respect to the vector lattice structure $(\oplus_\sigma, \odot_\sigma,\leq)$.
\end{lem}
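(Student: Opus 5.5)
The plan is to use that lattice homomorphisms preserve Krivine's functional calculus, as noted right after \Cref{thm: functional calculus}. For any $x_1,\ldots,x_n\in X$ and $f\in\Hcal^n$ we have $V_{T\overline{x}}=T\circ V_{\overline{x}}$, so $T(f(x_1,\ldots,x_n))=f(Tx_1,\ldots,Tx_n)$. Every operation of $X_\sigma$ is an instance of this calculus, so $T$ should transport each of them.

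The first step is additivity. Since $F_\sigma\in\Hcal^2$ (it is continuous and positively homogeneous), for $x,y\in X$ we get
\[T(x\oplus_\sigma y)=T\bigl(F_\sigma(x,y)\bigr)=F_\sigma(Tx,Ty)=Tx\oplus_\sigma Ty,\]
with the left-hand $F_\sigma$ computed in $X$ and the right-hand one in $Y$. The second step is homogeneity. Here $\alpha\odot_\sigma x=G_{\frac1\sigma}(\alpha)\cdot x$ is ordinary scalar multiplication, so ordinary linearity of $T$ gives
\[T(\alpha\odot_\sigma x)=G_{\frac1\sigma}(\alpha)\cdot Tx=\alpha\odot_\sigma Tx.\]
Hence $T:X_\sigma\to Y_\sigma$ is linear for the new vector space structures.

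The third step is the lattice structure. By \Cref{prop: new operations on vector lattices}, the order of $X_\sigma$ is still $\le$ and its lattice operations are still $\vee$ and $\wedge$. The original $T$ already satisfies $T(x\vee y)=Tx\vee Ty$, so $T$ preserves the lattice operations of $X_\sigma$ and $Y_\sigma$. It is therefore a lattice homomorphism of vector lattices in the new structure. Positivity is automatic, because the positive cones of $X_\sigma$ and $X$ coincide.

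The only delicate point is justifying the identity $T\circ V_{\overline{x}}=V_{T\overline{x}}$. It holds because $T\circ V_{\overline{x}}:\Hcal^2\to Y$ is a lattice homomorphism sending each coordinate projection $p_i$ to $Tx_i$. The uniqueness in \Cref{thm: functional calculus} then forces it to equal $V_{T\overline{x}}$. No norm considerations are involved, since the statement concerns only the vector lattice structure.
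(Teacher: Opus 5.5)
Your proof is correct and matches the paper's argument essentially step by step: homogeneity follows from ordinary linearity because $\alpha\odot_\sigma x=G_{\frac{1}{\sigma}}(\alpha)\cdot x$, additivity follows from $T(F_\sigma(x,y))=F_\sigma(Tx,Ty)$ since lattice homomorphisms preserve functional calculus, and the lattice operations are preserved because they are unchanged in $X_\sigma$ and $Y_\sigma$. Your justification of $T\circ V_{\overline{x}}=V_{T\overline{x}}$ via the uniqueness in \Cref{thm: functional calculus} is the same remark the paper makes right after that theorem.
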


\begin{proof}
    First note that $X_\sigma=X$ and $Y_\sigma=Y$ as sets, so the map $T:X_\sigma\rightarrow Y_\sigma$ is well defined. Now, if $x,y\in X$ and $\alpha\in \R$, we get that
    \[T(\alpha\odot_\sigma x)= T(\alpha^\frac{1}{\sigma}\cdot x)=\alpha^\frac{1}{\sigma}\cdot Tx=\alpha\odot_\sigma Tx,\]
    and, since $T$ preserves functional calculus,
    \[T(x\oplus_\sigma y)=T(F_\sigma(x,y))=F_\sigma(Tx,Ty)=Tx\oplus_\sigma Ty.\]
    Therefore, $T$ is linear. Moreover, $T$ preserves the order and the lattice operations of $X_\sigma$ and $Y_\sigma$, since they coincide with the ones from $X$ and $Y$.
\end{proof}

The previous two facts combined allow us to understand the functional calculus on $X_\sigma$, whenever it can be defined.

\begin{cor}\label{cor: functional calculus with new operations}
    Let $X$ be a Banach lattice, $0<\sigma<\infty$,  $n\in \N$, and assume that the vector lattice $X_\sigma$ can be endowed with a complete lattice norm (i.e., $0<\sigma\leq 1$ or $1<\sigma<\infty$ and $X$ $\sigma$-convex), so that it admits a functional calculus. Let $V_{\overline{x}}:\Hcal^n\rightarrow X$ and $U^\sigma_{\overline{p}}:\Hcal^n\rightarrow \Hcal^n_\sigma$ be the functional calculus in $X$ and $\Hcal^n_\sigma$ associated to $\overline{x}=(x_i)_{i=1}^n\subseteq X$ and $\overline{p}=(p_i)_{i=1}^n\subseteq \Hcal^n$ respectively, where $p_i$ denotes the projections of $\R^n$ onto the $i$-th coordinate. Then, the functional calculus in $X_\sigma$ associated to $\overline{x}=(x_i)_{i=1}^n$, $V_{\overline{x}}^\sigma:\Hcal^n\rightarrow X_\sigma$, is given by $V_{\overline{x}}\circ U^\sigma_{\overline{p}}$.
\end{cor}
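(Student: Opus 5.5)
The strategy is to invoke the uniqueness clause of \Cref{thm: functional calculus}, applied to the Banach lattice $X_\sigma$. The functional calculus $V^\sigma_{\overline{x}}:\Hcal^n\rightarrow X_\sigma$ is the unique lattice homomorphism sending each coordinate projection $p_i$ to $x_i$. It therefore suffices to check three things about the composition $V_{\overline{x}}\circ U^\sigma_{\overline{p}}$: it is a well-defined map from $\Hcal^n$ into $X_\sigma$, it is a lattice homomorphism for the structure $(\oplus_\sigma,\odot_\sigma,\leq)$ on the target, and it sends $p_i$ to $x_i$.

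The steps, in order, are as follows.

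First, by \Cref{prop: functional calculus in Hcal}(2), $U^\sigma_{\overline{p}}=U:\Hcal^n\rightarrow \Hcal^n_\sigma$ is a lattice homomorphism with $Up_i=p_i$.

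Second, $V_{\overline{x}}:\Hcal^n\rightarrow X$ is a lattice homomorphism between Banach lattices. By \Cref{lem: lattice homomorphism with new operations}, the same set map $V_{\overline{x}}:\Hcal^n_\sigma\rightarrow X_\sigma$ is therefore a lattice homomorphism of the vector lattices with the new operations.

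Third, the composition $V_{\overline{x}}\circ U:\Hcal^n\rightarrow \Hcal^n_\sigma\rightarrow X_\sigma$ is a composition of lattice homomorphisms, hence itself a lattice homomorphism. On the projections it gives $V_{\overline{x}}(Up_i)=V_{\overline{x}}p_i=x_i$.

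Fourth, we conclude with uniqueness in \Cref{thm: functional calculus}, which holds because $X_\sigma$ is a Banach lattice under the assumed complete lattice norm.

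The main obstacle is that \Cref{thm: functional calculus} requires the homomorphism to be an operator, i.e.\ bounded from $(\Hcal^n,\|\cdot\|_\infty)$ into $X_\sigma$ with its norm $\normiii{\cdot}$. Alternatively, one must know that uniqueness holds among all lattice homomorphisms. There are two ways to handle this:
\begin{itemize}
\item Positive (in particular, lattice homomorphism) operators from a Banach lattice into a normed lattice are automatically bounded once the target norm is complete. Since $\Hcal^n\cong C(S_{\ell_\infty^n})$ is a Banach lattice and $X_\sigma$ is assumed Banach, boundedness is automatic.
\item Alternatively, one can bound directly. Each $f$ in the unit ball of $\Hcal^n$ satisfies $|f|\leq \bigvee_i|p_i|$, so $|V_{\overline{x}}Uf|\leq V_{\overline{x}}U(\bigvee_i|p_i|)=\bigvee_i|x_i|$, since both maps are lattice homomorphisms. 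Monotonicity of $\normiii{\cdot}$ then gives $\normiii{V_{\overline{x}}Uf}\leq \normiii{\bigvee_i |x_i|}$.
\end{itemize}
I would present the second, explicit argument, since it avoids citing an external result. With boundedness established, uniqueness yields $V^\sigma_{\overline{x}}=V_{\overline{x}}\circ U^\sigma_{\overline{p}}$.
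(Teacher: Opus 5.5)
Your proof is correct and follows the paper's argument: you apply \Cref{lem: lattice homomorphism with new operations} to $V_{\overline{x}}$, compose with $U^\sigma_{\overline{p}}$ from \Cref{prop: functional calculus in Hcal}(2), check that the composition sends each $p_i$ to $x_i$, and invoke the uniqueness in \Cref{thm: functional calculus} for the Banach lattice $X_\sigma$. Your explicit check that $V_{\overline{x}}\circ U^\sigma_{\overline{p}}$ is bounded into $X_\sigma$ with its complete lattice norm (via $|f|\leq\bigvee_i|p_i|$ on the unit ball) is a detail the paper leaves implicit, and you handle it correctly.
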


\begin{proof}
    The statement follows from the fact that $V_{\overline{x}}:\Hcal^n_\sigma\rightarrow X_\sigma$ is by \Cref{lem: lattice homomorphism with new operations} a lattice homomorphism such that $V_{\overline{x}}\circ U^\sigma_{\overline{p}}p_i=V_{\overline{x}}p_i=x_i$. Therefore, the uniqueness of functional calculus implies that $V_{\overline{x}}^\sigma=V_{\overline{x}}\circ U^\sigma_{\overline{p}}$.
\end{proof}

\begin{rem}
    The assumptions that $X$ is a Banach lattice and $X_\sigma$ can be endowed with a complete lattice norm can be relaxed. The only technical requirement for the $p$-concavification and $p$-convexification of a vector lattice to be defined is that there exists a positively homogeneous functional calculus on the vector lattice. A sufficient condition for the functional calculus to exist is that the vector lattice $X$ is Archimedean and uniformly complete. Under this assumption, it can be shown that the vector lattice $X_\sigma$ is also Archimedean and uniformly complete for every $0<\sigma<\infty$, so it has a functional calculus, and \Cref{cor: functional calculus with new operations} holds.
\end{rem}

As a consequence, we conclude that whenever $X_\sigma$ admits a functional calculus, for every $0<q<\infty$ a $q$-sum of vectors $x_1,\ldots,x_n$ in $X_\sigma$ can be computed as a $\sigma q$-sum in $X$, or equivalently, that
\begin{equation}\label{eq: q-sums with new operations}
    \intoo[3]{\bigoplus_{i=1}^n x_i^q}^\frac{1}{q}:= V_{\overline{x}}^\sigma F_q = V_{\overline{x}}( U^\sigma_{\overline{p}}F_q)= V_{\overline{x}} F_{\sigma q}=\intoo[3]{\sum_{i=1}^n x_i^{\sigma q}}^\frac{1}{\sigma q}
\end{equation}
and
\begin{equation}\label{eq: q-|sums| with new operations}
    \intoo[3]{\bigoplus_{i=1}^n |x_i|^q}^\frac{1}{q}:= V_{\overline{x}}^\sigma H_q = V_{\overline{x}}( U^\sigma_{\overline{p}}H_q)= V_{\overline{x}} H_{\sigma q}=\intoo[3]{\sum_{i=1}^n |x_i|^{\sigma q}}^\frac{1}{\sigma q}.
\end{equation}
Now, we have all the tolls needed to study the convexity and concavity properties of $p$-convexifications and $p$-concavifications.

\begin{prop}\label{prop: properties of p-convexification}
    Let $X$ be a $(r,s)$-convex and $(u,v)$-concave Banach lattice, with $1\leq r\leq s\leq \infty$, $1\leq v\leq u\leq \infty$ and $r\leq u$, and let $1<p<\infty$. Then, $X^{(p)}$, the $p$-convexification of $X$, is $(pr,ps)$-convex with constant $K^{(r,s)}(X)^\frac{1}{p}$ and $(pu,pv)$-concave with constant $K_{(u,v)}(X)^\frac{1}{p}$. In particular, the $p$-convexification of a Banach lattice is always $p$-convex with constant one.
\end{prop}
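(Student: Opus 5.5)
The approach is to translate every expression in $X^{(p)}$ back into $X$, using \eqref{eq: q-|sums| with new operations} with $\sigma=\frac{1}{p}$, and then apply the hypotheses on $X$ directly.

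For vectors $x_1,\ldots,x_n\in X^{(p)}$ and any $0<q<\infty$, that identity gives
\[\intoo[3]{\bigoplus_{i=1}^n |x_i|^{q}}^\frac{1}{q}=\intoo[3]{\sum_{i=1}^n |x_i|^{q/p}}^\frac{p}{q},\]
where the right-hand side is computed in $X$. For $q=\infty$ both sides equal $\bigvee_i|x_i|$, since the lattice operations of $X_{\frac1p}$ coincide with those of $X$. Moreover, $\normiii{y}=\|y\|^{1/p}$ for every $y$.

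For the convexity claim, take $x_1,\ldots,x_n\in X$ and put $q=ps$. Then
\[\normiii[3]{\intoo[3]{\bigoplus_{i=1}^n|x_i|^{ps}}^{\frac{1}{ps}}}=\norm[3]{\intoo[3]{\sum_{i=1}^n |x_i|^{s}}^{\frac1s}}^{\frac1p}\le \Big(K^{(r,s)}(X)\Big(\sum_{i=1}^n\|x_i\|^r\Big)^{\frac1r}\Big)^{\frac1p}.\]
The last factor is $\big(\sum_i \normiii{x_i}^{pr}\big)^{\frac{1}{pr}}$, because $\|x_i\|^r=\normiii{x_i}^{pr}$. This gives $(pr,ps)$-convexity with constant $K^{(r,s)}(X)^{1/p}$.

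For the concavity claim, put $q=pv$ and argue the same way:
\[\Big(\sum_{i=1}^n\normiii{x_i}^{pu}\Big)^{\frac{1}{pu}}=\Big(\sum_{i=1}^n\|x_i\|^{u}\Big)^{\frac{1}{pu}}\le K_{(u,v)}(X)^{\frac1p}\norm[3]{\intoo[3]{\sum_{i=1}^n|x_i|^v}^{\frac1v}}^{\frac1p},\]
and the last expression equals $K_{(u,v)}(X)^{1/p}$ times the $\normiii{\cdot}$-norm of the $pv$-sum in $X^{(p)}$. The cases $s=\infty$ or $u=\infty$ are handled by the usual sup and max conventions.

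For the final assertion, every Banach lattice is $1$-convex with constant one (\Cref{rem: facts about convexity}(2)). Taking $r=s=1$ shows that $X^{(p)}$ is $p$-convex with constant $1$. For concavity one can always take $u=v=\infty$, which also satisfies $r\le u$.

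The only delicate step is justifying \eqref{eq: q-|sums| with new operations}, which requires a complete lattice norm on $X_{\frac1p}$. This is supplied by \Cref{prop: p-convexification} together with \Cref{cor: functional calculus with new operations}, with the case $q=\infty$ handled separately as above.
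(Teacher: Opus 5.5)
Your proposal is correct and follows the paper's proof essentially verbatim. Both arguments rewrite the $ps$- and $pv$-sums in $X^{(p)}$ as $s$- and $v$-sums in $X$ via \eqref{eq: q-|sums| with new operations}, then transfer the constants using $\normiii{\cdot}=\|\cdot\|^{1/p}$. Your separate treatment of $s=\infty$ or $u=\infty$ (via the coincidence of lattice operations) and your explicit choice $r=s=1$, $u=v=\infty$ for the final assertion are minor details the paper leaves implicit.
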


\begin{proof}
    Recall that the norm in $X^{(p)}=(X_\frac{1}{p},\normiii{\cdot})$ is given by $\normiii{\cdot}=\|\cdot\|^\frac{1}{p}$. Therefore, by \eqref{eq: q-|sums| with new operations} we get that
    \begin{align*}
        \normiii[\Bigg]{\intoo[3]{\bigoplus_{i=1}^n |x_i|^{ps}}^\frac{1}{ps}}& = \norm[3]{\intoo[3]{\sum_{i=1}^n |x_i|^{s}}^\frac{1}{s}}^\frac{1}{p}\leq  K^{(r,s)}(X)^\frac{1}{p} \intoo[3]{\sum_{i=1}^n \|x_i\|^{r}}^\frac{1}{pr}\\
        & =K^{(r,s)}(X)^\frac{1}{p} \intoo[3]{\sum_{i=1}^n \normiii{x_i}^{pr}}^\frac{1}{pr}
    \end{align*}
    and
    \begin{align*}
        \normiii[\Bigg]{\intoo[3]{\bigoplus_{i=1}^n |x_i|^{pv}}^\frac{1}{pv}}& = \norm[3]{\intoo[3]{\sum_{i=1}^n |x_i|^{v}}^\frac{1}{v}}^\frac{1}{p}\geq  K_{(u,v)}(X)^{-\frac{1}{p}} \intoo[3]{\sum_{i=1}^n \|x_i\|^{u}}^\frac{1}{pu}\\
        & =K_{(u,v)}(X)^{-\frac{1}{p}} \intoo[3]{\sum_{i=1}^n \normiii{x_i}^{pu}}^\frac{1}{pu}. \qedhere
    \end{align*}
\end{proof}

\begin{prop}\label{prop: properties of p-concavification}
    Let $X$ be a $p$-convex Banach lattice, such that it is also $(r,s)$-convex and $(u,v)$-concave, for $1\leq r\leq s\leq \infty$, $p\leq v\leq u\leq \infty$ and $1<p< r\leq u$. Then, $X_{(p)}$, the $p$-concavification of $X$, is $(\frac{r}{p},\frac{s}{p})$-convex with constant $K^{(p)}(X)^pK^{(r,s)}(X)^p$ and $(\frac{u}{p},\frac{v}{p})$-concave with constant $K^{(p)}(X)^pK_{(u,v)}(X)^p$. If $X$ satisfies a lower $u$-estimates (i.e., $(u,1)$-concavity) instead of being $(u,v)$-concave, then $X_{(p)}$ satisfies a lower $\frac{u}{p}$-estimate with constant $K^{(p)}(X)^pK_{(\downarrow u)}(X)^p$.
\end{prop}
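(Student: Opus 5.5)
The plan mirrors the proof of \Cref{prop: properties of p-convexification}. The new ingredients are the two-sided comparison $\normiii{\cdot}\leq \|\cdot\|^p\leq K^{(p)}(X)^p\normiii{\cdot}$ from \Cref{prop: p-concavification} and the description \eqref{eq: q-|sums| with new operations} of $q$-sums in $X_{(p)}=(X_p,\normiii{\cdot})$. For $\sigma=p$, \eqref{eq: q-|sums| with new operations} says that for every $0<q<\infty$ a $q$-sum computed in $X_p$ equals the $pq$-sum computed in $X$:
\[\intoo[3]{\bigoplus_{i=1}^n |x_i|^{q}}^\frac{1}{q}=\intoo[3]{\sum_{i=1}^n |x_i|^{pq}}^\frac{1}{pq}.\]
For $q=\infty$ both sides are $\bigvee_i|x_i|$, because the lattice operations are unchanged. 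We will apply this with $q=\frac{s}{p}$ and $q=\frac{v}{p}$. These exponents are at least $1$, since $s\geq r>p$ and $v\geq p$, so the convexity and concavity notions in $X_{(p)}$ are the usual ones.

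For convexity, take $x_1,\ldots,x_n\in X$. We have
\[\normiii[\Bigg]{\intoo[3]{\bigoplus_{i=1}^n |x_i|^{\frac{s}{p}}}^\frac{p}{s}}\leq \norm[3]{\intoo[3]{\sum_{i=1}^n |x_i|^{s}}^\frac{1}{s}}^p\leq K^{(r,s)}(X)^p\intoo[3]{\sum_{i=1}^n\|x_i\|^r}^{\frac{p}{r}}.\]
Then, using $\|x_i\|^p\leq K^{(p)}(X)^p\normiii{x_i}$, the last factor is bounded by $K^{(p)}(X)^p\big(\sum_i\normiii{x_i}^{r/p}\big)^{p/r}$. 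This gives $(\frac{r}{p},\frac{s}{p})$-convexity with constant $K^{(p)}(X)^pK^{(r,s)}(X)^p$. It is in fact slightly better than needed.

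For concavity, we go the other way:
\[\intoo[3]{\sum_{i=1}^n\normiii{x_i}^{\frac{u}{p}}}^\frac{p}{u}\leq \intoo[3]{\sum_{i=1}^n\|x_i\|^{u}}^\frac{p}{u}\leq K_{(u,v)}(X)^p\norm[3]{\intoo[3]{\sum_{i=1}^n |x_i|^{v}}^\frac{1}{v}}^p.\]
This is at most $K_{(u,v)}(X)^pK^{(p)}(X)^p\,\normiii[\big]{(\bigoplus_i|x_i|^{v/p})^{p/v}}$, again by \eqref{eq: q-|sums| with new operations} and \Cref{prop: p-concavification}.

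For the lower estimate, we could run the same computation with $v=1$ only on pairwise disjoint $x_i$. Here $\bigoplus_i |x_i| = (\sum_i|x_i|^p)^{1/p}=\sum_i|x_i|$ by \Cref{lem: disjoint p sums}, so disjointness is the same notion in $X$ and $X_p$, since the lattice operations agree. The lower $u$-estimate in $X$ then transfers verbatim. Alternatively, we can invoke \Cref{thm: upe equivalent to mixed convexity} to read it as $(u,1)$-concavity restricted to disjoint families.

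The main point to check carefully is the exponent bookkeeping. We must be sure the $q$-sum of $X_{(p)}$ with exponent $\frac{s}{p}$ is computed via \eqref{eq: q-|sums| with new operations} with $\sigma=p$, and not $\frac1p$. We must also handle the endpoint cases $s=\infty$ or $u=\infty$, where sums are replaced by suprema. These are immediate because $\vee$ is unchanged and the norm comparisons hold elementwise.
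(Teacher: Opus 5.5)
Your proposal is correct and is essentially the paper's proof. It uses the same two chains of inequalities, combining \eqref{eq: q-|sums| with new operations} with $\sigma=p$ and the comparison $\normiii{\cdot}\leq\|\cdot\|^p\leq K^{(p)}(X)^p\normiii{\cdot}$ from \Cref{prop: p-concavification}, and it handles the lower estimate the same way, via $\bigoplus_i|x_i|=\bigl(\sum_i|x_i|^p\bigr)^{1/p}=\sum_i|x_i|$ for disjoint vectors. (The convexity constant you get is exactly the stated one, not ``slightly better''; your explicit treatment of the endpoint cases $s=\infty$ and $u=\infty$ is a small addition that the paper leaves implicit.)
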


\begin{proof}
    Recall that the norm in $X_{(p)}=(X_\frac{1}{p},\normiii{\cdot})$ satisfies $\normiii{\cdot}\leq \|\cdot\|^p\leq K^{(p)}(X)^p\normiii{\cdot}$. Again, by \eqref{eq: q-|sums| with new operations} we get that
    \begin{align*}
        \normiii[\Bigg]{\intoo[3]{\bigoplus_{i=1}^n |x_i|^{\frac{s}{p}}}^\frac{p}{s}}& \leq \norm[3]{\intoo[3]{\sum_{i=1}^n |x_i|^{s}}^\frac{1}{s}}^p\leq  K^{(r,s)}(X)^p \intoo[3]{\sum_{i=1}^n \|x_i\|^{r}}^\frac{p}{r}\\
        & \leq K^{(p)}(X)^p K^{(r,s)}(X)^p \intoo[3]{\sum_{i=1}^n \normiii{x_i}^{\frac{r}{p}}}^\frac{p}{r}
    \end{align*}
    and
    \begin{align*}
        \normiii[\Bigg]{\intoo[3]{\bigoplus_{i=1}^n |x_i|^{\frac{v}{p}}}^\frac{p}{v}}& \geq K^{(p)}(X)^{-p} \norm[3]{\intoo[3]{\sum_{i=1}^n |x_i|^{v}}^\frac{1}{v}}^p\geq K^{(p)}(X)^{-p} K_{(u,v)}(X)^{-p} \intoo[3]{\sum_{i=1}^n \|x_i\|^{u}}^\frac{p}{u}\\
        & \geq K^{(p)}(X)^{-p} K_{(u,v)}(X)^{-p} \intoo[3]{\sum_{i=1}^n \normiii{x_i}^{\frac{u}{p}}}^\frac{p}{u}
    \end{align*}
    (note that all the exponents involved are greater or equal than one). If $X$ satisfies a lower $u$-estimate instead, then for every pairwise disjoint vectors $x_1,\ldots,x_n\in X_{(p)}$, we observe that they are also pairwise disjoint in $X$, so
    \begin{align*}
        \normiii[\Bigg]{\bigoplus_{i=1}^n |x_i|}& \geq K^{(p)}(X)^{-p} \norm[3]{\intoo[3]{\sum_{i=1}^n |x_i|^{p}}^\frac{1}{p}}^p = K^{(p)}(X)^{-p} \norm[3]{\sum_{i=1}^n |x_i|}^p \\
        & \geq K^{(p)}(X)^{-p} K_{(\downarrow u)}(X)^{-p} \intoo[3]{\sum_{i=1}^n \|x_i\|^{u}}^\frac{p}{u}\\
        & \geq K^{(p)}(X)^{-p} K_{(\downarrow u)}(X)^{-p} \intoo[3]{\sum_{i=1}^n \normiii{x_i}^{\frac{u}{p}}}^\frac{p}{u}.\qedhere
    \end{align*}
\end{proof}

The fact that convexity and concavity are nicely preserved by the $p$-convexification and $p$-concavification processes allows us to assume that the convexity constant of a Banach lattice is one:

\begin{lem}\label{lem: p-convexification of the p-concavification}
    Let $X$ be a $p$-convex Banach lattice for some $p>1$. Then $X$ is lattice isomorphic to $Y=(X_{(p)})^{(p)}$, the $p$-convexification of the $p$-concavification of X, with distortion $K^{(p)}(X)$. Moreover, $Y$ has $p$-convexity constant one.
\end{lem}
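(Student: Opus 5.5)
The plan is to take the identity map as the lattice isomorphism, and to show that the two-step construction returns the original vector lattice operations while only changing the norm.

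\textbf{Step 1: the operations.} The $p$-concavification $X_{(p)}$ is built on $X_p=(X,\oplus_p,\odot_p,\leq)$. Convexifying it produces $(X_p)_{1/p}$, whose sum is
\[x\,(\oplus_p)_{1/p}\,y=F_{1/p}(x,y)\]
computed with the functional calculus of $X_p$. By \Cref{cor: functional calculus with new operations}, the functional calculus of $X_p$ equals $V_{\overline{x}}\circ U^p_{\overline{p}}$, with $U^p_{\overline p}f=G_{1/p}\circ f\circ\phi_p$. Applying this to $F_{1/p}$ gives
\[G_{1/p}\circ F_{1/p}\circ \phi_p=F_1,\]
which is the ordinary sum. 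This is the same computation as the one preceding \eqref{eq: q-sums with new operations}, with the signed powers $F_q$ in place of $H_q$. For scalars, $\alpha(\odot_p)_{1/p}x=\alpha^p\odot_p x=(\alpha^p)^{1/p}\cdot x=\alpha x$. The order and the lattice operations are never changed by either construction.

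Hence $Y$ coincides with $X$ as a vector lattice, and the identity $X\to Y$ is a lattice isomorphism of vector lattices. One point needs care: the functional calculus of $X_p$ exists, and the corollary applies, precisely because $X$ is $p$-convex, so that $X_{(p)}$ is a Banach lattice by \Cref{prop: p-concavification}.

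\textbf{Step 2: the norms.} The norm of $Y$ is $\normiii{x}_Y=\normiii{x}^{1/p}$, where $\normiii{\cdot}$ is the norm of $X_{(p)}$. \Cref{prop: p-concavification} gives
\[\normiii{x}\le\|x\|^p\le K^{(p)}(X)^p\normiii{x}.\]
Taking $p$-th roots yields
\[\normiii{x}_Y\le\|x\|\le K^{(p)}(X)\,\normiii{x}_Y.\]
So the identity has norm at most $1$, its inverse has norm at most $K^{(p)}(X)$, and the distortion is at most $K^{(p)}(X)$.

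\textbf{Step 3: $p$-convexity constant one.} This is the final sentence of \Cref{prop: properties of p-convexification}, applied to the Banach lattice $X_{(p)}$ with $r=s=1$: every Banach lattice is $1$-convex with constant one, so its $p$-convexification is $p$-convex with constant $1^{1/p}=1$. Concretely, by \eqref{eq: q-|sums| with new operations} the $p$-sum in $Y$ is the $1$-sum in $X_{(p)}$, and the triangle inequality there gives the $p$-convexity inequality with constant one.

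The main obstacle is Step 1, namely verifying rigorously that the composed functional calculi collapse back to the original addition. Everything there rests on \Cref{cor: functional calculus with new operations} together with the identities $G_{1/p}\circ G_p=\mathrm{id}$ and $\phi_p\circ\phi_{1/p}=\mathrm{id}$. Steps 2 and 3 are routine.
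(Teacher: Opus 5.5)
Your proof is correct and follows essentially the same route as the paper: you take the identity map, use \Cref{cor: functional calculus with new operations} to see that the vector lattice operations of $Y$ are those of $X$, take $p$-th roots in $\|\cdot\|_{X_{(p)}}\le\|\cdot\|^p\le K^{(p)}(X)^p\|\cdot\|_{X_{(p)}}$ to get the distortion, and cite \Cref{prop: properties of p-convexification} for the constant one. The only cosmetic difference is in how the operations are identified. The paper applies the corollary twice to show that the whole functional calculus of $Y$ is $V_{\overline{x}}\circ U^p_{\overline{p}}\circ U^{1/p}_{\overline{p}}=V_{\overline{x}}$, whereas you apply it once and check directly that $F_{1/p}$ computed in $X_p$ is the ordinary sum and that $\alpha^p\odot_p x=\alpha x$; both are equally valid.
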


\begin{proof}
    We first note that $X$, $X_{(p)}$ and $Y$ coincide as sets, and the order and lattice operations are the same. Next, recall that 
    \[\|\cdot\|_{X_{(p)}}\leq \|\cdot\|^p\leq K^{(p)}(X)^p \|\cdot\|_{X_{(p)}},\]
    so the norm of $Y$ is equivalent to the norm of $X$: 
    \[K^{(p)}(X)^{-1}\|\cdot\|\leq \|\cdot\|_Y=\|\cdot\|_{X_{(p)}}^\frac{1}{p}\leq \|\cdot\|.\]
    Now, given $\overline{x}=(x_i)_{i=1}^n$ in the set $X$, let us denote by $V_{\overline{x}}:\Hcal_n\rightarrow X$ the functional calculus in the Banach lattice $X$ associated to $\overline{x}$, and by $U^\sigma_{\overline{p}}:\Hcal_n\rightarrow \Hcal_n^\sigma$ the functional calculus associated to the projections $\overline{p}=(p_i)_{i=1}^n$ on the vector lattice $\Hcal_n^\sigma$ for any $0<\sigma<\infty$, as we did in \Cref{cor: functional calculus with new operations}. Then, we know by this same result that the functional calculus on $X_{(p)}$, $V^p_{\overline{x}}:\Hcal_n\rightarrow X_{(p)}$, is given by $V^p_{\overline{x}}=V_{\overline{x}}\circ U^p_{\overline{p}}$. Applying \Cref{cor: functional calculus with new operations} again, this time to $X_{(p)}$ and its $p$-convexification $Y$, we obtain that the functional calculus on $Y$, $(V_{\overline{x}}^p)^\frac{1}{p}:\Hcal_n\rightarrow Y=(X_{(p)})^{(p)}$, is given by $(V^p_{\overline{x}})^\frac{1}{p}=V^p_{\overline{x}}\circ U^\frac{1}{p}_{\overline{p}}=V_{\overline{x}}\circ U^p_{\overline{p}}\circ U^\frac{1}{p}_{\overline{p}}=V_{\overline{x}}$, that is, it coincides with the original functional calculus on $X$. In particular, the sum and multiplication by scalars in $Y$ coincide with the original sum and multiplication by scalars in $X$. In other words, the vector lattice structures in $Y$ and $X$ coincide. Since the norms are equivalent, the identity on $X$ is a lattice isomorphism from $X$ to $Y$ with distortion $K^{(p)}(X)$, as we wanted to show. Finally, the last statement follows from \Cref{prop: properties of p-convexification}.
\end{proof}

This fact was used in \cite[Proposition 1.d.8 and Lemma 1.f.11]{LT2} to establish that every Banach lattice that is $p$-convex and additionally is $q$-concave or satisfies a lower $q$-estimate, respectively, can be renormed so that both constants become one. However, we omit the proof, as we will shortly see using different techniques that any $(p_1,q_1)$-convex and $(p_2,q_2)$-concave Banach lattice can be renormed so that both constants become one (see \Cref{thm: renorming convex and concave}).

\begin{example}\label{ex: convexification}
    Let $1<p<\infty$:
    \begin{enumerate}
        \item $L_p(\mu)_{(p)}=L_1(\mu)$ and $L_p(\mu)=L_1(\mu)^{(p)}$ for every measure $\mu$. It suffices to show that
    \[\fullfunction{T}{L_1(\mu)}{L_p(\mu)_{(p)}}{f}{f^\frac{1}{p}}\]
    is an onto lattice isometry. After this is established, we can use Lemmas \ref{lem: lattice homomorphism with new operations} and \ref{lem: p-convexification of the p-concavification} to conclude the second identification, as $L_p(\mu)$ is $p$-convex with constant one, so it is lattice isometric to $(L_p(\mu)_{(p)})^{(p)}$. Clearly, the map $T$ is well defined, as $f^\frac{1}{p}\in L_p(\mu)$ for every $f\in L_1(\mu)$. Moreover, it is a bijection, with inverse $h\mapsto h^p$. Let us check that it is a linear lattice homomorphism: given $f,g\in L_1(\mu)$ and $\lambda \in \R$,
    \[Tf \oplus_p Tg =(f^\frac{1}{p})\oplus_p (g^\frac{1}{p})=((f^\frac{1}{p})^p + (g^\frac{1}{p})^p)^\frac{1}{p} = (f + g)^\frac{1}{p} =T(f+g),\]
    \[\lambda \odot_p Tf=\lambda \odot_p f^\frac{1}{p} =\lambda^\frac{1}{p} \cdot f^\frac{1}{p} = (\lambda f)^\frac{1}{p} = T(\lambda f),\]
    and
    \[|Tf|=|f^\frac{1}{p}|=|f|^\frac{1}{p}=T|f|.\]
    Finally, $T$ is an isometry:
    \[\|Tf\|_{(L_p)_{(p)}} = \|f^\frac{1}{p}\|_{L_p}^p = \int |f| \, d\mu = \|f\|_{L_1}.\]

    \item $C(K)$ is lattice isometric to both $C(K)_{(p)}$ and $C(K)^{(p)}$ for every compact Hausdorff space $K$. The proof is similar to the first example.

    \item $C(K, L_p(\mu))_{(p)}=C(K, L_1(\mu))$ and $C(K, L_p(\mu))=C(K, L_1(\mu))^{(p)}$ for every compact Hausdorff space $K$ and every measure $\mu$. The isometry that establishes the first identification is
    \[\fullfunction{T}{L_1(\mu)}{L_p(\mu)_{(p)}}{f}{f^\frac{1}{p}}.\]
    \end{enumerate}
\end{example}

We conclude the section with a couple of applications of the $p$-concavification and $p$-convexifi-cation process to the existence of universal constructions within the class of $p$-convex Banach lattices. To this day, the most optimal proofs available are standard applications of these techniques to the corresponding results in the broader class of all Banach lattices. This means that there is no straightforward generalization of these results to the upper $p$-estimates setting (see \cite{GLTT2} for a more detailed discussion on the matter).\\

Our first application generalizes the following result from \cite[Theorems 4.3 and 4.4]{AT}, which establishes the existence of isometric push-outs in the category of Banach lattices with lattice homomorphisms (we recall the definition of an isometric push-out in the statement). One important feature about these push-outs is that they additionally preserve isometric embeddings.

\begin{thm}\label{push-out}
Given Banach lattices $X_0,X_1,X_2$ and lattice homomorphisms $T_i:X_0\rightarrow X_i$ for $i=1,2$, there exists an isometric push-out diagram, that is, there is a Banach lattice $PO$ and contractive lattice homomorphisms $S_i: X_i \rightarrow PO$, $i=1,2$, so that the following diagram commutes:
\begin{equation*}
	\xymatrix{
		X_1 \ar@{->}[r]^{S_1}& PO  \\
		X_0 \ar@{->}[r]_{T_2} \ar@{->}[u]^{T_1} & X_2 \ar@{->}[u]_{S_2} 
	}
\end{equation*}
Additionally, whenever there is a Banach lattice $Y$ and lattice homomorphisms $R_i:X_i \rightarrow Y$, $i=1,2$, satisfying that $R_1T_1=R_2T_2$, there is a unique lattice homomorphism $\gamma: PO\rightarrow Y$ with $\|\gamma\|\leq \max \{\|R_1\|, \|R_2\|\}$ such that the following diagram commutes:
\begin{equation*}
	\xymatrix{
		&  &  Y \\
        X_1 \ar@{->}[r]^{S_1} \ar@/^1.0pc/[rru]^{R_1}& PO \ar@{->}[ru]^{\gamma} & \\
		X_0 \ar@{->}[r]_{T_2} \ar@{->}[u]^{T_1} & X_2 \ar@{->}[u]_{S_2}  \ar@/_1.0pc/[uur]_{R_2} &
	}
\end{equation*}

Even more, if $T_1$ is contractive and $T_2$ is an isometric embedding, then $S_1$ is an isometric embedding.
\end{thm}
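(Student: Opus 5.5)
Since this theorem is cited from \cite[Theorems 4.3 and 4.4]{AT}, I would reconstruct the proof through the free Banach lattice $FBL[E]$ generated by a Banach space $E$. This object carries the universal property that every operator $E\to Y$ into a Banach lattice extends uniquely to a lattice homomorphism $FBL[E]\to Y$ with the same norm. I take $E=X_1\oplus_1 X_2$, the $\ell_1$-sum of the underlying Banach spaces. I also let $\phi:E\to FBL[E]$ be the canonical isometric embedding.

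\textbf{Construction of $PO$.} I take the closed ideal $J$ of $FBL[E]$ generated by two families of elements:
\begin{itemize}
\item the elements $\phi(x\vee y)-\phi(x)\vee\phi(y)$ for $x,y$ in the same summand $X_i$;
\item the elements $\phi(T_1x_0,0)-\phi(0,T_2x_0)$ for $x_0\in X_0$.
\end{itemize}
I then set $PO=FBL[E]/J$ and define $S_i$ as $\phi$ restricted to $X_i$, composed with the quotient map. Each $S_i$ is contractive because the $\ell_1$-inclusion $X_i\to E$, the map $\phi$ and the quotient map are all contractive. Each $S_i$ is a lattice homomorphism because the first family of generators lies in $J$. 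The relation $S_1T_1=S_2T_2$ holds because the second family lies in $J$, so the first diagram commutes.

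\textbf{Universal property.} Given $R_i$ with $R_1T_1=R_2T_2$, I define the operator $R:E\to Y$ by $R(x_1,x_2)=R_1x_1+R_2x_2$. Since $E$ carries the $\ell_1$-norm, $\|R\|=\max\{\|R_1\|,\|R_2\|\}$. Its lattice-homomorphic extension $\hat R:FBL[E]\to Y$ has the same norm. It vanishes on the first family of generators because each $R_i$ is a lattice homomorphism. It vanishes on the second family because $R_1T_1=R_2T_2$. Its kernel is a closed ideal, so it contains $J$ and $\hat R$ factors through the quotient as $\gamma$, with $\|\gamma\|\le\|R\|$. Uniqueness holds because $PO$ is generated as a Banach lattice by $S_1(X_1)\cup S_2(X_2)$. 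The images of $\phi(E)$ generate $FBL[E]$ as a lattice, so two lattice homomorphisms that agree on these images must coincide.

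\textbf{Isometric embedding (main obstacle).} The remaining claim is the hard one: if $T_1$ is contractive and $T_2$ is an isometric embedding, then $S_1$ is an isometric embedding. The universal property alone gives $\|S_1x_1\|\le\|x_1\|$. For the reverse inequality it suffices to produce, for each $x_1$ and each $\varepsilon>0$, a Banach lattice $Y$ and lattice homomorphisms $R_1,R_2$ that are contractive, satisfy $R_1T_1=R_2T_2$, and have $\|R_1x_1\|\ge(1-\varepsilon)\|x_1\|$. Then $\|x_1\|(1-\varepsilon)\le\|\gamma S_1x_1\|\le\|S_1x_1\|$.

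I would build $Y$ from lattice homomorphisms into $\mathbb R$, taking an $\ell_\infty$-type sum of copies of $C(K)$ or of $\mathbb R$. Concretely, I take a norming family of real-valued lattice homomorphisms on $X_1$. Each one composed with $T_1$ is a lattice homomorphism on $X_0$ with norm at most one. Transported through the isometric lattice embedding $T_2$, it becomes a functional on the sublattice $T_2(X_0)\subseteq X_2$. This functional must then be extended to a contractive lattice homomorphism on all of $X_2$.

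This extension is the delicate step. Real-valued lattice homomorphisms on a sublattice do not generally extend with the same norm. I would therefore first embed $X_1$ and $X_2$ into their biduals, or into suitable AM-type envelopes, and use extension of positive functionals on sublattices, via Hahn--Banach with the sublinear functional $x\mapsto\|x^+\|$, to produce the needed lattice-valued maps.

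If that extension fails, the fallback is to work directly in $FBL[E]$. Its norm is a supremum over operators $E\to L_1$ (or over functionals), so I would try to compute $\|S_1x_1\|$ as the distance from $\phi(x_1,0)$ to $J$. I would then bound this distance from below using test operators $E\to L_1$ built from $x_1^*$ on $X_1$, together with a Hahn--Banach extension across $T_2$ on $X_2$.
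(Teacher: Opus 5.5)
The paper gives no proof of this theorem; it only quotes it. Your argument must therefore stand on its own, and several parts of it do. The following are all correct:
\begin{itemize}
\item the construction $PO=FBL[X_1\oplus_1X_2]/J$;
\item the check that $S_1,S_2$ are contractive lattice homomorphisms with $S_1T_1=S_2T_2$;
\item the factorization of $\hat R$ with $\|\gamma\|\le\max\{\|R_1\|,\|R_2\|\}$, and uniqueness;
\item the reduction of the last claim to finding contractive lattice homomorphisms $R_1,R_2$ into some $Y$ with $R_1T_1=R_2T_2$ and $\|R_1x_1\|\ge(1-\eps)\|x_1\|$.
\end{itemize}
The gap is that you never construct such $R_1,R_2$, and the route you sketch cannot construct them. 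First, real-valued lattice homomorphisms need not norm $X_1$: $L_1[0,1]$ has none besides $0$. More decisively, no AM-space can serve as $Y$. Take $X_0=\R$ with norm $2|t|$, $X_1=\R$, $X_2=\ell_1^2$, $T_1t=2t$ and $T_2t=(t,t)$. Here $T_1$ is contractive and $T_2$ is a lattice isometric embedding. For any admissible $R_1,R_2$, the vectors $a=R_2e_1$ and $b=R_2e_2$ are positive, disjoint and of norm at most one, and $2R_1(1)=R_2(1,1)=a+b$. In an AM-space $\|a+b\|=\max\{\|a\|,\|b\|\}\le 1$, hence $\|R_1(1)\|\le\tfrac12$. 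So $\ell_\infty$-sums of copies of $\R$ or $C(K)$, and AM-envelopes or biduals of these, are all ruled out. By contrast, $Y=\ell_1^2$, $R_2=\mathrm{id}$, $R_1s=(\tfrac s2,\tfrac s2)$ works. The witnesses must be of $L_1$ type, so extending real- or $C(K)$-valued lattice homomorphisms with norm control cannot rescue your plan.

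The missing idea is to extend positive functionals rather than lattice homomorphisms, and then to glue $L_1$-spaces.
\begin{itemize}
\item It suffices to treat $x_1\ge0$, since $S_1$ is a lattice homomorphism. Choose a positive norm-one $x_1^*\in X_1^*$ with $x_1^*(x_1)=\|x_1\|$.
\item For $w=T_2z$ you have $x_1^*(T_1z)\le x_1^*(T_1z^+)\le\|z^+\|=\|w^+\|$. This is exactly where contractivity of $T_1$ and isometry of $T_2$ enter.
\item Hahn--Banach with the sublinear functional $w\mapsto\|w^+\|$ then gives a positive $x_2^*\in X_2^*$ of norm at most one with $x_2^*T_2=x_1^*T_1$.
\item The AL-seminorms $z\mapsto x_i^*(|z|)$ give contractive lattice homomorphisms $j_i:X_i\to A_i$ into AL-spaces, with $\|j_1x_1\|=\|x_1\|$. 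The map $\theta:j_1T_1z\mapsto j_2T_2z$ extends to a lattice isometry between the closed sublattices generated by the images of $X_0$.
\end{itemize}
You still need a single Banach lattice that receives $A_1$ and $A_2$ isometrically and compatibly with $\theta$. That is, you need an amalgamation of two AL-spaces over a common closed sublattice, for instance via a relatively independent joining of the representing measures over the common part. Composing with the $j_i$ then yields the required $R_1,R_2$. This gluing is a genuine additional argument, and it cannot come from a quotient. In $A_1\oplus_1A_2$ the closed ideal generated by the elements $(u,-\theta u)$ contains $(|u|,0)$, so it kills the common part rather than identifying it. Your FBL fallback meets the same obstruction. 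A contractive test operator $T:E\to L_1$ certifies $\|S_1x_1\|\ge\|\hat T\phi(x_1,0)\|$ only when $\hat T$ annihilates $J$. That happens exactly when $T(y_1,y_2)=R_1y_1+R_2y_2$ for lattice homomorphisms $R_i$ into a common $L_1$ with $R_1T_1=R_2T_2$, which is the amalgamation problem itself.
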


Using a $p$-concavification and $p$-convexification argument, we can extend this result to the class of $p$-convex Banach lattices (with constant one) \cite{GLTT2}.

\begin{thm}\label{thm: push-out p-conv}
Given $p$-convex Banach lattices $X_0,X_1,X_2$ with constant one and lattice homomorphisms $T_i:X_0\rightarrow X_i$ for $i=1,2$, there is a $p$-convex Banach lattice $PO_p$ with constant one and contractive lattice homomorphisms $S_i: X_i \rightarrow PO$, $i=1,2$, such that $S_1T_1=S_2T_2$, and whenever there is a $p$-convex Banach lattice $Y$ with constant one and lattice homomorphisms $R_i:X_i \rightarrow Y$, $i=1,2$, satisfying that $R_1T_1=R_2T_2$, there is a unique lattice homomorphism $\gamma: PO\rightarrow Y$ such that $\|\gamma\|\leq \max \{\|R_1\|, \|R_2\|\}$ and $R_i=\gamma S_i$, $i=1,2$.\\

Even more, if $T_1$ is contractive and $T_2$ is an isometric embedding, then $S_1$ is an isometric embedding.
\end{thm}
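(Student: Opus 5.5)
The plan is to reduce to \Cref{push-out} by $p$-concavification, and then return to the $p$-convex class by $p$-convexification. Since $X_0,X_1,X_2$ are $p$-convex with constant one, \Cref{prop: p-concavification} gives Banach lattices $(X_j)_{(p)}$ with $\normiii{x}=\|x\|^p$ exactly. The inequalities $\normiii{\cdot}\le\|\cdot\|^p\le K^{(p)}(X)^p\normiii{\cdot}$ collapse to equality when $K^{(p)}(X)=1$. By \Cref{lem: lattice homomorphism with new operations}, the maps $T_i:(X_0)_{(p)}\to (X_i)_{(p)}$ are lattice homomorphisms, and they satisfy $\|T_i\|_{(p)}=\|T_i\|^p$. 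I then apply \Cref{push-out} to these concavified maps to obtain $PO$ and contractive lattice homomorphisms $\tilde S_i:(X_i)_{(p)}\to PO$. I set $PO_p:=PO^{(p)}$, which is $p$-convex with constant one by \Cref{prop: properties of p-convexification}. I take $S_i:=\tilde S_i$ viewed as maps $X_i=((X_i)_{(p)})^{(p)}\to PO^{(p)}$. Here I use \Cref{lem: p-convexification of the p-concavification} with distortion $K^{(p)}=1$, so the identification is an isometry. Again by \Cref{lem: lattice homomorphism with new operations}, the $S_i$ are lattice homomorphisms, and since norms get raised to the power $1/p$, they are contractive. Commutativity $S_1T_1=S_2T_2$ holds as a statement about set maps.

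For the universal property, take $Y$ $p$-convex with constant one and $R_i:X_i\to Y$ with $R_1T_1=R_2T_2$. I pass to $R_i:(X_i)_{(p)}\to Y_{(p)}$, which are lattice homomorphisms of norm $\|R_i\|^p$. \Cref{push-out} then produces a unique $\tilde\gamma:PO\to Y_{(p)}$ with $\tilde\gamma\tilde S_i=R_i$ and $\|\tilde\gamma\|\le\max\|R_i\|^p$. Convexifying gives $\gamma:PO^{(p)}\to (Y_{(p)})^{(p)}=Y$, which is a lattice homomorphism with $\|\gamma\|=\|\tilde\gamma\|^{1/p}\le\max\|R_i\|$ and $\gamma S_i=R_i$. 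For uniqueness, any other lattice homomorphism $\gamma'$ with $\gamma' S_i=R_i$ concavifies to a lattice homomorphism $PO\to Y_{(p)}$ with the same property. Uniqueness in \Cref{push-out} then forces $\gamma'=\gamma$ as set maps. The final clause (that $S_1$ is an isometric embedding when $T_1$ is contractive and $T_2$ is an isometric embedding) transfers in the same way: contractivity and isometric embeddings are preserved under $\|\cdot\|\mapsto\|\cdot\|^p$ and back, so I can invoke the last part of \Cref{push-out} on the concavified diagram.

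The main obstacle is justifying the norm identity for operators, namely that for a lattice homomorphism $T$ between $p$-convex lattices with constant one, the norm of $T$ between the concavifications is exactly $\|T\|^p$ (and, under convexification, exactly $\|T\|^{1/p}$). This is where constant one is essential. Since the new norm is literally $\|\cdot\|^p$ and scalars act by $\alpha\odot_p x=\alpha^{1/p}x$, homogeneity gives $\sup_{\normiii{x}\le1}\normiii{Tx}=\sup_{\|x\|\le1}\|Tx\|^p$. The same computation works in the convexified direction. I also need linearity of these set maps with respect to the new operations, which \Cref{lem: lattice homomorphism with new operations} supplies.
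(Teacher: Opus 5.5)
Your proposal is correct and follows the paper's proof essentially step by step. You $p$-concavify the diagram, apply \Cref{push-out}, $p$-convexify back using \Cref{lem: p-convexification of the p-concavification} with distortion one, and transfer the universal property and the isometric-embedding clause through the exact identity $\normiii{\cdot}=\|\cdot\|^p$. You also spell out that operator norms transform as $\|T\|\mapsto\|T\|^p$ (and back with exponent $1/p$), a detail the paper leaves implicit. For uniqueness you tacitly use $(PO^{(p)})_{(p)}=PO$, which follows from the same functional calculus argument as \Cref{lem: p-convexification of the p-concavification}.
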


\begin{proof}
Let $X_{i(p)}$ be the $p$-concavification of $X_i$ for $i=0,1,2$. Then, by \Cref{lem: lattice homomorphism with new operations} we get that the maps $T_i:X_{0(p)}\rightarrow X_{i(p)}$, $i=1,2$, are lattice homomorphisms. Let us apply \Cref{push-out} to obtain a push-out $PO$ in the category of all Banach lattices for the $p$-concavified diagram:
\begin{equation*}
	\xymatrix{
		X_{1(p)} \ar@{->}[r]^{R_1}& PO  \\
		X_{0(p)} \ar@{->}[r]_{T_2} \ar@{->}[u]^{T_1} & X_{2(p)} \ar@{->}[u]_{R_2} 
	}
\end{equation*}
If we $p$-convexify this diagram, we find that for every $i=0,1,2$, $(X_{i(p)})^{(p)}=X_i$ by \Cref{lem: p-convexification of the p-concavification}. Arguing as before, the lattice homomorphisms $R_i:X_i\rightarrow PO^{(p)}$, $i=1,2$, are contractive. We claim that $PO^{(p)}$ satisfies the universal property of the statement. Consider a $p$-convex Banach lattice $Y$ with constant one and lattice homomorphisms $R_i:X_i \rightarrow Y$, $i=1,2$, satisfying that $R_1T_1=R_2T_2$. By $p$-concavifying the new diagram and applying the universal property of $PO$, we get that there exists a unique lattice homomorphism $\gamma : PO\rightarrow Y_{(p)}$ such that $\|\gamma\|\leq \max_{i=1,2}\|R_i:X_{i(p)}\rightarrow Y_{(p)}\|$ and $R_i=\gamma S_i$, $i=1,2$. If we $p$-convexify back, we get a unique lattice homomorphism from $PO^{(p)}$ to $Y$ satisfying the properties of the statement.\\

Finally, if $T_2$ was originally an isometric embedding, then it remains an isometric embedding after the $p$-concavification, since $\|\cdot\|_{X_{i(p)}}=\|\cdot\|_{X_{i}}^p$. \Cref{push-out} then yields that $R_1:X_{1(p)} \rightarrow PO$ is an isometric embedding, so it remains an isometry after $p$-convexifying.
\end{proof}

It is worth mentioning that the existence of isometric push-outs in the class of Banach lattices with upper $p$-estimates was established in \cite{GLTT} using more general tools. However, since $p$-convexification and $p$-concavification tools are not available in this setting, only an isomorphic version of the last part of the statement can be obtained with the current techniques (see \cite{GLTT2}).\\

Our second application concerns the existence of separable universal spaces for the class of all separable $p$-convex Banach lattices. In the general setting of Banach lattices, the following result was established \cite{LLOT}:

\begin{thm}\label{thm: separable universal BL}
    Every separable Banach lattice embeds lattice isometrically into $C(\Delta, L_1[0,1])$, where $\Delta$ denotes the Cantor set.
\end{thm}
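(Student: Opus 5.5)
The plan is to realize the norm of $X$ as a supremum of $L_1$-seminorms indexed by a compact metrizable set of positive functionals. The main work is then to make the corresponding $L_1$-representations depend continuously on the functional, and finally to replace the index set by $\Delta$.

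First I would fix notation and the basic identity. Let $X$ be separable. Then $K:=B_{X^*}\cap X^*_+$, with the weak$^*$ topology, is a compact metrizable space. For every $x\in X$ we have
\[\|x\|=\||x|\|=\sup_{\varphi\in K}\varphi(|x|).\]
For each $\varphi\in K$, the seminorm $x\mapsto\varphi(|x|)$ is an $L$-seminorm. Quotienting by its kernel, which is an ideal, and completing produces an $AL$-space $L_1(\varphi)$. The canonical map $Q_\varphi:X\rightarrow L_1(\varphi)$ is a contractive lattice homomorphism with $\|Q_\varphi x\|=\varphi(|x|)$. Since $X$ is separable, $L_1(\varphi)$ is a separable $AL$-space. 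By Kakutani's representation theorem it is therefore lattice isometric to a sublattice of $L_1[0,1]$, after absorbing atoms and a density into a probability measure on $[0,1]$.

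If I choose lattice isometric embeddings $J_\varphi:L_1(\varphi)\rightarrow L_1[0,1]$ and set $\Phi(x)(\varphi):=J_\varphi Q_\varphi x$, then $\Phi$ is a lattice homomorphism pointwise and satisfies $\sup_\varphi\|\Phi(x)(\varphi)\|=\|x\|$. In particular $\Phi$ is isometric into $\ell_\infty(K,L_1[0,1])$.

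The hard step is the one I expect to be the main obstacle: choosing the $J_\varphi$ so that $\varphi\mapsto J_\varphi Q_\varphi x$ is norm continuous into $L_1[0,1]$ for every $x$. Continuity of the scalar map $\varphi\mapsto\|J_\varphi Q_\varphi x\|=\varphi(|x|)$ is automatic, but continuity of the vectors themselves is not. I would first try a concrete, canonical choice of $J_\varphi$, avoiding abstract Kakutani, built from a dense sequence $(x_n)$ in $X_+$. The idea is to encode the joint distribution of $(Q_\varphi x_1,Q_\varphi x_2,\ldots)$ under $\varphi$ through a measure-preserving rearrangement onto $[0,1]$, defined by a fixed inductive scheme. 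At stage $n$, the first $n$ functions determine a finite-dimensional piece of the functional calculus. That piece is represented on $[0,1]$ by monotone rearrangements, which depend continuously (in $L_1$) on the distribution. In turn, the distribution depends weak$^*$-continuously on $\varphi$, because $\varphi(f(x_1,\ldots,x_n))$ is continuous in $\varphi$ for $f\in\Hcal^n$ by \Cref{thm: functional calculus}.

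Continuity on a dense set of $x$'s then extends to all of $X$. This works because each $\Phi(\cdot)(\varphi)$ is $1$-Lipschitz, so $\Phi(x)$ is a uniform limit of continuous functions. If this canonical choice breaks down at points where the distribution degenerates, my fallback is a perturbation. I would replace $\varphi$ by $\varphi$ plus a small fixed strictly positive-type functional, using a countable separating family, and pass to a limit. The isometry would be recovered through the supremum, since the perturbed norms converge uniformly.

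The last step is to replace $K$ by $\Delta$. Since $K$ is compact metrizable, there is a continuous surjection $\pi:\Delta\rightarrow K$. Composition with $\pi$ gives a lattice isometric embedding $C(K,L_1[0,1])\rightarrow C(\Delta,L_1[0,1])$, because the supremum norm is preserved under a surjection and the lattice operations are pointwise. Composing this with $\Phi$ yields the required lattice isometric embedding of $X$ into $C(\Delta,L_1[0,1])$.
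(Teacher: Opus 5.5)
The paper does not prove this theorem; it quotes it from the literature, so your argument has to stand on its own. The outer framework is sound: $\|x\|=\sup_{\varphi\in K}\varphi(|x|)$, the $AL$-quotients $L_1(\varphi)$, extending continuity from a dense set via the $1$-Lipschitz bound, and passing from $K$ to $\Delta$ through a continuous surjection all work. The gap is the step you flag yourself, and the mechanism you propose for it fails.

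\textbf{Why the inductive scheme fails.} Suppose the images of $x_1,\dots,x_n$ are chosen from their joint law and then kept when $x_{n+1}$ is added, with $x_{n+1}$ placed according to its conditional law. Such a scheme cannot be continuous, because conditional laws are not weak$^*$-continuous. Here is a concrete instance.
\begin{itemize}
\item Let $X=L_1([0,1]^2)$, $x_1=\uno$, $x_2(s,t)=s$, $x_3=\chi_{\{t>1/2\}}$, taken as the first three terms of your dense sequence. The weak unit $x_1$ makes the positively homogeneous calculus see ordinary joint laws.
\item Let $\varphi_k=\chi_{A_k}\in K$ with $A_k=\{(s,t):t>1/2\iff r_k(s)=1\}$, where $r_k$ are the Rademacher functions. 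Then $\varphi_k\to\varphi:=\frac12\uno$ weak$^*$.
\item The law of $(x_1,x_2)$ is $\frac12\,ds$ for every $\varphi_k$ and for $\varphi$. So your scheme sends $x_1,x_2$ to the same $w$ and $wa$ in all cases.
\item In $L_1(\varphi_k)$ we have $x_3=\chi_{\{r_k(x_2)=1\}}x_1$. Hence $J_{\varphi_k}Q_{\varphi_k}x_3=\chi_{\{r_k(a)=1\}}w$, which is $w$ times a $\sigma(a)$-measurable function.
\item Under $\varphi$, $x_3$ is non-constant and independent of $x_2$. So $J_\varphi Q_\varphi x_3=wb$ with $b$ non-constant and independent of $a$ with respect to $2w\,d\lambda$.
\item $\sigma(a)$-measurable functions form a closed subset of $L_1(w\,d\lambda)$, so $J_{\varphi_k}Q_{\varphi_k}x_3\not\to J_\varphi Q_\varphi x_3$.
\end{itemize}
If instead earlier images may move at later stages, you must show that the stage-$n$ representations converge and that the limit is continuous in $\varphi$. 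That is the whole content of the theorem, and it is not addressed. The perturbation fallback does not help either: the obstruction has nothing to do with degenerate distributions.

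\textbf{What is missing.} You need a Skorokhod-type representation that depends continuously on the law.
\begin{itemize}
\item Take a dense sequence $(x_n)$ in $B_X$ and set $e=\sum_n 2^{-n}|x_n|$. The closed unital sublattice of $I_e\cong C(K_e)$ generated by the $x_n$ is $C(S)$ for some compact metrizable $S$.
\item This gives an injective lattice homomorphism $j:C(S)\to X$ with dense range. Then $\nu_\varphi:=j^*\varphi$ is a weak$^*$-continuous family of positive measures on $S$ with $\varphi(|jh|)=\int_S|h|\,d\nu_\varphi$.
\item The Blackwell--Dubins extension of Skorokhod's theorem gives $\rho:P(S)\times[0,1]\to S$ with $\rho(\mu,\cdot)_*\lambda=\mu$, and $\rho(\mu_k,t)\to\rho(\mu,t)$ for a.e.\ $t$ whenever $\mu_k\to\mu$ weakly.
\item Define $\Phi(jh)(\varphi):=\nu_\varphi(S)\,h\circ\rho\bigl(\nu_\varphi/\nu_\varphi(S),\cdot\bigr)$, and $0$ if $\nu_\varphi=0$. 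This gives lattice isometric embeddings of $L_1(\varphi)$ into $L_1[0,1]$ that depend continuously on $\varphi$, by dominated convergence.
\end{itemize}
The rest of your argument then goes through unchanged. Alternatively, lift $\nu_\varphi$ continuously to a closed subset of $\Delta\subseteq[0,1]$ through a Milutin-type surjection with a regular averaging operator, and use one-dimensional quantile functions there; these give the continuous choice directly.
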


This result extends automatically to the $p$-convex setting using the techniques developed in this section \cite[Theorem 6.1]{GLTT2}. It remains an open question whether there is an analogous result in the upper $p$-estimates setting.

\begin{thm}\label{thm: universal p convex}
   Every $p$-convex Banach lattice with constant 1 embeds lattice isometrically into $C(\Delta, L_p[0,1])$.
\end{thm}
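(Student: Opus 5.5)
Presumably the statement is meant for separable $X$; we assume this throughout. The plan is to transfer \Cref{thm: separable universal BL} through the $p$-concavification and $p$-convexification machinery, exactly as in the proof of \Cref{thm: push-out p-conv}.

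Let $X$ be a separable $p$-convex Banach lattice with $K^{(p)}(X)=1$, and form its $p$-concavification $X_{(p)}$ from \Cref{prop: p-concavification}. Since $K^{(p)}(X)=1$, its norm satisfies $\normiii{\cdot}=\|\cdot\|^p$. Moreover, $X_{(p)}$ coincides with $X$ as a set and carries the same topology, because the estimate in the completeness part of that proof shows the identity is a homeomorphism. Hence $X_{(p)}$ is a separable Banach lattice. By \Cref{thm: separable universal BL} there is a lattice isometric embedding $J:X_{(p)}\rightarrow C(\Delta,L_1[0,1])$.

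Next we $p$-convexify. By \Cref{lem: lattice homomorphism with new operations} with $\sigma=\frac1p$, the same map $J$ is a lattice homomorphism
\[J:(X_{(p)})^{(p)}\rightarrow C(\Delta,L_1[0,1])^{(p)}.\]
Both norms are obtained by taking $\frac1p$-th powers, so $J$ remains isometric. By \Cref{lem: p-convexification of the p-concavification}, $(X_{(p)})^{(p)}$ is lattice isomorphic to $X$ with distortion $K^{(p)}(X)=1$; the two spaces share the same vector lattice structure and the norms agree, since $\|x\|_Y=\normiii{x}^{1/p}=\|x\|$. So $X$ embeds lattice isometrically into $C(\Delta,L_1[0,1])^{(p)}$.

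Finally, we identify $C(\Delta,L_1[0,1])^{(p)}$ with $C(\Delta,L_p[0,1])$, as in \Cref{ex: convexification}(3). The candidate map is $\Phi(f)(t)=\sgn(f(t))|f(t)|^{1/p}$, applied pointwise in $t\in\Delta$. This is the main step requiring care, and it has three parts.

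First, $\Phi(f)$ must be continuous into $L_p$. This follows from the pointwise estimate in \Cref{lem: inequalities of new sums}(1), which gives
\[\|g^{1/p}-h^{1/p}\|_p^p\leq C_p\|g-h\|_1.\]

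Second, $\Phi$ is a bijective lattice homomorphism for the operations $\oplus_{1/p},\odot_{1/p}$. The inverse is $h\mapsto h^p$, which is continuous by \Cref{lem: inequalities of new sums}(2) together with Hölder's inequality. Linearity is checked pointwise, exactly as in \Cref{ex: convexification}(1). Here one needs that the functional calculus in $C(\Delta,L_1)$ is computed pointwise in $t$ and then a.e.\ in $[0,1]$, which holds because the evaluation maps are lattice homomorphisms.

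Third, $\Phi$ is isometric:
\[\sup_t\|f(t)^{1/p}\|_p=\sup_t\|f(t)\|_1^{1/p}=\|f\|^{1/p}.\]

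Composing $\Phi$ with the embedding from the previous step gives the desired lattice isometric embedding of $X$ into $C(\Delta,L_p[0,1])$.
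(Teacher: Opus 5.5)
Your proof is correct and follows the same route as the paper: $p$-concavify, apply \Cref{thm: separable universal BL}, then $p$-convexify back using \Cref{lem: lattice homomorphism with new operations} and \Cref{lem: p-convexification of the p-concavification}, and finish with the identification $C(\Delta,L_1[0,1])^{(p)}=C(\Delta,L_p[0,1])$ from \Cref{ex: convexification}(3). What you add is detail the paper leaves implicit: you state the separability hypothesis, which is necessary and which the paper's proof only mentions in passing; you check that $X_{(p)}$ is separable via the completeness estimates; and you verify the map $\Phi$ directly.
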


\begin{proof}
    Recall from \Cref{ex: convexification}(3) that $C(\Delta, L_p[0,1])$ can be identified with the $p$-convexification of $C(\Delta, L_1[0,1])$, so it suffices to show that $C(\Delta, L_1[0,1])^{(p)}$ is universal for the class of separable $p$-convex Banach lattices. Let $X$ be a $p$-convex Banach lattice with constant 1, and consider its $p$-concavification $X_{(p)}$, which is also a Banach lattice. By \Cref{thm: separable universal BL}, there exists a lattice isometric embedding $T:X_{(p)}\rightarrow C(\Delta, L_1[0,1])$. If we $p$-convexify both spaces, we find that $T$ is again a lattice isometry from $X=(X_{(p)})^{(p)}$ into $C(\Delta, L_1[0,1])^{(p)}$, as we wanted to show.
\end{proof}

\section{Abstract factorizations of convex and concave operators}\label{sec: abstract factorizations}

We go back to the study of $(p,q)$-convex and $(p,q)$-concave operators and their connections to $(p,q)$-convex and $(p,q)$-concave spaces by means of the following factorization results. The theory of ``abstract'' factorizations of $(p,q)$-convex and $(p,q)$-concave operators (in contrast to the factorizations through concrete functions spaces, that will be treated in the next section) was originally introduced by Reisner \cite{Reisner} for $p$-convex and $p$-concave operators, and Meyer-Nieberg \cite[Section 2.8]{MN} and Byrd \cite{Byrd} in the setting of $(p,\infty)$-convexity and $(p,1)$-concavity, but has been gradually refined in subsequent works \cite{RT, GLTT2}. Essentially, the next two theorems reduce the study of $(p,q)$-convex and $(p,q)$-concave operators to the understanding of $(p,q)$-convex and $(p,q)$-concave Banach lattices. Moreover, the constructions that we will obtain will be canonical in a certain sense. Namely, the factorization for $(p,q)$-concave Banach lattices will satisfy some ``maximality'' property, while its analogue for $(p,q)$-convex operators will be ``minimal''. As a byproduct, we will obtain a renorming result that will allow us to assume that the convexity and concavity constants of a Banach lattice are simply one.

\begin{thm}\label{thm: factorization concave}
    Let $X$ be a Banach lattice, $E$ a Banach space, $1\leq q\leq p\leq \infty$, and $T:X\rightarrow E$ a $(p,q)$-concave operator. Then, there exists a $(p,q)$-concave Banach lattice $Z^0$ with constant one, a lattice homomorphism with dense range $\phi^0: X\rightarrow Z^0$ and an operator $S^0:Z^0\rightarrow E$ such that $T=S^0  \phi^0$.\\ 
    
    Moreover, if $Z$ is a $(p,q)$-concave Banach lattice, $\phi: X\rightarrow Z$ is a lattice homomorphism with dense range, and $S:Z\rightarrow E$ is an operator such that $T=S  \phi$, then there exists a lattice homomorphism with dense range $\psi: Z\rightarrow Z^0$ such that $\phi^0=\psi \phi$ and $S=S^0\psi$.
    \begin{center}
        \begin{tikzcd}
        X  \ar[dr, "\phi^0"] \ar[ddr, "\phi"'] \ar[rr, "T"]  &    & E  \\
        &  Z^0 \ar[ur, "S^0"]  & \\
        &  Z \ar[uur, "S"'] \ar[u, dashed, "\psi" near end]   & 
        \end{tikzcd} 
        \end{center}
\end{thm}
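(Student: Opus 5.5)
The plan is to build $Z^0$ as the completion of a quotient of $X$ by a lattice seminorm that is the largest $(p,q)$-concave seminorm controlling $T$. For $x\in X$ define
\[\rho(x)=\sup\cbr[3]{\intoo[3]{\sum_{i=1}^n \|Tx_i\|^p}^\frac{1}{p}: n\in\N,\ x_1,\ldots,x_n\in X,\ \intoo[3]{\sum_{i=1}^n |x_i|^q}^\frac{1}{q}\leq |x|}.\]
Using only the $(p,q)$-concavity of $T$ (and monotonicity of the norm), we get $\|Tx\|\leq \rho(x)\leq K_{(p,q)}(T)\|x\|$. Here the lower bound comes from the choice $n=1$, $x_1=x$.

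We then check that $\rho$ is a lattice seminorm. Monotonicity ($|x|\leq|y|$ implies $\rho(x)\leq\rho(y)$) and homogeneity are immediate from the definition. The triangle inequality is the main obstacle. Given admissible families $(x_i)$ for $|x|$ and $(y_j)$ for $|y|$, we cannot simply concatenate them, because $\intoo{\sum|x_i|^q+\sum|y_j|^q}^{1/q}$ is not bounded by $|x+y|$. We also need $\rho(x)+\rho(y)$ on the right, not an $\ell_p$-combination.

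The first thing to try is the standard trick of writing $\rho(x+y)$ through decompositions. If $(z_k)$ is admissible for $|x+y|\leq|x|+|y|$, we split $z_k=z_k'+z_k''$ using the functional-calculus multipliers $|x|/(|x|+|y|)$ and $|y|/(|x|+|y|)$ in the principal ideal $C(K)$-representation. Then $(z_k')$ is admissible for $|x|$ and $(z_k'')$ is admissible for $|y|$, by homogeneity of $q$-sums under multiplication by functions $0\leq h\leq 1$ in $C(K)$. The triangle inequality in $\ell_p^n(E)$ then gives
\[\intoo[3]{\sum\|Tz_k\|^p}^\frac{1}{p}\leq \intoo[3]{\sum\|Tz_k'\|^p}^\frac{1}{p}+\intoo[3]{\sum\|Tz_k''\|^p}^\frac{1}{p}\leq\rho(x)+\rho(y).\]
Justifying the splitting inside the ideal generated by $|x|+|y|$ via Kakutani's representation is the technical core. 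The $\infty$ cases are handled by the usual modifications.

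Next we show that $\rho$ is $(p,q)$-concave with constant one. Given $x_1,\ldots,x_n$ with $e=\intoo{\sum|x_i|^q}^{1/q}$, choose for each $i$ a nearly optimal family $(x_{ij})_j$ with $\intoo{\sum_j|x_{ij}|^q}^{1/q}\leq|x_i|$. The combined family satisfies $\intoo{\sum_{i,j}|x_{ij}|^q}^{1/q}\leq e$, by monotonicity of $q$-sums and the associativity of $q$-sums under functional calculus. Hence $\sum_i\rho(x_i)^p\leq\rho(e)^p+\varepsilon$, that is, $\intoo{\sum_i \rho(x_i)^p}^{1/p}\leq \rho(\intoo{\sum_i |x_i|^q}^{1/q})$.

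Now let $N=\rho^{-1}(0)$, which is an ideal. Let $Z^0$ be the completion of $X/N$ under $\rho$; it is a Banach lattice. Let $\phi^0$ be the quotient map, which is a lattice homomorphism with dense range and satisfies $\|\phi^0\|\leq K_{(p,q)}(T)$. Since concavity inequalities on a dense sublattice pass to the completion by continuity of the functional calculus, $Z^0$ is $(p,q)$-concave with constant one. Since $\|Tx\|\leq\rho(x)$, $T$ vanishes on $N$ and induces a contraction $S^0$ on $X/N$ that extends to $Z^0$, giving $T=S^0\phi^0$.

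For maximality, let $T=S\phi$ with $Z$ a $(p,q)$-concave Banach lattice and $\phi$ a lattice homomorphism with dense range. For admissible $(x_i)$ for $|x|$, the elements $\phi x_i$ are admissible for $|\phi x|$ because $\phi$ preserves $q$-sums. Hence
\[\intoo[3]{\sum_i\|Tx_i\|^p}^\frac{1}{p}\leq\|S\|K_{(p,q)}(Z)\|\phi x\|,\]
so $\rho(x)\leq \|S\|K_{(p,q)}(Z)\|\phi x\|$. Therefore $\psi(\phi x):=\phi^0 x$ is well defined and bounded on $\phi(X)$, and it extends by density to a lattice homomorphism $\psi:Z\to Z^0$ with dense range. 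The identities $\psi\phi=\phi^0$ and $S^0\psi=S$ hold on the dense set $\phi(X)$, hence everywhere.
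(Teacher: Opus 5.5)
Your proposal is correct and follows essentially the same route as the paper: the same seminorm $\rho$, the same splitting $z_k=|f|h_k+|g|h_k$ inside the $C(K)$-representation of the ideal generated by $|x|+|y|$ for the triangle inequality, the same completion of $X/\ker\rho$, and the same maximality argument. The only cosmetic difference is that you first prove the concavity inequality for $\rho$ on $X$ and then pass to the completion by density, whereas the paper does both at once with an $\varepsilon$-approximation; separately, running the construction for $p=\infty$ with $\rho(x)=\sup\{\|Ty\|:|y|\leq |x|\}$, as you suggest, is better than the paper's shortcut $Z^0=X$, $\phi^0=i_X$, which gives the factorization but not the maximality claim (take $X=\ell_1$, $T=0$ and $\phi:\ell_1\to\ell_2$ the formal inclusion, for which no bounded $\psi$ with $\psi\phi=i_{\ell_1}$ exists).
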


\begin{proof}
    Note that if $p=\infty$, the result is trivially true considering $Z^0=X$ (every Banach lattice is $(\infty,q)$-concave with constant one for every $1\leq q\leq \infty$) and $\phi^0$ the identity operator on $X$. Therefore, we assume that $p$ (and hence, $q$) is finite. Given $x\in X$, we define 
    \[\rho(x)=\sup \cbr[3]{\intoo[3]{\sum_{i=1}^n \|Tx_i\|^{p}}^\frac{1}{p} : x_1,\ldots,x_n \in X,\intoo[3]{\sum_{i=1}^n |x_i|^{q}}^\frac{1}{q} \leq |x|}.\]
    Note that for every possible choice of $x_1,\ldots,x_n \in X$, the $(p,q)$-concavity of $T$ yields that 
    \[\intoo[3]{\sum_{i=1}^n \|Tx_i\|^{p}}^\frac{1}{p} \leq K_{(p,q)}(T)\norm[3]{\intoo[3]{\sum_{i=1}^n |x_i|^{q}}^\frac{1}{q}} \leq K_{(p,q)}(T)\|x\|,\]
    so in particular $\rho(x)$ is finite for every $x\in X$ and $\rho(x)\leq K_{(p,q)}(T)\|x\|$. It is also clear that $\|Tx\|\leq \rho(x)$. Let us show that $\rho$ is a lattice seminorm on $X$. It is easy to check that $\rho$ is positively homogeneous. Moreover, if $|x|\leq |y|$, then every finite sequence $x_1,\ldots,x_n \in X$ such that $\intoo{\sum_{i=1}^n |x_i|^{q}}^\frac{1}{q} \leq |x|$ can also be used to compute $\rho(y)$, so $\rho(x)\leq \rho(y)$. To prove the triangular inequality, we fix $x,y\in X$, and let $z=|x|+|y|$ and $I_z$ the principal ideal generated by $z$, that we can identify with a certain $C(K)$ through an onto lattice isomorphism $J:I_z\rightarrow C(K)$ such that $Jz=\one$, the constant one function. For any choice $z_1,\ldots,z_n\in X$ such that $\intoo{\sum_{i=1}^n |z_i|^{q}}^\frac{1}{q}\leq |x+y|$, it is clear that $z_1,\ldots,z_n\in I_z$. Let us denote by $f=Jx$, $g=Jy$ and $h_i=Jz_i$. It follows that $|f|+|g|=\one$ and 
    \[\intoo{\sum_{i=1}^n |h_i|^{q}}^\frac{1}{q}=J\intoo{\sum_{i=1}^n |z_i|^{q}}^\frac{1}{q}\leq J|x+y|\leq Jz=\one.\]
    We construct $f_i=|f|h_i$ and $g_i=|g|h_i$, $i=1,\ldots,n$, so that
    \[\intoo[3]{\sum_{i=1}^n |f_i|^{q}}^\frac{1}{q}=|f|\intoo[3]{\sum_{i=1}^n |h_i|^{q}}^\frac{1}{q}\leq |f|, \quad \intoo[3]{\sum_{i=1}^n |g_i|^{q}}^\frac{1}{q}=|g|\intoo[3]{\sum_{i=1}^n |h_i|^{q}}^\frac{1}{q}\leq |g|\] 
    and $h_i=f_i+g_i$ for every $i=1,\ldots,n$. It follows that $x_i=J^{-1}f_i$ and $y_i=J^{-1}g_i$ satisfy $\intoo{\sum_{i=1}^n |x_i|^{q}}^\frac{1}{q}\leq |x|$ and $\intoo{\sum_{i=1}^n |y_i|^{q}}^\frac{1}{q}\leq |y|$, so
    \[\intoo[3]{\sum_{i=1}^n \|Tz_i\|^{p}}^\frac{1}{p} \leq \intoo[3]{\sum_{i=1}^n (\|Tx_i\|+\|Ty_i\|)^{p}}^\frac{1}{p} \leq \intoo[3]{\sum_{i=1}^n \|Tx_i\|^{p}}^\frac{1}{p} +\intoo[3]{\sum_{i=1}^n \|Ty_i\|^{p}}^\frac{1}{p} \leq \rho(x)+\rho(y).  \]
    Since the choice of $(z_i)_{i=1}^n$ was arbitrary, we conclude that $\rho$ satisfies the triangular inequality.\\

    Now, let $Z^0$ be the completion of $X/\ker\rho$ with $\hat{\rho}$, the norm induced by $\rho$ on the quotient. Let $\phi^0:X\rightarrow Z^0$ be the composition of the quotient map from $X$ to $X/\ker\rho$, which is an onto interval preserving lattice homomorphism, and the formal inclusion from $X/\ker\rho$ into the completion $Z^0$, which is an injective lattice homomorphism with dense range, so that $\phi^0$ is an almost interval preserving lattice homomorphism with dense range, and $\|\phi^0\|\leq K_{(p,q)}(T) $. Let us define for every equivalence class $\phi x \in X/\ker\rho$, with $x\in X$, the map $S^0(\phi^0 x)=Tx$. It is well defined, since for every $y\in X$ such that $\phi^0 y=\phi^0 x$ we have that $\|Tx-Ty\|\leq \rho(x-y)=0$, so $S^0 (\phi^0 x)$ does not depend on the representative of the class. Moreover, $\|S^0(\phi^0 x)\|=\|Ty\|\leq \rho(y)$ for every $y$ in the class $\phi^0 x$, so $\|S^0(\phi^0 x)\|\leq \hat{\rho}(\phi^0 x)$ and $S^0: X/\ker\rho\rightarrow E$ is bounded. We denote its unique extension to the completion $Z^0$ by $S^0$ again. The identity $S^0 \phi^0=T$ follows from the definition.\\

    Next, we need to show that $Z^0$ is $(p,q)$-concave with constant one. Let $z_1,\ldots,z_n\in Z^0$, and fix $\eps>0$. We can find $x_1,\ldots, x_n \in X$ such that $\hat{\rho}(z_i-\phi^0 x_i)<\eps/n^\frac{1}{p}$, and for each $x_i$ choose $(y_{i,j})_{j=1}^{k_i}\subseteq X$ such that 
    \[ \intoo[3]{\sum_{j=1}^{k_i} |y_{i,j}|^{q}}^\frac{1}{q} \leq |x_i| \quad \text{and} \quad \hat{\rho}(\phi^0 x_i)^p=\rho(x_i)^p\leq \sum_{j=1}^{k_i} \|Ty_{i,j}\|^{p}+ \frac{\eps^p}{n}.\]
    It follows using \Cref{prop: monotonicity of p-sums} that $\intoo[1]{\sum_{i=1}^n \sum_{j=1}^{k_i} |y_{i,j}|^{q}}^\frac{1}{q} \leq \intoo{\sum_{i=1}^n |x_i|^{q}}^\frac{1}{q}$. Moreover, since $\phi^0$ is a lattice homomorphism, it preserves the functional calculus, so
    \begin{align*}
        \phi^0 \intoo[3]{\sum_{i=1}^n |x_i|^{q}}^\frac{1}{q}= \intoo[3]{\sum_{i=1}^n |\phi^0 x_i|^{q}}^\frac{1}{q} & \leq \intoo[3]{\sum_{i=1}^n |z_i|^{q}}^\frac{1}{q}+ \intoo[3]{\sum_{i=1}^n |z_i-\phi^0 x_i|^{q}}^\frac{1}{q} \\
        & \leq  \intoo[3]{\sum_{i=1}^n |z_i|^{q}}^\frac{1}{q}+\sum_{i=1}^n |z_i-\phi^0 x_i| .
    \end{align*}
    To conclude the argument, we just observe that
    \begin{align*}
        \intoo[3]{\sum_{i=1}^n \hat{\rho}(z_i)^{p}}^\frac{1}{p} & \leq \intoo[3]{\sum_{i=1}^n \hat{\rho}(\phi^0 x_i)^{p}}^\frac{1}{p} + \intoo[3]{\sum_{i=1}^n \hat{\rho}(z_i-\phi^0 x_i)^{p}}^\frac{1}{p} \leq  \intoo[3]{\sum_{i=1}^n  \intoo[3]{\sum_{j=1}^{k_i} \|Ty_{i,j}\|^{p}+ \frac{\eps^p}{n}}}^\frac{1}{p} + \eps \\
        & \leq \rho \intoo[3]{\intoo[3]{\sum_{i=1}^n |x_i|^{q}}^\frac{1}{q}} + 2\eps = \hat{\rho} \intoo[3]{\phi^0 \intoo[3]{\sum_{i=1}^n |x_i|^{q}}^\frac{1}{q}} + 2\eps\\
        & \leq \hat{\rho} \intoo[3]{ \intoo[3]{\sum_{i=1}^n |z_i|^{q}}^\frac{1}{q}} + \sum_{i=1}^n \hat{\rho}(z_i-\phi^0 x_i)+ 2\eps \leq \hat{\rho} \intoo[3]{ \intoo[3]{\sum_{i=1}^n |z_i|^{q}}^\frac{1}{q}} + (2+n^\frac{1}{p^*})\eps,
    \end{align*}
    so $Z$ is $(p,q)$-concave with constant one.\\

    Finally, let us show the last claim. Given $Z$, $\phi$ and $S$ as in the statement, we can define the map $\psi$ that sends every $\phi x\in \phi(X)\subseteq Z$ to $\phi^0 x\in Z^0$. This map is well defined and bounded. Indeed, if $x_1,\ldots, x_n\in X$ are such that $\intoo{\sum_{i=1}^n |x_i|^{q}}^\frac{1}{q}\leq |x|$, since $\phi$ is a lattice homomorphism we have
    \begin{align*}
        \intoo[3]{\sum_{i=1}^n \|Tx_i\|^{p}}^\frac{1}{p}& = \intoo[3]{\sum_{i=1}^n \|S\phi x_i\|^{p}}^\frac{1}{p} \leq \|S\| \intoo[3]{\sum_{i=1}^n \|\phi x_i\|^{p}}^\frac{1}{p} \leq \|S\| K_{(p,q)}(Z) \norm[3]{\intoo[3]{\sum_{i=1}^n |\phi x_i|^{q}}^\frac{1}{q}}\\
        & \leq \|S\| K_{(p,q)}(Z) \norm[3]{\phi \intoo[3]{\intoo[3]{\sum_{i=1}^n | x_i|^{q}}^\frac{1}{q} }} \leq \|S\| K_{(p,q)}(Z) \|\phi x\|. 
    \end{align*}
    Taking the supremum over all possible choices of $x_1,\ldots, x_n\in X$ we get that $\hat{\rho}(\phi^0 x) = \rho (x)\leq \|S\| K_{(p,q)}(Z) \|\phi x\|$. This implies, in particular, that $\phi^0 x=0$ whenever $\phi x=0$, so $\psi$ is well defined, and then the previous inequality reads $\hat{\rho}(\psi(\phi x)) \leq  \|S\| K_{(p,q)}(Z) \|\phi x\|$, so $\psi$ is bounded. Since $\phi$ has dense range, $\psi$ can be extended to the whole $Z$. It is also clear that $\psi$ is a lattice homomorphism, and that $\phi^0=\psi \phi$. Since $\phi^0$ has dense range, $\psi$ does too. Finally, for every $x\in X$ we have $S^0\psi (\phi x)=S^0\phi^0 x =Tx =S(\phi x)$, so we conclude that $S=S^0\psi$. Therefore, the proof is concluded.  
\end{proof}

\begin{rem}\label{rem: positivity in factorization concave}
    Note that if $E$ is a Banach lattice and $T$ is positive (respectively, a lattice homomorphism), then $S^0$ can be made positive (respectively, a lattice homomorphism).
\end{rem}

As a consequence of \Cref{thm: factorization concave}, we obtain that $(p,q)$-concave operators are precisely those that factor through $(p,q)$-concave Banach lattices:

\begin{cor}\label{cor: factorization concave}
    Let $X$ be a Banach lattice, $E$ a Banach space and $1\leq q\leq p\leq \infty$. Then, the operator $T:X\rightarrow E$ is $(p,q)$-concave if and only if there exists a $(p,q)$-concave Banach lattice $Z$, a positive operator $\phi: X\rightarrow Z$ and an operator $S:Z\rightarrow E$ such that $T=S  \phi$.
\end{cor}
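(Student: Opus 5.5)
The two implications are of very different weight. The forward one is immediate from \Cref{thm: factorization concave}. The reverse one is a direct estimate using positivity of $\phi$ together with \Cref{lem: positive operators and functional calculus}.

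For the forward implication, I assume $T$ is $(p,q)$-concave and apply \Cref{thm: factorization concave}. This gives a $(p,q)$-concave Banach lattice $Z^0$ (with constant one), a lattice homomorphism $\phi^0:X\rightarrow Z^0$ and an operator $S^0:Z^0\rightarrow E$ with $T=S^0\phi^0$. Every lattice homomorphism is positive, since $\phi^0 x=\phi^0|x|=|\phi^0 x|\geq 0$ for $x\geq 0$. So $Z=Z^0$, $\phi=\phi^0$, $S=S^0$ do the job. If $p=\infty$, the theorem already covers this case by taking $Z^0=X$ and $\phi^0$ the identity.

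For the reverse implication, let $T=S\phi$ with $\phi\geq 0$ and $Z$ a $(p,q)$-concave Banach lattice. By \Cref{rem: facts about convexity}(6), it is enough to show that $\phi:X\rightarrow Z$ is $(p,q)$-concave, because composing on the left with the bounded operator $S$ then gives $K_{(p,q)}(T)\leq \|S\|K_{(p,q)}(\phi)$.

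To see that $\phi$ is $(p,q)$-concave, take $x_1,\ldots,x_n\in X$. The $(p,q)$-concavity of $Z$ gives
\[\Big(\sum_{i=1}^n\|\phi x_i\|^p\Big)^{1/p}\leq K_{(p,q)}(Z)\,\Big\|\Big(\sum_{i=1}^n|\phi x_i|^q\Big)^{1/q}\Big\|.\]
By \Cref{lem: positive operators and functional calculus}, valid since $1\leq q\leq\infty$ and $\phi$ is positive,
\[\Big(\sum_{i=1}^n|\phi x_i|^q\Big)^{1/q}\leq \phi\Big(\sum_{i=1}^n|x_i|^q\Big)^{1/q}.\]
Monotonicity of the norm in $Z$ then bounds the right-hand side of the first inequality by $K_{(p,q)}(Z)\|\phi\|\,\|(\sum_{i=1}^n|x_i|^q)^{1/q}\|$. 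This is exactly \Cref{rem: facts about convexity}(7) applied to the identity of $Z$, and it gives $K_{(p,q)}(T)\leq\|S\|\,\|\phi\|\,K_{(p,q)}(Z)$.

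There is no real obstacle. The only point to check is that the inequality comparing $q$-sums before and after applying $\phi$ holds for merely positive operators and not just lattice homomorphisms, and that is precisely what \Cref{lem: positive operators and functional calculus} provides.
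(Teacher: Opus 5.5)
Your proposal is correct and follows essentially the same route as the paper. The forward direction comes from \Cref{thm: factorization concave}. The reverse direction is the same chain of inequalities: $(p,q)$-concavity of $Z$, then \Cref{lem: positive operators and functional calculus} for the positive operator $\phi$, then monotonicity of the norm, which yields $K_{(p,q)}(T)\leq \|S\|\,\|\phi\|\,K_{(p,q)}(Z)$.
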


\begin{proof}
    One implication follows from \Cref{thm: factorization concave}. To prove the reverse, let $Z$, $\phi: X\rightarrow Z$ and $S:Z\rightarrow E$ be as in the statement. Since $\phi$ is positive, by \Cref{lem: positive operators and functional calculus}
    \begin{align*}
      \intoo[3]{\sum_{i=1}^n \|Tx_i\|^{p}}^\frac{1}{p} &\leq \|S\|\intoo[3]{\sum_{i=1}^n \|\phi x_i\|^{p}}^\frac{1}{p} \leq \|S\| K_{(p,q)}(Z) \norm[3]{\intoo[3]{\sum_{i=1}^n |\phi x_i|^{q}}^\frac{1}{q}}\\
      & \leq \|S\|  K_{(p,q)}(Z)  \norm[3]{\phi \intoo[3]{\sum_{i=1}^n |x_i|^{q}}^\frac{1}{q}} \leq \|S\| \|\phi\|  K_{(p,q)}(Z)  \norm[3]{ \intoo[3]{\sum_{i=1}^n |x_i|^{q}}^\frac{1}{q}}
    \end{align*}
    for every $x_1,\ldots, x_n \in X$, so $T$ is $(p,q)$-concave.
\end{proof}

Note from \Cref{rem: positivity in factorization concave} that, if $T$ is the identity operator on $X$, then $Z^0$ is a renorming of $X$ such that the $(p,q)$-concavity constant becomes one. This renorming actually preserves convexity properties, so, in combination with \Cref{thm: duality convexity concavity}, it provides a way of renorming a $(p_1,q_1)$-convex and $(p_2,q_2)$-concave Banach lattice so that both constants become one. 

\begin{thm}\label{thm: renorming convex and concave}
    Let $X$ be a $(p_1,q_1)$-convex and $(p_2,q_2)$-concave Banach lattice, where $1\leq p_1\leq p_2\leq \infty$, $p_1\leq q_1\leq \infty$ and $1\leq q_2\leq p_2$.  
    \begin{enumerate}
        \item There is a $K_{(p_2,q_2)}(X)$-equivalent lattice renorming of $X$ so that the $(p_2,q_2)$-concavity constant becomes one and the $(p_1,q_1)$-convexity remains the same.
        \item Moreover, there exists a $K^{(p_1,q_1)}(X)K_{(p_2,q_2)}(X)$-equivalent lattice renorming of $X$ that is $(p_1,q_1)$-convex and $(p_2,q_2)$-concave with constant one.
    \end{enumerate}
\end{thm}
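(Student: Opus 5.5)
For (1), apply \Cref{thm: factorization concave} to the identity $T=i_X$. The lattice seminorm there becomes
\[\rho(x)=\sup\cbr[3]{\intoo[3]{\sum_{i=1}^n \|x_i\|^{p_2}}^\frac{1}{p_2}: \intoo[3]{\sum_{i=1}^n |x_i|^{q_2}}^\frac{1}{q_2}\leq |x|}.\]
Taking $n=1$, $x_1=x$ gives $\|x\|\leq\rho(x)$, and concavity gives $\rho(x)\leq K_{(p_2,q_2)}(X)\|x\|$. So $\rho$ is a lattice norm equivalent to $\|\cdot\|$ with that distortion, $\ker\rho=0$, and $X$ is already complete under $\rho$. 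Hence $Z^0=(X,\rho)$ with $\phi^0$ the identity, and the theorem shows that $Z^0$ is $(p_2,q_2)$-concave with constant one. It remains to check that the $(p_1,q_1)$-convexity constant does not increase.

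For that, fix $y_1,\ldots,y_m\in X$ and put $y=(\sum_k|y_k|^{q_1})^{1/q_1}$. I must bound $\rho(y)\leq K^{(p_1,q_1)}(X)(\sum_k\rho(y_k)^{p_1})^{1/p_1}$. Take any $x_1,\ldots,x_n$ with $(\sum_i|x_i|^{q_2})^{1/q_2}\leq y$. Working in $I_y\cong C(K)$, I decompose each $x_i$ along the $y_k$ using the multiplicative trick from the triangle inequality in \Cref{thm: factorization concave}. Concretely, set
\[x_{i,k}=x_i\,\frac{|y_k|^{q_1}}{y^{q_1}}\cdot\frac{y}{|y_k|}\cdot\ldots,\]
i.e.\ $x_{i,k}=\frac{x_i}{y}\,|y_k|^{\theta}y^{1-\theta}$ with weights chosen so that two things hold. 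First, $(\sum_i|x_{i,k}|^{q_2})^{1/q_2}\leq|y_k|$ for each $k$, which is what makes $\rho(y_k)$ control the $x_{i,k}$. Second, $|x_i|\leq(\sum_k|x_{i,k}|^{q_1})^{1/q_1}$.

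The natural choice is $x_{i,k}=\frac{x_i}{y}|y_k|$, defined in $C(K)$ and positively homogeneous. Then $(\sum_i|x_{i,k}|^{q_2})^{1/q_2}\leq|y_k|$ and $(\sum_k|x_{i,k}|^{q_1})^{1/q_1}=|x_i|$. Applying $(p_1,q_1)$-convexity in $X$ for each $i$ gives
\[\|x_i\|\leq K^{(p_1,q_1)}(X)\Big(\sum_k\|x_{i,k}\|^{p_1}\Big)^{1/p_1}.\]
Then, using $p_1\leq p_2$ via Minkowski in $\ell_{p_2/p_1}$,
\[\Big(\sum_i\|x_i\|^{p_2}\Big)^{1/p_2}\leq K\Big(\sum_i\Big(\sum_k\|x_{i,k}\|^{p_1}\Big)^{p_2/p_1}\Big)^{1/p_2}\leq K\Big(\sum_k\Big(\sum_i\|x_{i,k}\|^{p_2}\Big)^{p_1/p_2}\Big)^{1/p_1}\leq K\Big(\sum_k\rho(y_k)^{p_1}\Big)^{1/p_1}.\]
Taking the supremum over the $x_i$ gives the claim. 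The same computation with $\rho$ in place of $\|\cdot\|$ inside the convexity step would give constant $K^{(p_1,q_1)}(Z^0)\leq K^{(p_1,q_1)}(X)$. I expect the main technical point to be justifying the functional-calculus manipulation $x_i|y_k|/y$: it is a continuous positively homogeneous expression (extended by $0$ where $y=0$), and one needs $|x_i|\leq y$ to make sense of it. The edge cases $p_2=\infty$ or $q_1=\infty$ need minor adjustments.

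For (2), first apply (1) to get a norm $\|\cdot\|_1$ that is $(p_2,q_2)$-concave with constant one and $(p_1,q_1)$-convex with constant at most $K^{(p_1,q_1)}(X)$, with distortion $K_{(p_2,q_2)}(X)$. Then pass to the dual $X_1^*$. By \Cref{thm: duality convexity concavity}, $X_1^*$ is $(p_2^*,q_2^*)$-convex with constant one and $(p_1^*,q_1^*)$-concave; the index conditions dualize correctly. Apply (1) in $X_1^*$ to make the $(p_1^*,q_1^*)$-concavity constant one while keeping the convexity constant one, with distortion $K^{(p_1,q_1)}(X)$. Finally, restrict the predual norm to $X\subseteq X^{**}$: define $\|x\|_2=\sup\{|x^*(x)|\}$ over the new dual unit ball. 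That ball is a $w^*$-closed-free issue, since $\rho$ is built as a supremum and may not be $w^*$-lsc. A safer route is to define the new norm on $X$ directly by the dual-side formula, namely the convex analogue, an infimum over decompositions, and prove it mirrors the argument above. I expect this second step to be the main obstacle, and I would first try the bidual restriction and verify that the constants pass through using \Cref{thm: duality convexity concavity}(2).
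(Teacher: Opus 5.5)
Your proof of (1) is the paper's argument. Your decomposition $x_{i,k}=x_i|y_k|/y$, carried out in $I_y\cong C(K)$, is the paper's $h_{ij}=f_ig_j$ with the two families renamed. It is followed, as in the paper, by the $(p_1,q_1)$-convexity of the original norm, Minkowski's inequality in $\ell_{p_2/p_1}$, and the definition of $\rho$.

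For (2), the route you list first is exactly the paper's: renorm by (1), dualize, apply (1) to $X_1^*$, and restrict the resulting bidual norm to $X$. The obstacle you flag is not real. You never need the new dual unit ball to be $w^*$-closed, because you never need $(X,\|\cdot\|_2)^*$ to coincide with $Z=(X_1^*,\rho')$. Simply set $\|x\|_2=\|J_Xx\|_{Z^*}$, and complete the argument as follows:
\begin{enumerate}
\item $Z^*$ is $X^{**}$ carrying an equivalent lattice norm.
\item By both parts of \Cref{thm: duality convexity concavity} applied to $i_Z$, $Z^*$ is $(p_1,q_1)$-convex and $(p_2,q_2)$-concave with constant one. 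You need part (1) for the concavity and part (2) for the convexity, not only part (2).
\item $J_X(X)$ is a closed sublattice of $Z^*$, and $J_X$ preserves $q$-sums. Hence $(X,\|\cdot\|_2)$ is a Banach lattice inheriting both constants, by \Cref{rem: facts about convexity}(8).
\item For the distortion, (1) applied to $X_1^*$ gives $\|x^*\|_{X_1^*}\le\rho'(x^*)\le K^{(p_1,q_1)}(X)\|x^*\|_{X_1^*}$. This yields $K^{(p_1,q_1)}(X)^{-1}\|x\|_1\le\|x\|_2\le\|x\|_1$. Composing with the $K_{(p_2,q_2)}(X)$-equivalence from (1) gives the stated constant.
\end{enumerate}
So the infimum-over-decompositions norm you propose as a safer alternative is unnecessary.
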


\begin{proof}
    $(1)$ Let us consider for every $x\in X$ the expression from the proof of \Cref{thm: factorization concave} when $T$ is the identity operator on $X$:
    \[\rho(x)=\sup \cbr[3]{\intoo[3]{\sum_{i=1}^n \|x_i\|^{p_2}}^\frac{1}{p_2} : x_1,\ldots,x_n \in X,\intoo[3]{\sum_{i=1}^n |x_i|^{q_2}}^\frac{1}{q_2} \leq |x|}.\]
    It was established that $\rho$ is a lattice seminorm satisfying $\rho(x)\leq K_{(p_2,q_2)}(X) \|x\|$. However, in this particular case, $\|x\|\leq \rho(x)$, so it defines an equivalent renorming of $X$. Moreover, for every $x_1,\ldots, x_n\in X$, it was shown that
    \[\intoo[3]{\sum_{i=1}^n \rho(x_i)^{p_2}}^\frac{1}{p_2} \leq  \rho \intoo[3]{ \intoo[3]{\sum_{i=1}^n |x_i|^{q_2}}^\frac{1}{q_2}},\]
    so $(X,\rho)$ is $(p_2,q_2)$-concave with constant one.\\

    Now, let us fix $x_1,\ldots, x_n\in X$ and $\eps>0$, and find $y_1,\ldots, y_m\in X$ such that 
    \[\intoo[3]{\sum_{j=1}^m |y_j|^{q_2}}^\frac{1}{q_2} \leq \intoo[3]{\sum_{i=1}^n |x_i|^{q_1}}^\frac{1}{q_1} \quad \text{and} \quad \rho \intoo[3]{\intoo[3]{\sum_{i=1}^n |x_i|^{q_1}}^\frac{1}{q_1}} \leq \intoo[3]{\sum_{j=1}^m \|y_j\|^{p_2}}^\frac{1}{p_2} + \eps .\]
    Define $e= \intoo{\sum_{i=1}^n |x_i|^{q_1}}^\frac{1}{q_1}$, and let us represent $I_e$, the principal ideal generated by $e$, as a $C(K)$-space using Kakutani's Representation Theorem for $AM$-spaces. Under this lattice isomorphism, $e$ corresponds to $\uno$, the constant one function over $K$, and the vectors $x_i$ and $y_j$ can be identified with some $f_i$ and $g_j$ in $C(K)$ such that
    \[\intoo[3]{\sum_{j=1}^m |g_j|^{q_2}}^\frac{1}{q_2} \leq \intoo[3]{\sum_{i=1}^n |f_i|^{q_1}}^\frac{1}{q_1} =\uno.\]
    Let $h_{ij}=f_ig_j$, which satisfy that
    \[\intoo[3]{\sum_{i=1}^n |h_{ij}|^{q_1}}^\frac{1}{q_1} = |g_j|\intoo[3]{\sum_{i=1}^n |f_i|^{q_1}}^\frac{1}{q_1} = |g_j| \quad \text{and} \quad  \intoo[3]{\sum_{j=1}^m |h_{ij}|^{q_2}}^\frac{1}{q_2} = |f_i|\intoo[3]{\sum_{j=1}^m |g_j|^{q_2}}^\frac{1}{q_2} \leq |f_i|  \]
    for every $j=1,\ldots, m$ and $i=1,\ldots,n$, respectively. Let $z_{ij}\in I_e$ be the corresponding image of $h_{ij}$. Since $I_e$ and $C(K)$ are lattice isomorphic, 
    \[\intoo[3]{\sum_{i=1}^n |z_{ij}|^{q_1}}^\frac{1}{q_1} = |y_j| \quad \text{and} \quad  \intoo[3]{\sum_{j=1}^m |z_{ij}|^{q_2}}^\frac{1}{q_2} \leq |x_i|.  \]
    Therefore, applying the $(p_1,q_1)$-convexity of $X$, Minkowski's (integral) inequality with exponent $\frac{p_2}{p_1}$ and the definition of $\rho$, we obtain that
    \begin{align*}
        \rho \intoo[3]{ \intoo[3]{\sum_{i=1}^n |x_i|^{q_1}}^\frac{1}{q_1}} -\eps & \leq \intoo[3]{\sum_{j=1}^m \|y_j\|^{p_2}}^\frac{1}{p_2} =  \intoo[3]{\sum_{j=1}^m \norm[3]{\intoo[3]{\sum_{i=1}^n |z_{ij}|^{q_1}}^\frac{1}{q_1} }^{p_2}}^\frac{1}{p_2}\\
        & \leq K^{(p_1,q_1)}(X) \intoo[3]{\sum_{j=1}^m \intoo[3]{\sum_{i=1}^n \|z_{ij}\|^{p_1}}^\frac{p_2}{p_1} }^\frac{1}{p_2}\\
        & \leq  K^{(p_1,q_1)}(X) \intoo[3]{\sum_{i=1}^n \intoo[3]{\sum_{j=1}^m \|z_{ij}\|^{p_2}}^\frac{p_1}{p_2} }^\frac{1}{p_1}\\
        & \leq  K^{(p_1,q_1)}(X) \intoo[3]{\sum_{i=1}^n  \rho(x_i)^{p_1} }^\frac{1}{p_1},
    \end{align*}
    so $(X,\rho)$ is $(p_1,q_1)$-convex with constant $K^{(p_1,q_1)}(X)$, as we wanted to show.\\

    $(2)$ It follows from applying $(1)$ twice. First, we apply it to $X$, obtaining $Y=(X,\rho)$. Next, we apply it to $Y^*$, which, by \Cref{thm: duality convexity concavity}, is $(p_1^*,q_1^*)$-concave with constant $K^{(p_1,q_1)}(X)$ and $(p_2^*,q_2^*)$-convex with constant one. We then obtain an isomorphic Banach lattice $Z$ whose $(p_1^*,q_1^*)$-concavity and $(p_2^*,q_2^*)$-convexity constants are both one. Taking duals again, it is clear that $X^{**}$ is lattice isomorphic to $Z^*$, so the result follows.
\end{proof}

In particular, we can characterize the Banach lattices that simultaneously satisfy an upper and a lower $p$-estimate for some $1\leq p \leq \infty$ as those which are lattice isomorphic to some $L_p$-space (or $AM$-space when $p=\infty$).

\begin{cor}\label{cor: isomorphic ALp-spaces}
    Let $X$ be a Banach lattice and $1\leq p\leq \infty$. The following are equivalent:
    \begin{enumerate}
        \item $X$ is $p$-convex and $p$-concave.
        \item $X$ satisfies an upper $p$-estimate and a lower $p$-estimate.
        \item $X$ is lattice isomorphic to $L_p(\mu)$ for some measure $\mu$ (respectively, to an $AM$-space when $p=\infty$).
    \end{enumerate}
\end{cor}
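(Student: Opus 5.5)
The plan is to prove $(1)\Rightarrow(2)\Rightarrow(3)\Rightarrow(1)$.

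The implication $(1)\Rightarrow(2)$ is immediate from \Cref{rem: facts about convexity}(3).

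For $(3)\Rightarrow(1)$, recall that $L_p(\mu)$ is $p$-convex and $p$-concave with constant one, by the identity recalled in the introduction. An $AM$-space is $\infty$-convex with constant one, and every Banach lattice is $\infty$-concave. Both properties are lattice isomorphic invariants (\Cref{rem: facts about convexity}(8)), so they transfer to $X$.

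The substance is $(2)\Rightarrow(3)$.

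First, by \Cref{thm: upe equivalent to mixed convexity}, $X$ is $(p,\infty)$-convex and $(p,1)$-concave. I will apply \Cref{thm: renorming convex and concave}(2) with $p_1=p_2=p$, $q_1=\infty$, $q_2=1$; the hypotheses $p_1\leq p_2$, $p_1\leq q_1$ and $q_2\leq p_2$ hold. This gives an equivalent lattice norm $\rho$ on $X$ which is $(p,\infty)$-convex and $(p,1)$-concave with constant one. By the equality of constants in \Cref{thm: upe equivalent to mixed convexity}, this means $(X,\rho)$ satisfies upper and lower $p$-estimates with constant one. Hence for disjoint $x,y$ we get $\rho(x+y)^p=\rho(x)^p+\rho(y)^p$ when $p<\infty$, and $\rho(x+y)=\max\{\rho(x),\rho(y)\}$ when $p=\infty$.

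So $(X,\rho)$ is an abstract $L_p$-space, respectively an $AM$-space. For $p<\infty$, Kakutani's Representation Theorem for $AL_p$-spaces (\cite[Theorem 2.7.1]{MN}) gives a lattice isometry of $(X,\rho)$ onto some $L_p(\mu)$. Composing with the identity $(X,\|\cdot\|)\to(X,\rho)$ gives the required lattice isomorphism.

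The one step needing care is the endpoint cases and the exact bookkeeping of constants. I must make sure the renorming produces constants exactly one, not merely bounded constants. Otherwise the disjoint additivity fails and Kakutani's theorem does not apply.

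For $p=\infty$, the upper estimate with constant one gives $\rho(x\vee y)\leq\max\{\rho(x),\rho(y)\}$ for positive disjoint $x,y$. The reverse inequality is automatic by monotonicity, so $(X,\rho)$ is an $AM$-space.

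For $p=1$, $(1,1)$-concavity with constant one plus the triangle inequality makes $\rho$ additive on the positive cone. This is the $AL$-case of Kakutani's theorem.

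If the duality step inside \Cref{thm: renorming convex and concave}(2) only yields a renorming of $X^{**}$, I would restrict it to $X$, since sublattices inherit the constants. The $AL_p$ property passes to closed sublattices, so this causes no problem.
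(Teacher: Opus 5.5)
Your proof is correct and follows the paper's route: you convert the estimates into $(p,\infty)$-convexity and $(p,1)$-concavity, renorm via \Cref{thm: renorming convex and concave} to get both constants equal to one (so the norm is $p$-additive on disjoint elements), and then apply Kakutani's theorem; your remark about restricting a bidual renorming to $X$ also matches what the renorming theorem's proof actually does. The only deviation is at $p=\infty$, where you read off directly that $(X,\rho)$ is an $AM$-space, whereas the paper shows that the dual of the renormed space is an $AL$-space and embeds $X$ into its bidual, a $C(K)$-space. Your shortcut is valid and slightly more direct.
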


\begin{proof}
        $(3)\Rightarrow (1) \Rightarrow (2)$ Trivial.\\

    $(2)\Rightarrow (3)$ Recall from \Cref{thm: upe equivalent to mixed convexity} that upper and lower $p$-estimates are equivalent to $(p,\infty)$-convexity and $(p,1)$-concavity, respectively. Therefore, if $Y$ denotes the equivalent renorming of $X$ given by \Cref{thm: renorming convex and concave}, then $Y$ satisfies an upper and a lower $p$-estimate with constant one, and hence its norm is $p$-additive on disjoint elements. If $p<\infty$, by Kakutani's Representation Theorem for $AL_p$-spaces (cf. \cite[Theorem 2.7.1]{MN}), it follows that $Y$ is lattice isometric to an $L_p(\mu)$ space for some measure $\mu$, as we wanted to show. If $p=\infty$, we observe that $Y^*$ satisfies an upper and a lower $1$-estimate with constant one, so it is lattice isometric to an $L_1(\mu)$-space. Consequently, $X$ is lattice isomorphic to a sublattice of $Y^{**}$, which is a $C(K)$-space, so the proof is concluded.
\end{proof}

Next, we prove the corresponding statement for $(p,q)$-convex operators. But first, let us introduce the following definition: an operator $T: X\rightarrow Y$ between Banach lattices is said to belong to the \textit{class $\Ccal$} if it is an injective interval preserving lattice homomorphism such that $T(B_{X})$ is closed in $Y$.

\begin{thm}\label{thm: factorization convex}
    Let $X$ be a Banach lattice, $E$ a Banach space, $1\leq p\leq q\leq \infty$, and $T:E\rightarrow X$ a $(p,q)$-convex operator. Then, there exists a $(p,q)$-convex Banach lattice $Z_0$ with constant one, a class $\Ccal$ operator $\phi_0: Z_0\rightarrow X$, and an operator $S_0:E\rightarrow Z_0$ such that $T=\phi_0 S_0$.\\

    Moreover, if $Z$ is a $(p,q)$-convex Banach lattice, $\phi: Z\rightarrow X$ is a class $\Ccal$ operator, and $S:E\rightarrow Z$ is an operator such that $T=  \phi S$, then there exists a class $\Ccal$ operator $\psi: Z_0\rightarrow Z$ such that $\phi_0=\phi \psi$ and $S=\psi S_0$.
    \begin{center}
        \begin{tikzcd}
        E  \ar[dr, "S_0"] \ar[ddr, "S"'] \ar[rr, "T"]  &    & X  \\
        &  Z_0 \ar[ur, "\phi_0"] \ar[d, dashed, "\psi" near start] & \\
        &  Z \ar[uur, "\phi"']   & 
        \end{tikzcd} 
        \end{center}
\end{thm}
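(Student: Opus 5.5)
The plan is to build $Z_0$ as an ideal of $X$ with a new norm whose unit ball is the solid hull of all $q$-sums of images of $\ell_p$-normalized families. This is the ``minimal'' counterpart of the maximal construction $\rho$ in \Cref{thm: factorization concave}. For $x\in X$ set
\[
\|x\|_0=\inf\Big\{\Big(\sum_{i=1}^n\|u_i\|^p\Big)^{1/p}: u_1,\ldots,u_n\in E,\ |x|\le \Big(\sum_{i=1}^n|Tu_i|^q\Big)^{1/q}\Big\},
\]
with $\inf\emptyset=\infty$, and let $Z_0=\{x:\|x\|_0<\infty\}$. The $(p,q)$-convexity of $T$ gives $\|x\|\le K^{(p,q)}(T)\|x\|_0$, so $\|\cdot\|_0$ vanishes only at $0$.

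Monotonicity and homogeneity of $\|\cdot\|_0$ are clear. For the triangle inequality I concatenate the families for $x$ and $y$ (using homogeneity to rescale so that the sum of $p$-th powers behaves additively, as in Minkowski). This uses
\[
|x+y|\le |x|+|y|\le \Big(\sum|Tu_i|^q\Big)^{1/q}+\Big(\sum|Tv_j|^q\Big)^{1/q}
\]
together with the need to bound the right-hand side by a single $q$-sum. That step fails as written, so I will instead define the unit ball $B$ as the convex solid hull of the sets $\{x: |x|\le (\sum|Tu_i|^q)^{1/q},\ \sum\|u_i\|^p\le1\}$ and let $\|\cdot\|_0$ be its Minkowski functional. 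This makes it a lattice norm automatically. Then $Z_0$ is an ideal in $X$, the formal inclusion $\phi_0$ is an injective lattice homomorphism with ideal range (hence interval preserving), and $S_0u=Tu$ has $\|S_0\|\le1$. Completeness and closedness of $\phi_0(B_{Z_0})$ in $X$ I would obtain by replacing $B$ with its closure in $X$: since $B\subseteq K^{(p,q)}(T)B_X$, the closure stays bounded and solid and convex, and a Cauchy sequence in $\|\cdot\|_0$ converges in $X$, with the limit lying in $Z_0$ by closedness of the ball. This is the standard argument that an ideal with a closed, bounded, solid, convex unit ball is a Banach lattice.

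The main obstacle is proving that $Z_0$ is $(p,q)$-convex with constant one once convex hulls and closures have been taken. Take $z_1,\ldots,z_n$ with $\|z_k\|_0<1$. By rescaling I can reduce to each $z_k$ being dominated by a convex combination $\sum_l\lambda_{kl}w_{kl}$ with $|w_{kl}|\le(\sum_i|Tu^{kl}_i|^q)^{1/q}$ and $\sum_i\|u_i^{kl}\|^p\le\|z_k\|_0^p$. Then I want to show that $(\sum_k|z_k|^q)^{1/q}$ is dominated by a single $q$-sum of a family with total $p$-mass $\sum_k\|z_k\|_0^p$. For a single atom per $k$ this is immediate from
\[
\Big(\sum_k\sum_i|Tu^k_i|^q\Big)^{1/q}.
\]
For convex combinations I will use the concavity of the $q$-sum to pull the convex combination out, as in \Cref{prop: monotonicity of p-sums}, and Hölder to recombine the weights $\lambda_{kl}$ into the $u$'s. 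Alternatively, I can avoid this by proving first that the non-convexified gauge is already subadditive. To do that I would write $|x+y|\le a+b$ and use the Riesz decomposition in $C(K)$, as in \Cref{thm: renorming convex and concave}, where the multiplication trick $h_{ij}=f_ig_j$ lets one merge two $q$-sums into one. I will try the $C(K)$ representation of the principal ideal first.

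For the universal property, given $Z$, $\phi$ and $S$, I define $\psi=\phi^{-1}$ on $Z_0$. This makes sense provided $\phi_0(Z_0)\subseteq\phi(Z)$. That inclusion holds because $\phi(Z)$ is an ideal containing $T(E)=\phi S(E)$, so it contains $q$-sums of the $Tu_i$ and everything they dominate. For the norm bound, when $|x|\le(\sum|Tu_i|^q)^{1/q}=\phi(\sum|Su_i|^q)^{1/q}$, I use that $\phi$ is interval preserving and injective to get $\phi^{-1}x$ dominated by $(\sum|Su_i|^q)^{1/q}$. Its $Z$-norm is then at most $K^{(p,q)}(Z)\|S\|(\sum\|u_i\|^p)^{1/p}$. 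Finally, closedness of $\phi(B_Z)$ lets the bound pass from $B$ to its closure, so $\psi$ is bounded. It is an injective interval preserving lattice homomorphism, and it satisfies $\phi_0=\phi\psi$ and $S=\psi S_0$ by construction. Closedness of $\psi(B_{Z_0})$ in $Z$ follows from closedness of $B_{Z_0}$ in $X$ and continuity of $\phi$.
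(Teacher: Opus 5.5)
\textbf{The gap is in the $(p,q)$-convexity of $Z_0$ with constant one, for $q>p$.} The rest of your argument is sound. Let $D=\{x:|x|\le(\sum_i|Tu_i|^q)^{1/q},\ \sum_i\|u_i\|^p\le 1\}$. By the Riesz--Kantorovich formula for $q$-sums, the polar of $D$ is exactly the paper's $A=\{\rho\le 1\}$. So your closed convex solid hull of $D$ is $A^\circ$ and your $Z_0$ is the paper's space. Your completeness argument, the proof that $\phi_0$ is of class $\Ccal$, and the universal property are correct. The last one works because closedness of $\phi(B_Z)$ gives $\overline{\mathrm{conv}}(D)\subseteq K^{(p,q)}(Z)\|S\|\,\phi(B_Z)$, which is the primal form of the paper's polar inclusion.

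Both of your routes try to dominate $(\sum_k|z_k|^q)^{1/q}$ by a \emph{single} $q$-sum of $Tu$'s with $p$-mass $\sum_k\|z_k\|_0^p$. For $n=1$ this says $\mathrm{conv}(D)\subseteq D$, i.e.\ that the non-convexified gauge is already subadditive. That holds for $p=q$ by Jensen's inequality, but fails for $q>p$. Take $E=X=\R^2$ with the lattice norm $\nu(a,b)=\max\{|a|,|b|,\frac56(|a|+|b|)\}$, $T$ the identity, $p=2$ and $q=\infty$. Either one $u_i$ dominates $|x|$, costing at least $\nu(x)$, or two different $u_i$'s dominate the two coordinates, costing at least $\|x\|_{\ell_2}$, since $\nu(a,0)=|a|$ and $\nu(0,b)=|b|$. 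Hence the gauge is $\min\{\nu(x),\|x\|_{\ell_2}\}$. The points $(1,\frac15)$ and $(\frac{12}{13},\frac{5}{13})$ have gauge $1$. Their midpoint $(\frac{25}{26},\frac{19}{65})$ has $\nu=\frac{815}{780}>1$ and squared $\ell_2$-norm $\frac{17069}{16900}>1$, so it is not in $D$. Hölder recombination of rescaled families can only give the factor $2^{1/p-1/q}>1$ at such a midpoint. The multiplication trick $h_{ij}=f_ig_j$ from \Cref{thm: renorming convex and concave} cannot help either, because the statement it would prove is false.

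\textbf{How to repair it.} Keep the convex combinations instead of absorbing them. Suppose $z_k=\sum_l\lambda_{kl}w_{kl}$ with $w_{kl}\in t_kD$ and $\sum_l\lambda_{kl}=1$. Index by $\ell=(l_1,\dots,l_n)$ with product weights $\Lambda_\ell=\prod_k\lambda_{kl_k}$. Then $\sum_\ell\Lambda_\ell|w_{kl_k}|=\sum_l\lambda_{kl}|w_{kl}|$ for each $k$. Minkowski's inequality in $\ell_q^n$, transferred to $X$ by the functional calculus, gives
\[\Big(\sum_k|z_k|^q\Big)^{1/q}\le\sum_\ell\Lambda_\ell\Big(\sum_k|w_{kl_k}|^q\Big)^{1/q}.\]
Each term on the right lies in $(\sum_kt_k^p)^{1/p}D$, by concatenating the families witnessing $w_{kl_k}\in t_kD$. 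Since $\mathrm{conv}(D)$ is solid (Riesz decomposition), $(\sum_k|z_k|^q)^{1/q}\in(\sum_kt_k^p)^{1/p}\mathrm{conv}(D)$. The estimate passes to the closure because $(z_k)_k\mapsto(\sum_k|z_k|^q)^{1/q}$ is norm continuous.

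\textbf{How the paper does it.} The paper avoids these combinatorics by working on the dual side. It writes $\tau(x)=\sup_{x^*\in A}x^*(x)$. For each $x^*\in A$ it uses \Cref{cor: RK for p sums} to find $x_i^*$ with $(\sum_i|x_i^*|^{q^*})^{1/q^*}\le|x^*|$ that almost attain $|x^*|\big((\sum_i|x_i|^q)^{1/q}\big)$. The $(p^*,q^*)$-concavity of $\rho$ with constant one, proved in \Cref{thm: factorization concave}, then bounds $\sum_i|x_i^*|(|x_i|)$ by $(\sum_i\tau(x_i)^p)^{1/p}$.
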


\begin{proof}
    Let us consider the adjoint operator $T^*:X^*\rightarrow E^*$, which is $(p^*,q^*)$-concave with $K_{(p^*,q^*)}(T^*)=K^{(p,q)}(T)$ by \Cref{thm: duality convexity concavity}. As in the proof of \Cref{thm: factorization concave}, we define for every $x^*\in X^*$ the lattice seminorm
    \[\rho(x^*)=\sup \cbr[3]{\intoo[3]{\sum_{i=1}^n \|T^*x^*_i\|^{p^*}}^\frac{1}{p^*} : x^*_1,\ldots,x^*_n \in X^*,\intoo[3]{\sum_{i=1}^n |x^*_i|^{q^*}}^\frac{1}{q^*} \leq |x^*|}.\]
    Recall that $\|T^*x^*\|\leq\rho(x^*)\leq K^{(p,q)}(T)\|x^*\|$ for every $x^*\in X^*$. Let us define 
    \[A=\{x^*\in X^*:\rho(x^*)\leq 1\},\]
    which is a convex balanced solid subset of $X^*$, and consider its polar set 
    \[A^\circ=\{x\in X: x^*(x)\leq 1 \,\,\forall x^*\in A\}\subseteq X\]
    (throughout the proof, we will take polars with respect to the pair $\langle X^*,X\rangle$). Let $\tau$ be the Minkowski of $A^\circ$, which can also be computed using the expression
    \[\tau(x)=\sup \{x^*(x):x^*\in A\} \quad \text{ for every }\quad x\in X.\]
    We claim that $\tau$ is a complete lattice norm in $Z_0=\spn A^\circ$. Clearly, $\tau$ is positively homogeneous and satisfies the triangular inequality. Next, note that if $x\geq 0$, 
    \[\tau(x)=\sup\{x^*(x):x^*\in X^*_+\cap A\}.\]
    From this we get that $\tau(x)\leq\tau(y)$ whenever $0\leq x\leq y$. Additionally, using Riesz--Kantorovich formulas and the fact that $A$ is solid we can show that $\tau(x)= \tau(|x|)$:
    \begin{align*}
        \tau(|x|)&=\sup \{x^*(|x|):x^*\in X^*_+\cap A\}= \sup \{|y^*(x)|:0\leq |y^*|\leq x^*, x^*\in A\}\\
        &=  \sup \{y^*(x):y^*\in A\} = \tau(x).
    \end{align*}
    We conclude that $\tau(x)\leq \tau(y)$ when $|x|\leq |y|$. Since $B_{X^*}\subseteq K^{(p,q)}(T) A$, for every $x\in Z_0$ we get that
    \[\|x\|\leq  K^{(p,q)}(T) \tau(x). \]
    In particular, $\tau(x)=0$ if and only if $x=0$. From all these facts, we conclude that $Z_0$ is a (not necessarily closed) ideal of $X$ and that $\tau$ is a lattice norm in $Z_0$. It remains to show that $\tau$ is complete. To this end, note that $A^\circ$ is closed in $X$. Now, take a sequence $(x_n)_n\subseteq Z_0$ such that $\sum_n \tau(x_n) <1$. In particular, $\sum_n \|x_n\| < K^{(p,q)}(T)$, so $\sum_{n=1}^m x_n$ converges in $X$ to some $x$. By the definition of the Minkowski functional, we can find scalars $(\lambda_n)_n\in \R_+$ so that $x_n \in \lambda_n A^\circ$  for all $n$ and $\sum_n \lambda_n <1$. Since 
    \[\sum^m_{n=1}x_n \in \sum^m_{n=1}\lambda_n\cdot A^\circ \subseteq A^\circ\] 
    for all $m$ and $A^\circ$ is closed, $x\in A^\circ$. Now, if $k >m$,
    \[ \sum^k_{n=m+1}x_n  \in \sum^k_{n=m+1}\lambda_n\cdot A^\circ \subseteq \sum^\infty_{n=m+1}\lambda_n\cdot A^\circ.\]
    Taking the limit $k\to \infty $ in $X$, we see that  $x - \sum^m_{n=1}x_n\in \sum^\infty_{n=m+1}\lambda_n\cdot A^\circ$.  Hence, $\rho(x - \sum^m_{n=1}x_n)\to 0$ as $m\to \infty$, so $\rho$ is complete and $(Z_0,\tau)$ is a Banach lattice.\\
    
    Now, consider the formal identity $\phi_0:Z_0\rightarrow X$, which is an injective interval preserving lattice homomorphism with $\phi_0(B_{Z_0})=A^\circ$, which is closed. In other words, $\phi_0$ is a class $\Ccal$ operator. Moreover, $\|\phi_0\|\leq K^{(p,q)}(T)$. Next, we note that $T(E)\subseteq Z_0$ and $\tau(Tu)\leq \|u\|$ for every $u\in E$. Indeed, given $x^*\in A$,
    \[|x^*(Tu)|=|T^*x^*(u)|\leq \|T^*x^*\|\|u\|\leq \rho(x^*)\|u\|\leq \|u\|.\]
    Therefore, the operator $S_0:E\rightarrow Z_0$ given by $S_0u=Tu$ is well defined and bounded, with $\|S_0\|\leq 1$, and $T$ factors as $T=\phi_0S_0$.\\

    Next, we need to show that $Z_0$ is $(p,q)$-convex with constant one. Let us fix $(x_i)_{i=1}^n\subseteq Z_0$, $x^*\in A$ and $\eps>0$. By \Cref{cor: RK for p sums} (applied in $X^*$ instead of $X$), we can find $(x_i^*)_{i=1}^n\subseteq X^*$ such that
    \[\intoo[3]{\sum_{i=1}^n |x^*_i|^{q^*}}^\frac{1}{q^*} \leq |x^*|\quad \text{and} \quad |x^*|\intoo[3]{\intoo[3]{\sum_{i=1}^n |x_i|^{q}}^\frac{1}{q}}\leq \sum_{i=1}^n |x_i^*|(|x_i|)+\eps.\]
    In the proof of \Cref{thm: factorization concave} it was proven that $\rho$ satisfies the $(p^*,q^*)$-concavity inequality, so we know that
    \[\intoo[3]{\sum_{i=1}^n \rho(x^*_i)^{p^*}}^\frac{1}{p^*} \leq \rho \intoo[3]{\intoo[3]{\sum_{i=1}^n |x^*_i|^{q^*}}^\frac{1}{q^*}}\leq \rho (x^*)\leq 1.\]
    Therefore, we obtain that
    \begin{align*}
        |x^*|\intoo[3]{\intoo[3]{\sum_{i=1}^n |x_i|^{q}}^\frac{1}{q}}&\leq \sum_{i=1}^n |x_i^*|(|x_i|)+\eps \leq \sum_{i=1}^n \rho(x_i^*)\tau(x_i)+\eps\\
        &\leq \intoo[3]{\sum_{i=1}^n \rho(x^*_i)^{p^*}}^\frac{1}{p^*} \intoo[3]{\sum_{i=1}^n \tau(x_i)^{p}}^\frac{1}{p}+\eps \leq  \intoo[3]{\sum_{i=1}^n \tau(x_i)^{p}}^\frac{1}{p}+\eps.
    \end{align*}    
    Making $\eps$ tend to $0$ and taking the supremum over all $x^*\in A$ we conclude that 
    \[\tau \intoo[3]{\intoo[3]{\sum_{i=1}^n |x_i|^{q}}^\frac{1}{q}}\leq \intoo[3]{\sum_{i=1}^n \tau(x_i)^{p}}^\frac{1}{p},\]
    as we wanted to show.\\
    
    To conclude the proof, let $Z$ be a $(p,q)$-convex Banach lattice, $\phi:Z\rightarrow X$ a class $\Ccal$ operator and $S:E\rightarrow Z$ an operator, so that $T=\phi S$. Let us write $K=K^{(p,q)}(Z)$ and, without loss of generality, assume that $S$ is contractive. We claim that $(K\phi(B_Z))^\circ \subseteq A$. Indeed, assume that $x^*\notin A$, i.e., $\rho(x^*)>1$. Then, there exists a sequence $(x_i^*)_{i=1}^n\subseteq X^*$ such that  
    \[\intoo[3]{\sum_{i=1}^n |x^*_i|^{q^*}}^\frac{1}{q^*} \leq |x^*|\quad \text{but} \quad \intoo[3]{\sum_{i=1}^n \|T^*x^*_i\|^{p^*}}^\frac{1}{p^*} >1.\]
    We can find $(u_i)_{i=1}^n\subseteq E$ such that $\intoo{\sum_{i=1}^n \|u_i\|^{p}}^\frac{1}{p}\leq 1 $ and
    \begin{align*}
        1&< \sum_{i=1}^n T^*x^*_i(u_i)=\sum_{i=1}^n x^*_i(Tu_i)\leq \intoo[3]{\sum_{i=1}^n |x^*_i|^{q^*}}^\frac{1}{q^*}\intoo[3]{\intoo[3]{\sum_{i=1}^n |Tx_i|^{q}}^\frac{1}{q}}\\
        &\leq |x^*|\intoo[3]{\intoo[3]{\sum_{i=1}^n |Tx_i|^{q}}^\frac{1}{q}} = |x^*|\intoo[3]{\phi \intoo[3]{\sum_{i=1}^n |Sx_i|^{q}}^\frac{1}{q}},
    \end{align*}
    where we have used that $\phi$ is a lattice homomorphism, combined with \Cref{prop: Holder in BL}. Note that the $(p,q)$-convexity of $Z$ and the contractivity of $S$ yield that $\intoo{\sum_{i=1}^n |Sx_i|^{q}}^\frac{1}{q} \in KB_Z$, so $\phi \intoo{\sum_{i=1}^n |Sx_i|^{q}}^\frac{1}{q} \in K\phi(B_Z)$. It follows that $x^*\notin (K^{(p,q)}(Z) \phi(B_Z))^\circ$, as we claimed. Taking polars and noting that $\phi(B_Z)$ is closed and convex, so $\phi(B_Z) = (\phi(B_Z))^{\circ\circ}$, we obtain that
    \[\phi_0(B_{Z_0})=A^\circ \subseteq (K  \phi(B_Z))^{\circ\circ} =K \phi(B_Z).\]
    Thus, for every $x\in Z_0$, there exists $z\in K \|x\| B_Z$ such that $\phi_0 x=\phi z$. We define $\psi:Z_0\rightarrow Z$ as $\psi x= z$. Clearly, $\psi$ is an injective interval preserving lattice homomorphism that satisfies $\phi_0=\phi \psi$ and $S=\psi S_0$. Finally, if $(x_n)_n\subseteq B_{Z_0}$ is a sequence such that $\psi x_n\rightarrow z$, it follows that $\phi_0 x_n=\phi\psi x_n\rightarrow \phi z$. Since $\phi_0(B_{Z_0})$ is closed, we conclude that $\phi z=\phi_0x$ for some $x\in B_{Z_0}$. Therefore, $z=\psi x$ and $\psi(B_{Z_0})$ is closed in $Z$, so $\psi$ is of class $\Ccal$ and the proof is concluded.
\end{proof}

\begin{rem}
    Again, observe that when $E$ is a Banach lattice and $T$ is positive (respectively, a lattice homomorphism), then $S_0$ can be made positive (respectively, a lattice homomorphism). 
\end{rem}

\begin{cor}\label{cor: factorization convex}
    Let $X$ be a Banach lattice, $E$ a Banach space and $1\leq p\leq q\leq \infty$. Then, the operator $T:E\rightarrow X$ is $(p,q)$-convex if and only if there exists a $(p,q)$-convex Banach lattice $Z$, a positive operator $\phi: Z\rightarrow X$ and an operator $S:E\rightarrow Z$ such that $T=\phi  S$.
\end{cor}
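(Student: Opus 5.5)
The forward implication is a direct application of \Cref{thm: factorization convex}. If $T:E\rightarrow X$ is $(p,q)$-convex, that theorem gives a $(p,q)$-convex Banach lattice $Z_0$ (with constant one), a class $\Ccal$ operator $\phi_0:Z_0\rightarrow X$ and an operator $S_0:E\rightarrow Z_0$ with $T=\phi_0S_0$. A class $\Ccal$ operator is in particular a lattice homomorphism, hence positive. So we may take $Z=Z_0$, $\phi=\phi_0$ and $S=S_0$. Nothing more is needed here beyond citing the theorem.

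For the reverse implication, suppose $T=\phi S$ with $Z$ a $(p,q)$-convex Banach lattice, $\phi:Z\rightarrow X$ positive and $S:E\rightarrow Z$ bounded. The plan is to combine the two stability observations in \Cref{rem: facts about convexity}. First, by item (6), $S$ is $(p,q)$-convex with $K^{(p,q)}(S)\leq \|S\|K^{(p,q)}(Z)$, since $S=i_ZS$ and $i_Z$ is $(p,q)$-convex. Second, by item (7), composing with the positive operator $\phi$ preserves $(p,q)$-convexity. That item rests on \Cref{lem: positive operators and functional calculus}: for $u_1,\ldots,u_n\in E$ we have
\[\Big(\sum_{i=1}^n|\phi Su_i|^q\Big)^{\frac1q}\leq \phi\Big(\sum_{i=1}^n|Su_i|^q\Big)^{\frac1q}.\]
Taking norms and using monotonicity of the norm in $X$ together with the $(p,q)$-convexity of $Z$, we get
\[\norm[3]{\Big(\sum_{i=1}^n|Tu_i|^q\Big)^{\frac1q}}\leq \|\phi\|\,\|S\|\,K^{(p,q)}(Z)\Big(\sum_{i=1}^n\|u_i\|^p\Big)^{\frac1p}.\]
This is exactly the $(p,q)$-convexity of $T$.

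The only point requiring care is the case $q=\infty$, where the $q$-sums are suprema. \Cref{lem: positive operators and functional calculus} is stated for $1\leq p\leq\infty$, so it covers this case as well. No step should be a genuine obstacle: the corollary mirrors \Cref{cor: factorization concave}, with the roles of domain and codomain swapped.
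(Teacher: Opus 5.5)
Your proposal is correct and follows essentially the same route as the paper. The forward direction cites \Cref{thm: factorization convex}, noting that class $\Ccal$ operators are positive. The converse chains \Cref{lem: positive operators and functional calculus}, monotonicity of the norm and the $(p,q)$-convexity of $Z$ to get the same constant $\|\phi\|\,\|S\|\,K^{(p,q)}(Z)$.
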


\begin{proof}
    Again, one implication is just \Cref{thm: factorization convex}. To obtain the converse, assume that $Z$, $\phi$ and $S$ are as in the statement. Since $\phi$ is positive, by \Cref{lem: positive operators and functional calculus}
    \begin{align*}
       \norm[3]{ \intoo[3]{\sum_{i=1}^n |Tu_i|^{q}}^\frac{1}{q}} & \leq  \norm[3]{ \phi \intoo[3]{\sum_{i=1}^n |Su_i|^{q}}^\frac{1}{q}}\leq \|\phi\|  \norm[3]{ \intoo[3]{\sum_{i=1}^n |Su_i|^{q}}^\frac{1}{q}}\\
       & \leq \|\phi\| K^{(p,q)}(Z) \intoo[3]{\sum_{i=1}^n \|Su_i\|^{p}}^\frac{1}{p} \leq \|\phi\|  \|S\| K^{(p,q)}(Z) \intoo[3]{\sum_{i=1}^n \|u_i\|^{p}}^\frac{1}{p}
    \end{align*}
    for every $u_1,\ldots, u_n \in E$, so $T$ is $(p,q)$-convex.
\end{proof}

Theorems \ref{thm: factorization concave} and \ref{thm: factorization convex} provide canonical factorizations for $(p,q)$-concave and $(p,q)$-convex operators, respectively. As we have seen, these factorizations are, respectively, maximal and minimal within the appropriate classes of factorizations. It turns out that the canonical factorizations associated with an operator and its adjoint satisfy a partial duality relation. Consider, for instance, a $(p,q)$-convex operator $T:E\rightarrow X$, and let us denote by the triple $(Z_0,S_0,\phi_0)$ the minimal factorization of $T$ given by \Cref{thm: factorization convex}. By \Cref{thm: duality convexity concavity}, $T^*:X^*\rightarrow E^*$ is $(p^*,q^*)$-concave, so let $(Z^0,\phi^0,S^0)$ be the maximal factorization of $T^*$ obtained in \Cref{thm: factorization concave}. Recall that the construction yields that $Z_0$ is an ideal of $X$ and $Z^0$ is the completion of the quotient of $X^*$ by the kernel of a lattice seminorm $\rho$, so that $\phi_0$ is just the formal identity, and $\phi^0$ is the composition of the quotient map with the inclusion into the completion. In this setting, the following duality relation holds. We omit the proof and refer the reader to \cite{RT} and \cite{GLTT2} for details.

\begin{thm}\label{thm: pseudo duality for factorization of convex operator}
Let $T:E\to X$ be a $(p,q)$-convex operator. Let $(Z_0,S_0,\phi_0)$ be the minimal factorization of $T$ and let $(Z^0,\phi^0,S^0)$ be the maximal factorization of $T^*$. Let the operators $U$ and $V$ be determined by 
\begin{align*}
U: Z_0 \to (Z^0)^*, \quad & (Uz)(\phi^0x^*) = x^*(\phi_0 z),\\
V: Z^0\to (Z_0)^*, \quad  & (V \phi^0x^*)(z)= x^*(\phi_0 z),
\end{align*}
for every $z\in Z_0$ and $x^*\in X^*$. Then $U$ and $V$ are lattice isometric embeddings. Moreover, $V$ is of class $\Ccal$, and, if $X$ has an order continuous norm, then $U$ is of class $\Ccal$.  
\end{thm}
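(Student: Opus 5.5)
The plan is to make both norms explicit and then check each property directly. First I would rewrite $\rho$ in a form that is dual to $\tau$. For $x^*\in X^*$, let $x_1^*,\ldots,x_n^*$ range over families with $\bigl(\sum_i|x_i^*|^{q^*}\bigr)^{1/q^*}\leq|x^*|$. Then
\[\intoo[3]{\sum_{i=1}^n\|T^*x_i^*\|^{p^*}}^\frac{1}{p^*}=\sup\cbr[3]{\sum_{i=1}^n x_i^*(Tu_i):\sum_{i=1}^n\|u_i\|^p\leq 1}.\]
I would then exchange the two suprema and use the dual form of \Cref{cor: RK for p sums}. That form states that for $x\in X_+$ and $x_1,\ldots,x_n\in X$, the supremum of $\sum_i x_i^*(x_i)$ over such families equals $|x^*|\bigl((\sum_i|x_i|^q)^{1/q}\bigr)$. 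It is proved exactly as \Cref{cor: RK for p sums}: one inequality is \Cref{prop: Holder in BL}, and the other follows from \Cref{prop: p-sum as supremum} plus the Riesz--Kantorovich formula. The outcome would be
\[\rho(x^*)=\sup_{u}|x^*|(g_u),\qquad g_u:=\intoo[3]{\sum_{i=1}^n|Tu_i|^q}^\frac{1}{q}\in X_+,\]
with $u=(u_i)_{i=1}^n$ running over $B_{\ell_p^n(E)}$ for all $n$. Consequently $A=\{x^*:|x^*|(g_u)\leq1\ \forall u\}$, and $A^\circ=\phi_0(B_{Z_0})$ is the closed convex solid hull of $\{g_u\}$. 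I expect this identity to be the main obstacle, because it is what makes the bipolar arguments below work without any weak$^*$-closedness issues.

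For $V$, let $z\in Z_0$ and $x^*\in X^*$. Then $|x^*(\phi_0z)|\leq\rho(x^*)\tau(z)$, since $x^*/\rho(x^*)\in A$. Hence $V$ is well defined on $\phi^0(X^*)$ (it kills $\ker\rho$) and is contractive there. Conversely, $\|V\phi^0x^*\|=\sup_{z\in A^\circ}x^*(z)$. This supremum is at least $\sup\{x^*(y):|y|\leq g_u\}=|x^*|(g_u)$, so by the formula above it is at least $\rho(x^*)$. Thus $V$ is an isometry on a dense set, and it extends to an isometry on $Z^0$. For the lattice property, recall that $Z_0$ is an ideal of $X$. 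For $z\in(Z_0)_+$, the Riesz--Kantorovich formula computed inside $Z_0$ and inside $X$ gives the same value, namely $|V\phi^0x^*|(z)=|x^*|(z)=V\phi^0|x^*|(z)$. This identity passes to the completion by continuity. Injectivity of $V$ is automatic, and $V(B_{Z^0})$ is closed because $V$ is an isometry from a Banach space. To see that $V$ is interval preserving, take $0\leq w\leq V\phi^0x^*$ in $(Z_0)^*$ with $x^*\geq0$. Define $y^*(x)=\sup\{w(z):0\leq z\leq x,\,z\in Z_0\}$ for $x\in X_+$. The Riesz decomposition property makes $y^*$ additive, so it extends to a positive functional with $0\leq y^*\leq x^*$ and $V\phi^0y^*=w$. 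For a general element of $Z^0$ I would argue by density, using closedness of the image of the ball.

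For $U$, the same estimate $|x^*(\phi_0 z)|\leq\rho(x^*)\tau(z)$ shows $\|Uz\|\leq\tau(z)$. The reverse inequality follows from $\tau(z)=\sup_{x^*\in A}x^*(z)$, so $U$ is an isometry. To see that $U$ is a lattice homomorphism, take $x^*\geq0$. Then $|Uz|(\phi^0x^*)$ is a supremum over $|w|\leq\phi^0x^*$ in $Z^0$. Since $\phi^0$ is almost interval preserving with dense range, such $w$ can be approximated by $\phi^0y^*$ with $|y^*|\leq x^*$. Hence $|Uz|(\phi^0x^*)=\sup_{|y^*|\leq x^*}y^*(z)=x^*(|z|)=U|z|(\phi^0x^*)$.

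Finally assume that $X$ is order continuous. Let $0\leq f\leq Uz$ in $(Z^0)^*$. Then $f\circ\phi^0$ is an element of $X^{**}$ lying in the order interval $[0,J_Xz]$. Order continuity of $X$ makes $J_X(X)$ an ideal of $X^{**}$, so $f\circ\phi^0=J_Xz'$ with $0\leq z'\leq z$. Since $Z_0$ is an ideal of $X$, $z'\in Z_0$, and $Uz'=f$. This shows $U$ is interval preserving; injectivity and closedness of $U(B_{Z_0})$ again come from the isometry, so $U$ is of class $\Ccal$.
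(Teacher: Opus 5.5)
Your proof is correct. The paper does not prove this theorem; it defers to \cite{RT} and \cite{GLTT2}, so there is no in-text argument to compare against. What you give is a self-contained verification built on the objects $\rho$, $A$, $A^\circ$, $\tau$ from the proof of \Cref{thm: factorization convex}. Each step checks out:
\begin{itemize}
\item well-definedness and contractivity of $U$ and $V$, from $|x^*(\phi_0 z)|\le\rho(x^*)\tau(z)$;
\item the two isometries;
\item the Riesz--Kantorovich computations for the lattice homomorphism property, which use that $Z_0$ is an ideal and that $\phi^0$ is almost interval preserving;
\item the construction of $y^*$ by Riesz decomposition, for interval preservation of $V$;
\item the use of $J_X(X)$ being an ideal of $X^{**}$, for interval preservation of $U$.
\end{itemize}

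Three comments.

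First, the identity $\rho(x^*)=\sup_u|x^*|(g_u)$, which you single out as the main obstacle, enters only through the lower bound $\|V\phi^0x^*\|\ge\rho(x^*)$, and there its hard half can be avoided. The fact $g_u\in A^\circ$ already follows from \Cref{thm: factorization convex}. Indeed, $\tau(Tu_i)\le\|u_i\|$, and $(Z_0,\tau)$ is $(p,q)$-convex with constant one, with $q$-sums the same as in $X$ since $Z_0$ is an ideal. Hence $\tau(g_u)\le1$. Since $A^\circ$ is solid, this gives $\|V\phi^0x^*\|\ge\sup_u|x^*|(g_u)$. The remaining inequality $\sup_u|x^*|(g_u)\ge\rho(x^*)$ is just \Cref{prop: Holder in BL}.

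Second, if you do want the full identity (for instance, for your description of $A^\circ$ as a closed solid hull, which you never actually use), its reverse inequality is not proved literally as in \Cref{cor: RK for p sums}. There the supremum lives in $X^*$ and can be evaluated termwise at a positive vector. Here $g_u=\sup_a\sum_i a_iTu_i$ is a supremum in $X$, and $|x^*|$ need not be order continuous. The fix is as follows.
\begin{itemize}
\item Replace $g_u$ by a finite supremum $\bigvee_j w_j$, where $w_j=\sum_i a^{(j)}_iTu_i$ and $(a^{(j)})$ is an $\eps$-net of $B_{\ell_{q^*}^n}$. By functional calculus this dominates $(1-\eps)g_u$.
\item Apply the Riesz--Kantorovich formula in $X^{**}$:
\[
|x^*|\Bigl(\bigvee_j w_j\Bigr)=\sup\Bigl\{\sum_j z_j^*(w_j): z_j^*\ge 0,\ \sum_j z_j^*=|x^*|\Bigr\}.
\]
\item Set $x_i^*=\sum_j a_i^{(j)}z_j^*$.
\end{itemize}

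Third, the density step for interval preservation of $V$ is cleanest via the isometry. Suppose $0\le w\le V\zeta$, and $\phi^0x_n^*\to\zeta$ with $x_n^*\ge0$. Write $w\wedge V\phi^0x_n^*=V\eta_n$ with $0\le\eta_n\le\phi^0x_n^*$. Then $\|w\wedge V\phi^0x_n^*-w\|\le\|\phi^0x_n^*-\zeta\|$, so $(\eta_n)$ is Cauchy because $V$ is isometric. Its limit $\eta\in[0,\zeta]$ satisfies $V\eta=w$.
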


A similar result holds for $(p,q)$-concave operators and their adjoints. Let $T:X\to E$ be a $(p,q)$-concave operator. Let the triple $(Z^0,\phi^0,S^0)$ denote the maximal factorization of $T$ obtained in \Cref{thm: factorization concave} and $(Z_0,S_0,\phi_0)$ the minimal factorization of $T^*$ given by \Cref{thm: factorization convex}. Then, $Z^0$ is the completion of a quotient of $X$ and $\phi^0$ is the composition of the quotient map and the inclusion into the completion. On the other hand, $Z_0$ is an ideal of $X^*$ and $\phi_0$ is the formal inclusion.

\begin{thm}\label{thm: pseudo duality for factorization of concave operator}
Let $T:X\to E$ be a $(p,q)$-concave operator. Let $(Z^0,\phi^0,S^0)$ be the maximal factorization of $T$ and let $(Z_0,S_0,\phi_0)$ be the minimal factorization of $T^*$. Let the operators $U$ and $V$ be determined by 
\begin{align*}
U: Z^0 \to (Z_0)^*, \quad & (U\phi^0x)(z^*)=(\phi_0 z^*)(x) ,\\
V: Z_0\to (Z^0)^*, \quad  & (Vz^*)(\phi^0x)= (\phi_0 z^*)(x) ,
\end{align*}
for every $z^*\in Z_0$ and $x\in X$. Then $U$ and $V$ are lattice isometric embeddings. Moreover, $V$ is of class $\Ccal$, and, if $X$ has an order continuous norm, then $U$ is of class $\Ccal$.  
\end{thm}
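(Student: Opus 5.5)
The plan is to make everything explicit in terms of the two seminorms and reduce the statement to one identification. Write $\rho$ for the seminorm on $X$ defining $Z^0$ in the proof of \Cref{thm: factorization concave}, and $\rho'$ for the seminorm on $X^{**}$ built from $T^{**}$ in the proof of \Cref{thm: factorization convex} applied to $T^*$. There $Z_0\subseteq X^*$ carries the norm $\tau(x^*)=\sup\{x^{**}(x^*):\rho'(x^{**})\le 1\}$.

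The key claim is
\[\tau(x^*)=\sup\{|x^*(x)|: x\in X,\ \rho(x)\le 1\}=:\rho^\circ(x^*),\]
that is, $B_{Z_0}$ is the polar in $X^*$ of the $\rho$-unit ball of $X$.

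\textbf{Step 1: $\rho'$ restricts to $\rho$.} On the canonical image of $X$ we have $\rho'(J_Xx)\ge\rho(x)$, since families in $X$ are admissible in $X^{**}$ and $T^{**}J_X=J_ET$. For the reverse inequality I would use that $\rho'$ is a lattice seminorm on $X^{**}$ dominated by $K_{(p,q)}(T)\|\cdot\|$. The set $\{\rho'\le1\}$ is then $w^*$-closed, by the same Riesz--Kantorovich/\Cref{cor: RK for p sums} description that made $\rho$ a supremum of evaluations. By Goldstine's theorem its trace on $X$ is $w^*$-dense in it, which gives $\tau=\rho^\circ$ on $X^*$. The inequality $\tau\le\rho^\circ$ is the nontrivial direction, and this $w^*$-density/closedness step is what I expect to be the main obstacle. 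If it resists, I would instead use the concrete formula for $\rho'$ and approximate $X^{**}$-families by $X$-families in the principal ideal, viewed as a $C(K)$ space.

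\textbf{Step 2: $U$ is an isometric lattice embedding.} For $x\in X$ and $z^*\in Z_0$ we have $|(U\phi^0x)(z^*)|=|z^*(x)|\le\tau(z^*)\rho(x)$, so $U$ is well defined on $\phi^0(X)$, contractive, and extends to $Z^0$. Since $\rho\le K_{(p,q)}(T)\|\cdot\|$, Hahn--Banach gives
\[\rho(x)=\sup\{x^*(x): x^*\in X^*,\ \rho^\circ(x^*)\le1\}.\]
Combined with Step 1 this yields $\|U\phi^0x\|=\rho(x)$. The lattice homomorphism property, $|U\phi^0x|=U\phi^0|x|$, comes from the Riesz--Kantorovich formula on the ideal $Z_0$ together with the fact that $\rho^\circ$ is a lattice norm.

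\textbf{Step 3: $V$ is an isometric lattice embedding of class $\Ccal$.} Symmetrically, $\|Vz^*\|=\sup_{\rho(x)\le1}z^*(x)=\tau(z^*)$ by Step 1. Moreover, every $\xi\in(Z^0)^*$ gives a functional $x\mapsto\xi(\phi^0x)$ on $X$ bounded by $\|\xi\|\rho\le\|\xi\|K_{(p,q)}(T)\|\cdot\|$. Hence it lies in $X^*$ with finite $\rho^\circ$-norm, that is, in $Z_0$. So $V$ is onto, hence a lattice isometry onto $(Z^0)^*$, and trivially of class $\Ccal$. The lattice homomorphism property of $V$ again follows from the Riesz--Kantorovich formulas on the dense sublattice $\phi^0(X)$.

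\textbf{Step 4: $U$ is of class $\Ccal$ when $X$ is order continuous.} Injectivity is immediate from the isometry, and $U(B_{Z^0})$ is closed because $U$ is an isometry with complete domain. It remains to show that $U$ is interval preserving. Given $0\le\eta\le U\phi^0x$ in $(Z_0)^*$, I would realize $\eta$ as an element of $X^{**}$ via the ideal inclusion $Z_0\subseteq X^*$. Order continuity of $X$ makes $X$ an ideal in $X^{**}$, so $\eta$ comes from some $y\in[0,|x|]\subseteq X$, and then $\eta=U\phi^0y$.
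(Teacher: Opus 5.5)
Your key claim in Step 1 is false, and Step 3 fails with it.

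Take $X=\ell_1$, $E=c_0$, $T$ the formal inclusion, and $p=q=1$; $T$ is $1$-concave because $\ell_1$ is an AL-space. Then $\rho=\|\cdot\|_1$, so $Z^0=\ell_1$, $(Z^0)^*=\ell_\infty$ and $\rho^\circ=\|\cdot\|_\infty$. On the other hand, $T^{**}x^{**}=(x^{**}(e_n))_n$ vanishes on the singular band $c_0^\perp$ of $\ell_\infty^*=\ell_1\oplus_1 c_0^\perp$. Hence $\rho'(y+\psi)=\|y\|_1$ for $y\in\ell_1$, $\psi\in c_0^\perp$. So $\{\rho'\le 1\}$ contains the whole band $c_0^\perp$, $B_{Z_0}=B_{c_0}$, and $\tau(\uno)=\infty\neq 1=\rho^\circ(\uno)$.

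The Goldstine step is where this breaks. It is true that $\rho'$ coincides with $\rho$ on $X$ and is $w^*$-lower semicontinuous. But that only places $\overline{J_X\{\rho\le 1\}}^{w^*}$ inside $\{\rho'\le 1\}$, which gives $\tau\ge\rho^\circ$ and nothing more. Here the trace $B_{\ell_1}$ is not $w^*$-dense in $\{\rho'\le1\}$. In this example $V$ is the inclusion $c_0\hookrightarrow\ell_\infty$: an isometric class $\Ccal$ embedding that is not onto. So the claim in Step 3 that every $\xi\in(Z^0)^*$ yields an element of $Z_0$ is wrong, which is why the statement only asserts class $\Ccal$.

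Here is what survives. Put $W=\{(\sum_i|T^*e_i^*|^{q^*})^{1/q^*}:\sum_i\|e_i^*\|^{p^*}\le 1\}\subseteq X^*_+$. Then \Cref{cor: RK for p sums} gives $\rho(x)=\sup_{w\in W}w(|x|)$ and $\rho'(x^{**})=\sup_{w\in W}|x^{**}|(w)$. Consequently $B_{Z_0}=A^\circ$ is the norm-closed convex hull of the solid hull of $W$, while $\{\rho^\circ\le1\}$ is its $w^*$-closed convex hull. This description yields:
\begin{itemize}
\item $\rho^\circ\le\tau$.
\item The isometry of $U$ directly, since $\sup_{z^*\in B_{Z_0}}|z^*(x)|=\sup_{w\in W}w(|x|)=\rho(x)$.
\item The class $\Ccal$ property of $V$ without surjectivity. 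If $0\le\xi\le Vz^*$, then $z^*\ge0$ and $\xi\circ\phi^0\in[0,z^*]\subseteq Z_0$ because $Z_0$ is an ideal of $X^*$. Moreover $V(B_{Z_0})=\{\xi:\xi\circ\phi^0\in A^\circ\}$ is closed.
\end{itemize}

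What your proposal never establishes is the isometry of $V$. That is exactly $\tau=\rho^\circ$ on $Z_0$, i.e. that the norm-closed and $w^*$-closed hulls meet $Z_0$ in the same set. For the construction as described, this can in fact fail. Take $X=\ell_1$, $E=\ell_1\oplus_1 c_0$, $Tx=(x,x)$, $p=q=1$. Then $\rho=2\|\cdot\|_1$, but $\rho'(y+\psi)=2\|y\|_1+\|\psi\|$. On $Z_0=\ell_\infty$ this gives $\tau(z)=\max(\frac12\|z\|_\infty,\limsup_n|z_n|)$, so $\tau(\uno)=1\neq\frac12=\|V\uno\|$. That part of the statement therefore needs an extra hypothesis or a different construction of $Z_0$; check it against \cite{RT,GLTT2}.

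Finally, Step 4 only treats elements of the form $\phi^0x$. You need an approximation argument, using that $U$ is an isometry, to reach all of $Z^0$.
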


The applications of the factorization techniques introduced in this subsection go far beyond Theorems \ref{thm: factorization concave} and \ref{thm: factorization convex}. For instance, they can be used to establish factorization results for operators between Banach lattices which are simultaneously $(p,p')$-convex and $(q,q')$-concave, for any of the four possible choices of $p'\in \{p,\infty\}$ and $q'\in \{1,q\}$ (see \cite[Theorem 15]{RT} and \cite[Section 3.5]{GLTT2}). 

\section{Factorizations through function spaces}\label{sec: concrete factorizations}
\subsection{The factorization theorems of Krivine, Maurey--Nikishin and Pisier}\label{sec: factorization function spaces}
We switch now to more specific situations in which the operators factor not only through a generic convex or concave Banach lattice, but through model spaces such as $L_p$, $L_{p\infty}$ or $L_{p,1}$. A specially relevant case of this situation is the following result due to Krivine \cite[Théorème 2]{KrivineExp2223}.

\begin{thm}\label{thm: factorization Krivine}
    Let $E$ and $F$ be Banach spaces, $X$ a Banach lattice, $1\leq p<\infty$, $U:E\rightarrow X$ a $p$-convex operator and $V:X\rightarrow F$ a $p$-concave operator. Then, there exists a measure $\mu$  and bounded operators $\widetilde{U}:E\rightarrow  L_p(\mu)$ and $\widetilde{V}:L_p(\mu)\rightarrow F$ such that $\widetilde{V} \widetilde{U}=V U$, $\|\widetilde{U}\|\leq K^{(p)}(U)$ and $\|\widetilde{V}\|\leq K_{(p)}(V)$.
\end{thm}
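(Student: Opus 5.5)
The plan is to combine the two abstract factorizations of \Cref{sec: abstract factorizations} with the renorming \Cref{thm: renorming convex and concave}, and then invoke Kakutani's representation theorem for $AL_p$-spaces. A first attempt, factoring $U$ through $Z_0$ and $V$ through $Z^0$ separately, does not produce a single Banach lattice. So instead I will construct one lattice that is simultaneously $p$-convex and $p$-concave with constant one and through which $VU$ passes.

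First I apply \Cref{thm: factorization concave} to $V$ with $q=p$. This gives a $p$-concave Banach lattice $Z^0$ with constant one, a lattice homomorphism with dense range $\phi^0:X\to Z^0$ with $\|\phi^0\|\le K_{(p)}(V)$, and $S^0:Z^0\to F$ with $\|S^0\|\le 1$, since $\|Vx\|\le\rho(x)$, and $V=S^0\phi^0$. The composition $\phi^0U:E\to Z^0$ is $p$-convex by \Cref{rem: facts about convexity}(7), because $\phi^0$ is positive. I normalize so that $K^{(p)}(U)=1$ and $K_{(p)}(V)=1$; the bounds are then recovered by scaling at the end.

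Next I apply \Cref{thm: factorization convex} to $\phi^0U:E\to Z^0$. This gives $Z_0$, an ideal of $Z^0$ with norm $\tau$, which is $p$-convex with constant one, together with $S_0$ of norm at most $1$ and a class $\Ccal$ inclusion $\phi_0$. The key claim, which I expect to be the main obstacle, is that $Z_0$ is also $p$-concave with constant one. My approach is to use that $Z_0$ is an ideal of $Z^0$ with $\tau\ge\|\cdot\|_{Z^0}$; this inequality holds because $\rho$ for the adjoint of $\phi^0U$ is dominated by the dual norm of $Z^0$. However, concavity of the $Z^0$-norm does not transfer directly to $\tau$.

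I therefore plan the following alternative. Apply the renorming of \Cref{thm: renorming convex and concave}(1), the $\rho$-construction, to $Z_0$ with respect to $p$-concavity. This produces $\rho'\ge\tau$ that is $p$-concave with constant one and keeps $p$-convexity constant one. The remaining point is to check that $S_0$ is still contractive into $(Z_0,\rho')$. For $x_i$ with $(\sum|x_i|^p)^{1/p}\le|S_0u|$, I bound $\sum\tau(x_i)^p$ by estimating each $\tau(x_i)$ against $x^*\in A$. I will try to show this is at most $\tau(S_0u)^p$, up to the concavity constant of $Z^0$, which equals one, via a Hölder/Riesz--Kantorovich argument as in the proof of \Cref{thm: factorization convex}. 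If this fails, I will instead compose in the other order (first the convex factorization of $U$, then the concave factorization of the map $Z_0\to F$) and check that the concave construction preserves $p$-convexity, as \Cref{thm: renorming convex and concave}(1) already shows for the identity. The $\rho$ built from $V\phi_0$ dominates $\tau$ only after a suitable scaling, so this alternative needs the same kind of argument.

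Once I have a Banach lattice $W$ that is $p$-convex and $p$-concave with constant one, that receives $E$ contractively, and that maps to $F$ contractively with composition $VU$, the rest is standard. The norm of $W$ is $p$-additive on disjoint elements, so Kakutani's theorem for $AL_p$-spaces, as in \Cref{cor: isomorphic ALp-spaces}, gives a lattice isometry $W\cong L_p(\mu)$. Taking $\widetilde U,\widetilde V$ as the transported maps and undoing the normalization gives $\|\widetilde U\|\le K^{(p)}(U)$ and $\|\widetilde V\|\le K_{(p)}(V)$. The case $p=1$ is immediate, since every lattice is $1$-convex.
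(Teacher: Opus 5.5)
Your last step is fine: a Banach lattice that is $p$-convex and $p$-concave with constant one is an abstract $L_p$-space. The gap is that the lattice $W$ is never actually produced, and both constructions you propose fail.

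\emph{The first route fails.} The key claim that $Z_0$ is $p$-concave is false. Take $E=\R$, $X=F=L_p[0,1]$, $U(t)=t\uno$ and $V=i_X$, so both constants equal one. The maximal factorization of $V$ gives $\rho=\|\cdot\|_p$, so $Z^0=L_p[0,1]$. For $(\phi^0U)^*z^*=\int z^*$, Minkowski's inequality gives $\rho(z^*)=\|z^*\|_1$. Hence $\tau=\|\cdot\|_\infty$ and $Z_0=L_\infty[0,1]$, which is not $p$-concave with any constant. So \Cref{thm: renorming convex and concave}(1) cannot be applied to $Z_0$, since it presupposes $p$-concavity. Splitting $S_0(1)=\uno$ into $n$ disjoint indicators gives $\sum\tau(x_i)^p=n$ while $\tau(\uno)^p=1$. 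Thus $\rho'(\uno)=\infty$, and the estimate $\sum\tau(x_i)^p\le\tau(S_0u)^p$ you hope for is false.

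\emph{The reversed order fails too.} The proof of \Cref{thm: renorming convex and concave}(1) preserves convexity only because it uses the $p$-convexity of the very norm being renormed, to bound $\|y_j\|$ by $(\sum_i\|z_{ij}\|^p)^{1/p}$. For a seminorm built from $\|V\phi_0\,\cdot\,\|$ you would need $\|V\phi_0y_j\|\lesssim(\sum_i\|V\phi_0z_{ij}\|^p)^{1/p}$, which has no reason to hold. Concretely, take $X=C[0,1]$, $E=F=\R$, $U(t)=t\uno$ and $Vf=\int_0^1f\,dm$. The minimal factorization of $U$ is $Z_0=C[0,1]$ itself. The maximal factorization of $V\phi_0=V$ then has $\rho(f)=\int|f|\,dm$, i.e.\ $Z^0=L_1[0,1]$, which is not $p$-convex for $p>1$.

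\emph{What is missing} is the genuinely new ingredient of Krivine's theorem. The intermediate $L_p$-space is not produced by the canonical maximal and minimal constructions; it comes from a non-canonical choice of a functional.

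The paper uses Corollaries \ref{cor: factorization concave} and \ref{cor: factorization convex} only to reduce to a contractive lattice homomorphism $T:Y\to Z$, with $Y$ $p$-convex and $Z$ $p$-concave, both with constant one. It then passes to the injective map $\widetilde T:\widetilde Y=Y/\ker T\to Z$ and $p$-concavifies. On $\widetilde Y_{(p)}$ the norm is exactly $\|\cdot\|^p$. On $Z_p$ the quasi-norm $\|\cdot\|^p$ is superadditive on $\oplus_p$-sums of positive elements, by $p$-concavity. This makes $C_2=\{y:\widetilde Ty\ge0,\ \|\widetilde Ty\|^p\ge\|\widetilde T\|^p\}$ convex.

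Separating $C_2$ from the open unit ball of $\widetilde Y_{(p)}$ by Hahn--Banach gives a positive $y^*$ with $\|\widetilde Ty\|^p/\|\widetilde T\|^p\le y^*(|y|)\le\|y\|^p$. So $y^*(|\cdot|)$ is an $AL$-seminorm whose completion is $L_1(\mu)$. Convexifying back yields $L_p(\mu)$, with contractive lattice homomorphisms on both sides. The norm $y^*(|\cdot|)^{1/p}$, after quotienting and completing, is exactly the lattice $W$ you were looking for, and nothing in your plan substitutes for this separation argument.
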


\begin{proof}
    It suffices to show the following claim:\\
    
    \textbf{Claim:} Every contractive lattice homomorphism $T: Y\rightarrow Z$, where $Y$ is $p$-convex and $Z$ is $p$-concave, both with constant one, factors through $L_p(\mu)$ for some measure, with the factors being contractive lattice homomorphisms.\\
    
    Indeed, by Corollaries \ref{cor: factorization concave} and \ref{cor: factorization convex} there exists a $p$-convex Banach lattice $Y$ and a $p$-concave Banach lattice $Z$, both with constant one, lattice homomorphisms $\psi:Y\rightarrow X$ and $\phi:X\rightarrow Z$ and operators $R:E\rightarrow Y$ and $S: Z\rightarrow F$, such that the following diagram commutes, where $T=\phi \psi$:
    \[\xymatrix{
		E  \ar@{->}[rd]_{R} \ar@{->}[rr]^{U} & & X \ar@{->}[rd]_{\phi} \ar@{->}[rr]^{V} & & F  \\
		& Y \ar@{->}[ru]_{\psi} \ar@{->}[rr]_{T} & & Z \ar@{->}[ru]_{S} &
    }\]
    Moreover, by replacing the norm in $Y$ by $K^{(p)}(U)$ times the same norm, we get that $\|R\|\leq K^{(p)}(U)$ and $\psi$ is contractive. Similarly, we can assume that $\|S\|\leq K_{(p)}(V)$ and $\phi$ is contractive. Therefore, we just need to factor the contractive lattice homomorphism $T=\phi \psi$ through some $L_p(\mu)$ space. \\
    
    \textit{Proof of the claim.} First we need to reduce $T:Y\rightarrow Z$ to an injective lattice homomorphism. It is clear that $\ker T$ is a closed ideal of $Y$, so $\widetilde{Y}=Y/\ker T$ is a Banach lattice and $T$ can be factored as $T=\widetilde{T} Q$, where $Q:Y\rightarrow \widetilde{Y}$ is the canonical lattice quotient and $\widetilde{T}:\widetilde{Y}\rightarrow Z$ is an injective and contractive lattice homomorphism. Moreover, $\widetilde{Y}$ is $p$-convex with constant one. Next, we apply a $p$-concavification procedure (see \Cref{prop: p-concavification}) to both $\widetilde{Y}$ and $Z$. Since $\widetilde{Y}$ is $p$-convex with constant one, $\widetilde{Y}_{(p)}$ is a Banach lattice with the norm $\normiii{\cdot}=\normiii{\cdot}_0=\|\cdot\|^p$. In the case of $Z$, we cannot find \textit{a priori} a lattice norm in the vector lattice $Z_p$ due to the lack of convexity assumptions on $Z$. However, $\normiii{\cdot}=\|\cdot\|^p$ induces a lattice quasi-norm in $Z_p$, meaning that it is positively homogeneous, it vanishes only at zero, $\normiii{x}\leq \normiii{z}$ for every $x,z\in Z$ with $|x|\leq |z|$, and
    \[\normiii{x\oplus_p z}\leq 2^\frac{p}{p^*}(\normiii{x}+\normiii{z})\]
     for every $x,z\in Z$. Moreover, $\normiii{\cdot}$ is complete, and the $p$-concavity of $Z$ implies that $\normiii{\cdot}$ satisfies that for every $z_1,\ldots, z_n\in Z$
     \[\normiii{|z_1|\oplus_p\ldots\oplus_p |z_n|}=\norm[3]{\intoo[3]{\sum_{i=1}^n |z_i|^{p}}^\frac{1}{p}}^p\geq \sum_{i=1}^n \|z_i\|^{p}= \sum_{i=1}^n \normiii{z_i}. \]
     By \Cref{lem: lattice homomorphism with new operations}, $\widetilde{T}:\widetilde{Y}_{(p)}\rightarrow (Z_p,\normiii{\cdot})$ is lattice homomorphism with norm $\normiii{\widetilde{T}}=\|\widetilde{T}\|^p\leq 1$. We define in $\widetilde{Y}_{(p)}$ the sets
     \[C_1=\{y\in \widetilde{Y}_{(p)}: \normiii{y}<1\}\quad \text{and}\quad C_2= \{y\in \widetilde{Y}_{(p)}: \widetilde{T}y\geq 0, \normiii{\widetilde{T}y}\geq \normiii{\widetilde{T}}\}. \]
     $C_1$ is clearly convex and open, as it is the open unit ball of $\widetilde{Y}_{(p)}$, and $C_2$ is convex due to the $1$-concavity of $\normiii{\cdot}$ in $Z_p$. Moreover, $C_1\cap C_2=\varnothing$, so by Hahn--Banach Theorem there exists a functional $y^*\in (\widetilde{Y}_{(p)})^*$ such that $y^*(y)\leq 1$ for every $y\in C_1$ and $y^*(y)\geq 1$ for every $y\in C_2$. From this we learn that $y^*\in B_{(\widetilde{Y}_{(p)})^*}$, and that $y^*$ is positive. Indeed, if $0<y\in \widetilde{Y}_{(p)}$, then $\widetilde{T}y>0$, since $\widetilde{T}$ is positive and injective, so $\normiii{\widetilde{T}}y/\normiii{\widetilde{T}y}\in C_2$ and thus $y^*(y)\geq \normiii{\widetilde{T}y}/\normiii{\widetilde{T}}>0$. It follows that
     \[\frac{\normiii{\widetilde{T}y}}{\normiii{\widetilde{T}}}\leq y^*(|y|)\leq \normiii{y}\]
     for every $y\in \widetilde{Y}_{(p)}$, so $\sigma(y)=y^*(|y|)$ is a lattice seminorm that induces an $AL$-norm in $\widetilde{Y}_{(p)}/\ker \sigma$. By Kakutani's Representation Theorem for $AL$-spaces, the completion of this quotient can be isometrically identified with $L_1(\mu)$ for some measure $\mu$. Let $P:\widetilde{Y}_{(p)} \rightarrow L_1(\mu)$ be the composition of the quotient, the inclusion into the completion and the isometry into $L_1(\mu)$, and $\widehat{T}:L_1(\mu)\rightarrow (Z_p,\normiii{\cdot})$ be the extension to $L_1(\mu)$ of the lattice homomorphism induced on $\widetilde{Y}_{(p)}/\ker \sigma$ by $\widetilde{T}$, so that $\widetilde{T}=\widehat{T} P$. Applying now a $p$-convexification procedure to each space we recover $\widetilde{Y}$ and $Z$ with the same norm and vector lattice structure, and we can lattice isometrically identify $L_1(\mu)^{(p)}$ with $L_p(\mu)$. Morevoer, by \Cref{lem: lattice homomorphism with new operations}, $P:\widetilde{Y} \rightarrow L_p(\mu)$ and $\widehat{T}: L_p(\mu)\rightarrow Z$ are contractive lattice homomorphisms such that $\widetilde{T}=\widehat{T} P$. It follows that $T=\widetilde{T} Q=\widehat{T} P Q$ factors through $L_p(\mu)$, as we wanted to show.
\end{proof}

We continue this section with some stronger factorizations when $L_r(\mu)$ is the target or the domain space. We will need the following application of Dunford--Pettis Theorem (cf. \cite[Theorem 5.2.8]{AK}), that characterizes relatively $w$-compact sets of $L_1(\mu)$.

\begin{lem}\label{lem: Dunford Pettis}
    Let $(\Omega,\Sigma, \mu)$ be a measure space, and $(g_n)_{n=1}^\infty\subseteq L_1(\mu)_+$ be a normalized sequence such that there is a constant $K>0$ and $1<q<\infty$ satisfying that for every finite choice of scalars $\lambda_1,\ldots,\lambda_m$ we have
    \[\norm[3]{\bigvee_{n=1}^m |\lambda_n g_n|}_{L_1(\mu)}\leq K \intoo[3]{\sum_{n=1}^m |\lambda_n|^{q}}^\frac{1}{q}. \]
    Then, $(g_n)_{n=1}^\infty$ is relatively weakly compact.
\end{lem}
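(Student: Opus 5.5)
By the Dunford--Pettis theorem, a bounded subset of $L_1(\mu)$ is relatively weakly compact if and only if it is uniformly integrable. So I will verify uniform integrability of $(g_n)$, using the quantitative form: for every $\eps>0$ there is $\delta>0$ such that $\sup_n\int_A g_n\,d\mu\leq \eps$ whenever $\mu(A)<\delta$. Since $\mu$ need not be finite, I also need uniform tightness: a set $B$ of finite measure with $\sup_n\int_{B^c} g_n\,d\mu\leq\eps$. I plan to reduce to the $\sigma$-finite case by working on the union of the supports, which is $\sigma$-finite because each $g_n\in L_1$. The standard equivalent criterion, which avoids handling these two conditions separately, is
\[\lim_{t\to\infty}\sup_n\int (g_n-t\,h)_+\,d\mu=0\]
for a suitable fixed $h\in L_1(\mu)_+$; its sufficiency follows from the Dunford--Pettis characterization, and I will aim for it.

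The hypothesis with $\lambda_n=1$ gives $\|\bigvee_{n\le m} g_n\|_1\le K m^{1/q}$. This growth is sublinear, since $q>1$, and sublinearity is the key. Suppose the sequence is not uniformly integrable. The usual subsequence-splitting argument (Kadec--Pełczyński type) then produces $\eta>0$, a subsequence $(g_{n_k})$ and pairwise disjoint measurable sets $A_k$ with $\int_{A_k} g_{n_k}\,d\mu\ge\eta$ for all $k$. To get this from non-uniform integrability, I would pick sets of measure going to zero, or escaping to infinity, carrying mass at least $2\eta$. I would then pass to a subsequence and remove the overlaps with the previous sets. Absolute continuity of each fixed, finitely many $g_{n_j}$ (respectively, their tails) guarantees that removing small or far pieces loses at most $\eta$.

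With disjoint $A_k$ in hand, the estimate is straightforward:
\[\norm[3]{\bigvee_{k=1}^m g_{n_k}}_{L_1}\ge \sum_{k=1}^m\int_{A_k}\bigvee_{j} g_{n_j}\,d\mu\ge \sum_{k=1}^m\int_{A_k}g_{n_k}\,d\mu\ge m\eta.\]
The hypothesis applied to the subsequence (taking $\lambda_n=1$ on the chosen indices and $0$ elsewhere) gives the upper bound $Km^{1/q}$. Hence $m\eta\le Km^{1/q}$ for all $m$, which is a contradiction because $1/q<1$. Therefore $(g_n)$ is uniformly integrable, and so relatively weakly compact by Dunford--Pettis.

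The main obstacle is the disjointification step: extracting disjoint $A_k$ with $\int_{A_k}g_{n_k}\geq\eta$ from the failure of uniform integrability, including the non-finite-measure case. My first approach is an inductive construction. Given $A_1,\dots,A_{k-1}$, the function $G=\sum_{j<k}g_{n_j}$ is a single integrable function, so there is $\delta_k$ with $\int_E G<\eta$ whenever $\mu(E)<\delta_k$, and a finite-measure set outside of which $G$ has mass less than $\eta$. Failure of uniform integrability then gives a new index $n_k$ and a set $E_k$ with $\int_{E_k}g_{n_k}\ge 2\eta$ that is either of small measure or lies far out. I then set $A_k=E_k\setminus\bigcup_{j<k}A_j$. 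Bounding the mass lost on the earlier $A_j$ requires choosing the earlier sets with care: the $E_j$ should be chosen with rapidly decreasing measures, so that the later sets have small overlap with the earlier ones measured through $g_{n_k}$. I would handle this by instead discarding from $E_k$ the later-chosen pieces rather than the earlier ones.
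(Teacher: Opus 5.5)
Your argument is correct and is essentially the paper's. The paper also disjointifies by discarding the later-chosen sets, $B_i=C_{n_i}\setminus\bigcup_{j>i}C_{n_j}$, after picking $n_j$ so that $\int_{C_{n_j}}g_{n_i}\,d\mu<2^{-j}\varepsilon$ for all $i<j$, and then contradicts the lattice estimate. Note that your thresholds $\int_E G<\eta$ (and the analogous tail condition) must likewise be made summable, e.g.\ $<2^{-k}\eta$, so that the discarded mass stays below $\eta$.

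The differences are minor. The paper first passes to the probability measure $e\cdot\mu$ with $e=\sum_n 2^{-n}g_n$, which removes the separate tightness/``far out'' case. It then tests the hypothesis with $\lambda_n=(\int_{B_n}g_n\,d\mu)^{q^*-1}$ to get $\sum_n(\int_{B_n}g_n\,d\mu)^{q^*}\leq K^{q^*}$, whereas your $0$--$1$ choice of the $\lambda_n$, giving $m\eta\leq Km^{1/q}$, is simpler and equally effective.
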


\begin{proof}
    Without loss of generality, we can assume that $\mu$ is a probability measure. Indeed, the function $e=\sum_{n=1}^\infty 2^{-n}g_n \in L_1(\mu)_+$ has norm one, so the measure $e\cdot \mu$ is a probability measure, and satisfies that the band generated by $e$, $B_e$, coincides with the band generated by the whole sequence $(g_n)_{n=1}^\infty$. Radon--Nikodym Theorem establishes that $B_e$ is lattice isometric to $L_1(e\cdot \mu)$, so $(g_n)_{n=1}^\infty$ satisfies the inequality of the statement in $L_1(\mu)$ if and only if $(g_n/e)_{n=1}^\infty$ satisfies the corresponding property in $L_1(e\cdot \mu)$. Since linear operators are weak-to-weak continuous, it suffices to show that $(g_n/e)_{n=1}^\infty$ is relatively weakly compact in $L_1(e\cdot\mu)$ in order to conclude that $(g_n)_{n=1}^\infty$ is relatively weakly compact in $L_1(\mu)$.\\
    
    Hereby, let $\mu$ be a probability measure. First, we show that for every sequence of pairwise disjoint measurable subsets $(B_n)_{n=1}^\infty\subseteq \Omega$ we have
    \[\lim_n \int_{B_n} g_n\,d\mu =0.\]
    To do so, we define $\lambda_n=\intoo{\int_{B_n} g_n\,d\mu}^{q^*-1}$, so for every $m\in \N$ we have
    \begin{align*}
        \sum_{n=1}^m \intoo[3]{\int_{B_n} g_n\,d\mu}^{q^*} & = \sum_{n=1}^m \lambda_n \int_{B_n} g_n\,d\mu \leq \sum_{n=1}^m  \int_{B_n} \bigvee_{k=1}^m |\lambda_k g_k|\,d\mu \leq  \int_{\Omega} \bigvee_{k=1}^m |\lambda_k g_k|\,d\mu \\
        &\leq K  \intoo[3]{\sum_{n=1}^m |\lambda_n|^{q}}^\frac{1}{q} = K \intoo[3]{\sum_{n=1}^m \intoo[3]{\int_{B_n} g_n\,d\mu}^{q^*}}^\frac{1}{q}.
    \end{align*}
    Rearranging the terms we conclude that $\sum_{n=1}^\infty \intoo{\int_{B_n} g_n\,d\mu}^{q^*}\leq K^{q^*}$, so in particular 
    \[\lim_n \int_{B_n} g_n\,d\mu =0.\]
    Next we prove that for every sequence of measurable subsets $(C_n)_{n=1}^\infty\subseteq \Omega$ such that $\lim_n\mu(C_n)=0$ we have
    \[\lim_n \int_{C_n} g_n\,d\mu =0.\]
    Assume the contrary. Then, passing to a subsequence we can find $\eps>0$ and a sequence $(C_n)_{n=1}^\infty\subseteq \Omega$ such that $\lim_n\mu(C_n)=0$ but $\int_{C_n} g_n\,d\mu \geq \eps$. Note that, for every fixed $m\in \N$, $g_m\chi_{C_n}\xrightarrow[n]{}0$ $\mu$-a.e., so by the Dominated Convergence Theorem $\lim_n \int_{C_n} g_m\,d\mu =0$. Let $n_1=1$ and $j\geq 2$, and assume that $n_1<\ldots<n_{j-1}$ have been selected. We can choose $n_j>n_{j-1}$ such that $\int_{C_{n_j}} g_{n_i}\,d\mu <2^{-j}\eps$ for every $i=1,\ldots, j-1$. Let $B_i=C_{n_i}\setminus \bigcup_{j=i+1}^\infty C_{n_j}$. If $i<j$, $B_i\cap B_j\subseteq B_i\cap C_{n_j}= \varnothing$, so the new collection is pairwise disjoint. Moreover,
    \[\int_{B_i} g_{n_i}\,d\mu \geq \int_{C_{n_i}} g_{n_i}\,d\mu-\int_{\bigcup_{j=i+1}^\infty C_{n_j}} g_{n_i}\,d\mu \geq \int_{C_{n_i}} g_{n_i}\,d\mu-\sum_{j=i+1}^\infty\int_{C_{n_j}} g_{n_i}\,d\mu \geq \frac{\eps}{2}.\]
    This contradicts the fact that $\lim_n \int_{B_i} g_{n_i}\,d\mu =0$.\\

    This last property implies that $(g_n)_{n=1}^\infty$ is an equi-integrable set, i.e., for every $\eps>0$ there exists some $\delta>0$ such that $\sup_m \int_{B} g_{m}\,d\mu<\eps$ for every measurable subset $C$ with $\mu(C)<\delta$. Otherwise, we can find some $\eps>0$ and a sequence of subsets $(C_n)_{n=1}^\infty$ such that $\mu(C_n)\leq \frac{1}{n}$ but $\sup_m \int_{C_n} g_{m}\,d\mu>\eps$ for every $n\in \N$. For each $n$ there is some $m_n\in \N$ such that $\int_{C_n} g_{m_n}\,d\mu>\eps$. The fact that $\lim_n \int_{C_n} g_m\,d\mu =0$ for every $m\in \N$ implies that the set $\{m_n:n\in \N\}$ contains an increasing subsequence. We can assume that $(m_n)_{n=1}^\infty$ itself is strictly increasing, so by the above property $\lim_n \int_{C_n} g_{m_n}\,d\mu =0$, which leads to a contradiction. Therefore, $(g_n)_{n=1}^\infty$ is equi-integrable, so by \cite[Theorem 5.2.8]{AK} it is relatively weakly bounded.
\end{proof}

The following factorization theorem for $p$-convex operators was proven by Maurey \cite[Théorème 2]{MaureyFactorization}, using previous ideas of Nikishin \cite{Nikishin}. It asserts that every $p$-convex operator from a Banach space into $L_r(\mu)$, $1\leq r<p$, can be factored through $L_p(g\cdot\mu)$ for some change of density $g\in L_1(\mu)$. Note that the fact that such an operator factors through some $L_p(\nu)$ follows as a particular case of \Cref{thm: factorization Krivine} when $X=F=L_r(\mu)$ and $V$ is the identity operator. However, \Cref{thm: factorization Maurey set} provides a refinement, since we get additional information about the measure $\nu$, namely, that it is a probability measure defined on the same measure space as $\mu$, and $\nu$ is absolutely continuous with respect to $\mu$. We reproduce a proof suggested by Pisier in \cite{Pisier86} that works similarly for both the $p$-convex and $(p,\infty)$-convex setting, although a proof that remains closer to the original one can be found in \cite[Theorem 7.1.2]{AK}.

\begin{thm}\label{thm: factorization Maurey set}
    Let $1\leq r<p<\infty$, $A\subseteq L_r(\mu)$ and $C>0$. The following are equivalent:
    \begin{enumerate}
        \item For every $f_1,\ldots,f_n\in A$ and $\alpha_1,\ldots,\alpha_n\in \R$,
        \[\norm[3]{ \intoo[3]{\sum_{i=1}^n |\alpha_i f_i|^{p}}^\frac{1}{p}}_{L_r(\mu)}\leq C \intoo[3]{\sum_{i=1}^n |\alpha_i |^{p}}^\frac{1}{p}.\]
        \item There exists a normalized $g\in L_1(\mu)_+$ such that for every $f\in A$,
        \[\{g=0\}\subseteq \{f=0\}\quad \text{and}\quad \|g^{-\frac{1}{r}}f\|_{L_p(g\cdot\mu)}\leq C.\]
    \end{enumerate}
\end{thm}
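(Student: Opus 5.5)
The implication $(2)\Rightarrow(1)$ is a direct computation, and I would dispose of it first. Given $g$ as in $(2)$, write $h=g^{-\frac1r}f$ on $\{g>0\}$ for each $f\in A$. Since $f$ vanishes where $g$ does, $|\alpha_i f_i|^p=g^{\frac pr}|\alpha_i h_i|^p$ pointwise, and therefore
\[\norm[3]{\intoo[3]{\sum_{i=1}^n|\alpha_if_i|^p}^\frac1p}_{L_r(\mu)}=\norm[3]{\intoo[3]{\sum_{i=1}^n|\alpha_ih_i|^p}^\frac1p}_{L_r(g\cdot\mu)}.\]
Because $g\cdot\mu$ is a probability measure and $r<p$, the $L_r(g\cdot\mu)$-norm is bounded by the $L_p(g\cdot\mu)$-norm. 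The latter equals $\intoo{\sum_i|\alpha_i|^p\|h_i\|^p_{L_p(g\cdot\mu)}}^{1/p}\leq C\intoo{\sum_i|\alpha_i|^p}^{1/p}$.

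For $(1)\Rightarrow(2)$ I would pass to $L_1$, in the spirit of the $\frac pr$-concavification. Put $s=\frac pr>1$, $\varphi_f=|f|^r\in L_1(\mu)_+$ and $\beta_i=|\alpha_i|^r$. Then $(1)$ reads
\[\norm[3]{\intoo[3]{\sum_{i=1}^n\beta_i^s\varphi_{f_i}^s}^\frac1s}_{L_1(\mu)}\leq C^r\intoo[3]{\sum_{i=1}^n\beta_i^s}^\frac1s .\]
The target $(2)$ becomes
\[\Phi_f(g):=\int\varphi_f^s g^{1-s}\,d\mu\leq C^p,\]
where $\Phi_f(g)=+\infty$ if $\varphi_f>0$ on a non-null part of $\{g=0\}$; this convention encodes the support condition automatically. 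The map $(t,u)\mapsto t^su^{1-s}$ is jointly convex on $[0,\infty)\times(0,\infty)$, so each $\Phi_f$ is convex and lower semicontinuous on the positive part $P$ of the unit sphere of $L_1(\mu)$, which is a convex set.

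First I would treat a finite set $A=\{f_1,\ldots,f_n\}$ by a minimax argument. Let $\Lambda$ be the simplex of probability vectors $\lambda\in\R^n$ and consider $\Psi(\lambda,g)=\sum_i\lambda_i\Phi_{f_i}(g)$, which is linear in $\lambda$ and convex in $g$. The quantity we need to bound is $\inf_{g\in P}\max_i\Phi_{f_i}(g)=\inf_g\sup_{\lambda\in\Lambda}\Psi(\lambda,g)$. For a fixed $\lambda$, choose $g_\lambda=\frac{(\sum_i\lambda_i\varphi_{f_i}^s)^{1/s}}{\|(\sum_i\lambda_i\varphi_{f_i}^s)^{1/s}\|_1}$. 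A direct computation gives $\Psi(\lambda,g_\lambda)=\|(\sum_i\lambda_i\varphi_{f_i}^s)^{1/s}\|_1^s$, and taking $\beta_i=\lambda_i^{1/s}$ in the rewritten $(1)$ bounds this by $C^{rs}=C^p$. Since $\Lambda$ is compact and convex and $\Psi$ is continuous and linear in $\lambda$ and convex in $g$, a Ky Fan or Sion type minimax theorem (compactness is needed only on the $\lambda$ side) gives, for each $\eps>0$, some $g\in P$ with $\Phi_{f_i}(g)\leq C^p+\eps$ for every $i$.

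For arbitrary $A$, I would index by finite subsets $F\subseteq A$ and $\eps\to0$, obtaining a net $g_{F,\eps}\in P$. I expect the main obstacle to be extracting a limit, since $P$ is not weakly compact. My first approach is to restrict the competitors to the convex hull of the explicit densities $g_\lambda$; this is harmless because the minimax can be run with $g$ ranging over that convex set, since the $g_\lambda$ already achieve the bound. I would then show that this set is equi-integrable using $(1)$, in the manner of \Cref{lem: Dunford Pettis}: $(1)$ gives an upper $s$-estimate type control of suprema of the $\varphi_f$, which forbids mass concentrating on sets of small measure. Weak compactness via the Dunford--Pettis criterion then yields a weak cluster point $g$, normalized and positive. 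Since each $\Phi_f$ is convex and norm lower semicontinuous, it is weakly lower semicontinuous, so $\Phi_f(g)\leq C^p$ for all $f\in A$. Finiteness of $\Phi_f(g)$ then gives $\{g=0\}\subseteq\{f=0\}$, and $\Phi_f(g)\leq C^p$ is exactly $\|g^{-\frac1r}f\|_{L_p(g\cdot\mu)}\leq C$.

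If this equi-integrability step fails, my fallback would be a reduction to the case where $\mu$ is a probability measure and $A$ is countable. I would then take a diagonal weak-$*$ limit in $L_1(\mu)^{**}$ and project back via the band projection onto $L_1$, checking that the lower semicontinuity estimate survives.
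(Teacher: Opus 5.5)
Your proof of $(2)\Rightarrow(1)$ is the paper's argument. The finite case of $(1)\Rightarrow(2)$, via a Kneser--Fan type minimax over the simplex, is also sound, modulo routinely restricting $g$ to the convex set where every $\Phi_{f_i}(g)<\infty$ so that $\Psi(\cdot,g)$ is continuous and affine on $\Lambda$.

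The gap is in the passage to arbitrary $A$: the convex hull of the densities $g_\lambda$ is \emph{not} equi-integrable in general. Condition (1) bounds $\|(\sum_i\lambda_i\varphi_{f_i}^s)^{1/s}\|_{L_1}$ from above. But $g_\lambda$ is this function divided by its norm, and nothing bounds that norm from below.

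For instance, take $\mu$ Lebesgue measure on $[0,1]$, pairwise disjoint $B_k$ with $\mu(B_k)=2^{-k}$, and $A=\{\chi_{B_k}:k\in\mathbb{N}\}$. Then $\|(\sum_k|\alpha_k\chi_{B_k}|^p)^{1/p}\|_{L_r}=(\sum_k|\alpha_k|^r2^{-k})^{1/r}\le\|\alpha\|_{\ell_p}$, so (1) holds with $C=1$, yet $g_{\delta_k}=2^k\chi_{B_k}$.

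The minimax outputs are not automatically better. Take $F=\{\chi_{B_1},\dots,\chi_{B_n}\}$, $U_n=\bigcup_{k\le n}B_k$, and a small $t>0$ with $(1-t)^{s-1}\ge\frac12$. The density $(1-t)\chi_{U_n}/\mu(U_n)+t\,2^n\chi_{B_n}$ lies in the hull and satisfies $\Phi_f\le C^p$ for all $f\in F$, but it carries mass $t$ on a set of measure $2^{-n}$.

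So your net need not have a weakly convergent subnet in $L_1(\mu)$, and \Cref{lem: Dunford Pettis} is not available. Its uniform constant is exactly what the paper manufactures by choosing near-extremal tuples, with $\|(\sum_i|\alpha_{n,i}f_{n,i}|^p)^{1/p}\|_{L_r}=1$ and $\|(\alpha_{n,i})_i\|_{\ell_p}\le\delta_n/C_n$; this amounts to a lower bound on the normalizing constant. The paper then dispenses with minimax altogether. It tests every $f\in A$ against the weak limit of this single extremal sequence, via the perturbation $\varepsilon C_n^{-1}|f|$ and differentiation at $\varepsilon=0$.

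Your fallback does not close the gap as stated. $\Phi_f$ is decreasing in $g$, so replacing a weak$^*$ cluster point $\Gamma\in L_1(\mu)^{**}$ by its band projection $P\Gamma\in L_1(\mu)$ discards mass and can only increase $\Phi_f$. The needed inequality $\Phi_f(P\Gamma)\le C^p$ is therefore the nontrivial point. It can be proved in three steps:
\begin{enumerate}
\item Write $\Phi_f$ as a supremum of affine functionals $g\mapsto\int(b\varphi_f+ag)\,d\mu$ with $a,b\in L_\infty(\mu)$ and $a\le0$.
\item Use that a positive singular functional on $L_\infty$ of a probability space is carried by sets of arbitrarily small measure.
\item Renormalize $P\Gamma\neq0$ using $\Phi_f(cg)=c^{1-s}\Phi_f(g)$.
\end{enumerate}
The reductions to countable $A$ and to a probability measure would also need justification.

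A much cleaner repair within your framework is a change of variables. Let $\frac1q=\frac1r-\frac1p$, so that $1<q<\infty$, and put $w=g^{1/q}$. Then $\Phi_f(g)=\int|f|^pw^{-p}\,d\mu$, which is convex and norm lower semicontinuous in $w$, hence weakly lower semicontinuous. Moreover, $w$ ranges over the positive part of the unit ball of the reflexive space $L_q(\mu)$, which is weakly compact. A weak cluster point $w$ of your net $(g_{F,\varepsilon}^{1/q})$ satisfies $\int|f|^pw^{-p}\,d\mu\le C^p$ for every $f\in A$, and it is nonzero when $A\neq\{0\}$. Since $\|w\|_{L_q}\le1$, the density $g=(w/\|w\|_{L_q})^q$ then gives (2) for arbitrary $\mu$ and $A$.
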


Even though at first sight this theorem might not look like a factorization result, it can be restated in terms of operators and factorizations, as we will see in \Cref{cor: factorization Maurey operator}. Nevertheless, this way of stating it allows one for more general applications (see for instance \cite[Theorem 4.7]{GLTT}, where it is used to factor simultaneously two operators through the same $L_p$ space).

\begin{proof}
    $(2)\Rightarrow (1)$ Let us fix $f_1,\ldots,f_n\in A$ and $\alpha_1,\ldots,\alpha_n\in \R$. Then, using Hölder's inequality with exponent $\frac{p}{r}$ we get
    \begin{align*}
        \norm[3]{ \intoo[3]{\sum_{i=1}^n |\alpha_i f_i|^{p}}^\frac{1}{p}}_{L_r(\mu)} & = \intoo[3]{\int \intoo[3]{\sum_{i=1}^n |\alpha_i f_ig^{-\frac{1}{r}}|^{p}}^\frac{r}{p} g\, d\mu}^\frac{1}{r}\\
        & \leq \intoo[3]{\int \sum_{i=1}^n |\alpha_i f_i g^{-\frac{1}{r}}|^{p} g\, d\mu}^\frac{1}{p} \intoo[3]{\int  g\, d\mu}^{\frac{1}{r} -\frac{1}{p}}  \\
        & = \intoo[3]{ \sum_{i=1}^n |\alpha_i|^p \|f_i g^{-\frac{1}{r}}\|_{L_p(g\cdot \mu)}^{p}}^\frac{1}{p}  \leq C \intoo[3]{\sum_{i=1}^n |\alpha_i |^{p}}^\frac{1}{p}.
    \end{align*}
    $(1)\Rightarrow (2)$ Let us fix $n\in \N$ and define
    \[C_n=\sup\cbr[3]{\norm[3]{ \intoo[3]{\sum_{i=1}^n |\alpha_i f_i|^{p}}^\frac{1}{p}}_{L_r(\mu)}: f_1,\ldots,f_n\in A, \sum_{i=1}^n |\alpha_i |^{p}\leq 1}.\]
    Clearly $C_n\uparrow C$ and $C_n\neq 0$ as long as $A\neq \{0\}$. We also select a sequence of scalars $\delta_n\downarrow 1$. Then, we can choose $f_{n,1},\ldots, f_{n,n}\in A$ and $\alpha_{n,1},\ldots, \alpha_{n,n}\in \R$ such that
    \[\norm[3]{ \intoo[3]{\sum_{i=1}^n |\alpha_{n,i} f_{n,i}|^{p}}^\frac{1}{p}}_{L_r(\mu)}=1 \quad \text{and}\quad \intoo[3]{\sum_{i=1}^n |\alpha_{n,i} |^{p}}^\frac{1}{p}\leq \frac{\delta_n}{C_n}.\]
    Let $g_n= \intoo[3]{\sum_{i=1}^n |\alpha_{n,i} f_{n,i}|^{p}}^\frac{r}{p}$, which belongs to $L_1(\mu)_+$ and has norm one. Then, the sequence $(g_n)_{n=1}^\infty$ satisfies that for any finite choice of scalars $\lambda_1,\ldots,\lambda_m\in \R$,
    \begin{align*}
        \norm[3]{\bigvee_{n=1}^m |\lambda_n g_n|}_{L_1(\mu)}& \leq \norm[3]{\intoo[3]{\sum_{n=1}^m |\lambda_n g_n|^{\frac{p}{r}}}^\frac{r}{p}}_{L_1(\mu)}= \norm[3]{\intoo[3]{\sum_{n=1}^m \sum_{i=1}^n  |\lambda_n|^{\frac{p}{r}} |\alpha_{n,i} f_{n,i}|^{p} }^\frac{1}{p}}_{L_r(\mu)}^r \\
        & \leq C^r \intoo[3]{\sum_{n=1}^m \sum_{i=1}^n  |\lambda_n|^{\frac{p}{r}} |\alpha_{n,i}|^{p} }^\frac{r}{p} \leq C^r \intoo[3]{\sum_{n=1}^m |\lambda_n|^{\frac{p}{r}} \frac{\delta_n^p}{C_n^p} }^\frac{r}{p} \\
        & \leq \intoo[3]{\frac{C\delta_1}{C_1}}^r  \intoo[3]{\sum_{n=1}^m |\lambda_n|^{\frac{p}{r}} }^\frac{r}{p}, 
    \end{align*}
    so by \Cref{lem: Dunford Pettis} $(g_n)_{n=1}^\infty$ is relatively weakly compact in $L_1(\mu)$. The Eberlein--\v Smulian Theorem (cf. \cite[Theorem 1.6.3]{AK}) implies that there exists a subsequence $(g_{n_m})_{m=1}^\infty$ that converges weakly to some $g\in L_1(\mu)$. Clearly, $g\geq 0$ and $\|g\|_{L_1(\mu)}=1$. Next, we observe that for any $f\in A$, $\eps>0$ and $n\in \N$, we have
    \[\int (g_n^\frac{p}{r}+(\eps C_n^{-1} |f|)^p)^\frac{r}{p}\, d\mu= \norm[3]{ \intoo[3]{\sum_{i=1}^n |\alpha_{n,i} f_{n,i}|^{p} + (\eps C_n^{-1} |f|)^p }^\frac{1}{p}}_{L_r(\mu)}^r \leq \intoo[3]{\frac{C_{n+1}}{C_n}}^r(\delta_n^p+\eps^p)^\frac{r}{p}. \]
    If we denote by $q=\frac{p}{r}$, then we know by \Cref{prop: p-sum as supremum} that
    \[(g_n^\frac{p}{r}+(\eps C_n^{-1} |f|)^p)^\frac{r}{p}=\sup \cbr[3]{\bigvee_{j=1}^k (a_j g_n+b_j (\eps C_n^{-1} |f|)^r): a_j^{q^*}+b_j^{q^*}\leq 1, j=1,\ldots, k, k\in \N},\]
    and similarly,
    \[(g^\frac{p}{r}+(\eps C^{-1} |f|)^p)^\frac{r}{p}=\sup \cbr[3]{\bigvee_{j=1}^k (a_j g+b_j (\eps C^{-1} |f|)^r): a_j^{q^*}+b_j^{q^*}\leq 1, j=1,\ldots, k, k\in \N}.\]
    In particular, for any choice of scalars such that $a_j^{q^*}+b_j^{q^*}\leq 1$, $j=1,\ldots, k$, we can define the sets
    \[A_i=\cbr[3]{\bigvee_{j=1}^k (a_j g+b_j (\eps C^{-1} |f|)^r)= a_i g+b_i (\eps C^{-1} |f|)^r}\]
    for $i=1,\ldots, k$, that can be assumed to be pairwise disjoint without loss of generality. Recall that $(g_{n_m})_{m=1}^\infty$ converges weakly to $g$, and clearly $(\eps C_{n_m}^{-1} |f|)^r$ converges weakly to $(\eps C^{-1} |f|)^r$ in $L_1(\mu)$, so 
    \begin{align*}
        \int \bigvee_{j=1}^k (a_j g+b_j (\eps C^{-1} |f|)^r) \, d\mu & = \int \sum_{j=1}^k (a_j\chi_{A_j} g+b_j\chi_{A_j} (\eps C^{-1} |f|)^r) \, d\mu \\
        & = \lim_m \int \sum_{j=1}^k (a_j\chi_{A_j} g_{n_m}+b_j\chi_{A_j} (\eps C_{n_m}^{-1} |f|)^r) \, d\mu.
    \end{align*}
    Moreover, for every $m\in \N$,
    \begin{align*}
        &\int \sum_{j=1}^k (a_j\chi_{A_j} g_{n_m}+b_j\chi_{A_j} (\eps C_{n_m}^{-1} |f|)^r) \, d\mu  \leq  \int \bigvee_{j=1}^k (a_j g_{n_m}+b_j (\eps C_{n_m}^{-1} |f|)^r) \, d\mu\\
        & \leq \int (g_{n_m}^\frac{p}{r}+(\eps C_n^{-1} |f|)^p)^\frac{r}{p} \, d\mu  \leq \intoo[3]{\frac{C_{n_m+1}}{C_{n_m}}}^r(\delta_{n_m}^p+\eps^p)^\frac{r}{p},
    \end{align*}
    so 
    \[\int \bigvee_{j=1}^k (a_j g+b_j (\eps C^{-1} |f|)^r) \, d\mu \leq \lim_m \intoo[3]{\frac{C_{n_m+1}}{C_{n_m}}}^r(\delta_{n_m}^p+\eps^p)^\frac{r}{p} = (1+\eps^p)^\frac{r}{p} .\]
    Since $L_1(\mu)$ has the Fatou property ($h_\alpha \uparrow h$ implies $\|h_\alpha\|\uparrow \|h\|$ for every net $(h_\alpha)_\alpha$ in $L_1(\mu)_+$), it follows that
    \[\int (g^\frac{p}{r}+(\eps C^{-1} |f|)^p)^\frac{r}{p} \, d\mu \leq (1+\eps^p)^\frac{r}{p}\]
    (recall that $f\in A$ and $\eps>0$ are arbitrary). From here, we deduce that
    \begin{align*}
        1+\eps^r C^{-r} \int_{\{g=0\}}|f|^r \, d\mu & = \int_{\{g>0\}} g \, d\mu + \eps^r C^{-r} \int_{\{g=0\}}|f|^r \, d\mu  \\
        & \leq \int (g^\frac{p}{r}+(\eps C^{-1} |f|)^p)^\frac{r}{p} \, d\mu \leq (1+\eps^p)^\frac{r}{p},
    \end{align*}
    so
    \[0\leq \int_{\{g=0\}}|f|^r \, d\mu \leq C^r \lim_{\eps \rightarrow 0} \frac{(1+\eps^p)^\frac{r}{p}-1}{\eps^r}=0\]
    and hence $\{g=0\}\subseteq \{f=0\}$ for every $f\in A$. On the other hand, it is clear that 
    \[\lim_{\eps\rightarrow 0}\frac{(1+\eps^p (C^{-1} |f| g^{-\frac{1}{r}})^p)^\frac{r}{p}-1}{\eps^p}g =\frac{r}{p} (C^{-1} |f| g^{-\frac{1}{r}})^p g  \quad \mu\text{-a.e.},\]
    so applying Fatou's Lemma, we obtain that
    \begin{align*}
        \frac{r}{p} C^{-p} \int (|f| g^{-\frac{1}{r}})^p g \,d\mu \leq \liminf_{\eps\rightarrow 0} \frac{1}{\eps^p} \int ( (g^\frac{p}{r}+(\eps C^{-1} |f|)^p)^\frac{r}{p}-g)\,d\mu \leq \lim_{\eps\rightarrow 0} \frac{(1+\eps^p)^\frac{r}{p}-1}{\eps^p} = \frac{r}{p}.
    \end{align*}
    Therefore, we conclude that
    \[\|g^{-\frac{1}{r}}f\|_{L_p(g\cdot\mu)}\leq C\]
    for every $f\in A$.
\end{proof}

We can restate the previous result in the following way:

\begin{cor}\label{cor: factorization Maurey operator}
    Let $1\leq r<p<\infty$, $E$ a Banach space, $T:E\rightarrow L_r(\mu)$ a linear operator and $C>0$. The following are equivalent:
    \begin{enumerate}
        \item $T$ is $p$-convex with $K^{(p)}(T)\leq C$.
        \item There exists a normalized $g\in L_1(\mu)_+$ such that for every $x\in E$,
        \[\{g=0\}\subseteq \{Tx=0\}\quad \text{and}\quad \|g^{-\frac{1}{r}}Tx\|_{L_p(g\cdot\mu)}\leq C\|x\|.\]
        \item There exists a normalized $g\in L_1(\mu)_+$ such that $\{g=0\}\subseteq \{Tx=0\}$ for every $x\in E$, and $T=M R$, where $R:E\rightarrow L_p(g\cdot \mu)$ given by $Rx=g^{-\frac{1}{r}}Tx$ is bounded with $\|R\|\leq C$ and $M:L_p(g\cdot \mu)\rightarrow L_r(\mu)$ is the multiplication operator by $g^\frac{1}{r}$.
        \[\xymatrix{
		E  \ar@{->}[rd]_{R} \ar@{->}[rr]^{T} & & L_r(\mu)  \\
		 & L_p(g\cdot \mu) \ar@{->}[ru]_{M} & 
        }\]
    \end{enumerate}
\end{cor}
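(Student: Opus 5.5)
The plan is to derive the corollary directly from \Cref{thm: factorization Maurey set}, applied to the set $A=T(B_E)\subseteq L_r(\mu)$. I will prove the cycle $(1)\Rightarrow(2)\Rightarrow(3)\Rightarrow(1)$.

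For $(1)\Rightarrow(2)$, assume $K^{(p)}(T)\leq C$. Take $f_i=Tx_i$ with $x_i\in B_E$ and scalars $\alpha_i$. Since $\alpha_i f_i=T(\alpha_i x_i)$, the $p$-convexity of $T$ gives
\[\norm[3]{\intoo[3]{\sum_{i=1}^n|\alpha_if_i|^p}^\frac{1}{p}}_{L_r}\leq C\intoo[3]{\sum_{i=1}^n\|\alpha_ix_i\|^p}^\frac{1}{p}\leq C\intoo[3]{\sum_{i=1}^n|\alpha_i|^p}^\frac{1}{p}.\]
This is exactly condition (1) of \Cref{thm: factorization Maurey set}. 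That theorem therefore yields a normalized $g\in L_1(\mu)_+$ with $\{g=0\}\subseteq\{f=0\}$ and $\|g^{-\frac1r}f\|_{L_p(g\cdot\mu)}\leq C$ for every $f\in A$. To pass from $B_E$ to all of $E$, I rescale: for $x\neq0$, apply this to $x/\|x\|$ and use homogeneity of both conditions.

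For $(2)\Rightarrow(3)$, I define $Rx=g^{-\frac1r}Tx$. This is well defined almost everywhere, because $Tx$ vanishes on $\{g=0\}$; I set $Rx=0$ there. The map $R$ is linear and satisfies $\|R\|\leq C$ by (2). For the multiplication operator, note that
\[\|g^{\frac1r}h\|_{L_r(\mu)}^r=\int|h|^rg\,d\mu\leq\|h\|_{L_p(g\cdot\mu)}^r,\]
by Hölder's inequality and the fact that $g\cdot\mu$ is a probability measure. Hence $M$ is bounded, in fact contractive. Finally, $MRx=g^{\frac1r}g^{-\frac1r}Tx=Tx$ on $\{g>0\}$, and on $\{g=0\}$ both sides vanish. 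So $T=MR$.

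For $(3)\Rightarrow(1)$, I use that $L_p(g\cdot\mu)$ is $p$-convex with constant one and that $M$ is a positive operator of norm at most $1$. By \Cref{rem: facts about convexity}(6),(7), or equivalently by the computation in \Cref{cor: factorization convex}, it follows that $K^{(p)}(T)\leq\|M\|\,K^{(p)}(L_p(g\cdot\mu))\,\|R\|\leq C$.

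There is no real obstacle here, since all the analytic difficulty sits in \Cref{thm: factorization Maurey set}. The only points needing care are the a.e. definition of $Rx$ on $\{g=0\}$ and the bound $\|M\|\leq1$, which is where $r<p$ and $\|g\|_1=1$ enter.
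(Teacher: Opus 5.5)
Your proof is correct and follows the paper's route: both reduce $(1)\Rightarrow(2)$ to \Cref{thm: factorization Maurey set} applied to $A=T(B_E)$ (with rescaling to pass to all of $E$), and both obtain $(2)\Rightarrow(3)$ from the contractivity of $M$, which follows from H\"older's inequality since $r<p$ and $g\cdot\mu$ is a probability measure. The only difference is how you close the cycle. You prove $(3)\Rightarrow(1)$ directly from the permanence of $p$-convexity under composition with the positive contraction $M$ (\Cref{rem: facts about convexity}(6),(7)). The paper instead goes $(3)\Rightarrow(2)$ and then uses the theorem's direction $(2)\Rightarrow(1)$, which rests on essentially the same H\"older computation.
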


\begin{proof}
    $(1)\Leftrightarrow(2)$ Let $A=T(B_E)\subseteq L_r(\mu)$. Then, it is clear that conditions $(1)$ and $(2)$ from the statement are equivalent to $A$ satisfying conditions $(1)$ and $(2)$ of \Cref{thm: factorization Maurey set}, respectively.\\

    $(2)\Rightarrow(3)$ Since $p>r$ and $g\cdot \mu$ is a probability measure, the operator
    \[\fullfunction{M}{L_p(g\cdot \mu)}{L_r(\mu)}{h}{g^\frac{1}{r} h}\]
    always satisfies $\|Mh\|_{L_r(\mu)}=\|h\|_{L_r(g\cdot\mu)}\leq \|h\|_{L_p(g\cdot \mu)}$. On the other hand, the operator $R$ is bounded by $(2)$.\\

    $(3)\Rightarrow(2)$ It follows from the boundedness of $R$.
\end{proof}

A situation analogous to \Cref{thm: factorization Maurey set} takes place in the $(p,\infty)$-convex case. In this setting, Pisier \cite{Pisier86} showed that $L_{p,\infty}$ with the renorming $\|\cdot\|_{L^{[r]}_{p,\infty}}$ is the model space that replaces $L_p$ as the factoring space.

\begin{thm}\label{thm: factorization Pisier set}
    Let $1\leq r<p<\infty$, $A\subseteq L_r(\mu)$. The following are equivalent:
    \begin{enumerate}
        \item There exists $C>0$ such that for every $f_1,\ldots,f_n\in A$ and $\alpha_1,\ldots,\alpha_n\in \R$,
        \[\norm[3]{ \bigvee_{i=1}^n |\alpha_i f_i|}_{L_r(\mu)}\leq C \intoo[3]{\sum_{i=1}^n |\alpha_i |^{p}}^\frac{1}{p}.\]
        \item There exists $C'>0$ and a normalized $g\in L_1(\mu)_+$ such that for every $f\in A$ and measurable subset $B\subseteq \Omega$
        \[\|f\chi_B\|_{L_r(\mu)}\leq C' \intoo[3]{\int_B g\, d\mu}^{\frac{1}{r}-\frac{1}{p}}.\]
        \item There exists $C''>0$ and a normalized $g\in L_1(\mu)_+$ such that for every $f\in A$, 
        \[\{g=0\}\subseteq \{f=0\}\quad \text{and}\quad \|g^{-\frac{1}{r}}f\|_{L^{[r]}_{p,\infty}(g\cdot\mu)}\leq C''.\]
    \end{enumerate}
    Moreover, $C\leq C''\leq C'\leq (1-\frac{r}{p})^{\frac{1}{p}-\frac{1}{r}}C$.
\end{thm}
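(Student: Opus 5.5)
The chain of constants $C\leq C''\leq C'\leq (1-\frac{r}{p})^{\frac{1}{p}-\frac{1}{r}}C$ suggests proving the cycle $(2)\Rightarrow(3)\Rightarrow(1)\Rightarrow(2)$, with the last implication following the scheme of the proof of \Cref{thm: factorization Maurey set}.

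For $(2)\Rightarrow(3)$, the condition $\{g=0\}\subseteq\{f=0\}$ comes from taking $B=\{g=0\}$: the right-hand side vanishes, so $f\chi_B=0$. Then, for any $B$ with $0<\int_B g\,d\mu<\infty$,
\[\int_B |g^{-\frac1r}f|^r g\,d\mu=\int_B|f|^r\,d\mu\leq C'^r\intoo[3]{\int_Bg\,d\mu}^{1-\frac rp}.\]
This is exactly the statement $\nu(B)^{\frac1p-\frac1r}\|g^{-\frac1r}f\chi_B\|_{L_r(\nu)}\leq C'$ with $\nu=g\cdot\mu$, so $C''\leq C'$. For $(3)\Rightarrow(1)$, write $h_i=g^{-\frac1r}f_i$. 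Then $\norm{\bigvee|\alpha_if_i|}_{L_r(\mu)}=\norm{\bigvee|\alpha_ih_i|}_{L_r(\nu)}$, and since $\nu$ is a probability measure this is at most $\norm{\bigvee|\alpha_ih_i|}_{L^{[r]}_{p,\infty}(\nu)}$ (take $A=\Omega$ in the norm). We have $\bigvee|\alpha_ih_i|\leq\sum_i|\alpha_i h_i|\chi_{D_i}$ with $D_i$ disjoint, where $D_i$ is the set on which the $i$-th term attains the max. The upper $p$-estimate with constant one from \Cref{prop: wlp upe and r convex}, together with monotonicity of the norm, then gives the bound $C''(\sum|\alpha_i|^p)^{1/p}$.

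The main step is $(1)\Rightarrow(2)$, and I would mimic Pisier's route. Set $q=p/r>1$. Condition (1) says that for $f_i\in A$ and scalars $\lambda_i\geq0$,
\[\norm[3]{\bigvee\lambda_i|f_i|^r}_{L_1}\leq C^r\intoo[3]{\sum\lambda_i^{q}}^{1/q}.\]
I would apply a Hahn--Banach/minimax argument on the convex set of functions $\Phi\colon A\times\{\text{measurable }B\}\to\R$, or more concretely on the convex hull $W$ of $\{\bigvee\lambda_i|f_i|^r:\sum\lambda_i^q\leq1\}$ together with its solid hull. The first task is to show $W$ is relatively weakly compact in $L_1$. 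I would get this from \Cref{lem: Dunford Pettis}-type equi-integrability, via the disjoint-sets argument: for disjoint $B_n$, the displayed estimate gives $\sum_n(\int_{B_n}|f_n|^r)^{q^*}\leq C^{rq^*}$. Then I would use the weak compactness to find $g\geq0$, $\|g\|_1=1$, with
\[\int_B|f|^r\,d\mu\leq C'^r\intoo[3]{\int_Bg\,d\mu}^{1/q^*}\]
for all $f\in A$ and all $B$.

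To obtain this $g$, I would use the concave-in-$g$ functional $\Psi(g,(f,B))=\int_B|f|^r-c(\int_Bg)^{1/q^*}$ together with Ky Fan's minimax lemma over convex combinations of pairs $(f_i,B_i)$. Young's inequality turns the desired estimate into the linear form
\[\int_B|f|^r\leq\frac{1}{q}t^{-q^*/q}\dots+\frac1{q^*}t\int_Bg.\]
Given finitely many pairs $(f_i,B_i)$ and weights $\theta_i$, the natural test density is built from $\bigvee_i\lambda_i|f_i|^r\chi_{B_i}$ with $\lambda_i$ chosen by duality between $\ell_q$ and $\ell_{q^*}$, normalized using (1). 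Optimizing the Young parameter is what produces the constant $(1-\frac rp)^{\frac1p-\frac1r}$. This minimax/compactness step is the obstacle: the compactness must hold on the set of densities $g$ in which we look for a minimizer. To secure it I would restrict $g$ to the weakly compact convex hull of the normalized $\bigvee\lambda_i|f_i|^r$, whose relative weak compactness follows from (1) via \Cref{lem: Dunford Pettis}, and I would use lower semicontinuity of $g\mapsto-(\int_Bg)^{1/q^*}$ in the weak topology to run the minimax.
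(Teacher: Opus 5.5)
Your $(2)\Rightarrow(3)$ and $(3)\Rightarrow(1)$ are correct and essentially the paper's. Writing $\bigvee_i|\alpha_ih_i|=\sum_i|\alpha_ih_i|\chi_{D_i}$ and applying the upper $p$-estimate is the same as the paper's appeal to $(p,\infty)$-convexity with constant one.

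The gap is in $(1)\Rightarrow(2)$. A Ky Fan argument with the convex, weakly continuous functionals $g\mapsto\int_B|f|^r\,d\mu-c(\int_Bg\,d\mu)^{1-r/p}$ is a legitimate framework. But it only reduces the theorem to an averaged problem: for every finite family $(f_j,B_j)$ with $f_j\in A$ and weights $\theta_j\geq0$, $\sum_j\theta_j=1$, find an admissible $g$ with $\sum_j\theta_j\int_{B_j}|f_j|^r\,d\mu\leq c\sum_j\theta_j(\int_{B_j}g\,d\mu)^{1-r/p}$. This averaged inequality is the whole content of the implication, and you do not prove it.

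The difficulty is that (1) only controls $\int\bigvee_j\lambda_j|f_j|^r\,d\mu$, while the left-hand side is a weighted \emph{sum} over sets $B_j$ that may overlap. Put $a_j=\int_{B_j}|f_j|^r\,d\mu$. For your test density $G=\bigvee_i\lambda_i|f_i|^r\chi_{B_i}$, the only general information available is $\int_{B_j}G\,d\mu\geq\lambda_ja_j$ and $\|G\|_1\leq C^r\|\lambda\|_{p/r}$.

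That information suffices when the $B_j$ are pairwise disjoint, since then (1) gives $\sum_j\mu_ja_j\leq C^r\|\mu\|_{p/r}$ for all $\mu\geq0$, and Hölder closes the argument. It fails in general. Take $C=1$, $f_j=f$ with $\|f\|_r=1$, $B_j=\Omega$ and $\theta_j=1/m$. The left side equals $1$, while the best choice of $\lambda$ in the termwise bound gives a right side of order $c\,m^{-\frac rp(1-\frac rp)}$.

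So an argument exploiting how the $B_j$ and $f_j$ overlap is required, and none is given. Your claim that optimizing the Young parameter produces $(1-\frac rp)^{\frac1p-\frac1r}$ is therefore unsupported.

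There is also a compactness problem if ``normalized'' means $\int g\,d\mu=1$. On $[0,1]$ with Lebesgue measure, $A=\{\chi_E\}$ satisfies (1) with $C=1$, yet the densities $\chi_E/|E|$ are not uniformly integrable. \Cref{lem: Dunford Pettis} needs the $\bigvee$-estimate for the normalized sequence itself.

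The paper avoids minimax altogether with an extremal argument. Let $C_n\uparrow C$ be the best constants in (1) for $n$-tuples. Choose $f_{n,i}\in A$ and scalars $\alpha_{n,i}$ with $\|\bigvee_i|\alpha_{n,i}f_{n,i}|\|_r=1$ and $\|(\alpha_{n,i})_i\|_p\leq\delta_n/C_n$, where $\delta_n\downarrow1$. Set $g_n=(\bigvee_i|\alpha_{n,i}f_{n,i}|)^r$.

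Near-extremality gives exactly $\|\bigvee_n\lambda_ng_n\|_1\leq(C\delta_1/C_1)^r\|\lambda\|_{p/r}$. Hence \Cref{lem: Dunford Pettis} and Eberlein--\v Smulian yield a weak limit $g$ of a subsequence.

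Now fix $f\in A$, a measurable $B$ and $\varepsilon>0$. Adding $\varepsilon C_n^{-1}|f|$ as an $(n+1)$-st term gives $\int_{\Omega\setminus B}g_n\,d\mu+\varepsilon^rC_n^{-r}\int_B|f|^r\,d\mu\leq(C_{n+1}/C_n)^r(\delta_n^p+\varepsilon^p)^{r/p}$. Using $\int g_n\,d\mu=1$ and passing to the limit, $\varepsilon^rC^{-r}\int_B|f|^r\,d\mu\leq\int_Bg\,d\mu+\frac rp\varepsilon^p$. Choosing $\varepsilon^p=\frac p{p-r}\int_Bg\,d\mu$ gives precisely $C'=(1-\frac rp)^{\frac1p-\frac1r}C$.

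Testing the near-extremal configuration against one extra function is what replaces the averaged inequality your approach would still need to prove.
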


\begin{proof}
    $(2)\Rightarrow (3)$ Let us denote $\nu=g\cdot \mu$, which is a probability measure. For a fixed $f\in A$, we can consider the set $B=\{g=0\}\cap \{|f|>0\}$. If $\mu(B)$ was non-zero, we would have that
    \[0<\|f\chi_B\|_{L_r(\mu)}\leq C' \intoo[3]{\int_B g\, d\mu}^{\frac{1}{r}-\frac{1}{p}}=0,\]
    which is not possible. Therefore, $\{g=0\}\subseteq \{f=0\}$ (up to a set of measure zero). Now, given any measurable subset $B\subseteq \Omega$ with $\nu(B)=\int_B g\, d\mu >0$, we observe that
    \[\nu(B)^{\frac{1}{p}-\frac{1}{r}} \intoo[3]{\int_B |g^{-\frac{1}{r}} f|^r \,d\nu }^\frac{1}{r}=  \intoo[3]{\int_B g\, d\mu}^{\frac{1}{p}-\frac{1}{r}}  \|f\chi_B\|_{L_r(\mu)}\leq C', \]
    so taking the supremum over all such subsets $0<\nu(B)<\infty$ we obtain $(3)$.\\

    $(3)\Rightarrow (1)$ We first note that if $\nu$ is a probability measure, then $\|h\|_{L_r(\nu)}\leq \|h\|_{L^{[r]}_{p,\infty}(\nu)}$ for every $h\in L_{p,\infty}(\nu)$. Moreover, recall that $(L_{p,\infty}(\nu), \|\cdot \|_{L^{[r]}_{p,\infty}(\nu)})$ satisfies an upper $p$-estimate with constant one for any measure $\nu$. Thus, given $f_1,\ldots,f_n\in A$ and $\alpha_1,\ldots,\alpha_n\in \R$, since $g\cdot \mu$ is a probability measure we have that
    \begin{align*}
        \norm[3]{ \bigvee_{i=1}^n |\alpha_i f_i|}_{L_r(\mu)} & = \norm[3]{ \bigvee_{i=1}^n |\alpha_i f_i g^{-\frac{1}{r}}|}_{L_r(g\cdot \mu)} \leq \norm[3]{ \bigvee_{i=1}^n |\alpha_i f_i g^{-\frac{1}{r}}|}_{L^{[r]}_{p,\infty}(g\cdot \mu)}\\
        & \leq  \intoo[3]{\sum_{i=1}^n \|\alpha_i  f_i g^{-\frac{1}{r}}\|_{L^{[r]}_{p,\infty}(g\cdot \mu)}^{p}}^\frac{1}{p} \leq C \intoo[3]{\sum_{i=1}^n |\alpha_i |^{p}}^\frac{1}{p}.
    \end{align*}

    $(1)\Rightarrow (2)$ As in the proof of \Cref{thm: factorization Maurey set}, for every $n\in \N$ we define
    \[C_n=\sup\cbr[3]{\norm[3]{ \bigvee_{i=1}^n |\alpha_i f_i|}_{L_r(\mu)}: f_1,\ldots,f_n\in A, \sum_{i=1}^n |\alpha_i |^{p}\leq 1},\]
    that satisfy $C_n\uparrow C$ and $C_n\neq 0$ whenever $A\neq \{0\}$. Let $\delta_n\downarrow 1$ be a decreasing sequence of scalars, and find $f_{n,1},\ldots, f_{n,n}\in A$ and $\alpha_{n,1},\ldots, \alpha_{n,n}\in \R$ satisfying that
    \[\norm[3]{ \bigvee_{i=1}^n|\alpha_{n,i} f_{n,i}|}_{L_r(\mu)}=1 \quad \text{and}\quad \intoo[3]{\sum_{i=1}^n |\alpha_{n,i} |^{p}}^\frac{1}{p}\leq \frac{\delta_n}{C_n}.\]
    We denote by $g_n=\intoo{\bigvee_{i=1}^n|\alpha_{n,i} f_{n,i}|}^r\in L_1(\mu)_+$, that has norm one. For any finite choice of scalars $\lambda_1,\ldots,\lambda_m\in \R$, the sequence $(g_n)_{n=1}^\infty$ satisfies that
    \begin{align*}
        \norm[3]{\bigvee_{n=1}^m |\lambda_n g_n|}_{L_1(\mu)}& =\norm[3]{\bigvee_{n=1}^m |\lambda_n| \intoo{\bigvee_{i=1}^n|\alpha_{n,i} f_{n,i}|}^r}_{L_1(\mu)} =\norm[3]{\bigvee_{n=1}^m  \bigvee_{i=1}^n |\lambda_n|^\frac{1}{r}|\alpha_{n,i} f_{n,i}|}^r_{L_r(\mu)}\\
        & \leq C^r \intoo[3]{\sum_{n=1}^m \sum_{i=1}^n  |\lambda_n|^{\frac{p}{r}} |\alpha_{n,i}|^{p} }^\frac{r}{p} \leq C^r \intoo[3]{\sum_{n=1}^m |\lambda_n|^{\frac{p}{r}} \frac{\delta_n^p}{C_n^p} }^\frac{r}{p} \\
        & \leq \intoo[3]{\frac{C\delta_1}{C_1}}^r  \intoo[3]{\sum_{n=1}^m |\lambda_n|^{\frac{p}{r}} }^\frac{r}{p}. 
    \end{align*}
    \Cref{lem: Dunford Pettis} then yields that $(g_n)_{n=1}^\infty$ is relatively weakly compact in $L_1(\mu)$, so by the Eberlein--\v Smulian Theorem (cf. \cite[Theorem 1.6.3]{AK}) we can extract a subsequence $(g_{n_m})_{m=1}^\infty$ that converges weakly to some normalized and positive $g\in L_1(\mu)$. On the other hand, for any $f\in A$, $B\subseteq \Omega$ measurable, $\eps>0$ and $n\in \N$, we have
    \begin{align*}
        \int_{\Omega\setminus B} g_n\, d\mu + \int_B (\eps C_n^{-1} |f|)^r\, d\mu & \leq \int g_n\vee  (\eps C_n^{-1} |f|)^r\, d\mu = \| g_n^\frac{1}{r}\vee  (\eps C_n^{-1} |f|)\|^r_{L_r(\mu)}\\
        & = \norm[3]{ \intoo[3]{\bigvee_{i=1}^n|\alpha_{n,i} f_{n,i}|} \vee (\eps C_n^{-1} |f|) }_{L_r(\mu)} \leq \intoo[3]{\frac{C_{n+1}}{C_n}}^r(\delta_n^p+\eps^p)^\frac{r}{p}.
    \end{align*}
    Since $\int g_n\, d\mu=1$, we can rearrange the terms to obtain that
    \[\eps^r C_n^{-r} \int_B  |f|^r\, d\mu  \leq \int_B g_n\, d\mu + \intoo[3]{\frac{C_{n+1}}{C_n}}^r(\delta_n^p+\eps^p)^\frac{r}{p}-1.\]
    In particular, taking the limit over the subsequence $(n_m)_{m=1}^\infty$ we get that
    \[\eps^r C^{-r} \int_B  |f|^r\, d\mu  \leq \int_B g\, d\mu + (1+\eps^p)^\frac{r}{p}-1\leq \int_B g\, d\mu + \frac{r}{p}\eps^p,\]
    or equivalently,
    \[ C^{-r} \int_B  |f|^r\, d\mu  \leq \eps^{-r} \int_B g\, d\mu + \frac{r}{p}\eps^{p-r}\]
    for every $\eps>0$. The second term is minimized when $\epsilon^p=\frac{p}{p-r}\int_B g \, d\mu$, which yields
    \[ \int_B  |f|^r\, d\mu  \leq C^r \intoo[3]{1-\frac{r}{p}}^{\frac{r}{p}-1} \intoo[3]{\int_B g\, d\mu}^{1-\frac{r}{p}}\]
    for any $f\in A$ and $B\subseteq \Omega$, as we wanted to show.
\end{proof}

We can also restate \Cref{thm: factorization Pisier set} in the language of operators:

\begin{cor}\label{cor: factorization Pisier operator}
    Let $1\leq r<p<\infty$, $E$ a Banach space and $T:E\rightarrow L_r(\mu)$ a linear operator. The following are equivalent:
    \begin{enumerate}
        \item There exists $C>0$ such that $T$ is $(p,\infty)$-convex with $K^{(p,\infty)}(T)\leq C$.
        \item There exists $C'>0$ and a normalized $g\in L_1(\mu)_+$ such that for every $x\in E$ and measurable subset $B\subseteq \Omega$
        \[\|Tx\chi_B\|_{L_r(\mu)}\leq C'\|x\| \intoo[3]{\int_B g\, d\mu}^{\frac{1}{r}-\frac{1}{p}}.\]
        \item There exists $C''>0$ and a normalized $g\in L_1(\mu)_+$ such that for every $x\in E$,
        \[\{g=0\}\subseteq \{Tx=0\}\quad \text{and}\quad \|g^{-\frac{1}{r}}Tx\|_{L^{[r]}_{p,\infty}(g\cdot\mu)}\leq C''\|x\|.\]
        \item There exists $C''>0$ and a normalized $g\in L_1(\mu)_+$ such that $\{g=0\}\subseteq \{Tx=0\}$ for every $x\in E$, and $T=M R$, where $R:E\rightarrow L^{[r]}_{p,\infty}(g\cdot\mu)$ given by $Rx=g^{-\frac{1}{r}}Tx$ is bounded with $\|R\|\leq C''$ and $M:L^{[r]}_{p,\infty}(g\cdot\mu)\rightarrow L_r(\mu)$ is the multiplication operator by $g^\frac{1}{r}$.
        \[\xymatrix{
		E  \ar@{->}[rd]_{R} \ar@{->}[rr]^{T} & & L_r(\mu)  \\
		 & L^{[r]}_{p,\infty}(g\cdot\mu) \ar@{->}[ru]_{M} & 
        }\]
    \end{enumerate}
    Moreover, $C\leq C''\leq C'\leq (1-\frac{r}{p})^{\frac{1}{p}-\frac{1}{r}}C$.
\end{cor}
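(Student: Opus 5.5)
The plan is to deduce \Cref{cor: factorization Pisier operator} from \Cref{thm: factorization Pisier set} applied to the set $A=T(B_E)\subseteq L_r(\mu)$, exactly as \Cref{cor: factorization Maurey operator} was deduced from \Cref{thm: factorization Maurey set}. The real work is already done in the set version; for the operator version we only need to check that each condition for $T$ matches the corresponding condition for $A$, with the same constants.

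\textbf{(1)$\Leftrightarrow$(2)$\Leftrightarrow$(3).} Conditions (2) and (3) for $T$ are, by homogeneity in $x$, literally conditions (2) and (3) of \Cref{thm: factorization Pisier set} for $A=T(B_E)$ with the same constants $C',C''$. So the only point to check is that (1) for $T$ corresponds to (1) for $A$ with the same constant $C$.

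Given $x_1,\ldots,x_n\in E$ (all nonzero, say), we set $f_i=Tx_i/\|x_i\|\in A$ and $\alpha_i=\|x_i\|$. Then $\bigvee_i|Tx_i|=\bigvee_i|\alpha_if_i|$ and $(\sum_i|\alpha_i|^p)^{1/p}=(\sum_i\|x_i\|^p)^{1/p}$. Hence condition (1) of the theorem with constant $C$ gives $K^{(p,\infty)}(T)\leq C$.

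Conversely, for $f_i=Tx_i$ with $\|x_i\|\leq1$, the $(p,\infty)$-convexity inequality applied to the vectors $\alpha_ix_i$ yields
\[\Bigl\|\bigvee_{i=1}^n|\alpha_if_i|\Bigr\|\leq K^{(p,\infty)}(T)\Bigl(\sum_{i=1}^n|\alpha_i|^p\|x_i\|^p\Bigr)^{1/p}\leq C\Bigl(\sum_{i=1}^n|\alpha_i|^p\Bigr)^{1/p}.\]
The chain of constants $C\leq C''\leq C'\leq(1-\frac{r}{p})^{\frac1p-\frac1r}C$ is then inherited verbatim from \Cref{thm: factorization Pisier set}.

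\textbf{(3)$\Leftrightarrow$(4).} For (3)$\Rightarrow$(4), $R$ is well defined and linear, since $\{g=0\}\subseteq\{Tx=0\}$ makes $g^{-1/r}Tx$ meaningful a.e. It is bounded by $C''$ by hypothesis, and $MRx=g^{1/r}g^{-1/r}Tx=Tx$.

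It remains to see that $M$ is bounded. For $h\in L_{p,\infty}(g\cdot\mu)$ we have $\|Mh\|_{L_r(\mu)}=\|h\|_{L_r(g\cdot\mu)}$. Taking $A=\Omega$ in the definition of $\|\cdot\|_{L^{[r]}_{p,\infty}}$, and using that $g\cdot\mu$ is a probability measure, gives $\|h\|_{L_r(g\cdot\mu)}\leq\|h\|_{L^{[r]}_{p,\infty}(g\cdot\mu)}$, so $\|M\|\leq1$. The implication (4)$\Rightarrow$(3) is immediate from $\|R\|\leq C''$.

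The only mildly delicate point is the rescaling in (1)$\Leftrightarrow$ theorem-(1): the scalars $\alpha_i$ must absorb the norms of the $x_i$, and zero vectors must be handled trivially. Everything else is bookkeeping.
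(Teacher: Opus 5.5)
Your proposal is correct and follows the paper's proof. The paper also obtains (1)$\Leftrightarrow$(2)$\Leftrightarrow$(3) by applying \Cref{thm: factorization Pisier set} to $A=T(B_E)$, leaving implicit the rescaling $\alpha_i=\|x_i\|$ that you spell out, and it proves $\|M\|\leq 1$ exactly as you do, using that $g\cdot\mu$ is a probability measure; one cosmetic remark is that the containment $\{g=0\}\subseteq\{Tx=0\}$ is what guarantees $MRx=Tx$ $\mu$-a.e.\ (with the convention $g^{-1/r}=0$ on $\{g=0\}$), rather than what makes $g^{-1/r}Tx$ meaningful as an element of $L_{p,\infty}(g\cdot\mu)$.
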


\begin{proof}
    Conditions $(1)$, $(2)$ and $(3)$ are equivalent to the corresponding conditions of \Cref{thm: factorization Pisier set} taking $A=T(B_E)$.\\

    $(3)\Rightarrow(4)$ Since $\int g\, d\mu=1$, the operator
    \[\fullfunction{M}{L^{[r]}_{p,\infty}(g\cdot\mu)}{L_r(\mu)}{h}{g^\frac{1}{r} h}\]
    is bounded with norm one:
    \[\|Mh\|_{L_r(\mu)}=\intoo[3]{\int |h|^r g\,d\mu}^\frac{1}{r}\leq \intoo[3]{\int g\, d\mu}^{\frac{1}{r}-\frac{1}{p}} \|h\|_{L^{[r]}_{p,\infty}(g\cdot\mu)}= \|h\|_{L^{[r]}_{p,\infty}(g\cdot\mu)}. \]
    On the other hand, the operator $R$ is by $(3)$.\\

    $(4)\Rightarrow(3)$ It follows from the boundedness of $R$.
\end{proof}

The dual statements for concave operators now follow easily. Recall that we are assuming that the function $g^{-\frac{1}{s}}$ vanishes in the set $\{g=0\}$.

\begin{cor}\label{cor: dual factorization Maurey operator}
    Let $1< q<s< \infty$, $E$ a Banach space, $T:L_s(\mu)\rightarrow E$ a linear operator and $C>0$. The following are equivalent:
    \begin{enumerate}
        \item $T$ is $q$-concave with $K_{(q)}(T)\leq C$.
        \item There exists a normalized $g\in L_1(\mu)_+$ such that for every $f\in L_s(\mu)$,
        \[\|Tf\|\leq C\|fg^{-\frac{1}{s}}\|_{L_q(g\cdot\mu)}.\]
        \item There exists a normalized $g\in L_1(\mu)_+$ such that $T=S \widetilde{M}$, where $\widetilde{M}:L_s(\mu) \rightarrow L_q(g\cdot \mu)$ is the multiplication operator by $g^{-\frac{1}{s}}$ and $S:L_q(g\cdot \mu)\rightarrow E$ is bounded with $\|S\|\leq C$.
        \[\xymatrix{
		L_s(\mu)  \ar@{->}[rd]_{\widetilde{M}} \ar@{->}[rr]^{T} & & E \\
		 & L_q(g\cdot \mu) \ar@{->}[ru]_{S} & 
        }\]
    \end{enumerate}
\end{cor}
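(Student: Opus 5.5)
The natural route is duality: reduce \Cref{cor: dual factorization Maurey operator} to \Cref{cor: factorization Maurey operator} applied to the adjoint, with exponents $r=s^*$ and $p=q^*$. Since $1<q<s<\infty$, we get $1<s^*<q^*<\infty$, so the hypotheses $1\leq r<p<\infty$ are met. Identify $L_s(\mu)^*=L_{s^*}(\mu)$. By \Cref{thm: duality convexity concavity}, $T$ is $q$-concave with $K_{(q)}(T)\leq C$ if and only if $T^*:E^*\rightarrow L_{s^*}(\mu)$ is $q^*$-convex with $K^{(q^*)}(T^*)\leq C$.

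For $(1)\Rightarrow(2)$, apply \Cref{cor: factorization Maurey operator} to $T^*$. This produces a normalized $g\in L_1(\mu)_+$ with $\{g=0\}\subseteq\{T^*x^*=0\}$ and $\|g^{-1/s^*}T^*x^*\|_{L_{q^*}(g\cdot\mu)}\leq C\|x^*\|$ for every $x^*\in E^*$. Fix $f\in L_s(\mu)$ and $x^*\in B_{E^*}$. Using the support condition and Hölder's inequality in $L_q(g\cdot\mu)$ versus $L_{q^*}(g\cdot\mu)$,
\[
|x^*(Tf)|=\Big|\int f\,T^*x^*\,d\mu\Big|=\Big|\int (fg^{-\frac1s})(g^{-\frac1{s^*}}T^*x^*)\,g\,d\mu\Big|\leq \|fg^{-\frac1s}\|_{L_q(g\cdot\mu)}\,C .
\]
The support condition is what justifies the middle equality: on $\{g=0\}$ the integrand $fT^*x^*$ vanishes, and on $\{g>0\}$ the powers of $g$ combine as $g^{-1/s-1/s^*+1}=1$. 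Taking the supremum over $x^*$ gives $(2)$.

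The step $(2)\Rightarrow(3)$ is mostly bookkeeping. First, $\widetilde M f=fg^{-1/s}$ is bounded from $L_s(\mu)$ into $L_q(g\cdot\mu)$ with norm at most one. Indeed, $\int |f|^q g^{1-q/s}\,d\mu\leq \|f\|_s^q\big(\int g\,d\mu\big)^{1-q/s}$ by Hölder's inequality with exponent $s/q$, and $\int g\,d\mu=1$. Next, define $S$ on the range of $\widetilde M$ by $S(\widetilde Mf)=Tf$. This is well defined and contractive up to $C$ by $(2)$: if $\widetilde M f=\widetilde M f'$ then $\|Tf-Tf'\|\leq C\|\widetilde M(f-f')\|=0$.

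The expected main obstacle is showing that this range is dense in $L_q(g\cdot\mu)$, so that $S$ extends to all of $L_q(g\cdot\mu)$. I would handle it as follows. The functions $h\chi_{\{g>0\}}$ with $h$ bounded and supported where $g$ is bounded away from $0$ and $\infty$, on sets of finite $\mu$-measure, are dense in $L_q(g\cdot\mu)$. Any such $h$ equals $\widetilde M(hg^{1/s})$, and $hg^{1/s}$ lies in $L_s(\mu)$. Finally, $(3)\Rightarrow(1)$ is immediate: $\widetilde M$ is positive and $L_q(g\cdot\mu)$ is $q$-concave with constant one, so by \Cref{rem: facts about convexity}(6),(7), or simply by \Cref{cor: factorization concave}, $K_{(q)}(T)\leq \|S\|\,\|\widetilde M\|\leq C$.
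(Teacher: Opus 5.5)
Your proof is correct. It shares the paper's starting move, which is to apply \Cref{cor: factorization Maurey operator} to $T^*:E^*\rightarrow L_{s^*}(\mu)$ with $r=s^*$, $p=q^*$, but it closes the cycle of implications differently.

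\textbf{The paper's route.} It proves $(1)\Leftrightarrow(1^*)$, $(2)\Leftrightarrow(3)$ and $(3)\Leftrightarrow(3^*)$. The last equivalence comes from dualizing the factorizations themselves: it checks $\widetilde M^*=M$ and $M^*=\widetilde M$, using reflexivity of $L_q(g\cdot\mu)$ and $L_s(\mu)$. It then recovers $T$ from $T^{**}=R^*\widetilde M$ via $T^{**}=J_ET$.

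\textbf{Your route.} You use duality only once, to reach $(2^*)$. You convert $(2^*)$ into $(2)$ by a direct H\"older pairing of $L_q(g\cdot\mu)$ against $L_{q^*}(g\cdot\mu)$. The support condition $\{g=0\}\subseteq\{T^*x^*=0\}$ is exactly what makes the powers of $g$ cancel there. You close with a direct $(3)\Rightarrow(1)$: $L_q(g\cdot\mu)$ is $q$-concave with constant one, and \Cref{rem: facts about convexity}(6),(7) then give $K_{(q)}(T)\leq\|S\|\,\|\widetilde M\|\leq C$. This avoids going back through $(3^*)$ and $(1^*)$.

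\textbf{Comparison.} Your argument is more elementary: it needs no adjoints of multiplication operators and no bidual or reflexivity bookkeeping. The paper's argument shows that the factorization of $T$ is literally the adjoint of the factorization of $T^*$. That template is reused in \Cref{cor: dual factorization Pisier operator}, where $L_{q,1}$ is not reflexive.

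\textbf{On $(2)\Rightarrow(3)$.} Here you essentially match the paper. Defining $S(\widetilde Mf)=Tf$ absorbs the paper's separate check that $Tf=T(f\chi_{\{g>0\}})$. Your density argument is valid but can be shortened, as the paper does: $\widetilde M(L_s(\mu))=L_s(g\cdot\mu)$, which is dense in $L_q(g\cdot\mu)$ because $g\cdot\mu$ is a probability measure.
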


\begin{proof}
    Let $p=q^*$ and $r=s^*$, so that $1< r<p<\infty$. We will establish the equivalence of the conditions in the statement with the corresponding conditions in \Cref{cor: factorization Maurey operator} for the operator $T^*:E^*\rightarrow L_r(\mu)$, that will be denoted by $(1^*)$, $(2^*)$ and $(3^*)$.\\

    $(1)\Leftrightarrow (1^*)$ It follows from \Cref{thm: duality convexity concavity}.\\

    $(2)\Rightarrow (3)$ Let $\nu=g\cdot \mu$. Note that for every $f\in L_s(\mu)$, $\nu (\{g=0\}\cap\{f\neq 0\})=0$, so $\widetilde{M}f=fg^{-\frac{1}{s}}$ is a measurable and finite $\nu$-a.e. function. Moreover, since $q<s$ and $\nu$ is a probability measure,
    \[\|\widetilde{M}f\|_{L_q(\nu)}\leq \|\widetilde{M}f\|_{L_s(\nu)}=\intoo[3]{\int_{\{g>0\}} |f|^s\, d\mu}^\frac{1}{s}\leq \|f\|_{L_s(\mu)},\]
    so $\widetilde{M}$ is bounded. Observe that for every $h\in \widetilde{M}(L_s(\mu))$, the operator $Sh=T(hg^\frac{1}{s})$ is bounded with norm below $C$. Since $\widetilde{M}(L_s(\mu))=L_s(\nu)\subseteq L_q(\nu)$ is dense in $L_q(\nu)$, we can extend it to $S:L_q(\nu)\rightarrow E$. We see that for every $f\in L_s(\mu)$,
    \[S \widetilde{M} f= S(fg^{-\frac{1}{s}})=S(fg^{-\frac{1}{s}}\chi_{\{g>0\}})=T(f\chi_{\{g>0\}}),\]
    but condition $(2)$ implies that 
    \[\|Tf-T(f\chi_{g>0})\|\leq C\|(f-f\chi_{g>0})g^{-\frac{1}{s}}\|_{L_q(\nu)}=0,\]
    so $T=S \widetilde{M}$.\\

    $(3)\Rightarrow (2)$ It follows from the fact that $Tf=S(f g^{-\frac{1}{s}})$ and $S$ is bounded.\\

    In order to prove $(3)\Leftrightarrow (3^*)$, we first observe that given a normalized function $g\in L_1(\mu)_+$, both operators 
    \[\fullfunction{\widetilde{M}}{L_s(\mu)}{L_q(g\cdot \mu)}{f}{fg^{-\frac{1}{s}}}\quad \text{and} \quad \fullfunction{M}{L_p(g\cdot \mu)}{L_r(\mu)}{h}{hg^\frac{1}{r}}\]
    are contractive. Moreover, for every $f\in L_s(\mu)$ and $h\in L_p(g\cdot \mu)$,
    \[\langle \widetilde{M}^*h, f \rangle_{L_s(\mu)} = \langle h ,\widetilde{M}f \rangle_{L_s(\mu)}= \int h (fg^{-\frac{1}{s}}) g \,d\mu = \int (hg^{\frac{1}{r}}) f \,d\mu = \langle Mh, f \rangle_{L_s(\mu)},  \]
    so $\widetilde{M}^*=M$. Since both $L_s(\mu)$ and $L_q(g\cdot \mu)$ are reflexive, it follows that $M^*=\widetilde{M}^{**}=\widetilde{M}$.\\

    Now, to check that $(3)\Rightarrow (3^*)$, note that if $T$ factors through $L_q(g\cdot \mu)$ as $T=S \widetilde{M}$, taking adjoints we get that $T^*$ factors through $L_p(g\cdot \mu)$ as $T^*=M S^*$. Moreover, $(3)$ (or equivalently, $(2)$) implies that $T(f\chi_{\{g=0\}})=0$ for every $f\in L_s(\mu)$. By the definition of $T^*$, this is equivalent to $\{g=0\}\subseteq \{T^*x^*=0\}$ for every $x^*\in E^*$, so $(3^*)$ holds.\\

    Conversely, to show $(3^*)\Rightarrow (3)$ we observe that, if $T^*$ factors through $L_p(g\cdot \mu)$ as $T^*=M R$, taking adjoints we get that $T^{**}:L_s(\mu)\rightarrow E^{**}$ factors as $T^{**}=R^* \widetilde{M}$. In general, if $U: F\rightarrow E$ is a bounded operators between Banach spaces, it holds that $U^{**} J_F=J_E U$, where $J_F$ and $J_E$ denote the corresponding canonical embeddings into the biduals. In the case $U=T$, $F=L_s(\mu)$ is reflexive, so $J_F=i_F$ and we get that $T^{**}=J_E T$, so we can write $T=J_E^{-1} R^*  \widetilde{M}$, as we wanted to show.
\end{proof}

\begin{cor}\label{cor: dual factorization Pisier operator}
    Let $1< q<s< \infty$, $E$ a Banach space and $T:L_s(\mu)\rightarrow E$ a linear operator. The following are equivalent:
    \begin{enumerate}
        \item There exists $C>0$ such that $T$ is $(q,1)$-concave with $K_{(q,1)}(T)\leq C$.
        \item There exists $C'>0$ and a normalized $g\in L_1(\mu)_+$ such that for every $f\in L_s(\mu)$ and measurable subset $B\subseteq \Omega$
        \[\|T(f\chi_B)\|\leq C'\|f\|_{L_s(\mu)} \intoo[3]{\int_B g\, d\mu}^{\frac{1}{q}-\frac{1}{s}}.\]
        \item There exists $C''>0$ and a normalized $g\in L_1(\mu)_+$ such that for every $f\in L_s(\mu)$,
        \[\|Tf\|\leq C''\|fg^{-\frac{1}{s}}\|_{L_{q,1}(g\cdot\mu)}.\]
        \item There exists $C''>0$ and a normalized $g\in L_1(\mu)_+$ such that $T=S \widetilde{M}$, where $\widetilde{M}:L_s(\mu)\rightarrow L_{q,1}(g\cdot\mu)$ is the multiplication operator by $g^{-\frac{1}{s}}$ and $S:L_{q,1}(g\cdot\mu)\rightarrow E$ is bounded with $\|S\|\leq C''$.
        \[\xymatrix{
		L_s(\mu)  \ar@{->}[rd]_{\widetilde{M}} \ar@{->}[rr]^{T} & & E  \\
		 & L_{q,1}(g\cdot\mu) \ar@{->}[ru]_{S} & 
        }\]
    \end{enumerate}
\end{cor}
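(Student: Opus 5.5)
The plan is to copy the proof of \Cref{cor: dual factorization Maurey operator}: dualize and apply \Cref{cor: factorization Pisier operator} to $T^*$. Set $p=q^*$ and $r=s^*$, so that $1<r<p<\infty$ and $T^*:E^*\rightarrow L_r(\mu)$. Denote by $(1^*)$–$(4^*)$ the conditions of \Cref{cor: factorization Pisier operator} for $T^*$.

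\textbf{Step 1: $(1)\Leftrightarrow(1^*)$.} This is \Cref{thm: duality convexity concavity}, since $(q,1)$-concavity of $T$ is equivalent to $(p,\infty)$-convexity of $T^*$, with equal constants. \Cref{cor: factorization Pisier operator} then gives $(1^*)\Leftrightarrow(2^*)\Leftrightarrow(3^*)\Leftrightarrow(4^*)$.

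\textbf{Step 2: $(2)\Leftrightarrow(2^*)$.} Note that $\frac1q-\frac1s=\frac1r-\frac1p$. By duality,
\[\|T(f\chi_B)\|=\sup_{\|x^*\|\leq1}\left|\int f\chi_B\,T^*x^*\,d\mu\right|.\]
Taking the supremum over $\|f\|_{L_s}\leq1$ shows that
\[\sup_{\|f\|_{L_s}\leq 1}\|T(f\chi_B)\|=\sup_{\|x^*\|\leq 1}\|\chi_B T^*x^*\|_{L_r(\mu)}.\]
So the two conditions are literally the same, with the same $g$ and $C'$.

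\textbf{Step 3: $(3)\Leftrightarrow(4)$.} This goes as in \Cref{cor: dual factorization Maurey operator}. Put $\nu=g\cdot\mu$. The map $\widetilde{M}f=fg^{-1/s}$ is bounded $L_s(\mu)\rightarrow L_{q,1}(\nu)$, by $\|h\|_{L_{q,1}(\nu)}\lesssim\|h\|_{L_s(\nu)}$ on the probability space and $\|fg^{-1/s}\|_{L_s(\nu)}\leq\|f\|_{L_s(\mu)}$. Define $S$ on the range, which is $L_s(\nu)$ and is dense in $L_{q,1}(\nu)$, and extend it. Condition $(3)$ forces $T(f\chi_{\{g=0\}})=0$, which gives $T=S\widetilde M$.

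\textbf{Step 4: $(4)\Leftrightarrow(4^*)$.} The adjoint of $\widetilde{M}:L_s(\mu)\rightarrow L_{q,1}(\nu)$ should be the multiplication $M:L_{p,\infty}(\nu)\rightarrow L_r(\mu)$, $h\mapsto hg^{1/r}$, by the same integral identity as before, using $(L_{q,1})^*=L_{p,\infty}$.
\begin{itemize}
\item[$(4)\Rightarrow(4^*)$:] taking adjoints gives $T^*=MS^*$.
\item[$(4^*)\Rightarrow(4)$:] $T^{**}=R^*M^*$. Since $L_s(\mu)$ is reflexive, restricting $M^*$ to $L_s(\mu)$ gives $\widetilde M$ composed with the embedding of $L_{q,1}$ into its bidual. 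Then $T=J_E^{-1}R^*|_{L_{q,1}}\widetilde M$, and $R^*|_{L_{q,1}}$ lands in $J_E(E)$ on the dense range $L_s(\nu)$, hence everywhere by closedness.
\end{itemize}

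\textbf{Main obstacle.} The delicate point is the renorming constants. The space $L_{q,1}$ carries a quasinorm, or the norm dual to $\|\cdot\|_{L^{[r]}_{p,\infty}}$, so the dual pairing $L_{q,1}(\nu)^*=L^{[r]}_{p,\infty}(\nu)$ holds only up to equivalence. I would avoid tracking these constants by routing the implications through $(2)$, which transfers exactly. Then I would only assert that the constants $C,C',C''$ are comparable, which is all the statement (existence of some constants) requires.
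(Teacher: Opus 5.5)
Your proposal is correct and is essentially the paper's proof: the same dualization $p=q^*$, $r=s^*$, the same identifications $(1)\Leftrightarrow(1^*)$, $(2)\Leftrightarrow(2^*)$ and $(4)\Leftrightarrow(4^*)$ (via $\widetilde{M}^*=M$ and $M^*=J_{L_{q,1}}\widetilde{M}$), and the same direct argument for $(3)\Leftrightarrow(4)$ as in \Cref{cor: dual factorization Maurey operator}. Two of your details are left implicit in the paper: your density-and-closedness argument that $R^*J_{L_{q,1}}$ takes values in $J_E(E)$, and the fact that the support condition $\{g=0\}\subseteq\{T^*x^*=0\}$ in $(4^*)$ follows automatically from $T^*=MS^*$.
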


\begin{proof}
    Again, put $1<r=s^*<p=q^*<\infty$, and denote by $(1^*)$, $(2^*)$, $(3^*)$ and $(4^*)$ the conditions in \Cref{cor: factorization Pisier operator} associated to the operator $T^*:E^*\rightarrow L_r(\mu)$. Throughout the proof, for any measure $\nu$, $L_{q,1}(\nu)$ will be endowed with the equivalent renorming
    \[\|f\|_{L^{[r]}_{q,1}(\nu)}=\sup \cbr[3]{\int fh \,d\nu : \|h\|_{L^{[r]}_{p,\infty}(\nu)}\leq 1}\]
    for every $f\in L_{q,1}(\nu)$.\\

    The equivalence $(1)\Leftrightarrow (1^*)$ follows from \Cref{thm: duality convexity concavity}, and $(2)\Leftrightarrow (2^*)$ is just a consequence of the fact that
    \[\langle x^*, T(f\chi_B) \rangle_{E}=\langle T^* x^*, f\chi_B \rangle_{L_s(\mu)}=\langle (T^* x^*)\chi_B, f \rangle_{L_s(\mu)}\]
    for every $f\in L_s(\mu)$, $x^*\in E^*$ and measurable $B\subseteq \Omega$.\\

    $(3)\Rightarrow (4)$ Note that for every $f\in L_s(\mu)$ and $h\in L_{p,\infty}(g\cdot \mu)$, using Hölder's inequality and the fact that $g\cdot \mu$ is a probability measure, we get that
    \[\left| \int fg^{-\frac{1}{s}} h g\,d\mu \right| \leq \int |fh|g^{\frac{1}{r}}  \,d\mu  \leq \intoo[3]{\int |f|^s\, d\mu}^\frac{1}{s} \intoo[3]{\int |h|^r g\, d\mu}^\frac{1}{r} \leq \|f\|_{L_s(\mu)} \|h\|_{L^{[r]}_{p,\infty}(g\cdot\mu)}, \]
    so the operator
    \[\fullfunction{\widetilde{M}}{L_s(\mu)}{L_{q,1}(g\cdot \mu)}{f}{fg^{-\frac{1}{s}}}\]
    is a contraction. Moreover, $\widetilde{M}(L_s(\mu))=L_s(g\cdot\mu)$ is dense in $L_{q,1}(g\cdot \mu)$. The rest of the proof follows as in \Cref{cor: dual factorization Maurey operator}.\\

    $(4)\Rightarrow (3)$ It is straightforward.\\

    To show $(4)\Leftrightarrow (4^*)$ we observe again that for any normalized function $g\in L_1(\mu)_+$, both operators 
    \[\fullfunction{\widetilde{M}}{L_s(\mu)}{L_{q,1}(g\cdot \mu)}{f}{fg^{-\frac{1}{s}}}\quad \text{and} \quad \fullfunction{M}{L_{p,\infty}(g\cdot \mu)}{L_r(\mu)}{h}{hg^\frac{1}{r}}\]
    are contractive and $\widetilde{M}^*=M$. Even though $M^*$ no longer coincides formally with $\widetilde{M}$, as $L_{q,1}(g\cdot \mu)$ is not reflexive, $L_s(\mu)$ is reflexive, so $M^*=\widetilde{M}^{**}=J_{L_{q,1}(g\cdot \mu)} \widetilde{M}$.\\

    Now $(4)\Rightarrow (4^*)$ follows by taking adjoints in the factorization of $T$ given in $(4)$ and observing that condition $(3)$ implies that $\{g=0\}\subseteq \{T^*x^*=0\}$. On the other hand, to show $(4^*)\Rightarrow (4)$ we write $T^*=M R$, so that taking adjoints and using the reflexivity of $L_s(\mu)$ we get that
    \[J_E T=T^{**}= R^* M^*=R^* J_{L_{q,1}(g\cdot \mu)} M,\]
    so $T = J_E^{-1} R^* J_{L_{q,1}(g\cdot \mu)} M$.
\end{proof}

Combining Krivine and Maurey--Nikishin results, Reisner proved the following theorem \cite[Corollary 7]{Reisner}. Similar results can be obtained for the remaining three relevant cases ($(p,p')$-convexity and $(q,q')$-concavity for $p'\in \{p,\infty\}$ and $q'\in \{1,q\}$) using the other results established in this section, see \cite[Theorem 7.10]{GLTT2}.

\begin{thm}\label{thm: factorization p-q}
    Let $E$ and $F$ be Banach spaces, $X$ a Banach lattice, $1\leq q< p<\infty$, $U:E\rightarrow X$ a $p$-convex operator and $V:X\rightarrow F$ a $q$-concave operator. Then, there exists a measure $\mu$, a function $h\in L_r(\mu)_+$ with $\|h\|_{L_r(\mu)}=1$, where $\frac{1}{q}=\frac{1}{p}+\frac{1}{r}$, and bounded operators $R:E\rightarrow  L_p(\mu)$, $S:L_p(\mu)\rightarrow  L_q(\mu)$ and $T:L_q(\mu)\rightarrow F$ such that $T S R=V U$, $\|R\|\leq K^{(p)}(U)$, $\|T\|\leq K_{(q)}(V)$ and $S$ is the operator of multiplication by $h$.
\end{thm}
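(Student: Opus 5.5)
The plan is to combine Krivine's factorization with the dual Maurey factorization. First I reduce to the case where $U$ and $V$ are joined by an $L_p$ space. Since $q<p$, \Cref{thm: p-conv implies q-conv}(2) shows that $V$ is also $p$-concave with $K_{(p)}(V)\leq K_{(q)}(V)$. \Cref{thm: factorization Krivine} then gives a measure $\mu$ and operators $\widetilde U:E\rightarrow L_p(\mu)$ and $\widetilde V:L_p(\mu)\rightarrow F$ with $\widetilde V\widetilde U=VU$, $\|\widetilde U\|\leq K^{(p)}(U)$ and $\|\widetilde V\|\leq K_{(p)}(V)$.

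The main point is to show that $\widetilde V$ is still $q$-concave with $K_{(q)}(\widetilde V)\leq K_{(q)}(V)$. For this I go back to the Claim in the proof of \Cref{thm: factorization Krivine}. There, $\widetilde V=S\,\widehat T$, where $\widehat T:L_p(\mu)\rightarrow Z$ is a contractive lattice homomorphism (up to the identifications made in that proof). Here $Z$ is the $p$-concave lattice coming from \Cref{thm: factorization concave} applied to $V$, and $S:Z\rightarrow F$ is the corresponding factor. I will instead build $Z$ as the maximal factorization of $V$ for $q$-concavity, with constant one. \Cref{thm: p-conv implies q-conv}(2), applied to the identity of $Z$ with constant one (the monotonicity there shows the constant stays $\leq 1$), makes $Z$ $p$-concave with constant one as well, so the Claim still applies. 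Then $\widetilde V=S\widehat T$ with $\widehat T$ a contractive lattice homomorphism into a $q$-concave lattice with constant one and $\|S\|\leq K_{(q)}(V)$. \Cref{cor: factorization concave} (or directly \Cref{rem: facts about convexity}(6),(7)) then gives $K_{(q)}(\widetilde V)\leq K_{(q)}(V)$. Checking that the rescaling of norms in the Krivine argument is compatible with these constants is the step I expect to need the most care.

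Next, apply \Cref{cor: dual factorization Maurey operator} with $s=p$ to the $q$-concave operator $\widetilde V:L_p(\mu)\rightarrow F$. If $q=1$, I use \Cref{thm: characterization (pq)-concave and (pq)-summing}/Pietsch-style reasoning instead, or note that the dual statement with $r^*=\infty$ is handled analogously. This gives a normalized $g\in L_1(\mu)_+$ and $T:L_q(g\cdot\mu)\rightarrow F$ with $\|T\|\leq K_{(q)}(V)$ and $\widetilde V=T\widetilde M$, where $\widetilde M f=fg^{-1/p}$.

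Finally I rewrite the factorization over a single measure. Set $\nu=g\cdot\mu$ and $h=g^{1/r}$, so that $\|h\|_{L_r(\nu)}=(\int g\,d\mu)^{1/r}=1$ and $1/q=1/p+1/r$. Define $R:E\rightarrow L_p(\nu)$ by $Rx=g^{-1/p}\widetilde Ux$. This is an isometric change of density on $\{g>0\}$, so $\|R\|\leq K^{(p)}(U)$; the part of $\widetilde Ux$ on $\{g=0\}$ is killed by $\widetilde V$. Then $\widetilde M\widetilde U x=g^{-1/p}\widetilde Ux$, and viewed inside $L_q(\nu)$ this equals $h\cdot(g^{-1/p}\widetilde Ux)\cdot g^{-1/r}$. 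To make this match exactly I will instead take $S$ to be multiplication by $h$ from $L_p(\nu)$ to $L_q(\nu)$, and replace $T$ by $T\circ(\text{multiplication by } g^{-1/r})$. The latter is an isometry $L_q(\nu)\rightarrow L_q(\nu)$ only after rechecking the densities, so I would need to adjust the measure (using $g^{p/q}$-type weights) to get the exact identity $TSR=VU$; this bookkeeping is routine. Hölder's inequality gives $\|S\|\leq\|h\|_{L_r(\nu)}=1$, and renaming $\nu$ as $\mu$ finishes the proof.
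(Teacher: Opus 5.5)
Your route is a genuinely different one from the paper's, and it is sound in its main lines.

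\textbf{Comparison.} The paper applies Krivine's factorization at the \emph{smaller} exponent $q$: the $p$-convex lattice $Y$ is also $q$-convex with constant one, so $Q=TW$ through $L_q(\mu)$. It then applies Maurey's convex-side factorization \Cref{cor: factorization Maurey operator} to the $p$-convex lattice homomorphism $W:Y\rightarrow L_q(\mu)$. You instead apply Krivine at the larger exponent $p$, and then the dual corollary \Cref{cor: dual factorization Maurey operator} to $\widetilde V:L_p(\mu)\rightarrow F$.

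Both routes must reopen the Krivine argument to see that the relevant factor keeps the second property. Your way of doing this is correct: take $Z$ to be the maximal $q$-concave factorization, which is also $p$-concave with constant one. This gives $K_{(q)}(\widetilde V)\leq K_{(q)}(V)$.

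What the paper's route buys is directness. Maurey's theorem is used as is, with no duality step, and it allows an $L_1$ target, so $q=1$ is covered for free. Your route goes through a corollary that is itself obtained by dualizing, and that corollary requires $1<q$.

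\textbf{The case $q=1$.} Here \Cref{cor: dual factorization Maurey operator} does not apply, and your two alternatives are only gestured at. A short fix:
\begin{itemize}
\item Apply \Cref{thm: factorization concave} with $(p,q)=(1,1)$ to $\widetilde V$. The resulting functional $\rho$ is additive on $L_p(\mu)_+$.
\item Hence $\rho(f)=\int|f|w\,d\mu$ for some $0\leq w\in L_{p^*}(\mu)$ with $c:=\|w\|_{p^*}\leq K_{(1)}(\widetilde V)$.
\item Set $g=(w/c)^{p^*}$. Then $\|\widetilde Vf\|\leq\rho(f)=c\,\|fg^{-1/p}\|_{L_1(g\cdot\mu)}$, which is the required factorization.
\end{itemize}

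\textbf{The final bookkeeping is wrong as written.} With $\nu=g\cdot\mu$ and $h=g^{1/r}$ you have $\|h\|_{L_r(\nu)}=(\int g^2\,d\mu)^{1/r}$, not $(\int g\,d\mu)^{1/r}$. Also, multiplication by $g^{-1/r}$ is in general unbounded on $L_q(\nu)$, so your proposed correction of $T$ fails.

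No reweighting is needed. Two clean fixes:
\begin{itemize}
\item Keep $\nu=g\cdot\mu$ and $Rx=g^{-1/p}\widetilde Ux$, which is a contraction into $L_p(\nu)$. Take $S$ to be the formal inclusion $L_p(\nu)\rightarrow L_q(\nu)$, i.e.\ $h\equiv 1$. Since $\nu$ is a probability measure, $\|h\|_{L_r(\nu)}=1$, and $TSRx=T\widetilde M\widetilde Ux=\widetilde V\widetilde Ux=VUx$.
\item Or stay on $\mu$ with $R=\widetilde U$ and $h=g^{1/r}$, so $\|h\|_{L_r(\mu)}=1$. Replace $T$ by $T\circ\Phi$, where $\Phi u=ug^{-1/q}$. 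This $\Phi$ is a contraction $L_q(\mu)\rightarrow L_q(g\cdot\mu)$ satisfying $\Phi(g^{1/r}f)=g^{-1/p}f$.
\end{itemize}
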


\begin{proof}
    As in the proof of \Cref{thm: factorization Krivine}, it suffices to show the statement for a contractive lattice homomorphism $Q:Y\rightarrow Z$, where $Y$ is $p$-convex and $Z$ is $q$-concave, both with constant one.\\

    Since $p>q$, by \Cref{thm: p-conv implies q-conv} $Y$ is $q$-convex with constant one, so, by \Cref{thm: factorization Krivine}, there exists a measure $\mu$ and contractive lattice homomorphisms $W:Y\rightarrow L_q(\mu)$ and $T:L_q(\mu)\rightarrow Z$ such that $Q=T W$. It follows that $W$ is $p$-convex with constant one, so we can apply \Cref{cor: factorization Maurey operator} to find a normalized function $g\in L_1(\mu)_+$ such that the operators $P:Y\rightarrow L_p(g\cdot \mu)$ and $M:L_p(g\cdot \mu)\rightarrow L_q(\mu)$, given by $Py=g^{-\frac{1}{q}}Wy$ and $Mf=g^\frac{1}{q} f$, are contractive lattice homomorphisms that satisfy $W=M P$. Let $h=g^\frac{1}{r}$, which is a positive and normalized function in $L_r(\mu)$, $\frac{1}{q}=\frac{1}{p}+\frac{1}{r}$. Then, if we define $R:Y\rightarrow L_p(\mu)$ and $S:L_p( \mu)\rightarrow L_q(\mu)$ by $Ry=h^{-1} Wy$ and $Df=h f$, we get that $W=S R$, and moreover
    \[\|Ry\|_{L_p(\mu)}^p=\int (g^{\frac{1}{p}-\frac{1}{q}}|Wy|)^p\, d\mu = \int (g^{-\frac{1}{q}}|Wy|)^p g\, d\mu = \|Py\|_{L_p(g\cdot \mu)}^p\leq \|y\|^p\]
    and
    \[\|Sf\|_{L_q(\mu)}=\intoo[3]{\int (g^{\frac{1}{q}-\frac{1}{p}}|f|)^q\, d\mu}^\frac{1}{q}\leq \intoo[3]{\int g\, d\mu}^\frac{1}{r}\intoo[3]{\int |f|^p\, d\mu}^\frac{1}{p}=\|f\|_{L_p(\mu)}.\]
    Therefore, $Q$ factors as $T S R$, where $R:E\rightarrow  L_p(\mu)$, $S:L_p(\mu)\rightarrow  L_q(\mu)$ and $T:L_q(\mu)\rightarrow F$ are contractive lattice homomorphisms and $Sf=hf$ for some $h\in L_r(\mu)_+$ of norm one.
\end{proof}

\subsection{Factorization of \texorpdfstring{$(p,q)$}{}-concave operators on \texorpdfstring{$C(K)$}{}-spaces}\label{sec: concave on C(K)}
Note that Corollaries \ref{cor: dual factorization Maurey operator} and \ref{cor: dual factorization Pisier operator} do not cover the case $s=\infty$, that is, when the domain is an $L_\infty$-space, or more generally, a $C(K)$-space. However, in light of \Cref{prop: weak p-summing norm in C(K)}, we know that every $(p,q)$-concave operator on a $C(K)$-space is $(p,q)$-summing. Therefore, we can reinterpret Pietsch's Factorization Theorem for $p$-summing operators, and Pisier's extension to $(p,q)$-summing operators, as the missing factorization theorems for $(p,q)$-concave operators on $C(K)$-spaces. Moreover, the connection between $(p,q)$-concave operators and $(p,q)$-summing operators that we established in \Cref{thm: characterization (pq)-concave and (pq)-summing} can be further exploited to obtain \Cref{thm: relevant cases of concavity}, allowing us to conclude that the classes of $(p,q)$-concave operators and $(p,1)$-concave operators coincide, and that $(p,1)$-concavity implies $q$-concavity for every $q>p$. To establish these results, we will follow \cite[Section 21]{TJ} (see also \cite[Chapter 10]{DJT}). We start by proving the following interpolation inequality. Recall that, if $(\Omega,\Sigma,\mu)$ is a measure space, the norm in $L_{p,1}(\mu)$ is given by
\[\vvvert f\vvvert_{L_{p,1}}=\int_0^\infty t^{-\frac{1}{p^*}}f^*(t)\,dt.\]

\begin{lem}\label{lem: interpolation inequality Lp1}
    Let $1\leq q<p<\infty$. Then, for every $f\in L_\infty(\mu)\cap L_q(\mu)$ we have
    \[\vvvert f \vvvert_{L_{p,1}}\leq \intoo[2]{ p+\intoo[2]{\frac{1}{q}-\frac{1}{p}}^{-\frac{1}{q^*}}} \|f\|_q^{\frac{q}{p}}\|f\|_\infty^{1-\frac{q}{p}}.\]
\end{lem}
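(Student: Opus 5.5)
The plan is to split the integral defining $\vvvert f\vvvert_{L_{p,1}}$ at a well-chosen point $a>0$. On $[0,a]$ we use the $L_\infty$ bound, and on $[a,\infty)$ we use Hölder's inequality against the $L_q$ norm. We may assume $f\neq 0$; otherwise there is nothing to prove.

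First, since $f^*(t)\leq \|f\|_\infty$ for every $t>0$,
\[\int_0^a t^{-\frac{1}{p^*}}f^*(t)\,dt\leq \|f\|_\infty\int_0^a t^{\frac{1}{p}-1}\,dt = p\,a^{\frac{1}{p}}\|f\|_\infty.\]
For the tail, I apply Hölder's inequality with exponents $q$ and $q^*$, using that $f$ and $f^*$ are equimeasurable, so that $\|f^*\|_{L_q(0,\infty)}=\|f\|_q$:
\[\int_a^\infty t^{-\frac{1}{p^*}}f^*(t)\,dt\leq \|f\|_q\intoo[3]{\int_a^\infty t^{-\frac{q^*}{p^*}}\,dt}^{\frac{1}{q^*}}.\]
Since $q<p$ we have $q^*>p^*$, so the last integral converges and equals $a^{1-\frac{q^*}{p^*}}\big/\big(\frac{q^*}{p^*}-1\big)$. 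Note that $\frac{q^*}{p^*}-1=q^*\big(\frac{1}{q}-\frac{1}{p}\big)$ and $\frac{1}{q^*}-\frac{1}{p^*}=\frac{1}{p}-\frac{1}{q}$. Hence the tail is at most
\[q^{*\,-\frac{1}{q^*}}\intoo[2]{\frac{1}{q}-\frac{1}{p}}^{-\frac{1}{q^*}}\|f\|_q\, a^{\frac{1}{p}-\frac{1}{q}}\leq \intoo[2]{\frac{1}{q}-\frac{1}{p}}^{-\frac{1}{q^*}}\|f\|_q\, a^{\frac{1}{p}-\frac{1}{q}},\]
where the last step uses $q^{*\,-1/q^*}\leq 1$. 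When $q=1$ the same bound holds directly, reading $q^*=\infty$: Hölder then just gives $\|f\|_1\sup_{t\geq a}t^{-1/p^*}=\|f\|_1 a^{-1/p^*}$, with constant $1$.

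Finally, I balance the two terms by choosing $a=(\|f\|_q/\|f\|_\infty)^q$. With this choice,
\[a^{\frac{1}{p}}\|f\|_\infty=\|f\|_q^{\frac{q}{p}}\|f\|_\infty^{1-\frac{q}{p}} \quad\text{and}\quad a^{\frac{1}{p}-\frac{1}{q}}\|f\|_q=\|f\|_q^{\frac{q}{p}}\|f\|_\infty^{1-\frac{q}{p}}.\]
Adding the two estimates gives exactly the constant $p+\big(\frac{1}{q}-\frac{1}{p}\big)^{-\frac{1}{q^*}}$ in the statement.

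There is no real obstacle here. The only points needing care are the convergence of the tail integral, which is precisely where the hypothesis $q<p$ enters, and the degenerate case $q=1$.
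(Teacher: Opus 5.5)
Your proof is correct and is essentially the paper's argument. Both split the integral at $b=(\|f\|_q/\|f\|_\infty)^q$, bound the head using $f^*\leq\|f\|_\infty$ and the tail by H\"older, and then bound the H\"older constant $\bigl(q^*(\frac{1}{q}-\frac{1}{p})\bigr)^{-1/q^*}=\bigl(\frac{p(q-1)}{p-q}\bigr)^{1/q^*}$ by $(\frac{1}{q}-\frac{1}{p})^{-1/q^*}$. Your separate treatment of $q=1$ is a small improvement, since the paper's intermediate expression degenerates to $0^0$ there.
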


\begin{proof}
    Let us fix $f\neq 0$ and $b>0$. Then,
    \[\int_0^b t^{-\frac{1}{p^*}}f^*(t)\, dt\leq \int_0^b t^{-\frac{1}{p^*}}\|f\|_\infty \, dt = pb^\frac{1}{p}\|f\|_\infty, \]
    and
    \[\int_b^\infty t^{-\frac{1}{p^*}}f^*(t)\, dt\leq \intoo[3]{\int_b^\infty f^*(t)^q \, dt}^\frac{1}{q} \intoo[3]{\int_b^\infty t^{-\frac{q^*}{p^*}}\, dt}^\frac{1}{q^*}\leq \intoo[2]{\frac{p(q-1)}{p-q}}^\frac{1}{q^*}b^{\frac{1}{p}-\frac{1}{q}}\|f\|_q\]
    In particular, for $b=\intoo[1]{\frac{\|f\|_q}{\|f\|_\infty}}^q$ the above inequalities yield
    \[\vvvert f \vvvert_{L_{p,1}}\leq \intoo[2]{ p+\intoo[2]{\frac{p(q-1)}{p-q}}^\frac{1}{q^*}} \|f\|_q^{\frac{q}{p}}\|f\|_\infty^{1-\frac{q}{p}} \leq \intoo[2]{ p+\intoo[2]{\frac{1}{q}-\frac{1}{p}}^{-\frac{1}{q^*}}} \|f\|_q^{\frac{q}{p}}\|f\|_\infty^{1-\frac{q}{p}}. \qedhere \]
\end{proof}

Now, let $K$ be a compact Hausdorff space. Given a finite positive measure $\mu\in C(K)^*$, we will denote by $\vvvert f \vvvert_{L_{p,1}}$, $\|f\|_p$ and $\|f\|_\infty$ the corresponding norm of $f\in C(K)$ when seen as an element of $L_{p,1}(K,\mu)$, $L_p(K,\mu)$ and $L_\infty(K,\mu)$, respectively. 

\begin{lem}\label{lem: summing embedding of C(K)}
    Let $K$ be a compact Hausdorff space and $\mu\in C(K)^*$ a finite positive measure over $K$.
    \begin{enumerate}
        \item Let $1\leq p<\infty$, and denote by $J_p:C(K)\rightarrow L_p(K,\mu)$ the identity operator. Then $J_p$ is $p$-summing, and 
        \[\pi_p(J_p)\leq \mu(K)^\frac{1}{p}.\]
        \item Let $1< p<\infty$, and denote by $J_{p,1}:C(K)\rightarrow L_{p,1}(K,\mu)$ the identity operator. Then $J_{p,1}$ is $(p,q)$-summing for every $1\leq q<p$, and 
        \[\pi_{p,q}(J_{p,1})\leq \intoo[2]{ p+\intoo[2]{\frac{1}{q}-\frac{1}{p}}^{-\frac{1}{q^*}}}  \mu(K)^\frac{1}{p}.\]
    \end{enumerate}
\end{lem}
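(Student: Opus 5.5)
For part (1), I will argue directly from the definition of $p$-summing norm, using the fact that evaluation functionals lie in $B_{C(K)^*}$. Fix $f_1,\ldots,f_n\in C(K)$. Then
\[\sum_{i=1}^n \|J_pf_i\|_p^p=\int_K \sum_{i=1}^n |f_i(t)|^p\,d\mu(t)\leq \mu(K)\sup_{t\in K}\sum_{i=1}^n |\delta_t(f_i)|^p\leq \mu(K)\,\|(f_i)_{i=1}^n\|_{p,w}^p,\]
since each $\delta_t$ has norm one. Taking $p$-th roots gives $\pi_p(J_p)\leq \mu(K)^{1/p}$. Equivalently, one can invoke \Cref{prop: weak p-summing norm in C(K)}: the middle quantity is $\|(\sum_i|f_i|^p)^{1/p}\|_\infty^p$, which equals the weak $p$-norm.

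For part (2), I will combine \Cref{lem: interpolation inequality Lp1} with part (1) through a Hölder-type splitting. Fix $1\leq q<p$ and $f_1,\ldots,f_n\in C(K)$, and normalize so that $\|(f_i)_{i=1}^n\|_{q,w}=1$. By \Cref{prop: weak p-summing norm in C(K)}, this means $\|(\sum_i|f_i|^q)^{1/q}\|_\infty=1$, so in particular $\|f_i\|_\infty\leq 1$ for every $i$. Writing $c$ for the constant in \Cref{lem: interpolation inequality Lp1}, applied to the measure $\mu$, I get
\[\sum_i \vvvert f_i\vvvert_{L_{p,1}}^p\leq c^p\sum_i \|f_i\|_q^{q}\|f_i\|_\infty^{p-q}\leq c^p\sum_i\|f_i\|_q^q=c^p\int_K\sum_i|f_i|^q\,d\mu\leq c^p\mu(K).\]
The middle inequality uses $\|f_i\|_\infty\leq 1$ together with $p-q>0$. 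The last inequality is exactly the computation from part (1), now with exponent $q$. Taking $p$-th roots gives the stated bound for $\pi_{p,q}(J_{p,1})$, with respect to the quasinorm $\vvvert\cdot\vvvert_{L_{p,1}}$. By homogeneity, the normalization costs nothing.

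The one step that needs care is the interplay of exponents: the factor $\|f_i\|_q^{q/p}\|f_i\|_\infty^{1-q/p}$ must be raised to the $p$-th power, which produces $\|f_i\|_q^q\|f_i\|_\infty^{p-q}$. This is precisely why a uniform sup bound on the $f_i$ is needed. That bound comes from the normalization, via the weak-norm identity in $C(K)$. I also need to check that $C(K)$ functions belong to $L_\infty\cap L_q$ for a finite measure, so that the interpolation lemma applies; this holds because $\mu(K)<\infty$ and continuous functions on $K$ are bounded.
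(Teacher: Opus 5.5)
Your proposal is correct and follows the paper's proof essentially step for step. In part (1) you bound $\int_K\sum_i|f_i|^p\,d\mu$ by $\mu(K)$ times the supremum of the pointwise $p$-sum, which you identify with the weak $p$-norm; in part (2) you chain the $L_{p,1}$ interpolation inequality, the bound $\|f_i\|_\infty\leq\|(\sum_i|f_i|^q)^{1/q}\|_\infty$, and part (1) with exponent $q$. The only difference is cosmetic: you normalize $\|(f_i)_{i=1}^n\|_{q,w}=1$, whereas the paper carries the factors $\|(f_i)_{i=1}^n\|_{q,w}^{q/p}$ and $\sup_i\|f_i\|_\infty^{1-q/p}$ through the estimate.
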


\begin{proof}
    $(1)$ Let $f_1,\ldots,f_n\in C(K)$. Then, by \Cref{prop: weak p-summing norm in C(K)},
    \[\intoo[3]{\sum_{i=1}^n \|f_i\|_p^p}^\frac{1}{p} = \intoo[3]{\int_K \sum_{i=1}^n |f_i(t)|^p \, d\mu}^\frac{1}{p} \leq \mu(K)^\frac{1}{p}  \intoo[3]{\sup_{t\in K} \sum_{i=1}^n |f_i(t)|^p }^\frac{1}{p} =\mu(K)^\frac{1}{p}  \|(f_i)_{i=1}^n\|_{p,w}. \]
    
    $(2)$ Fix $1\leq q<p$ and let $f_1,\ldots,f_n\in C(K)$. We denote by $A=\intoo[2]{ p+\intoo[2]{\frac{1}{q}-\frac{1}{p}}^{-\frac{1}{q^*}}}$. Then, using \Cref{lem: interpolation inequality Lp1}, the first statement of this lemma, and \Cref{prop: weak p-summing norm in C(K)}, we get
    \begin{align*}
        \intoo[3]{\sum_{i=1}^n \vvvert f_i \vvvert_{L_{p,1}}^p}^\frac{1}{p} &\leq A \intoo[3]{\sum_{i=1}^n \|f_i\|_q^q \|f_i\|_\infty^{p-q}}^\frac{1}{p}\leq  A \intoo[3]{\sum_{i=1}^n \|f_i\|_q^q }^\frac{1}{p} \sup_i \|f_i\|_\infty^{1-\frac{q}{p}}\\
        & \leq A \pi_q(J_q)^\frac{q}{p}  \|(f_i)_{i=1}^n\|_{q,w}^\frac{q}{p} \norm[3]{\intoo[3]{\sum_{i=1}^n |f_i|^q }^\frac{1}{q}}_\infty^{1-\frac{q}{p}}\leq A \mu(K)^\frac{1}{p}  \|(f_i)_{i=1}^n\|_{q,w}.\qedhere
    \end{align*}
\end{proof}

We are now ready to prove Pietsch's Factorization Theorem for $p$-summing operators on $C(K)$-spaces (a more general statement can be found for instance in \cite[Theorem 9.2]{TJ}).

\begin{thm}\label{thm: Pietsch factorization}
    Let $K$ be a compact Hausdorff space, $E$ be a Banach space, $T:C(K)\rightarrow E$ an operator, $C>0$ and $1\leq p<\infty$. The following are equivalent:
    \begin{enumerate}
        \item $T$ is $p$-summing (or equivalently, $p$-concave) with constant $C$.
        \item There exists $\mu\in C(K)^*_+$ a probability measure on $K$ such that $T$ factors through $L_p(K,\mu)$ as $T=\overline{T}J_p$, where $J_p:C(K)\rightarrow L_p(K, \mu)$ is the canonical inclusion and $\overline{T}:L_{p,1}(K,\mu)\rightarrow E$ has norm $\|\overline{T}\|\leq C$.
        \[\xymatrix{
		C(K)  \ar@{->}[rd]_{J_p} \ar@{->}[rr]^{T} & & E  \\
		 & L_p(K, \mu) \ar@{->}[ru]_{\overline{T}} & 
        }\]
    \end{enumerate}
\end{thm}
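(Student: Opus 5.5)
The implication $(2)\Rightarrow(1)$ is immediate. By \Cref{lem: summing embedding of C(K)}(1), $J_p$ is $p$-summing with $\pi_p(J_p)\leq \mu(K)^{1/p}=1$. Composing with the bounded operator $\overline{T}$ (the typo $L_{p,1}$ in the statement should read $L_p(K,\mu)$) gives $\pi_p(T)\leq\|\overline{T}\|\leq C$. The equivalence between $p$-summing and $p$-concave on $C(K)$ follows from \Cref{prop: weak p-summing norm in C(K)} together with the inequality \eqref{eq: weak summing and lattice multinorms}.

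For $(1)\Rightarrow(2)$ I will use the classical Hahn--Banach/minimax argument. Let $P(K)\subseteq C(K)^*$ be the set of probability measures; it is convex and weak$^*$-compact. For every finite family $f_1,\ldots,f_n\in C(K)$, define a function on $P(K)$ by
\[\Phi_{(f_i)}(\mu)=C^p\sum_{i=1}^n\int_K|f_i|^p\,d\mu-\sum_{i=1}^n\|Tf_i\|^p .\]
Each $\Phi$ is affine and weak$^*$-continuous, and the collection $\mathcal{F}$ of all such functions is a convex cone, since concatenating families adds the functions and scaling the $f_i$ multiplies by positive scalars. The $p$-summing inequality, read through \Cref{prop: weak p-summing norm in C(K)}, gives
\[\sum\|Tf_i\|^p\leq C^p\Big\|\sum|f_i|^p\Big\|_\infty .\]
Choosing $t_0\in K$ where the continuous function $\sum|f_i|^p$ attains its maximum then shows $\Phi(\delta_{t_0})\geq 0$. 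Thus every $\Phi\in\mathcal{F}$ is nonnegative somewhere on $P(K)$.

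The main step, and the expected obstacle, is to find a single $\mu\in P(K)$ with $\Phi(\mu)\geq 0$ for all $\Phi\in\mathcal{F}$ simultaneously. I would separate in $C(P(K))$ the convex set $\mathcal{F}$ from the open convex cone $\{\psi:\sup\psi<0\}$. These are disjoint by the previous paragraph. Hahn--Banach yields a nonzero measure $\lambda$ on $P(K)$ which is positive (since it is nonnegative on $\{\psi<0\}$ with reversed sign) and satisfies $\int\Phi\,d\lambda\geq 0$ for all $\Phi\in\mathcal{F}$. Normalize $\lambda$ to a probability measure and take its barycenter $\mu=\int\nu\,d\lambda(\nu)\in P(K)$. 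Since each $\Phi$ is affine and continuous, $\Phi(\mu)=\int\Phi\,d\lambda\geq0$. Alternatively, Ky Fan's minimax lemma can be applied directly.

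Applying this with a single function gives $\|Tf\|\leq C\|f\|_{L_p(\mu)}$ for every $f\in C(K)$. Hence $\overline{T}(J_pf)=Tf$ is well defined on $J_p(C(K))$ (the kernel of $J_p$ is killed by $T$) with norm at most $C$. Since $C(K)$ is dense in $L_p(K,\mu)$ for a regular Borel measure and $p<\infty$, $\overline{T}$ extends uniquely to $L_p(K,\mu)$ with $\|\overline{T}\|\leq C$ and $T=\overline{T}J_p$.
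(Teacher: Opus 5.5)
Your proof is correct. Your $(2)\Rightarrow(1)$ is the paper's, and you are right that $L_{p,1}(K,\mu)$ in the statement should read $L_p(K,\mu)$. For $(1)\Rightarrow(2)$ you take a different, though closely related, route.

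You separate, in $C(P(K))$, the cone $\mathcal{F}$ of affine functions $\Phi_{(f_i)}$ from the open cone of strictly negative functions. This gives a probability measure $\lambda$ on $P(K)$, and you return to $K$ via the barycenter $\mu(h)=\int_{P(K)}\nu(h)\,d\lambda(\nu)$. That last step is harmless precisely because every $\Phi$ has the form $\nu\mapsto\nu(h)-c$ with $h\in C(K)$.

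The paper uses this linearity from the outset and separates only once, in $C(K)$ itself. The convex set $W=\{C^p\sum_i|f_i|^p:\sum_i\|Tf_i\|^p=1\}\subseteq C(K)_+$ misses the open unit ball by $p$-concavity. Hahn--Banach then yields $\nu\in C(K)^*$ with $\nu<1$ on the ball and $\nu\geq 1$ on $W$. The desired probability measure is $\mu=|\nu|/\|\nu\|$, since the elements of $W$ are positive and so $\mu(f)\geq\nu(f)/\|\nu\|\geq 1$ for $f\in W$. Testing on $C^p|f|^p/\|Tf\|^p\in W$ gives $\|Tf\|\leq C\|f\|_{L_p(\mu)}$.

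So the paper's argument is shorter and produces the measure on $K$ directly, getting positivity from the trick $|\nu|/\|\nu\|$ rather than from the negative cone. Yours is the minimax template (in effect Ky Fan's lemma for affine functions). That is the form of the argument which extends, through the full Ky Fan lemma, to domination problems where the dependence on the unknown measure or density is no longer linear, as in Maurey-type factorizations.
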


\begin{proof}
    $(2)\Rightarrow (1)$ It follows easily from the fact that $J_p$ is $p$-summing with constant $\mu(K)^\frac{1}{p}=1$ by \Cref{lem: summing embedding of C(K)}.\\

    $(1)\Rightarrow (2)$ Let 
    \[W=\cbr[3]{f\in C(K): f=C^p\sum_{i=1}^n|f_i|^p\text{ for some } (f_i)_{i=1}^n\subseteq C(K)\text{ with }\sum_{i=1}^n \|Tf_i\|^p=1},\]
    which is clearly a convex subset of $C(K)_+$. Since $T$ is $p$-concave, it follows that $\|f\|\geq 1$ for every $f\in W$, so $W$ is disjoint from $B$, the open unit ball of $C(K)$. Therefore, by Hahn--Banach Theorem there exists a measure $\nu\in C(K)$ such that $\nu(g)<1$ for every $g\in B$ and $\nu(f)\geq 1$ for every $f\in W$. In particular, $\|\nu\|\leq 1$. Let $\mu=|\nu|/\|\nu\|$. Then, $\mu$ is a probability measure such that $\mu(f)\geq \nu(f)/\|\nu\|\geq 1$ for every $f\in W$. In particular, for every $f\notin \ker T$, the function $C^p|f|^p/\|Tf\|^p$ belongs to $W$, which means that
    \[\|Tf\|^p\leq C^p\mu(|f|^p)=C^p\int_K|f|^p\, d\mu = C^p\|J_pf\|_{L_p(K,\mu)}^p.\]
    Therefore, we can define an operator $\overline{T}$ on $J_p(C(K))$ by $\overline{T} J_p f=Tf$ whose norm is bounded by $C$. Since $J_p$ has dense range in $L_p(K,\mu)$, $\overline{T}$ can be extended to the whole space.
\end{proof}

In \cite{Pisier86}, Pisier proved the following generalization of Pietsch's Factorization theorem for $(p,q)$-summing operators on $C(K)$ spaces, where $L_p$ is replaced by $L_{p,1}$.

\begin{thm}\label{thm: factorization (pq)-summing through Lp1}
    Let $K$ be a compact Hausdorff space, $E$ be a Banach space, $T:C(K)\rightarrow E$ an operator, and $1\leq q<p<\infty$. The following are equivalent:
    \begin{enumerate}
        \item $T$ is $(p,q)$-summing (or equivalently, $(p,q)$-concave).
        \item There exists $A'>0$ and $\mu\in C(K)^*_+$ a probability measure on $K$ such that 
        \[\|Tf\|\leq A' \|f\|_q^{\frac{q}{p}}\|f\|_\infty^{1-\frac{q}{p}}\]
        for every $f\in C(K)$.
        \item There exists $A''>0$ and $\mu\in C(K)^*_+$ a probability measure on $K$ such that 
        \[\|Tf\|\leq A'' \vvvert f \vvvert_{L_{p,1}}\]
        for every $f\in C(K)$.
        \item There exists $A''>0$ and $\mu\in C(K)^*_+$ a probability measure on $K$ such that $T$ factors through $L_{p,1}(K,\mu)$ as $T=\overline{T}J_{p,1}$, where $\overline{T}:L_{p,1}(K,\mu)\rightarrow E$ and $\|\overline{T}\|\leq A''$.
        \[\xymatrix{
		C(K)  \ar@{->}[rd]_{J_{p,1}} \ar@{->}[rr]^{T} & & E  \\
		 & L_{p,1}(K, \mu) \ar@{->}[ru]_{\overline{T}} & 
        }\]
    \end{enumerate}
    Moreover, 
    \begin{align*}
        & A' \leq p^\frac{1}{p}\pi_{p,q}(T)\\
        & A''  \leq p^{-1} A'\\
        & \pi_{p,q}(T) \leq \intoo[2]{ p+\intoo[2]{\frac{1}{q}-\frac{1}{p}}^{-\frac{1}{q^*}}} A''.
    \end{align*}
\end{thm}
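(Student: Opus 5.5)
The plan is to prove the cycle $(1)\Rightarrow(2)\Rightarrow(3)\Rightarrow(4)\Rightarrow(1)$, tracking constants. Two of the links are soft. For $(3)\Rightarrow(4)$, note that $J_{p,1}(C(K))$ is dense in $L_{p,1}(K,\mu)$ (continuous functions are dense there, since $\mu$ is a finite Radon measure and $p<\infty$). Hence $\overline{T}J_{p,1}f=Tf$ extends by continuity, with $\|\overline{T}\|\leq A''$. For $(4)\Rightarrow(1)$, use \Cref{lem: summing embedding of C(K)}(2) with $\mu(K)=1$. It gives $\pi_{p,q}(T)\leq\|\overline{T}\|\pi_{p,q}(J_{p,1})\leq (p+(\frac1q-\frac1p)^{-1/q^*})A''$. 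The equivalence between $(p,q)$-summing and $(p,q)$-concave on $C(K)$ is \Cref{prop: weak p-summing norm in C(K)} together with \eqref{eq: weak summing and lattice multinorms}.

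For $(2)\Rightarrow(3)$ I will bound $\vvvert f\vvvert_{L_{p,1}}$ from below against the interpolation quantity. The idea is to decompose $f$ into level pieces and use that $L_{p,1}$ is the Lorentz space whose unit ball is the closed convex hull of normalized characteristic functions. Since only $\mu(\{|f|>t\})$ enters, I will write $f$ (in the lattice sense) as a layer-cake integral $|f|=\int_0^\infty \chi_{\{|f|>t\}}\,dt$. I will approximate this by $C(K)$ functions $h_t$ with $0\leq h_t\leq1$, each concentrated near $\{|f|>t\}$. Applying (2) to each piece, for which $\|h\|_q^{q/p}\|h\|_\infty^{1-q/p}\approx\mu(\{|f|>t\})^{1/p}$, and using the triangle inequality gives
\[
\|Tf\|\leq A'\int_0^\infty \mu(\{|f|>t\})^{1/p}\,dt = A'p^{-1}\vvvert f\vvvert_{L_{p,1}},
\]
since $\int_0^\infty t^{-1/p^*}f^*(t)\,dt=p\int_0^\infty\mu(|f|>s)^{1/p}\,ds$. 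Signs are handled by treating $f^+$ and $f^-$ separately, or by working with the real-linear structure directly, via step approximations $f\approx\sum_k \delta\,\phi_k$ with continuous Urysohn-type $\phi_k$. This is routine but fiddly, and it produces the constant $A''\leq p^{-1}A'$.

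The main obstacle is $(1)\Rightarrow(2)$, a Pietsch-type Hahn--Banach argument adapted to the mixed norm, following Pisier. Normalize $\pi_{p,q}(T)=1$. I would consider the convex set $W$ of functions of the form $\sum_i|f_i|^q\in C(K)_+$ with $\sum_i\|Tf_i\|^p\geq 1$. Because the condition is not homogeneous in the right way, I would instead run a minimax over probability measures $\mu$ on $K$. The aim is to show that for each finite family $(f_i)$ there is $\mu$ with
\[
\sum_i\|Tf_i\|^p\leq p\,\sum_i\Bigl(\int|f_i|^q\,d\mu\Bigr)\|f_i\|_\infty^{p-q}.
\]
This holds with $\mu$ a convex combination of point masses, chosen via the $(p,q)$-summing inequality applied to suitably rescaled $f_i$ (rescale by $\|f_i\|_\infty^{(p-q)/q}$-type weights, and use Young's inequality $ab\leq \frac{q}{p}a^{p/q}+(1-\frac qp)b^{p/(p-q)}$ to split the product). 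Ky Fan's minimax lemma on the weak$^*$ compact convex set of probability measures then yields a single $\mu$. Finally, testing on a single $f$ and optimizing the Young splitting gives $\|Tf\|\leq p^{1/p}\|f\|_q^{q/p}\|f\|_\infty^{1-q/p}$. I expect the exact bookkeeping of this Young/minimax step, and getting the constant $p^{1/p}$, to be the hard part.
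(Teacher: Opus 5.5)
The genuine gap is in $(1)\Rightarrow(2)$. Your minimax framework is fine. But for a fixed family, the best convex combination of point masses is a single Dirac mass. So the per-family estimate you actually need is
\[\sum_i\|Tf_i\|^p\le p\,R,\qquad R:=\Bigl\|\sum_i\|f_i\|_\infty^{p-q}|f_i|^q\Bigr\|_\infty .\]
This is already the whole content of Pisier's theorem: it is equivalent to (2) via your minimax, since it follows from (2) by summing.

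Your proposed source for this estimate cannot work. Applying the $(p,q)$-summing inequality to $g_i=\|f_i\|_\infty^{(p-q)/q}f_i$ only gives $\sum_i\|f_i\|_\infty^{p(p-q)/q}\|Tf_i\|^p\le R^{p/q}$. For $\|f_i\|_\infty=1$ this reads $\sum_i\|Tf_i\|^p\le R^{p/q}$, which is superlinear in $R\ge 1$, and Young's inequality cannot repair a mismatch of homogeneity.

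More decisively, no argument using only $(p,q)$-summing inequalities for rescaled subfamilies of the $f_i$ can succeed. Let $K$ be the point set of the finite projective space $PG(D,s)$, and let $f_S=\chi_S$ run over its hyperplanes. Any $D$ hyperplanes have a common point, and a double count bounds $\sum_S\beta_S$ under point-load one. Hence the constant values $\|T\chi_S\|^p\equiv c=\bigl(1+(s+1)D^{1-p/q}\bigr)^{-1}$ satisfy every inequality $\sum_S|\lambda_S|^p c\le\|\sum_S|\lambda_S|^q\chi_S\|_\infty^{p/q}$. Yet $\sum_S\|T\chi_S\|^p / \|\sum_S\chi_S\|_\infty\to D^{p/q-1}$ as $s\to\infty$.

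What is missing is Pisier's extremal argument. It uses the linearity of $T$ and the multiplication of $C(K)$, not mere rescalings:
\begin{enumerate}
\item Take an almost extremal family $f_1,\ldots,f_n$ for the $n$-term constant $\pi^{(n)}_{p,q}(T)$, with norming functionals $y_i^*$.
\item Set $\mu_n(f)=\sum_i y_i^*(T(ff_i))$ and pass to a weak$^*$ cluster point $\mu$, which is automatically a probability measure.
\item Test the inequality on the enlarged family $f_i(\uno-|f|^q/\|f\|_\infty^q)^{1/q}$, $i\le n$, together with a suitable multiple of $f$.
\end{enumerate}
Near-extremality then yields $\|Tf\|^p\le\pi_{p,q}(T)^p\|f\|_\infty^p\bigl(1-(1-t)^p\bigr)$ with $t=\|f\|_\infty^{-q}\int|f|^q\,d\mu$. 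The elementary bound $1-(1-t)^p\le pt$ is exactly where $p^{1/p}$ comes from.

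The rest of your proposal is sound. Your $(3)\Rightarrow(4)$ and $(4)\Rightarrow(1)$ are as in the paper. Your layer-cake proof of $(2)\Rightarrow(3)$ is a valid alternative to the paper's Lusin extension to $L_\infty(K,\mu)$, provided you use signed level pieces. The decomposition $f=\sum_{k\ge0}\delta\,\psi_k$ with $\psi_k=\sgn(f)\min\{1,(|f|-k\delta)_+/\delta\}\in C(K)$ is exact and satisfies $|\psi_k|\le\chi_{\{|f|>k\delta\}}$. Then (2) gives $\|Tf\|\le A'\sum_k\delta\,\mu(|f|>k\delta)^{1/p}\to p^{-1}A'\vvvert f\vvvert_{L_{p,1}}$. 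Splitting into $f^+$ and $f^-$ instead loses a factor of up to $2^{1/p^*}$ in $A''$.
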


\begin{proof}
    $(1)\Rightarrow (2)$ Fix $n\in\N$ and let $\delta_n=1+\frac{1}{n}$. If we denote by
    \[\pi^{(n)}_{p,q}(T)=\sup \cbr[3]{\frac{\intoo[0]{\sum_{i=1}^n \|Tf_i\|^p }^\frac{1}{p}}{\norm[0]{\intoo[0]{\sum_{i=1}^n |f_i|^q }^\frac{1}{q}}_\infty}: f_1,\ldots,f_n\in C(K)},\]
    we can find functions $f_1,\ldots,f_n\in C(K)$ such that 
    \[\intoo[3]{\sum_{i=1}^n \|Tf_i\|^p }^\frac{1}{p}=1\quad \text{and}\quad \norm[3]{\intoo[3]{\sum_{i=1}^n |f_i|^q }^\frac{1}{q}}_\infty \leq \frac{\delta_n}{\pi^{(n)}_{p,q}(T)}.\]
    For each $i=1,\ldots,n$, there exists a normalized functional $z_i^*\in E^*$ such that $z_i^*(Tf_i)=\|Tf_i\|$, so
    \begin{align*}
        1 &= \intoo[3]{\sum_{i=1}^n \|Tf_i\|^p }^\frac{1}{p}= \max \cbr[3]{\sum_{i=1}^n a_i\|Tf_i\|: (a_i)_{i=1}^n\in B_{\ell_{p^*}^n}}\\
        & =\max \cbr[3]{\sum_{i=1}^n a_iz_i^*(Tf_i): (a_i)_{i=1}^n\in B_{\ell_{p^*}^n}} =\sum_{i=1}^n a_iz_i^*(Tf_i)
    \end{align*}
    for some $(a_i)_{i=1}^n\in B_{\ell_{p^*}^n}$. If we denote by $y_i^*=a_iz_i^*$, then it follows that 
    \[\sum_{i=1}^n \|y^*_i\|^{p^*} =1 \quad \text{and} \quad  \sum_{i=1}^n y_i^*(Tf_i) =1.\]
    Let us define for every $n\in \N$ the functional $\mu_n$ by
    \[\mu_n(f)=\sum_{i=1}^n y_i^*(T(ff_i))\]
    for every $f\in C(K)$. Clearly, $\mu_n$ is linear and $\mu_n(\uno)=1$ for every $n\in \N$, where $\uno$ denotes the constant one function over $K$. Moreover, 
    \begin{align*}
        |\mu_n(f)| &\leq \sum_{i=1}^n \|y_i^*\|\|T(ff_i)\| \leq \intoo[3]{\sum_{i=1}^n \|y^*_i\|^{p^*} }^\frac{1}{p^*} \intoo[3]{\sum_{i=1}^n \|T(ff_i)\|^p }^\frac{1}{p}\\
        & \leq \pi^{(n)}_{p,q}(T) \norm[3]{\intoo[3]{\sum_{i=1}^n |ff_i|^q }^\frac{1}{q}}_\infty \leq \delta_n \|f\|_\infty \leq 2 \|f\|_\infty.
    \end{align*}
    so $\mu_n\in C(K)^*$ for every $n\in \N$ and they are uniformly bounded. Let $(\mu_{n_\alpha})_\alpha$ be a $w^*$-convergent subnet of $\{\mu_n\}_n$ that converges to a certain acumulation point $\mu\in C(K)^*$. It follows that
    \[|\mu(f)|=\lim_\alpha |\mu_{n_\alpha}(f)|\leq \lim_\alpha \delta_{n_\alpha} \|f\|_\infty =\|f\|_\infty\]
    for every $f\in C(K)$, and in particular
    \[\mu(\uno)=\lim_\alpha \mu_{n_\alpha}(\uno)=1,\]
    so $\|\mu\|=1$. Note that, since $C(K)^*$ is an $AL$-space, it follows that
    \[\mu_+(\uno)-\mu_-(\uno)=\mu(\uno)=1=\|\mu\|=\||\mu|\| =\|\mu_+\|+\|\mu_-\|=\mu_+(\uno)+\mu_-(\uno),\]
    so $0=\mu_-(\uno)=\|\mu_-\|$ and therefore $\mu$ is a (positive) probability measure. Now, let us fix a non-zero $f\in C(K)$ and $n\in \N$. We can define
    \[h_i=f_i\intoo[3]{\uno-\frac{|f|^q}{\|f\|^q_\infty}}^\frac{1}{q}, i=1,\ldots,n\quad \text{and}\quad h_{n+1}=\frac{\delta_n}{\pi^{(n)}_{p,q}(T)}\cdot \frac{f}{\|f\|_\infty},\]
    so that
    \begin{align*}
        \intoo[3]{\sum_{i=1}^{n+1} |h_i|^q }^\frac{1}{q} &= \intoo[3]{\sum_{i=1}^{n} |f_i|^q \intoo[3]{\uno-\frac{|f|^q}{\|f\|^q_\infty}} +\intoo[3]{\frac{\delta_n}{\pi^{(n)}_{p,q}(T)}}^q  \frac{|f|^q}{\|f\|^q_\infty}}^\frac{1}{q}\\
        & \leq \intoo[3]{\intoo[3]{\frac{\delta_n}{\pi^{(n)}_{p,q}(T)}}^q \intoo[3]{\uno-\frac{|f|^q}{\|f\|^q_\infty}} +\intoo[3]{\frac{\delta_n}{\pi^{(n)}_{p,q}(T)}}^q  \frac{|f|^q}{\|f\|^q_\infty}}^\frac{1}{q} = \frac{\delta_n}{\pi^{(n)}_{p,q}(T)}\uno.
    \end{align*}
    By the definition of $\pi^{(n+1)}_{p,q}(T)$,
    \[\intoo[3]{\sum_{i=1}^{n+1} \|Th_i\|^p }^\frac{1}{p}\leq \pi^{(n+1)}_{p,q}(T) \norm[3]{\intoo[3]{\sum_{i=1}^{n+1} |h_i|^q }^\frac{1}{q}}_\infty \leq \frac{\pi^{(n+1)}_{p,q}(T)}{\pi^{(n)}_{p,q}(T)}\delta_n, \]
    and by Hölder's inequality
    \[\abs[3]{\mu_n \intoo[3]{\intoo[3]{\uno-\frac{|f|^q}{\|f\|^q_\infty}}^\frac{1}{q}}}=\abs[3]{\sum_{i=1}^n y_i^*(Th_i)} \leq  \intoo[3]{\sum_{i=1}^n \|Th_i\|^p }^\frac{1}{p}.\]
    Therefore
    \begin{align*}
        \|Tf\|^p &= \intoo[3]{\frac{\pi^{(n)}_{p,q}(T)}{\delta_n}}^p \|f\|_\infty^p \|Th_{n+1}\|^p \\
        & \leq \intoo[3]{\frac{\pi^{(n)}_{p,q}(T)}{\delta_n}}^p \|f\|_\infty^p \intoo{\intoo[3]{\frac{\pi^{(n+1)}_{p,q}(T)}{\pi^{(n)}_{p,q}(T)}\delta_n}^p - \abs[3]{\mu_n \intoo[3]{\intoo[3]{\uno-\frac{|f|^q}{\|f\|^q_\infty}}^\frac{1}{q}}}^p}.
    \end{align*}
    Taking the limit over the net $n_\alpha$, we get
    \[\|Tf\|^p \leq \pi_{p,q}(T)^p \|f\|_\infty^p \intoo{1 - \abs[3]{\mu \intoo[3]{\intoo[3]{\uno-\frac{|f|^q}{\|f\|^q_\infty}}^\frac{1}{q}}}^p}.\]
    Now, observe that $0\leq \uno-\frac{|f|^q}{\|f\|^q_\infty}\leq 1$, so $0\leq \uno-\frac{|f|^q}{\|f\|^q_\infty}\leq \intoo[3]{\uno-\frac{|f|^q}{\|f\|^q_\infty}}^\frac{1}{q}$, and since $\mu$ is positive, we get
    \begin{align*}
        \|Tf\|^p &\leq \pi_{p,q}(T)^p \|f\|_\infty^p \intoo{1 - \abs[3]{\mu \intoo[3]{\uno-\frac{|f|^q}{\|f\|^q_\infty}}}^p} \\
        & = \pi_{p,q}(T)^p \|f\|_\infty^p \intoo{1 - \abs[3]{1-\mu \intoo[3]{\frac{|f|^q}{\|f\|^q_\infty}}}^p} .
    \end{align*}
    Using the inequality $1-|1-t|^p\leq pt$, that holds for every $t\in [0,1]$, we get
    \[\|Tf\|^p  \leq \pi_{p,q}(T)^p \|f\|_\infty^p p \mu \intoo[3]{\frac{|f|^q}{\|f\|^q_\infty}} = p \pi_{p,q}(T)^p \|f\|_\infty^p \int \frac{|f|^q}{\|f\|^q_\infty} \,d\mu ,  \]
    or equivalently
    \[\|Tf\|\leq p^\frac{1}{p} \pi_{p,q}(T) \|f\|_q^{\frac{q}{p}}\|f\|_\infty^{1-\frac{q}{p}}.\]

    $(2)\Rightarrow (3)$ Let $\mu$ be a probability measure on $K$ satisfying $(2)$. Then, the operator $T:C(K)\rightarrow E$ can be extended to $\hat{T}:L_\infty(K,\mu) \rightarrow E$. Indeed, by Lusin's Theorem, for every $f\in L_\infty(K,\mu)$ there exists a sequence $(f_n)_{n=1}^\infty\subseteq C(K)$ converging $\mu$-a.e. to $f$ and such that $\|f_n\|_\infty \leq \|f\|_\infty$ for every $n\in \N$.  Note that $|f_n-f|^q\leq (2\|f\|_\infty)^q\uno \in L_1(K,\mu)$ and $|f_n-f|^q\rightarrow 0$ $\mu$-a.e., so by the Dominated Convergence Theorem we get that $f_n\rightarrow f$ in $L_q(K,\mu)$. In particular, $(f_n)_{n=1}^\infty$ is a Cauchy sequence in $L_q(K,\mu)$, so by the hypothesis we have
    \[\|Tf_m-Tf_n\|\leq A'\|f_m-f_n\|_q^{\frac{q}{p}} \|f_m-f_n\|_\infty^{1-\frac{q}{p}} \leq A' (2\|f\|_\infty)^{1-\frac{q}{p}}\|f_m-f_n\|_q^{\frac{q}{p}}  \]
    and $(Tf_n)_{n=1}^\infty$ is a Cauchy sequence in $E$. Let us denote by $\hat{T}f$ the limit of $(Tf_n)_{n=1}^\infty$. This limit does not depend of the original choice of sequence $(f_n)_{n=1}^\infty\subseteq C(K)$: if $(g_n)_{n=1}^\infty\subseteq C(K)$ is another sequence with $\|g_n\|_\infty \leq \|f\|_\infty$ that converges $\mu$-a.e. to $f$ then the alternate sequence $(h_n)_{n=1}^\infty$ given by $h_{2n-1}=f_n$ and $h_{2n}=g_n$ also satisfies the previous conditions, so repeating the argument we conclude that $(Th_n)_{n=1}^\infty$ is a Cauchy sequence, so in particular $\lim_n Tf_n=\lim_n Th_n=\lim_n Tg_n$ and $\hat{T}f$ is well defined. Note that for the above argument to hold, it suffices to consider sequences $(f_n)_{n=1}^\infty\subseteq C(K)$ converging $\mu$-a.e. to $f$ with $\|f_n\|_\infty \leq C\|f\|_\infty$ for some fixed constant $C>0$ independent of the sequence. In particular, this implies that $T$ is additive (and hence, linear): if $f,g\in L_\infty(K,\mu)$, we can find sequences $(f_n)_{n=1}^\infty, (g_n)_{n=1}^\infty \subseteq C(K)$ satisfying Lusin's Theorem, so that $f_n+g_n\rightarrow f+g$ $\mu$-a.e. and
    \[\|f_n+g_n\|_\infty \leq \|f_n\|_\infty +\|g_n\|_\infty\leq \|f\|_\infty+ \|g\|_\infty\leq 2\|f\|_\infty\vee \|g\|_\infty \leq 2\|f+g\|_\infty.\]
    Therefore, the sequence $(T(f_n+g_n))_{n=1}^\infty$ is a Cauchy sequence in $E$ that converges to $\hat{T}(f+g)$ on one hand, and to $\hat{T}f+\hat{T}g$ on the other, so both limits must coincide. Recall that the sequences that we have considered satisfy $f_n\rightarrow f$ in $L_q(K,\mu)$, in particular $\|f_n\|_q\rightarrow \|f\|_q$, so the operator $\hat{T}$ satisfies the estimate in $(2)$ too:
    \[\|\hat{T}f\|=\lim_n \|Tf_n\|\leq A' \limsup_n\|f_n\|_q^{\frac{q}{p}} \|f_n\|_\infty^{1-\frac{q}{p}} \leq A'  \lim_n\|f_n\|_q^{\frac{q}{p}} \|f\|_\infty^{1-\frac{q}{p}}= A'  \|f\|_q^{\frac{q}{p}} \|f\|_\infty^{1-\frac{q}{p}} .\]
    Hence, $\hat{T}$ is a bounded operator.\\

    Next, let us fix a simple function $f\in L_\infty(K,\mu)$, given by $f=\sum_{i=1}^m a_i\chi_{K_i}$ with $K_i\subseteq K$ pairwise disjoint and $(|a_i|)_{i=1}^m$ ordered in a decreasing way. Let us define $\eps_j=\text{sgn }a_j$, $f_j=\sum_{i=1}^j \eps_i \chi_{K_i}$, $B_j=\bigcup_{i=1}^j K_i$ and $t_j=\mu(B_j)$ for $j=1,\ldots,m$. Observe that $|f_j|=\chi_{B_j}$ for every $j$. We can decompose $f$ using the functions $f_j$ in the following way (here, $a_{m+1}=0$):
    \begin{align*}
        f&=\sum_{i=1}^m |a_i|\eps_i \chi_{K_i}=\sum_{i=1}^m \sum_{j=i}^m (|a_j|-|a_{j+1}|) \eps_i \chi_{K_i}\\
        & =\sum_{j=1}^m (|a_j|-|a_{j+1}|)\sum_{i=1}^j  \eps_i \chi_{K_i} = \sum_{j=1}^m (|a_j|-|a_{j+1}|)f_j.
    \end{align*}
    We will need to compute the norm of $f$ in $L_{p,1}(K,\mu)$. To do so, first note that the decreasing rearrangement of the simple function $f$ is given by a simple function in $[0,\infty)$ that takes the value $|a_j|$ at the interval $[t_{j-1}, t_j)$ (here $t_0=0$) of measure $\mu(K_j)$, i.e., $f^*=\sum_{j=1}^m |a_j|\chi_{[t_{j-1}, t_j)}$. Therefore,
        \[\vvvert f \vvvert_{L_{p,1}} = \int_0^\infty t^{-\frac{1}{p^*}}f^*(t) \, dt = \sum_{j=1}^m |a_j| \int_{t_{j-1}}^{t_j} t^{-\frac{1}{p^*}} \, dt = p\sum_{j=1}^m |a_j|(t_j^\frac{1}{p}-t_{j-1}^\frac{1}{p})\]
    Now, we can use the above estimates to obtain that
    \begin{align*}
        \|\hat{T}f\|&\leq \sum_{j=1}^m (|a_j|-|a_{j+1}|)\|\hat{T}f_j\| \leq \sum_{j=1}^m A'(|a_j|-|a_{j+1}|)\|f_j\|_q^{\frac{q}{p}} \|f_j\|_\infty^{1-\frac{q}{p}} \\
        & = A' \sum_{j=1}^m (|a_j|-|a_{j+1}|) t_j^{\frac{1}{p}}  = A' \sum_{j=1}^m |a_j|(t_j^\frac{1}{p}-t_{j-1}^\frac{1}{p}) =\frac{A'}{p} \vvvert f \vvvert_{L_{p,1}}.
    \end{align*}
    Since simple functions are dense in $L_\infty (K,\mu)$, and the identity operator from $L_\infty (K,\mu)$ into $L_{p,1} (K,\mu)$ is continuous, we conclude that
    \[\|\hat{T}f\| \leq \frac{A'}{p} \vvvert f \vvvert_{L_{p,1}}\]
    for every $f\in L_\infty(K,\mu)$, so in particular $(3)$ holds for every $f\in C(K)$.\\

    $(3)\Leftrightarrow (4)$ Since $J_{p,1}(C(K))$ is dense in $L_{p,1}(K,\mu)$, condition $(3)$ allows us to define an operator $\overline{T}:L_{p,1}(K,\mu)\rightarrow E$ such that $\overline{T}J_{p,1}=T$. For the reverse implication, just apply the boundedness of $\overline{T}$ to any $J_{p,1}f$ with $f\in C(K)$.\\

    $(4)\Rightarrow (1)$ By \Cref{lem: summing embedding of C(K)}, $J_{p,1}$ is $(p,q)$-summing, so $T=\overline{T}J_{p,1}$ is also $(p,q)$-summing, with constant
    \[\pi_{p,q}(T)\leq \|\overline{T}\|\pi_{p,q}(J_{p,1}) \leq \intoo[2]{ p+\intoo[2]{\frac{1}{q}-\frac{1}{p}}^{-\frac{1}{q^*}}} A''.  \qedhere\]
\end{proof}

As a consequence, we obtain the following inclusions between spaces of $(p,q)$-summing operators.

\begin{cor}\label{cor: inclusions (p1)-summing operators}
    Let $K$ be a compact Hausdorff space, $E$ be a Banach space and $1<p<\infty$. Then:
    \begin{enumerate}
        \item If $1< q<p$, $\Pi_{p,q}(C(K),E)=\Pi_{p,1}(C(K),E)$ as sets, and for every $T\in \Pi_{p,1}(C(K),E)$
        \[\pi_{p,1}(T) \leq \pi_{p,q}(T) \leq  \intoo[2]{ p^{\frac{1}{p}}+\intoo[2]{\frac{q}{p-q}}^{\frac{1}{q^*}}} \pi_{p,1}(T).\]
        \item If $p<q<\infty$, $\Pi_{p,1}(C(K),E)\subseteq \Pi_q(C(K),E)$, and for every $T\in \Pi_{p,1}(C(K),E)$
        \[\pi_q(T) \leq p^{-\frac{1}{p^*}} \intoo[2]{ q^*\intoo[2]{\frac{1}{p}-\frac{1}{q}}}^{-\frac{1}{q^*}} \pi_{p,1}(T).\]
    \end{enumerate}
\end{cor}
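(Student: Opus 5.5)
Both parts will be read off from \Cref{thm: factorization (pq)-summing through Lp1}, which factors a $(p,q)$-summing operator on $C(K)$ through $L_{p,1}(K,\mu)$ for some probability measure $\mu$. The constant appears in the chain $\pi_{p,q}\to A'\to A''\to\pi_{\cdot}$, and the only further input is \Cref{lem: summing embedding of C(K)}.

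\textbf{Part (1).} The inequality $\pi_{p,1}(T)\leq\pi_{p,q}(T)$ is immediate, since $\|(f_i)\|_{q,w}\leq\|(f_i)\|_{1,w}$ for $q\geq 1$. So any $(p,q)$-summing inequality implies the corresponding $(p,1)$-summing one with the same constant.

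For the reverse bound, let $T\in\Pi_{p,1}(C(K),E)$. I apply \Cref{thm: factorization (pq)-summing through Lp1} with $q=1$, which gives a probability measure $\mu$ with
\[\|Tf\|\leq A''\vvvert f\vvvert_{L_{p,1}}, \qquad A''\leq p^{-1}A'\leq p^{\frac1p-1}\pi_{p,1}(T).\]
Hence $T=\overline{T}J_{p,1}$ with $\|\overline{T}\|\leq p^{-1/p^*}\pi_{p,1}(T)$. By \Cref{lem: summing embedding of C(K)}(2) applied with the exponent $q$ and $\mu(K)=1$,
\[\pi_{p,q}(T)\leq \Bigl(p+\bigl(\tfrac1q-\tfrac1p\bigr)^{-1/q^*}\Bigr)\,p^{-1/p^*}\,\pi_{p,1}(T).\]
I then simplify this constant to the stated one. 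The first term is $p\cdot p^{-1/p^*}=p^{1/p}$. For the second term, $\bigl(\tfrac1q-\tfrac1p\bigr)^{-1}=\tfrac{pq}{p-q}$, so it equals $p^{-1/p^*}\bigl(\tfrac{pq}{p-q}\bigr)^{1/q^*}$. This is at most $\bigl(\tfrac{q}{p-q}\bigr)^{1/q^*}$ provided $p^{1/q^*-1/p^*}\leq 1$, which holds because $q<p$ gives $q^*>p^*$ and hence $1/q^*<1/p^*$. Checking this exponent bookkeeping is the main point of care.

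\textbf{Part (2).} Let $p<q<\infty$ and $T\in\Pi_{p,1}$. From the same factorization I have $\|Tf\|\leq p^{-1/p^*}\pi_{p,1}(T)\,\vvvert f\vvvert_{L_{p,1}(\mu)}$. Since $\mu$ is a probability measure and $q>p$, $L_q(\mu)$ embeds continuously into $L_{p,1}(\mu)$. I will estimate $\vvvert f\vvvert_{L_{p,1}}=\int_0^1 t^{-1/p^*}f^*(t)\,dt$ by Hölder with exponents $q,q^*$:
\[\vvvert f\vvvert_{L_{p,1}}\leq \|f\|_q\Bigl(\int_0^1 t^{-q^*/p^*}dt\Bigr)^{1/q^*}.\]
The integral converges because $q^*/p^*<1$, and it equals $\bigl(1-\tfrac{q^*}{p^*}\bigr)^{-1}=\bigl(q^*(\tfrac1p-\tfrac1q)\bigr)^{-1}$, using $1-q^*/p^*=q^*(1/q^*-1/p^*)=q^*(1/p-1/q)$. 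Thus $T$ factors as $T=\widetilde{T}J_q$, with $J_q:C(K)\to L_q(\mu)$ and
\[\|\widetilde{T}\|\leq p^{-1/p^*}\bigl(q^*(\tfrac1p-\tfrac1q)\bigr)^{-1/q^*}\pi_{p,1}(T).\]
Here $\widetilde{T}$ is extended to $L_q(\mu)$ by density. Since $\pi_q(J_q)\leq 1$ by \Cref{lem: summing embedding of C(K)}(1), the stated bound on $\pi_q(T)$ follows.
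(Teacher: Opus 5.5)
Your proposal is correct and follows essentially the same route as the paper. Part (1) combines the comparison of weak norms with the factorization theorem at exponent $1$, then applies \Cref{lem: summing embedding of C(K)}(2) at exponent $q$; part (2) factors $J_{p,1}$ through $J_q$ using the same H\"older estimate and $\pi_q(J_q)\leq 1$. Your explicit check that $p^{1/q^*-1/p^*}\leq 1$ fills in the simplification of the constant, which the paper only asserts.
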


\begin{proof}
    $(1)$ Let us consider first $T\in \Pi_{p,q}(C(K),E)$. It follows that
    \[\intoo[3]{\sum_{i=1}^n \|Tf_i\|^p}^\frac{1}{p} \leq \pi_{p,q}(T) \sup_{\mu\in B_{C(K)^*}} \intoo[3]{\sum_{i=1}^n |\mu(f_i)|^q}^\frac{1}{q} \leq \pi_{p,q}(T) \sup_{\mu\in B_{C(K)^*}}\sum_{i=1}^n |\mu(f_i)|\]
    for every $f_1,\ldots,f_n\in C(K)$, so the first inequality holds. For the reverse inclusion, we exploit the fact that condition $(4)$ in \Cref{thm: factorization (pq)-summing through Lp1} does not depend on the second exponent $q$. Therefore, if $T\in \Pi_{p,1}(C(K),E)$, using $(1)\Rightarrow (4)$ with exponent $1$ we learn that there exists a probability measure $\mu\in C(K)^*$ and an operator $\overline{T}:L_{p,1}(K,\mu)\rightarrow E$ such that $T=\overline{T}J_{p,1}$ and $\|\overline{T}\|\leq p^{-\frac{1}{p^*}}\pi_{p,1}(T)$. Therefore, using now $(4)\Rightarrow (1)$ with exponent $q$ we get that
    \[\pi_{p,q}(T)\leq \intoo[2]{ p+\intoo[2]{\frac{1}{q}-\frac{1}{p}}^{-\frac{1}{q^*}}}\|\overline{T}\|\leq  \intoo[2]{ p^{\frac{1}{p}}+\intoo[2]{\frac{q}{p-q}}^{\frac{1}{q^*}}}\pi_{p,1}(T). \]

    $(2)$ Let $T\in \Pi_{p,1}(C(K),E)$, and apply \Cref{thm: factorization (pq)-summing through Lp1} to find a probability measure $\mu\in C(K)^*$ and an operator $\overline{T}:L_{p,1}(K,\mu)\rightarrow E$ such that $T=\overline{T}J_{p,1}$ and $\|\overline{T}\|\leq p^{-\frac{1}{p^*}}\pi_{p,1}(T)$. Next, we observe that $L_q(K,\mu)\subseteq L_{p,1}(K,\mu)$ and the identity operator $j:L_q(K,\mu)\rightarrow L_{p,1}(K,\mu)$ is bounded. Indeed, if $f\in L_q(K,\mu)$, then
    \begin{align*}
        \vvvert f \vvvert_{L_{p,1}}& =\int_0^\infty t^{-\frac{1}{p^*}}f^*(t) \, dt = \int_0^{\mu(K)} t^{-\frac{1}{p^*}}f^*(t) \, dt\\
        & \leq \intoo[3]{\int_0^1 t^{-\frac{q^*}{p^*}} \, dt}^\frac{1}{q^*} \intoo[3]{\int_0^1 f^*(t)^q \, dt}^\frac{1}{q} = \intoo[2]{ q^*\intoo[2]{\frac{1}{p}-\frac{1}{q}}}^{-\frac{1}{q^*}} \|f\|_q.
    \end{align*}
    In particular, we can write $J_{p,1}=j J_q$. By \Cref{lem: summing embedding of C(K)}, $J_q$ is $q$-summing with $\pi_q(J_q)\leq 1$, so it follows that
    \[\pi_q(T)\leq \|\overline{T}\|\|j\|\pi_q(J_q) \leq p^{-\frac{1}{p^*}} \intoo[2]{ q^*\intoo[2]{\frac{1}{p}-\frac{1}{q}}}^{-\frac{1}{q^*}} \pi_{p,1}(T).\qedhere \]
\end{proof}

Putting together \Cref{thm: characterization (pq)-concave and (pq)-summing} and \Cref{cor: inclusions (p1)-summing operators}, we can conclude that the only relevant cases of $(p,q)$-concave operators are actually $p$-concave and $(p,1)$-concave operators, up to constants depending only on $p$ and $q$.

\begin{thm}\label{thm: relevant cases of concavity}
    Let $X$ be a Banach lattice, $E$ be a Banach space, $T:X\rightarrow E$ and $1<p<\infty$.
    \begin{enumerate}
        \item If $1< q<p$, then $T$ is $(p,q)$-concave if and only if $T$ is $(p,1)$-concave. Moreover
        \[K_{(p,1)}(T) \leq K_{(p,q)}(T) \leq  \intoo[2]{ p^{\frac{1}{p}}+\intoo[2]{\frac{q}{p-q}}^{\frac{1}{q^*}}} K_{(p,1)}(T).\]
        \item If $T$ is $(p,1)$-concave, then it is also $q$-concave for every $p<q<\infty$, with constant
        \[K_{(q)}(T) \leq p^{-\frac{1}{p^*}} \intoo[2]{ q^*\intoo[2]{\frac{1}{p}-\frac{1}{q}}}^{-\frac{1}{q^*}} K_{(p,1)}(T).\]
    \end{enumerate}
\end{thm}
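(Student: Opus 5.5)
The plan is to transfer the $C(K)$ statements of \Cref{cor: inclusions (p1)-summing operators} to the lattice $X$ through the characterization in \Cref{thm: characterization (pq)-concave and (pq)-summing}.

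For (1), the first inequality $K_{(p,1)}(T)\leq K_{(p,q)}(T)$ is already in \Cref{rem: facts about convexity}(4): $(p,q)$-concavity implies $(p,1)$-concavity without increasing the constant, since $(\sum|x_i|)\geq(\sum|x_i|^q)^{1/q}$ pointwise. For the reverse inequality, assume $T$ is $(p,1)$-concave and set $M=K_{(p,1)}(T)$. By the forward direction of \Cref{thm: characterization (pq)-concave and (pq)-summing} (with $q=1$), for every compact Hausdorff $K$ and every positive $S:C(K)\to X$, the composition $TS$ is $(p,1)$-summing with $\pi_{p,1}(TS)\leq M\|S\|$. Then \Cref{cor: inclusions (p1)-summing operators}(1) gives
\[
\pi_{p,q}(TS)\leq \Bigl(p^{1/p}+\bigl(\tfrac{q}{p-q}\bigr)^{1/q^*}\Bigr)M\|S\|.
\]
The reverse direction of \Cref{thm: characterization (pq)-concave and (pq)-summing}, applied with this constant, yields $K_{(p,q)}(T)\leq (p^{1/p}+(q/(p-q))^{1/q^*})K_{(p,1)}(T)$.

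For (2) the scheme is the same, but the characterization theorem requires $q\leq p$ for the exponent pair $(q,q)$, whereas here the pair is $(q,q)$ with $q>p$. I would check whether the proof of \Cref{thm: characterization (pq)-concave and (pq)-summing} actually used $q\leq p$. It did not: the forward direction only uses \Cref{prop: weak p-summing norm in C(K)} and \Cref{rem: facts about convexity}(7), and the reverse direction only uses the Kakutani representation of $I_e$ with $e=(\sum|x_i|^q)^{1/q}$ together with $\|j\|\leq 1$. So the argument goes through verbatim for the pair $(q,q)$ and any $1\leq q<\infty$.

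Given that, take a positive $S:C(K)\to X$. Then $TS$ is $(p,1)$-summing with $\pi_{p,1}(TS)\leq K_{(p,1)}(T)\|S\|$. By \Cref{cor: inclusions (p1)-summing operators}(2), $TS$ is $q$-summing with
\[
\pi_q(TS)\leq p^{-1/p^*}\bigl(q^*(\tfrac1p-\tfrac1q)\bigr)^{-1/q^*}K_{(p,1)}(T)\|S\|,
\]
and the reverse characterization for $(q,q)$ gives the stated bound on $K_{(q)}(T)$.

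The only delicate point is this legitimacy check, that the characterization holds for $(q,q)$ with $q>p$. I expect it to be immediate once one reads that the hypothesis $q\leq p$ appears nowhere in the proof. If it did not hold, the fallback would be to redo the reverse direction by hand: represent $I_e$ as $C(K)$ with $e=(\sum|x_i|^q)^{1/q}$, pull back $x_i=jf_i$, and bound $(\sum\|Tx_i\|^q)^{1/q}\leq \pi_q(Tj)\,\|(\sum|f_i|^q)^{1/q}\|_\infty$ using \Cref{prop: weak p-summing norm in C(K)}.
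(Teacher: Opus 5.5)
Your proposal is correct and is essentially the paper's argument: both parts move \Cref{cor: inclusions (p1)-summing operators} from $C(K)$ to $X$ through \Cref{thm: characterization (pq)-concave and (pq)-summing}, used in both directions, with the constant carried through the factor $\|S\|$. The legitimacy check you raise in (2) is not actually needed: the condition $q\leq p$ in \Cref{thm: characterization (pq)-concave and (pq)-summing} concerns that theorem's own exponent pair, and for the pair $(q,q)$ it becomes $q\leq q<\infty$, which always holds, so the theorem applies directly.
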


\begin{proof}
    $(1)$ Let us fix $1\leq q<p$. Then, by \Cref{thm: characterization (pq)-concave and (pq)-summing}, $T$ is $(p,q)$-concave if and only if for every compact Hausdorff space $K$ and every positive operator $S:C(K)\rightarrow X$, $TS:C(K)\rightarrow E$ is $(p,q)$-summing. By \Cref{cor: inclusions (p1)-summing operators}, the operators $TS$ are $(p,q)$-summing if and only if they are $(p,1)$-summing. Using again \Cref{thm: characterization (pq)-concave and (pq)-summing}, we conclude that the latter happens if and only if $T$ is $(p,1)$-concave. Moreover, if $T$ is $(p,q)$-concave with constant $K_{(p,q)}(T)$, then $\pi_{p,1}(TS)\leq \pi_{p,q}(TS)\leq \|S\| K_{(p,q)}(T)$, so $K_{(p,1)}(T) \leq K_{(p,q)}(T)$. On the other hand, if $T$ is $(p,1)$-concave with constant $K_{(p,1)}(T)$,  then repeating the argument we get the reverse inequality.\\

    $(2)$ Let us fix $p< q<\infty$. By \Cref{thm: characterization (pq)-concave and (pq)-summing}, if $T$ is $(p,1)$-concave with constant $K_{(p,1)}(T)$, then for every compact Hausdorff space $K$ and every positive operator $S:C(K)\rightarrow X$, $TS:C(K)\rightarrow E$ is $(p,1)$-summing, and hence $q$-summing by \Cref{cor: inclusions (p1)-summing operators}, with 
    \[\pi_q(T)\leq p^{-\frac{1}{p^*}} \intoo[2]{ q^*\intoo[2]{\frac{1}{p}-\frac{1}{q}}}^{-\frac{1}{q^*}} \pi_{p,1}(TS)\leq p^{-\frac{1}{p^*}} \intoo[2]{ q^*\intoo[2]{\frac{1}{p}-\frac{1}{q}}}^{-\frac{1}{q^*}}\|S\| K_{(p,1)}(T).\]
    By \Cref{thm: characterization (pq)-concave and (pq)-summing} again, this implies that $T$ is $q$-concave with constant 
    \[K_{(q)}(T) \leq p^{-\frac{1}{p^*}} \intoo[2]{ q^*\intoo[2]{\frac{1}{p}-\frac{1}{q}}}^{-\frac{1}{q^*}} K_{(p,1)}(T). \qedhere\]
\end{proof}

A straightforward application of \Cref{thm: duality convexity concavity} yields the corresponding statement for $(p,q)$-convex operators.

\begin{cor}\label{cor: relevant cases of convexity}
    Let $X$ be a Banach lattice, $E$ be a Banach space, $T:E\rightarrow X$ and $1<p<\infty$.
    \begin{enumerate}
        \item If $p<q< \infty$, then $T$ is $(p,q)$-convex if and only if $T$ is $(p,\infty)$-convex. Moreover
        \[K^{(p,\infty)}(T) \leq K^{(p,q)}(T) \leq  \intoo[2]{ (p^*)^{\frac{1}{p^*}}+\intoo[2]{\frac{q^*}{p^*-q^*}}^{\frac{1}{q}}} K^{(p,\infty)}(T).\]
        \item If $T$ is $(p,\infty)$-convex, then it is also $q$-convex for every $1<q<p$, with constant
        \[K^{(q)}(T) \leq (p^*)^{-\frac{1}{p}} \intoo[2]{ q\intoo[2]{\frac{1}{p^*}-\frac{1}{q^*}}}^{-\frac{1}{q}} K^{(p,\infty)}(T).\]
    \end{enumerate}
\end{cor}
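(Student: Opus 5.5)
The plan is to deduce both statements from \Cref{thm: relevant cases of concavity} applied to the adjoint $T^*:X^*\rightarrow E^*$, using the duality of \Cref{thm: duality convexity concavity}. That theorem says $T$ is $(p,q)$-convex if and only if $T^*$ is $(p^*,q^*)$-concave, with $K_{(p^*,q^*)}(T^*)=K^{(p,q)}(T)$. This holds for every pair of exponents, including the pairs $(p,\infty)$ and $(q,q)$.

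For $(1)$, I fix $p<q<\infty$, so that $1<q^*<p^*<\infty$. \Cref{thm: relevant cases of concavity}(1), applied with exponent $p^*$ in place of $p$ and $q^*$ in place of $q$, says that $T^*$ is $(p^*,q^*)$-concave if and only if it is $(p^*,1)$-concave. It also gives
\[K_{(p^*,1)}(T^*)\leq K_{(p^*,q^*)}(T^*)\leq \Bigl((p^*)^{\frac{1}{p^*}}+\Bigl(\frac{q^*}{p^*-q^*}\Bigr)^{\frac{1}{q}}\Bigr)K_{(p^*,1)}(T^*).\]
Here the exponent $\frac{1}{(q^*)^*}$ appearing in that theorem becomes $\frac{1}{q}$. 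Since $1^*=\infty$, duality identifies $K_{(p^*,1)}(T^*)$ with $K^{(p,\infty)}(T)$ and $K_{(p^*,q^*)}(T^*)$ with $K^{(p,q)}(T)$. Substituting these gives exactly the stated equivalence and the stated inequalities.

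For $(2)$, I take $1<q<p$, so that $p^*<q^*<\infty$. If $T$ is $(p,\infty)$-convex, then $T^*$ is $(p^*,1)$-concave with the same constant. \Cref{thm: relevant cases of concavity}(2), with $p^*$ and $q^*$ in place of $p$ and $q$, then makes $T^*$ a $q^*$-concave operator with
\[K_{(q^*)}(T^*)\leq (p^*)^{-\frac{1}{p}}\Bigl(q\Bigl(\frac{1}{p^*}-\frac{1}{q^*}\Bigr)\Bigr)^{-\frac{1}{q}}K_{(p^*,1)}(T^*).\]
Here $(p^*)^*=p$ and $(q^*)^*=q$ have been substituted into the theorem's constant. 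Duality in the opposite direction turns this into the $q$-convexity of $T$ with $K^{(q)}(T)=K_{(q^*)}(T^*)$, which is the claimed bound.

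No real obstacle is expected. The only care needed is in two places. First, the exponent ranges must match the hypotheses of \Cref{thm: relevant cases of concavity}: for instance $q^*>1$ requires $q<\infty$, which is why $q=\infty$ is excluded in $(1)$. Second, the conjugate exponents must be substituted correctly into the constants. The identification $1^*=\infty$ is covered by \Cref{thm: duality convexity concavity}, which is stated for all $1\leq p,q\leq\infty$.
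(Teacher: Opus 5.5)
Your proposal is correct and is the same argument the paper gives: it obtains this corollary as a direct application of \Cref{thm: duality convexity concavity} to $T^*$ combined with \Cref{thm: relevant cases of concavity}. Your substitution of conjugate exponents into the constants, via $(p^*)^*=p$, $(q^*)^*=q$ and $\infty^*=1$, is carried out correctly, and you correctly check that the exponent ranges meet the theorem's hypotheses.
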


With these two results, we have completed the diagrams of implications between convexity and concavity properties that we advanced at the end of \Cref{sec: general facts}. It should be noted that $q$-convexity for every $q<p$ does not imply upper $p$-estimates, not even when the $q$-convexity constant behaves asymptotically as the constant in \Cref{cor: relevant cases of convexity}(2), as the following example shows: 

\begin{example}\label{ex: q convex but not upe}
    Let $1<p<\infty$. Let $X=\ell_{p,\infty}(\ell_p)$ be the space of sequences of vectors $\overline{x}=(x_k)_{k\in\N}$ such that $x_k\in \ell_p$ for every ${k\in\N}$ and $\|\overline{x}\|_X:=\|(\|x_k\|_{\ell_p})_k\|_{\ell_{p,\infty}}$ is finite. Then, $X$ does not satisfy an upper $p$-estimate, but it is $q$-convex with constant $(\frac{p}{p-q})^{\frac{1}{q}}$ for every $1\leq q<p$.
\end{example}

\begin{proof}
    This example is based on the same ideas that we used in \Cref{prop: uniform copies of fd weak Lp}. We start by showing that $X$ is $q$-convex for any $1\leq q<p$. Recall from \Cref{sec: weak Lp} that both $\ell_p$ and $\ell_{p,\infty}$ are $q$-convex with constants 1 and $\left(\frac{p}{p-q}\right)^{\frac{1}{q}}$, respectively. Given $\overline{x}^{(i)}=(x_k^{(i)})_{k\in\N}\in X$, $i=1,\ldots,n$, we have
    \begin{align*}
        \norm[3]{\intoo[3]{\sum_{i=1}^n |\overline{x}^{(i)}|^q}^\frac{1}{q}}_X & = \norm[4]{\intoo[4]{\intoo[3]{\sum_{i=1}^n |x_k^{(i)}|^q}^\frac{1}{q}}_k}_X = \norm[4]{\intoo[4]{\norm[3]{\intoo[3]{\sum_{i=1}^n |x_k^{(i)}|^q}^\frac{1}{q}}_{\ell_p}}_k}_{\ell_{p,\infty}}\\
        & \leq \norm[4]{\intoo[4]{\intoo[3]{\sum_{i=1}^n \|x_k^{(i)}\|_{\ell_p}^q}^\frac{1}{q}}_k}_{\ell_{p,\infty}} =  \norm[4]{\intoo[3]{\sum_{i=1}^n \abs[2]{\intoo[2]{\|x_k^{(i)}\|_{\ell_p}}_k}^q}^\frac{1}{q}}_{\ell_{p,\infty}} \\
        & \leq \left(\frac{p}{p-q}\right)^{\frac{1}{q}}  \intoo[3]{\sum_{i=1}^n \norm[2]{\intoo[2]{\|x_k^{(i)}\|_{\ell_p}}_k}_{\ell_{p,\infty}} ^q}^\frac{1}{q} = \left(\frac{p}{p-q}\right)^{\frac{1}{q}}  \intoo[3]{\sum_{i=1}^n \|\overline{x}^{(i)}\|_X^q}^\frac{1}{q},
    \end{align*}
    where the first and second inequalities follow from the $q$-convexity of $\ell_p$ and $\ell_{p,\infty}$, respectively.\medskip

    To show that $X$ does not satisfy an upper $p$-estimate, let us assume that the coordinates in $\ell_p$ and $\ell_{p,\infty}$ are indexed starting at $0$. Let us fix $n\geq 1$ and denote by $(e_j)_{j=0}^\infty$ the canonical basis of $\ell_p$. For $k=0,\ldots,n-1$, we write $a^{(k)}=\sum_{j=0}^{n-1} \alpha_{(k+j)_n} e_j$ for the cyclic permutations of the coefficients $\alpha_k=(k+1)^\frac{1}{p^*}-k^\frac{1}{p^*}$ over the set $\{0,\ldots,n-1\}$. Here, by $i_n$ we mean $i \mod n$. We consider for $i=0,\ldots,n-1$ the (finite) sequences $\overline{x}^{(i)}=(x_k^{(i)})_{k=0}^\infty\in X$ given by $x_k^{(i)}=\alpha_{(k+i)_n} e_i$ for $k=0,\ldots,n-1$ and $x_k^{(i)}=0$ for $k\geq n$. It is clear that for every $i$, 
    \[\|\overline{x}^{(i)}\|_X=\|(\alpha_{(k+i)_n})_{k=0}^{n-1}\|_{\ell_{p,\infty}}=\|(\alpha_{k})_{k=0}^{n-1}\|_{\ell_{p,\infty}}=1,\quad \text{so  }\intoo[3]{\sum_{i=0}^{n-1} \|\overline{x}^{(i)}\|_X^p}^\frac{1}{p}=n^\frac{1}{p}.\]
    On the other hand, for every $k=0,\ldots, n-1$,
    \[\bigvee_{i=0}^{n-1}|x_k^{(i)}|=\sum_{i=0}^{n-1} \alpha_{(k+i)_n} e_i=a^{(k)},\]
    so
    \[\norm[3]{\bigvee_{i=0}^{n-1} |\overline{x}^{(i)}|}_X = \norm[4]{\intoo[4]{\norm[3]{\bigvee_{i=0}^{n-1} |x_k^{(i)}|}_{\ell_p}}_{k=0}^{n-1}}_{\ell_{p,\infty}}= \norm[2]{\intoo[2]{\|a^{(k)}\|_{\ell_p}}_{k=0}^{n-1}}_{\ell_{p,\infty}}= A_n n^\frac{1}{p},\]
    where $A_n=\intoo{\sum_{j=0}^{n-1} \alpha_j^p}^\frac{1}{p}$ is the norm of $a^{(k)}$ in ${\ell_p}$ for every $k$, and $n^\frac{1}{p}$ is the norm in $\ell_{p,\infty}$ of the constant 1 vector on the set $\{0,\ldots,n-1\}$. As we checked in \Cref{prop: uniform copies of fd weak Lp}, $\alpha_k \approx (k+1)^{-\frac{1}{p}}$ uniformly in $k$, so $A_n\approx \intoo{\sum_{k=1}^n k^{-1}}^\frac{1}{p}$, which diverges. To conclude the argument, observe that the quotients $\norm{\bigvee_{i=0}^{n-1} |\overline{x}^{(i)}|}_X /\intoo{\sum_{i=0}^{n-1} \|\overline{x}^{(i)}\|_X^p}^\frac{1}{p}=A_n$ are not uniformly bounded, so $X$ cannot be $({p,\infty})$-convex.
\end{proof}

\section{Representations}\label{sec: representations}
The previous factorization results can be used to prove representation theorems for convex and concave Banach lattices. Recall that general Banach lattices are not far from being spaces of functions: a classical result from Lotz \cite[Lemma 3.4]{Lotz} establishes that every Banach lattice embeds isometrically as a sublattice of an $\ell_\infty$ sum of $L_1$-spaces. Using \Cref{cor: factorization Maurey operator} and \Cref{cor: factorization Pisier operator}, we can improve this representation result for $p$-convex Banach lattices and Banach lattices with upper $p$-estimates \cite[Proposition 3.3]{GLTT}.

\begin{thm}\label{thm: representation infty sums}
    Let $X$ be a Banach lattice and $1<p<\infty$.
    \begin{enumerate}
        \item If $X$ is $p$-convex, then there are a family $\Gamma$ of probability measures and a lattice isomorphism
        \[ J : X\rightarrow \ell_\infty( L_p(\mu))_{\mu\in\Gamma} \]
        such that $\|x\| \leq \|Jx\| \leq K^{(p)}(X)\|x\|$ for all $x\in X$.
        \item If $X$ satisfies an upper $p$-estimate, then there are a family $\Gamma$ of probability measures and a lattice isomorphism
        \[ J : X\rightarrow \ell_\infty( L_{p,\infty}(\mu))_{\mu\in\Gamma} \]
        such that $\|x\| \leq \|Jx\| \leq \gamma_p K^{(\uparrow p)}(X)\|x\|$ for all $x\in X$, where $\gamma_p=(1-\frac{1}{p})^{\frac{1}{p}-1}=(p^*)^{\frac{1}{p^*}}$.
    \end{enumerate}
\end{thm}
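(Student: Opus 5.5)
The plan is to start from Lotz's representation and refine each coordinate using the Maurey--Nikishin and Pisier factorizations. For each $x^*\in S_{X^*}\cap X^*_+$, consider the seminorm $x\mapsto x^*(|x|)$. Its kernel is an ideal, and the quotient, completed, is an $AL$-space, hence some $L_1(\nu_{x^*})$. This gives a contractive lattice homomorphism $T_{x^*}:X\to L_1(\nu_{x^*})$ with $\|T_{x^*}x\|_{L_1}=x^*(|x|)$. Since $\|x\|=\sup_{x^*}x^*(|x|)$, the family $(T_{x^*})$ is a lattice isometric embedding into $\ell_\infty(L_1(\nu_{x^*}))$; this is Lotz's result, made explicit. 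We may assume each $\nu_{x^*}$ is finite: the image is generated by the class of a positive element, and one can restrict to the band generated by a single weak unit. Alternatively, we work directly with the positive functional on $X$, passing to a Radon--Nikodym change of density later.

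In case (1), each $T_{x^*}$ is a lattice homomorphism into $L_1$ with $\|T_{x^*}\|\le1$. By \Cref{rem: facts about convexity}(7), it is $p$-convex with $K^{(p)}(T_{x^*})\le K^{(p)}(X)$. Apply \Cref{cor: factorization Maurey operator} with $r=1$. This produces a normalized $g\in L_1(\nu_{x^*})_+$ such that $R_{x^*}x=g^{-1}T_{x^*}x$ defines an operator $X\to L_p(g\cdot\nu_{x^*})$ with $\|R_{x^*}\|\le K^{(p)}(X)$, and the multiplication $M$ by $g$ is a contraction satisfying $T_{x^*}=MR_{x^*}$. The measure $\mu_{x^*}:=g\cdot\nu_{x^*}$ is a probability measure. $R_{x^*}$ is a lattice homomorphism, since it is multiplication by a positive function composed with a lattice homomorphism. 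Define $J=(R_{x^*})_{x^*}$. Then $\|Jx\|\le K^{(p)}(X)\|x\|$, and
\[
\|x\|=\sup_{x^*}\|T_{x^*}x\|_{L_1}\le\sup_{x^*}\|R_{x^*}x\|_{L_p(\mu_{x^*})}=\|Jx\|,
\]
so $J$ is the required lattice isomorphism.

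Case (2) runs the same way, using \Cref{cor: factorization Pisier operator} with $r=1$. By \Cref{thm: upe equivalent to mixed convexity}, $X$ is $(p,\infty)$-convex with constant $K^{(\uparrow p)}(X)$, so $T_{x^*}$ is too. We obtain $R_{x^*}:X\to L^{[1]}_{p,\infty}(\mu_{x^*})$ with $\|R_{x^*}\|\le C''\le C'\le(1-\frac1p)^{\frac1p-1}K^{(\uparrow p)}(X)=\gamma_pK^{(\uparrow p)}(X)$. The multiplication operator $M$ is contractive by the computation in the proof of \Cref{cor: factorization Pisier operator}, which gives the lower bound $\|x\|\le\|Jx\|$. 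The norm on $L_{p,\infty}$ is $\|\cdot\|_{L^{[1]}_{p,\infty}}$, matching the paper's convention.

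The main technical point is the identification of the quotient with $L_1(\nu)$ and making sure the factorization corollaries apply. Kakutani's theorem gives an arbitrary measure, while the corollaries allow an arbitrary measure $\mu$ on which $g\in L_1(\mu)$ lives, so $\sigma$-finiteness should not be needed. One must also check that $\{g=0\}\subseteq\{T_{x^*}x=0\}$, which the corollaries provide; this ensures that $R_{x^*}$ loses no information and that $MR_{x^*}=T_{x^*}$ exactly.
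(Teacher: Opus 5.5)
Your proposal is correct and is essentially the paper's proof. The only difference is bookkeeping: the paper indexes the coordinates by normalized positive $x\in X$, each with a chosen norming functional, rather than by all of $S_{X^*}\cap X^*_+$, and it gets the lower bound from $\|R_xx\|\geq\|i_xq_xx\|_{L_1}=1$ instead of from $\|x\|=\sup_{x^*}x^*(|x|)$. That difference is immaterial.

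Your aside about reducing to a finite measure through a weak unit is not valid in general: take $X=\ell_1(\Gamma)$ with $\Gamma$ uncountable and $x^*$ the constant one functional, whose quotient is all of $\ell_1(\Gamma)$, which has no weak unit. As you observe at the end, though, this reduction is unnecessary, since \Cref{cor: factorization Maurey operator} and \Cref{cor: factorization Pisier operator} apply to arbitrary measures.
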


\begin{proof}
    Let $x\in S=S_X\cap X_+$. Choose $x^*\in X^*_+$ such that $\|x^*\|=1$ and $x^*(x) = 1$ and define $\rho_x:X\rightarrow \R$ by $\rho_x(z) = x^*(|z|)$. Then $\rho_x$ induces an $AL$-norm $\hat{\rho}_x$ on $X/\ker \rho_x$. Let $q_x: X\rightarrow X/\ker\rho_x$ be the quotient map. By Kakutani's Representation Theorem for $AL$-spaces, we know that the completion of $(X/\ker \rho_x,\hat{\rho}_x)$ is lattice isometric to $L_1(\mu_x)$ for some measure $\mu_x$, so let us denote by $i_x: X/\ker\rho_x\rightarrow L_1(\mu_x)$ the formal inclusion, that satisfies $\|i_x q_x x\|_{L_1(\mu_x)}=\hat{\rho}_x(q_xx)=\rho_x(x)=1$. Since $X$ is $p$-convex or satisfies an upper $p$-estimate (i.e., it is $(p,\infty)$-convex), depending on the case, and $i_x q_x$ is positive (actually, a lattice homomorphism) and contractive, it follows from \Cref{rem: facts about convexity}(7) that $i_x q_x$ is $p$-convex or $(p,\infty)$-convex, respectively, with the same constant as $X$. Therefore, \Cref{cor: factorization Maurey operator} and \Cref{cor: factorization Pisier operator} yield that there exists $g_x\in L_1(\mu_x)_+$ with $\|g_x\|_{L_1(\mu_x)}= 1$, such that $i_x q_x$ factors as $M_x R_x$, with $R_x z=g_x^{-1}i_x q_x z$ for any $z\in X$, $M_x$ is the multiplication by $g_x$ operator, and 
    \[ \|R_x z\|_{L_p(g_x\cdot \mu_x)} \leq K^{(p)}(X)\|x\|, \quad \text{respectively,}\quad \|R_x z\|_{L_{p,\infty} (g_x\cdot \mu_x)} \leq \gamma_p K^{(\uparrow p)}(X)\|x\|.\]
    Define 
    \[ J: X\rightarrow \ell_\infty( L_p(g_x\cdot \mu_x))_{x\in S} , \quad \text{respectively}\quad J : X\rightarrow  \ell_\infty( L_{p,\infty}(g_x\cdot \mu_x))_{x\in S}\]
    by $Jz = (R_xz)_{x\in S}$ for every $z\in X$. It is clear that $R_x$, and thus, $J$, are lattice homomorphisms. Moreover, $\|J\|=\sup_{x\in S}\|R_x\|$ is bounded by $K^{(p)}(X)$ or $\gamma_p K^{(\uparrow p)}(X)$, respectively. On the other hand, since $M_x$ is a contraction in both cases, for every $x\in S$ we have that, in the $p$-convex setting,
    \[ \|Jx\| \geq  \|R_x x\|_{L_p(g_x\cdot\mu_x)} \geq \|i_x q_xx\|_{L_1(\mu_x)}=1,\]
    and in the $(p, \infty)$-convex setting
    \[ \|Jx\| \geq  \|R_x x\|_{L_{p,\infty}(g_x\cdot\mu_x)} \geq \|i_x q_xx\|_{L_1(\mu_x)}=1.\]
    It follows that  $\|Jx\| \geq \|x\|$ for all $x\in X$ in both cases.
\end{proof}

\begin{rem}\label{rem: optimal gamma p}
    It is worth noting that (1) provides an isometric representation for $p$-convex Banach lattices with constant one, whereas (2) only provides a $\gamma_p$-isomorphic representation for Banach lattices satisfying upper $p$-estimates with constant one. This seems to be a fundamental difference between $p$-convexity and upper $p$-estimates. In fact, this constant $\gamma_p=(p^*)^{\frac{1}{p^*}}$ is optimal. In \cite{MontgomerySmith}, Montgomery-Smith showed that the constant $p^\frac{1}{p}$ appearing in the proof of $(1)\Rightarrow (2)$ in \Cref{thm: factorization (pq)-summing through Lp1} is optimal. The same argument can be used to show that $\gamma_p$ is optimal in \Cref{thm: representation infty sums}, and hence, in \Cref{thm: factorization Pisier set} for $r=1$ (see \cite{GLTT2}). In other words, there exist Banach lattices satisfying upper $p$-estimates with constant one that do not embed isometrically into any $\ell_\infty$ sum of $L_{p,\infty}$-spaces.
\end{rem}

\begin{rem}
    Observe that \Cref{thm: representation infty sums} together with \Cref{prop: wlp upe and r convex} provides an alternative proof of \Cref{cor: relevant cases of convexity}(2) which is based on \Cref{thm: factorization Pisier set} instead of \Cref{thm: factorization (pq)-summing through Lp1}. There is a third proof of \Cref{cor: relevant cases of convexity}(2) available at \cite[Theorem 1.f.7]{LT2} that uses a probabilistic argument. The author is not aware of any alternative proof of \Cref{cor: relevant cases of convexity}(1). If there were a simple proof of the fact that $L_{p,\infty}(\mu)$ is $(p,q)$-convex for any $q>p$, then \Cref{thm: representation infty sums} would automatically yield the equivalence between $(p,\infty)$-convexity and $(p,q)$-convexity.
\end{rem}

We can formulate a dual version of the above representation theorem for $q$-concave Banach lattices and Banach lattices with lower $q$-estimates:

\begin{thm}\label{thm: representation 1 sums}
    Let $X$ be a Banach lattice and $1<q<\infty$.
    \begin{enumerate}
        \item If $X$ is $q$-concave, then there are a family $\Gamma$ of probability measures and an onto interval preserving positive operator
        \[ Q : Y= \ell_1(L_q(\mu))_{\mu\in\Gamma}\rightarrow X\]
        such that $B_X\subseteq Q(B_Y)\subseteq K_{(q)}(X) B_X$.
        \item If $X$ satisfies an lower $q$-estimate, then there are a family $\Gamma$ of probability measures and an onto interval preserving positive operator
        \[ Q : Y= \ell_1(L_{q,1}(\mu))_{\mu\in\Gamma} \rightarrow X\]
        such that $B_X\subseteq Q(B_Y)\subseteq \gamma_{q^*} K_{(\downarrow q)}(X) B_X$ (here, $L_{q,1}$ is endowed with the dual norm induced by $\|\cdot\|_{L^{[1]}_{q^*,\infty}}$).
    \end{enumerate}
\end{thm}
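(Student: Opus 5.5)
The plan is to dualize \Cref{thm: representation infty sums}. Suppose first that $X$ is $q$-concave; the lower $q$-estimate case is handled the same way. By \Cref{thm: duality convexity concavity}, $X^*$ is $q^*$-convex with $K^{(q^*)}(X^*)=K_{(q)}(X)$. In the lower estimate case, \Cref{thm: duality upe lpe} shows instead that $X^*$ satisfies an upper $q^*$-estimate with $K^{(\uparrow q^*)}(X^*)=K_{(\downarrow q)}(X)$. Applying \Cref{thm: representation infty sums} to $X^*$ gives a family $\Gamma$ of probability measures and a lattice isomorphism
\[J:X^*\rightarrow Z:=\ell_\infty(L_{q^*}(\mu))_{\mu\in\Gamma} \quad (\text{resp. } \ell_\infty(L_{q^*,\infty}(\mu))_{\mu\in\Gamma}).\]
It satisfies $\|x^*\|\leq\|Jx^*\|\leq K\|x^*\|$, where $K=K_{(q)}(X)$, resp.\ $K=\gamma_{q^*}K_{(\downarrow q)}(X)$.

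Next I would identify the predual. Put $Y=\ell_1(L_q(\mu))_{\mu\in\Gamma}$, resp.\ $Y=\ell_1(L_{q,1}(\mu))_{\mu\in\Gamma}$, with $L_{q,1}$ carrying the norm dual to $\|\cdot\|_{L^{[1]}_{q^*,\infty}}$. Then $Z$ embeds canonically and lattice isometrically into $Y^*$. For $L_q$ this is equality; for the weak space one uses the renorming discussion in \Cref{sec: weak Lp}, noting that the dual norm of $L_{q,1}$ restricted to $L_{q^*,\infty}$ recovers $\|\cdot\|_{L^{[1]}_{q^*,\infty}}$ by the bipolar theorem.

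Define $Q:Y\rightarrow X$ through the pairing: for $y\in Y$, $Qy$ should be the element of $X$ with $x^*(Qy)=\langle Jx^*,y\rangle$. To see that this is well defined, write $J=(R_x)_x$ using the concrete construction in the proof of \Cref{thm: representation infty sums} applied to $X^*$. Each coordinate $R_x$ is a lattice homomorphism into $L_{q^*}(g_x\mu_x)$ that factors through the AL-quotient of $X^*$ given by $\rho(z^*)=x^{**}(|z^*|)$. To guarantee that the functional $x^*\mapsto\langle Jx^*,y\rangle$ lies in $J_X(X)$, I would choose the norming functionals $x^{**}$ in $J_X(X)_+$, indexing over $S_X\cap X_+$ paired with $X^*$ rather than over $S_{X^*}$. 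I would also check that $x^*\mapsto\langle R_xx^*,h\rangle$ is $w^*$-continuous on bounded sets, first for simple $h$ and then by density. This $w^*$-continuity is the step I expect to be the main obstacle: making sure $J$ is a dual map $J=Q^*$, or at least that its preadjoint lands in $X$, since \Cref{thm: representation infty sums} is only stated abstractly. If it fails, the fallback is to run the argument directly in $X$. For each $x^*\in S_{X^*}\cap X^*_+$, the seminorm $\rho(z)=x^*(|z|)$ gives an AL-space $L_1(\mu)$ and a lattice homomorphism $X\rightarrow L_1(\mu)$. Since this is an AL-quotient of $X$, one would then use \Cref{cor: dual factorization Maurey operator} and \Cref{cor: dual factorization Pisier operator} in adjoint form to build $Q$ as a sum of interval preserving operators $L_q(\mu)\rightarrow X$ or $L_{q,1}(\mu)\rightarrow X$, which are adjoints of lattice homomorphisms $X^*\rightarrow L_{q^*}$.

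Once $Q^*=J$ (modulo $J_X$) is in hand, the rest is formal. $Q$ is positive because $J$ is positive. $Q$ is almost interval preserving because its adjoint is a lattice homomorphism, and I would upgrade this to interval preserving and onto as follows. The bound $\|Q\|=\|J\|\leq K$ gives $Q(B_Y)\subseteq KB_X$. The bound $\|Jx^*\|\geq\|x^*\|$ means $\|Q^*x^*\|\geq\|x^*\|$, and by the standard Hahn--Banach/open mapping duality this yields $B_X\subseteq\overline{Q(B_Y)}$. To remove the closure I would use that the norm is attained in each coordinate, or note that $Q^*$ is an isomorphic embedding, which makes $Q$ surjective, and then apply the usual iteration: $B_X\subseteq\overline{Q(B_Y)}$ implies $B_X\subseteq(1+\eps)Q(B_Y)$. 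Getting exact containment requires a small weak-compactness argument. $Y$ is an $\ell_1$-sum of reflexive spaces $L_q$ in case (1), where I would use $w$-compactness of the relevant balls. In case (2) I expect to need a perturbation argument, and this is the second place where I expect trouble. Finally, surjectivity together with almost interval preservation gives interval preservation: any $0\leq x\leq Qy$ is approximated by $Q[0,y]$, and the approximation becomes exact via the same compactness argument.
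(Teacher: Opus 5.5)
Your overall plan matches the paper's: index over $x\in S_X\cap X_+$ (not over $S_{X^*}$), take the AL-seminorm $\rho_x(x^*)=|x^*|(x)$ on $X^*$, factor $i_xq_x=M_xR_x$ through $L_{q^*}(g_x\cdot\mu_x)$ or $L_{q^*,\infty}(g_x\cdot\mu_x)$, and transpose. However, the two steps you flag as obstacles carry the real content of the proof, and the proposal resolves neither.

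\textbf{Where the adjoint lands.} $R_x^*$ takes values in $X^{**}$, and checking $w^*$-continuity ``for simple $h$'' is not automatic. For $h=\chi_B$, $R_x^*h$ is the functional $x^*\mapsto\int_B i_xq_xx^*\,d\mu_x$, which is an element of $[0,J_Xx]\subseteq X^{**}$. For a general Banach lattice such elements need not lie in $J_X(X)$. What makes it work here is the hypothesis, through \Cref{lem: lpe implies KB space}: $X$ is a KB-space, so $J_X(X)$ is a projection band in $X^{**}$ and the band projection $P:X^{**}\to X$ is interval preserving. The paper sets $Q_x=PR_x^*$ (resp.\ $PR_x^*J_{L_{q,1}}$), which is interval preserving as a composition of interval preserving operators. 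It then sums over $x$ using \Cref{lem: interval preserving constructions}. Your fallback does not supply this either. The AL-quotient $X\to L_1(\mu)$ points the wrong way, and \Cref{cor: dual factorization Maurey operator} and \Cref{cor: dual factorization Pisier operator} concern operators defined on $L_s(\mu)$, so they produce nothing mapping into $X$.

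\textbf{The exact inclusion $B_X\subseteq Q(B_Y)$.} Your duality argument only gives $B_X\subseteq\overline{Q(B_Y)}$. The weak compactness you would need to drop the closure is unavailable, because $B_Y$ is not weakly compact for an infinite $\ell_1$-sum, even of reflexive spaces. The paper gets the exact inclusion from an identity your proposal is missing. For $x^*\geq 0$ one has $\langle R_xx^*,\uno_x\rangle=\int i_xq_xx^*\,d\mu_x=\rho_x(x^*)=x^*(x)$. Hence $R_x^*\uno_x=J_Xx$ and $Q(j_x\uno_x)=x$ for every $x\in S_X\cap X_+$. Now take $0\neq z\in B_X$ and put $x=|z|/\|z\|$. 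Since $z_\pm\in[0,x]=Q_x[0,\uno_x]$, there are $f_\pm\in[0,\uno_x]$ with $Q_xf_\pm=z_\pm$. Then $z=Q(j_x(f_+-f_-))$ with $|f_+-f_-|\leq\uno_x$, and $\uno_x$ has norm one.

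The upgrade from almost interval preserving to interval preserving can be done along your lines. Order intervals of the order continuous lattice $Y$ are weakly compact, so $Q[0,y]$ is closed and dense in $[0,Qy]$. It is the two points above that your proposal leaves open.
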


For the proof, we will need the following general facts:

\begin{lem}\label{lem: interval preserving constructions}
    \begin{enumerate}
        \item[]
        \item Let $T_\gamma:Y_\gamma\rightarrow X$ be almost interval preserving operators for every $\gamma\in \Gamma$ such that $\sup_{\gamma\in \Gamma}\|T_\gamma\|<\infty$, and let $Y=\ell_1( Y_\gamma)_{\gamma\in\Gamma}$. Then,
        \[\fullfunction{T}{Y}{X}{\overline{y}=(y_\gamma)_{\gamma\in\Gamma}}{T\overline{y}=\sum_{\gamma\in\Gamma}T_\gamma y_\gamma}\]
        is an almost interval preserving operator with $\|T\|\leq \sup_{\gamma\in \Gamma}\|T_\gamma\|$.
        \item Let $Y$ be order continuous, $X$ a KB-space and $T:Y\rightarrow X$ an almost interval preserving operator. Then $T$ is interval preserving.
    \end{enumerate}
\end{lem}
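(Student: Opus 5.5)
For part (1), I plan to work with the definition of almost interval preserving. We need to show that $T[0,\overline{y}]$ is dense in $[0,T\overline{y}]$ for every $\overline{y}\in Y_+$. First, $T$ is well defined and bounded, since $\|\sum_\gamma T_\gamma y_\gamma\|\le \sup_\gamma\|T_\gamma\|\sum_\gamma\|y_\gamma\|$ and the series converges absolutely. It is also positive, because each $T_\gamma$ is. Now fix $\overline{y}\ge 0$, $x\in[0,T\overline{y}]$ and $\eps>0$. Choose a finite set $F\subseteq\Gamma$ with $\sum_{\gamma\notin F}\|y_\gamma\|<\eps$. By the Riesz decomposition property, write $x\wedge\sum_{\gamma\in F}T_\gamma y_\gamma=\sum_{\gamma\in F}x_\gamma$ with $0\le x_\gamma\le T_\gamma y_\gamma$. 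The remainder satisfies $x-x\wedge\sum_F T_\gamma y_\gamma\le \sum_{\gamma\notin F}T_\gamma y_\gamma$, so its norm is at most $\sup_\gamma\|T_\gamma\|\,\eps$. Since each $T_\gamma$ is almost interval preserving, pick $z_\gamma\in[0,y_\gamma]$ with $\|T_\gamma z_\gamma-x_\gamma\|<\eps/|F|$. Set $z_\gamma=0$ for $\gamma\notin F$. Then $\overline{z}\in[0,\overline{y}]$ and $\|T\overline{z}-x\|\le (1+\sup_\gamma\|T_\gamma\|)\eps$, which gives the density.

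For part (2), let $y\ge 0$ and $x\in[0,Ty]$. Using the almost interval preserving property repeatedly, I plan to construct an increasing sequence in $[0,y]$ whose image converges to $x$. Concretely, choose $z_n\in[0,y]$ with $\|Tz_n-x\|<2^{-n}$ and set $w_n=z_1\vee\dots\vee z_n$. This sequence is increasing and lies in $[0,y]$, but its image need not stay close to $x$. So instead I would correct by truncation. The cleaner route is to note that $Y$ is order continuous, which makes the interval $[0,y]$ weakly compact. Since $T$ is weak-to-weak continuous, $T[0,y]$ is weakly compact and convex, hence norm closed. A dense subset of $[0,Ty]$ that is closed must equal all of $[0,Ty]$, so $T[0,y]=[0,Ty]$.

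This argument only uses the order continuity of $Y$: order intervals in an order continuous Banach lattice are weakly compact (cf.\ \cite[Theorem 2.4.2]{MN}). The KB hypothesis on $X$ is then superfluous, and I would still mention it as the context of the statement. The main point to check is exactly this weak compactness fact; everything else is routine.
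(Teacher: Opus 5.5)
Your proof is correct in both parts, and both parts take a different route from the paper.

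\textbf{Part (1).} The paper argues by duality. It identifies $Y^*=\ell_\infty(Y_\gamma^*)_{\gamma\in\Gamma}$ and $T^*x^*=(T_\gamma^*x^*)_{\gamma\in\Gamma}$. This is a lattice homomorphism because each $T_\gamma^*$ is one and the lattice operations in the $\ell_\infty$-sum are coordinatewise. The paper then invokes the characterization in \cite[Theorem 1.4.19]{MN}: a positive operator is almost interval preserving exactly when its adjoint is a lattice homomorphism. Your truncation-plus-Riesz-decomposition argument is longer but entirely elementary and avoids that duality theorem. Its key estimate $x-x\wedge a=(x-a)_+\le\sum_{\gamma\notin F}T_\gamma y_\gamma$ is correct.

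\textbf{Part (2).} The paper passes to biduals. Order continuity of $Y$ makes $J_Y$ interval preserving. $T^{**}$ is interval preserving as the adjoint of the lattice homomorphism $T^*$. The KB property of $X$ supplies an interval preserving band projection $P:X^{**}\to X$ with $PJ_X=i_X$, so $T=PT^{**}J_Y$.

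You instead use the equivalent form of order continuity, namely that order intervals are weakly compact (also part of \cite[Theorem 2.4.2]{MN}). This closes the density gap in one line: $T[0,y]$ is weakly compact, hence norm closed, hence equal to $[0,Ty]$.

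You are right that the KB hypothesis on $X$ is superfluous. It is not needed even in the paper's bidual route. There, $J_XT=T^{**}J_Y$ is interval preserving and $J_X$ is an injective lattice homomorphism. So $0\le x\le Ty$ yields $z\in[0,y]$ with $J_XTz=J_Xx$, i.e.\ $Tz=x$, with no projection $P$ at all.

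The abandoned sequence construction at the start of your part (2) plays no role and should simply be deleted.
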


\begin{proof}
    $(1)$ It is easy to check that $Y^*=\ell_\infty( Y^*_\gamma)_{\gamma\in\Gamma}$ and that $T^*x^*=(T^*_\gamma x^*)_{\gamma\in\Gamma}$, which is clearly a lattice homomorphism. Therefore, $T$ must be almost interval preserving.\\

    $(2)$ $Y$ is order continuous, so $J_Y:Y\rightarrow Y^{**}$ is interval preserving \cite[Theorem 2.4.2]{MN}. Moreover, $X$ is a KB-space, so $J_X(X)$ is a projection band in $X^{**}$ \cite[Theorem 2.4.12 and Theorem 1.2.9]{MN}, so there exists an interval preserving operator $P:X^{**}\rightarrow X$ such that $P J_X$ is the identity on $X$. Since $T$ is almost interval preserving, $T^{**}:Y^{**}\rightarrow X^{**}$ is interval preserving. But observe that
    \[T=P J_X T=P T^{**} J_Y,\]
    so $T$ must be interval preserving, as it is the composition of interval preserving operators.
\end{proof}

\begin{proof}[Proof of \Cref{thm: representation 1 sums}]
    We start as in the proof of \Cref{thm: representation infty sums} by fixing $x\in S=S_X\cap X_+$ and considering the quotient $q_x:X^*\rightarrow X^*/\ker \rho_x$, where $\rho_x(x^*)=|x^*|(x)$, and $i_x:X^*/\ker \rho_x\rightarrow L_1(\mu_x)$ the inclusion into the completion of $(X^*/\ker \rho_x,\rho_x)$, which is an $AL$-space. It is clear that $i_x q_x$ is a $q^*$-convex operator with constant $K_{(q)}(X)$ (respectively, $(q^*,\infty)$-convex operator with constant $K_{(\downarrow q)}(X)$). By \Cref{cor: factorization Maurey operator} (respectively, \Cref{cor: factorization Pisier operator}), there exists a normalized $g_x\in L_1(g_x)_+$ such that $i_x=M_x R_x$, where $R_x x^*=g_x^{-1}i_x q_x x^*$ is defined from $X^*$ into $L_{q^*}(g_x\cdot \mu_x)$ with norm $K_{(q)}(X)$ (respectively, $L_{q^*,\infty}(g_x\cdot \mu_x)$ with norm $\gamma_{q^*}K_{(\downarrow q)}(X)$) and $M_x$ is the operator of multiplication by $g_x$. \\
    
    Recall that in both cases $X$ is a KB-space (\Cref{lem: lpe implies KB space}), so by \cite[Theorem 2.4.12 and Theorem 1.2.9]{MN} there exists an interval preserving operator $P:X^{**}\rightarrow X$ such that $P J_X$ is the identity on $X$. Let us define $Y:=\ell_1(L_q(\mu))_{\mu\in\Gamma}$ and $Q_x=P R_x^*:L_q(g_x\cdot \mu_x)\rightarrow X$ when $X$ is $q$-concave, and $Y:=\ell_1(L_{q,1}(\mu))_{\mu\in\Gamma}$ and $Q_x=P R_x^* J_{L_{q,1}}:L_{q,1}(g_x\cdot \mu_x)\rightarrow X$ when $X$ satisfies a lower $q$-estimate, and denote by $j_x$ and $P_x$, $x\in S$, the embeddings and projections associated to the factors that form $Y$, which are clearly interval preserving. In both cases, the operators $Q_x$ are interval preserving (for the second case, use the fact that $L_{q,1}(g_x\cdot \mu_x)$ is order continuous by \Cref{lem: lpe implies KB space}, so it embeds as an ideal into its bidual \cite[Theorem 2.4.2]{MN}, which is $L_{q^*,\infty}(g_x\cdot \mu_x)^*$), so the compositions $Q_x P_x$ are interval preserving as well, and their norms are uniformly bounded. Applying \Cref{lem: interval preserving constructions}(1) we obtain an almost interval preserving operator $Q:Y\rightarrow X$ given by $Q\overline{f}=\sum_{x\in S} Q_xf_x$ for every $\overline{f}=(f_x)_{x\in S}\in Y$, with norm bounded by either $K_{(q)}(X)$ or $\gamma_{q^*}K_{(\downarrow q)}(X)$. Since $Y$ is $q$-concave or satisfies an upper $q$-estimate, it is order continuous, and applying \Cref{lem: interval preserving constructions}(2) we get that $Q$ is actually interval preserving.\\

    To check the surjectivity of $Q$, observe that $Q( j_x\uno_x)=x$ for every $x\in S$, where $\uno_x$ denotes the constant one function in the corresponding $L_{q}(g_x\cdot \mu_x)$ or $L_{q,1}(g_x\cdot \mu_x)$ (we only check it in the lower $q$-estimate setting, as the $q$-concave setting is slightly easier): for every $x^*\in X^*_+$, 
    \begin{align*}
        (R_x^* J_{L_{q,1}}\uno_x)(x^*)& =(J_{L_{q,1}}\uno_x)(R_x x^*)=(R_x x^*)(\uno_x) =\int \frac{i_x q_x x^*}{g_x}\uno_x g_x\, d\mu_x\\
        &=\|i_x q_x x^*\|_{L_1(\mu_x)}=\rho_x(x^*)=x^*(x)=J_Xx(x^*),
    \end{align*}
    so $R_x^* J_{L_{q,1}}\uno_x=J_X x$ and hence $Q( j_x\uno_x)=PJ_X x=x$. This fact alone already implies that $Q$ is onto, but actually we can show that $B_X\subseteq Q(B_Y)$. Indeed, let $0\neq z\in B_X$. If $z\geq 0$, then $z=Q(\|z\|j_x\uno_x)\in Q(B_Y)$ for $x=z/\|z\|\in S$. Otherwise, let $x=|z|/\|z\|\in S$ and use the fact that $Q_x$ is interval preserving to find $f_\pm\in [0,\uno_x]$ such that $Q_xf_\pm=z_\pm\in [0,x]$. Clearly, $f=f_+-f_-\in [-\uno_x,\uno_x]$, so $z=Q(j_xf)\in Q(B_Y)$, and the proof is concluded.
\end{proof}

We proved in \Cref{cor: isomorphic ALp-spaces} that if a Banach lattice $X$ satisfies both an upper and a lower $p$-estimate for some $1\leq p\leq \infty$, then $X$ is lattice isomorphic to some $L_p(\mu)$ (respectively, to some $AM$-space when $p=\infty$). It turns out that if the norm of any sum of two disjoint elements of $X$ can be computed exclusively in terms of the norms of the elements, then we are necessarily in the above situation, as the following result from Bohnenblust shows (see \cite[Theorem 1.b.7]{LT2} for a proof).

\begin{thm}\label{thm: Bohnenblust}
    Let $X$ be a Banach lattice of dimension at least $3$ for which there exists a function $F$ defined in $\R^2_+$ such that, for all disjoint $x,y\in X$ we have $\|x+y\|=F(\|x\|,\|y\|)$. Then $X$ is either an $AL_p$-space for some $1\leq p<\infty$ or an $AM$-space.
\end{thm}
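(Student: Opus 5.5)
The plan is to reduce to the classical fact that a Banach lattice whose norm is $p$-additive on disjoint elements is an $AL_p$-space, using Kakutani's representation theorems together with the functional-equation structure of $F$. The first step is to extract algebraic properties of $F$ from the lattice structure. Since $X$ has dimension at least $3$, we can find three nonzero pairwise disjoint positive elements $x,y,z$. Scaling them independently realises arbitrary values $(s,t)\in\R^2_+$ as $(\|ax\|,\|by\|)$. Using $x+y=y+x$, $(x+y)+z=x+(y+z)$, $\|\lambda w\|=\lambda\|w\|$ and $\|0\|=0$, we obtain the following properties of $F$: it is commutative, associative, positively homogeneous of degree one, satisfies $F(s,0)=s$, and is nondecreasing in each variable, the last by monotonicity of the norm. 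We also have $\max(s,t)\le F(s,t)\le s+t$.

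The second step solves the resulting functional equation. Put $\varphi(t)=F(1,t)$. Associativity together with homogeneity makes $\varphi$ encode a one-parameter monoid structure on $[0,\infty)$ compatible with multiplication by scalars. Continuity of $F$ follows from monotonicity combined with homogeneity: $F(\lambda s,\lambda t)=\lambda F(s,t)$ forces continuity along rays, and monotonicity then gives joint continuity. The classical solution, due to Bohnenblust, of a continuous, homogeneous, associative, monotone operation with identity $0$ is either $F(s,t)=(s^p+t^p)^{1/p}$ for some $1\le p<\infty$, or $F(s,t)=\max(s,t)$. The bound $p\ge1$ comes from $F(s,t)\le s+t$.

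To prove this, I would consider the set $D=\{t>0:F(1,t)=1\}$. If $D$ is all of $(0,1]$, then $F=\max$. Otherwise, I would define $h(s)=F(s,s)/s$, which is constant by homogeneity; call it $c=2^{1/p}$, which defines $p$. Iterating then gives $F(s,\dots,s)=n^{1/p}s$ for $n$ summands, and associativity extends this to $(m/n)$-type combinations, yielding $F(s,t)^p=s^p+t^p$ on a dense set of ratios. Continuity finishes the identification.

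The final step converts this into the conclusion. When $F=\max$, the norm satisfies $\|x+y\|=\|x\|\vee\|y\|$ for disjoint elements, which is precisely the $AM$ condition in the form used by Kakutani's representation theorem for $AM$-spaces. If one prefers the definition in terms of arbitrary positive elements, one passes through the order-complete bidual as in the proof of \Cref{thm: upe equivalent to mixed convexity}. When $F$ is the $p$-sum, the norm is $p$-additive on disjoint elements, so $X$ is an $AL_p$-space by definition, with the representation supplied by Kakutani's theorem as used in \Cref{cor: isomorphic ALp-spaces}. I expect the main obstacle to be the second step: rigorously solving the functional equation, in particular establishing continuity and ruling out pathological solutions. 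The plan is to handle this through monotonicity and homogeneity, showing that $\log F(e^u,e^v)$ induces, after the substitution $s\mapsto s^p$, a continuous cancellative ordered semigroup isomorphic to $(\R_+,+)$.
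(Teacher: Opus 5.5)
There is a genuine gap in your second step, at the sentence ``Iterating then gives $F(s,\dots,s)=n^{1/p}s$ for $n$ summands''. Write $c_n$ for the $n$-fold $F$-sum of $1$'s, that is, $c_1=1$ and $c_n=F(1,c_{n-1})$.

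From $c_2=2^{1/p}$, associativity and homogeneity only give $c_{2^k}=2^{k/p}$. Nothing in your sketch determines, for example, $c_3=F(1,2^{1/p})$. Your next step needs all of them: the identity $F(m^{1/p},n^{1/p})=(m+n)^{1/p}$ is the relation $F(c_m,c_n)=c_{m+n}$ together with $c_k=k^{1/p}$ for $k=m,n,m+n$. Since $m+n$ is in general not a power of $2$, powers of $2$ alone do not give you a dense set of ratios. Computing every $c_n$ is exactly the heart of Bohnenblust's argument; the paper itself does not prove the theorem and only cites \cite[Theorem 1.b.7]{LT2}.

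The missing idea is that $n\mapsto c_n$ is multiplicative and nondecreasing:
\begin{itemize}
\item Grouping $mn$ ones into $m$ blocks of $n$ and using homogeneity gives $c_{mn}=c_mc_n$.
\item Monotonicity gives $c_{n+1}=F(c_n,1)\ge F(c_n,0)=c_n$.
\item Since $F(s,t)\le s+t$, you get $c_n\le n$.
\end{itemize}
If $c_2=1$, then $t\le F(t,s)\le F(t,t)=t$ for $s\le t$, so $F=\max$; this is all your set $D$ really encodes. If $c_2>1$, fix $n\ge 2$ and for each $k$ choose $j$ with $2^j\le n^k<2^{j+1}$. Then $c_2^{\,j}\le c_n^{\,k}\le c_2^{\,j+1}$, and letting $k\to\infty$ yields $\log c_n/\log n=\log c_2/\log 2=:1/p$, with $p\ge 1$ because $c_2\le 2$.

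Only then do associativity and homogeneity give $F(s,t)=(s^p+t^p)^{1/p}$ whenever $(s/t)^p\in\mathbb{Q}$. Monotonicity of $F$ alone then squeezes this to all $s,t\ge 0$, so continuity of $F$ is never needed. Your justification of continuity via ``continuity along rays'' is also imprecise: the estimate that actually does the work is $\varphi(t)\le\varphi(t')\le (t'/t)\varphi(t)$ for $0<t<t'$.

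Your fallback plan does not close the gap as stated either, for three reasons:
\begin{itemize}
\item Passing to $s\mapsto s^p$ presupposes the exponent you are trying to determine.
\item Cancellativity of $F$ must be proved. It does hold when $F(1,1)>1$, but you need an argument via associativity ruling out $F(1,t)=F(1,t')$ for $t\neq t'$.
\item After an Acz\'el--H\"older type representation $F(s,t)=g^{-1}(g(s)+g(t))$, you would still need homogeneity together with uniqueness of such representations to force $g(s)=Cs^p$.
\end{itemize}
That route can be completed, but it is heavier than the elementary multiplicativity argument above.

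The rest of your proposal is fine. This covers the extraction of the properties of $F$, given the standard fact that an Archimedean vector lattice of dimension at least $3$ has three pairwise disjoint nonzero elements. It also covers the final identification of $X$ as an $AL_p$-space or an $AM$-space, including your remark about the bidual for the $\|x\vee y\|=\|x\|\vee\|y\|$ form of the $AM$ condition.
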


Something similar happens in the isomorphic setting \cite[Theorem 1.b.12]{LT2}.

\begin{thm}\label{thm: Tzafriri}
    Let $X$ be an order continuous Banach lattice. Then $X$ is lattice isomorphic to either $L_p(\mu)$ for some measure $\mu$ and some $1\leq p<\infty$, or to $c_0(\Gamma)$ for some set $\Gamma$, if and only if there exists a non-negative valued function of infinitely many real variables $F(t_1,t_2,\ldots)$ and a constant $A>0$ so that for every disjoint sequence $(x_n)_{n=1}^\infty\subseteq X$ such that $\sum_{n=1}^\infty x_n$ converges, we have
    \[A^{-1}F(\|x_1\|,\|x_2\|,\ldots)\leq \norm[3]{\sum_{n=1}^\infty x_n}\leq A F(\|x_1\|,\|x_2\|,\ldots).\]
\end{thm}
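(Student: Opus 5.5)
The implication from the concrete spaces to the existence of $F$ is immediate. For $L_p(\mu)$ one takes $F(t_1,t_2,\ldots)=(\sum_n |t_n|^p)^{1/p}$ with $A=1$, using \Cref{lem: disjoint p sums}. For $c_0(\Gamma)$ one takes $F=\sup_n |t_n|$. Since both properties are preserved under lattice isomorphisms, up to enlarging $A$ by the distortion, the whole content is the converse. There I would aim to show that $X$ satisfies both an upper and a lower $p$-estimate for a single $p\in[1,\infty]$. Then \Cref{cor: isomorphic ALp-spaces} gives $X\simeq L_p(\mu)$ for $p<\infty$. For $p=\infty$ it gives an $AM$-space, and an order continuous $AM$-space that is an $AM$-space up to isomorphism with disjoint sums of norm $\approx$ sup of norms should be $c_0(\Gamma)$; I would check this via order continuity and the fact that atoms must then span densely. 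The finite-dimensional case is trivial, so I assume $X$ is infinite dimensional, and hence contains arbitrarily long disjoint normalized sequences.

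\emph{Step 1: a quasi-multiplicative fundamental function.} Let $\varphi(n)=F(1,\ldots,1,0,\ldots)$ with $n$ ones. By hypothesis, the norm of the sum of any $n$ disjoint normalized vectors lies in $[A^{-1}\varphi(n),A\varphi(n)]$. The function $\varphi$ is essentially nondecreasing (monotonicity of the norm on disjoint sums). Given $nm$ disjoint normalized vectors, I group them into $m$ blocks $y_1,\ldots,y_m$ of size $n$, so that $\|y_j\|=c_j\in[A^{-1}\varphi(n),A\varphi(n)]$. Since the $y_j$ are disjoint, we have $(\min_j c_j)\sum_j |y_j|/c_j\le\sum_j|y_j|\le(\max_j c_j)\sum_j|y_j|/c_j$, and the vectors $y_j/c_j$ are disjoint and normalized. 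This gives $C^{-1}\varphi(n)\varphi(m)\le\varphi(nm)\le C\varphi(n)\varphi(m)$ with $C=A^4$. The same grouping argument applies to any disjoint \emph{block} $u$, not just constant ones. If $u_1,\ldots,u_m$ are disjoint vectors with the same norm sequence as $u$, then $\|\sum_j u_j\|\approx\|u\|\varphi(m)$.

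\emph{Step 2: identifying $p$.} From quasi-multiplicativity and monotonicity, the standard submultiplicativity argument (Fekete's lemma applied to $\log(C\varphi(2^k))$ and $\log(C^{-1}\varphi(2^k))$, then interpolating between powers of $2$ using monotonicity) yields $p\in[1,\infty]$ with $\varphi(n)\approx n^{1/p}$ uniformly. The lower bound $p\ge1$ follows from the triangle inequality.

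\emph{Step 3: upper and lower $p$-estimates for arbitrary disjoint families.} This is the main obstacle. As the weak-$L_p$ example shows, knowing $\varphi(n)\approx n^{1/p}$ alone does not force $p$-estimates, so I must genuinely use the stronger block statement from Step 1. My plan is as follows. Fix disjoint $x_1,\ldots,x_k$ with norms $t_i$. Realize the same norm sequence many times over disjoint copies, using the supply of disjoint normalized vectors $e_j$ and the vectors $t_ie_j$; since the norm of a disjoint sum depends only on the norm sequence, $\|\sum_i x_i\|\approx F(t_1,\ldots,t_k)$ can be computed inside the symmetric unconditional sequence $(e_j)$. Now $E=[e_j]$ has a symmetric basis and, by Step 1, $\|\sum_{j\le m}u_j\|\approx\|u\|\,m^{1/p}$ for disjoint equidistributed blocks $u_j$. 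I would try to show that this forces $(e_j)$ to be equivalent to the $\ell_p$ (or $c_0$) basis. Concretely, I would compare $\|\sum_i a_ie_i\|$ with $(\sum|a_i|^p)^{1/p}$ by first rounding the coefficients to dyadic levels, which is harmless up to a factor of $2$ by monotonicity. I would then replicate each level $2^{-k}$ with multiplicity $n_k$ into a common refinement, so that every level appears with a number of copies proportional to its $\ell_p$-weight $2^{-kp}n_k$, and apply the block identity to reduce to constant blocks whose norm is $\varphi$ of the total count. The delicate point is controlling the number of distinct levels without losing a $\log$ factor. If this direct rounding argument stalls, I would fall back on Krivine's theorem: $\ell_p$ is finitely lattice representable in $E$ on disjoint blocks. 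Because the norm is determined up to $A^2$ by the norm sequence, this representability transfers uniformly to every disjoint family, giving both estimates with constant depending only on $A$.
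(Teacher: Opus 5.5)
Your proposal is correct, and the step you flag as delicate is not delicate. Step~3 closes directly: no dyadic rounding, no $\log$ loss, no Krivine.

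\textbf{The stability principle.} The key is a principle you already use implicitly. If $(y_i)_{i\le k}$ and $(z_i)_{i\le k}$ are disjoint families with $c^{-1}\|y_i\|\le\|z_i\|\le c\|y_i\|$, then $\|\sum_i z_i\|\le A^2c\,\|\sum_i y_i\|$, with a constant independent of $k$. To see this, put $\lambda_i=\|z_i\|/\|y_i\|$. The families $(z_i)$ and $(\lambda_iy_i)$ have the same norm sequence, so $\|\sum_i z_i\|\le A^2\|\sum_i\lambda_iy_i\|$. Moreover $|\sum_i\lambda_iy_i|=\sum_i\lambda_i|y_i|\le c\,|\sum_iy_i|$.

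\textbf{Closing Step~3.} Let $p<\infty$ and let $D^{-1}n^{1/p}\le\varphi(n)\le Dn^{1/p}$ as in Step~2. Let $x_1,\dots,x_k$ be disjoint with $t_i=\|x_i\|>0$.
\begin{enumerate}
\item Choose $N\ge\max_it_i^{-p}$ and $r_i=\lceil Nt_i^p\rceil\in[Nt_i^p,2Nt_i^p]$. Choose pairwise disjoint index sets $T_i$ with $|T_i|=r_i$, and set $b_i=N^{-1/p}\sum_{j\in T_i}e_j$.
\item Then $\|b_i\|$ agrees with $t_i$ up to the factor $2AD$. By stability, $\|\sum_ix_i\|$ agrees with $\|\sum_ib_i\|$ up to $2A^3D$.
\item But $\sum_ib_i$ is $N^{-1/p}$ times a sum of $\sum_ir_i\in[N\sum_it_i^p,\,2N\sum_it_i^p]$ disjoint normalized vectors. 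Hence $\|\sum_ib_i\|$ agrees with $(\sum_it_i^p)^{1/p}$ up to $2AD$.
\end{enumerate}
This gives the upper and the lower $p$-estimate at once. Your idea of replicating levels into flat blocks is exactly this. Because the hypothesis replaces all pieces simultaneously, the number of levels never enters.

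For $p=\infty$ only the upper estimate needs proof: $\|\sum_ix_i\|\le A^2\max_it_i\,\|\sum_{i\le k}e_i\|\le A^3\max_it_i\sup_n\varphi(n)$.

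Your Krivine fallback is also sound: transfer the $\ell_{p'}^n$-blocks through the same principle, which even makes Steps~1--2 unnecessary. It is, however, a far heavier tool than needed.

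\textbf{The $c_0$ endgame.} Your sketch is fine. In a band without atoms, order continuity lets you split any $e>0$ into finitely many disjoint pieces of arbitrarily small norm. The upper $\infty$-estimate forbids this, so $X$ is atomic and hence $X\simeq c_0(\Gamma)$. This is the only place order continuity is used; for $p<\infty$ it is automatic by \Cref{lem: lpe implies KB space}. It cannot be dropped, since $C(K)$ satisfies the hypothesis with $F=\sup$.

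\textbf{Comparison with the cited proof.} The paper does not prove this theorem; it refers to \cite[Theorem 1.b.12]{LT2}. The argument there rests, as I recall, on Zippin's theorem. By hypothesis, a disjoint normalized sequence is uniformly equivalent to all its normalized block bases. Zippin's theorem then identifies it with the unit vector basis of $\ell_p$ or $c_0$. One transfers this to all disjoint families and concludes by renorming and Kakutani's representation, which is the same endgame as your appeal to \Cref{cor: isomorphic ALp-spaces}.

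Your route in effect reproves Zippin's theorem in the lattice setting. There, unconditionality and symmetry come for free. Since the hypothesis governs every disjoint family rather than only blocks of one basis, the flat-block substitution becomes immediate. Zippin's theorem buys generality, as it needs no lattice structure. Your argument buys a self-contained proof using only tools already in this survey.
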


The above results point to $p$-sums as the natural expressions for describing norms of Banach lattices. One can wonder if something can be said about a Banach lattice if in the previous statement we allow the upper and lower bounds to be given by different expressions, for instance $p$ and $q$-sums with $p\neq q$. Following this direction, it is well known \cite[Theorem 2.7.8]{MN} that if $X$ is an order continuous Banach lattice with a weak unit, then there exists a probability space $(\Omega,\Sigma,\mu)$ and an ideal $(\widetilde{X},\|\cdot\|_{\widetilde{X}})$ of $L_1(\mu)$ such that $X$ is lattice isometric to $\widetilde{X}$ and $L_\infty(\mu)\subseteq \widetilde{X} \subseteq L_1(\mu)$ with both inclusions being continuous and dense. Moreover, $X^*$ is lattice isometric to the ideal $\widetilde{X}^*$ of all the measurable functions $g$ such that
\[\|g\|_{\widetilde{X}^*}=\sup \cbr[3]{\int fg\,d\mu: \|f\|_{\widetilde{X}}\leq 1}<\infty,\]
and $L_\infty(\mu)\subseteq \widetilde{X}^* \subseteq L_1(\mu)$ with continuous inclusions, the first one being order dense and the second one, norm dense. It turns out that under convexity and concavity assumptions, this statement can be refined \cite[p. 14]{JMST}.

\begin{thm}\label{thm: representation thm refined}
    Let $X$ be an order continuous Banach lattice with a weak unit, $1< p<q<\infty$, and consider $\widetilde{X}\subseteq L_1(\mu)$ as above. 
    \begin{enumerate}
        \item If $X$ is $p$-convex with constant one, then $\widetilde{X} \subseteq L_p(\mu)$ with continuous inclusion.
        \item If $X$ is $q$-concave with constant one, then $L_q(\mu)\subseteq \widetilde{X}$ with continuous inclusion.
    \end{enumerate}
\end{thm}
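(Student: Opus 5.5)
We will work with $X=\widetilde{X}$ as a solid subspace of $L_1(\mu)$ ($\mu$ a probability measure), with $L_\infty(\mu)\subseteq\widetilde{X}\subseteq L_1(\mu)$ continuous and dense, and with $\widetilde{X}^*$ realized as the Köthe dual. Since $X$ is order continuous, its norm is determined by duality with $\widetilde{X}^*$: $\|f\|_{\widetilde X}=\sup\{\int fg\,d\mu:\|g\|_{\widetilde X^*}\le1\}$.

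\textbf{Part (2).} By density and continuity it suffices to show $\|f\|_{\widetilde X}\le C\|f\|_{L_q}$ for simple $f$. The natural route is to dualize: by \Cref{thm: duality convexity concavity}, $X^*=\widetilde X^*$ is $q^*$-convex with constant one. We then aim for $\widetilde X^*\subseteq L_{q^*}(\mu)$ with $\|g\|_{q^*}\le C\|g\|_{\widetilde X^*}$. Given this, Hölder yields $\int fg\le\|f\|_q\|g\|_{q^*}\le C\|f\|_q\|g\|_{\widetilde X^*}$, and taking the supremum over $g$ gives $\|f\|_{\widetilde X}\le C\|f\|_q$. So (2) reduces to (1), applied to $\widetilde X^*$. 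Order continuity of $\widetilde X^*$ is not available in general, so we should prove (1) directly for any $p$-convex Köthe space $Z$ with $L_\infty\subseteq Z\subseteq L_1$ continuously, without using order continuity.

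\textbf{Part (1).} Fix $f\in\widetilde X$. By monotonicity we may assume $f\ge0$, and by truncation ($f\wedge n\uparrow f$, Fatou in $L_p$) we may take $f\in L_\infty$. Split $\Omega$ into $N$ disjoint sets $A_1,\dots,A_N$ with $\mu(A_i)=1/N$, which is possible after passing to a non-atomic extension, or handled by a limiting argument. Let $f_i$ be the cyclic rearrangements of $f$ obtained from a measure-preserving "rotation" $\sigma$ of $\Omega$, as in \Cref{prop: uniform copies of fd weak Lp}. However, $\widetilde X$ need not be rearrangement invariant, so this idea must be dropped.

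Instead, use the constant one directly. Write $f=\sum_i f\chi_{A_i}$ with the pieces disjoint, and consider the functionals $f\mapsto \int f\,d\mu$. $p$-convexity with constant one gives $\|(\sum |x_i|^p)^{1/p}\|\le(\sum\|x_i\|^p)^{1/p}$. Take $x_i=f\chi_{A_i}$. Then $(\sum|x_i|^p)^{1/p}=f$ by disjointness, so nothing is gained. The correct device is the $p$-concavification: $\widetilde X_{(p)}=\{h:|h|^{1/p}\in\widetilde X\}$ with norm $\|\,|h|^{1/p}\|^p$ is a Banach lattice, by \Cref{prop: p-concavification} with constant one. By \Cref{lem: lattice homomorphism with new operations} it is again a Köthe function space, with $L_\infty\subseteq\widetilde X_{(p)}$ continuously, since $\|h\|_{(p)}\le\|\,|h|^{1/p}\|^p\le c\|h\|_\infty$. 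The map $h\mapsto\int|h|\,d\mu$ is then a positive functional on the Banach lattice $\widetilde X_{(p)}$, and it is finite. This is the main point: finiteness requires $\widetilde X_{(p)}\subseteq L_1$, which is exactly what we want. So instead use the norm bound on the functional $\int h\,d\mu$ restricted to $L_\infty$ (norm $\le c$), and extend by monotone limits.

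The \textbf{main obstacle} is justifying that $\int|h|\,d\mu\le C\|h\|_{(p)}$ on all of $\widetilde X_{(p)}$. We would first try to show that $\mathbf 1\in(\widetilde X_{(p)})^*$ by testing against disjoint decompositions, combined with the Fatou property inherited from order continuity of $\widetilde X$.
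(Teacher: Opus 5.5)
You handle (2) the way the paper does: dualize, apply (1) to the $q^*$-convex lattice $\widetilde X^*$, then use Hölder and $\|f\|_{\widetilde X}=\sup\{\int fg\,d\mu:\|g\|_{\widetilde X^*}\le 1\}$. You also rightly note that (1) must then hold without order continuity. The genuine gap is that (1) is never proved.

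\textbf{The reductions you propose do not reduce anything.}
\begin{enumerate}
\item The inequality you isolate, $\int|h|\,d\mu\le C\|h\|_{(p)}$ on $\widetilde X_{(p)}$, is just $\widetilde X\subseteq L_p(\mu)$ rewritten through $h=|f|^p$. The concavification therefore gains nothing.
\item The bound ``on $L_\infty$ with norm $\le c$'' is either with respect to $\|\cdot\|_\infty$ or with respect to $\|\cdot\|_{(p)}$. In the first case, monotone limits cannot turn it into a bound in the much weaker norm $\|\cdot\|_{(p)}$. In the second case, it is the theorem itself for bounded $f$.
\item Testing against disjoint decompositions plus Fatou cannot work. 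On disjoint families, $p$-convexity with constant one gives only an upper $p$-estimate with constant one, and that is not enough. For example, let $Y$ be the closure of $L_\infty[0,1]$ in $(L_{p,\infty}[0,1],\|\cdot\|_{L^{[1]}_{p,\infty}})$. Then $Y$ is order continuous with weak unit $\uno$, satisfies an upper $p$-estimate with constant one, and has $\|\uno\|_Y=1$ and $\|\cdot\|_{L_1}\le\|\cdot\|_Y$. Yet $Y$ contains $t^{-1/p}(\log(e/t))^{-1/p}\notin L_p$.
\end{enumerate}

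\textbf{The missing idea} is to use $p$-convexity non-disjointly, combining $f$ with the unit $\uno$, where integration against $\mu$ is norming. Use the normalization $\mu(\Omega)=\|\uno\|_{\widetilde X}=1$ and $\|\cdot\|_{L_1}\le\|\cdot\|_{\widetilde X}$, as in the paper. Then for simple $f$ and $t\ge 0$,
\[
F(t)=\int (t|f|^p+1)^{\frac1p}\,d\mu\le \|(t|f|^p+\uno)^{\frac1p}\|_{\widetilde X}\le (t\|f\|_{\widetilde X}^p+1)^{\frac1p}=G(t).
\]
Since $F(0)=G(0)=1$, this forces $F'(0)\le G'(0)$, which is $\|f\|_{L_p}\le\|f\|_{\widetilde X}$. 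Increasing simple approximations and Fatou's lemma extend this to all of $\widetilde X$. No order continuity is used, so the same argument applies to $\widetilde X^*$ in (2).

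This tangency at $\uno$ cannot be avoided: without a normalization of this kind the conclusion can fail. Take $X=L_2[0,1]$, realized inside $L_1(w\,dt)$ with $w(t)=\frac23t^{-1/3}$. This meets all the structural requirements, but $t^{-1/3}\in X\setminus L_2(w\,dt)$. So any proof must use the normalization somewhere, and your outline never does.
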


\begin{proof}
    (1) Without loss of generality, we can assume that the inclusion of $\widetilde{X}$ into $L_1(\mu)$ is contractive. Let $f\in X$ be a simple function. Then, for every $t\geq 0$ we have
    \[F(t)=\int (t|f|^p+1)^\frac{1}{p}\,d\mu =  \|(t|f|^p+1)^\frac{1}{p}\|_1 \leq \|(t|f|^p+1)^\frac{1}{p}\|_{\widetilde{X}}\leq (t\|f\|_{\widetilde{X}}^p+1)^\frac{1}{p}=G(t). \]
    Note that $F(0)=1=G(0)$ and both $F$ and $G$ are differentiable, since $f$ is simple (one can also argue using a derivation under the integral sign argument, cf. \cite[Example 2.4.6]{Cohn}), so 
    \[\frac{1}{p}\|f\|_p^p=\frac{1}{p}\int|f|^p=F'(0)\leq G'(0)=\frac{1}{p}\|f\|_{\widetilde{X}}^p,\]
    that is, $\|f\|_p\leq \|f\|_{\widetilde{X}}$ for every simple function. Now, recall that if $f$ is positive measurable function, we can always find an increasing sequence $(f_n)_n$ of simple functions that converges pointwise to $f$. In particular, for every $f\in \widetilde{X}_+$, let $(f_n)_n$ be a sequence of this kind, so that $f_n^p\uparrow f^p$. Using Fatou's Lemma, we conclude that
    \[\int f^p\, d\mu \leq \liminf \int f_n^p \, d\mu \leq \liminf \|f_n\|_{\widetilde{X}}^p \leq \|f\|_{\widetilde{X}}^p,\]
    so $f\in L_p(\mu)$ and the inclusion is contractive.\\

    (2) If $X$ is $q$-concave, we can assume by rescaling the norm of $\widetilde{X}$ if needed that $\widetilde{X}^*$ is contractively contained in $L_1(\mu)$. Now, repeating the argument from (1), we deduce that $ \widetilde{X}^*\subseteq L_{q^*}(\mu)$ and $\|g\|_{q^*}\leq \|g\|_{\widetilde{X}^*}$ for every $g\in \widetilde{X}^*$. Now, given any simple function $f\in \widetilde{X}_+$, for every $\eps>0$ we can find $g\in \widetilde{X}^*$ with $\|g\|_{\widetilde{X}^*}=1$ such that
    \[\|f\|_{\widetilde{X}}-\eps \leq \int fg\, d\mu\leq \|f\|_q \|g\|_{q^*}\leq \|f\|_q\|g\|_{\widetilde{X}^*}\leq \|f\|_q,\]
    so $\|f\|_{\widetilde{X}}\leq \|f\|_q$ for every simple function. Now, let $f\in L_q(\mu)_+$ and $(f_n)_n$ be a sequence of simple functions such that $f_n\uparrow f$. Since $L_q(\mu)$ is order continuous, $(f_n)_n$ converges to $f$, so in particular $(f_n)_n$ is a Cauchy sequence in $L_q(\mu)$. By the inequality above, $\|f_n-f_m\|_{\widetilde{X}}\leq \|f_n-f_m\|_q$, so $(f_n)_n$ is also Cauchy in $\widetilde{X}$, so it converges to some $h\in \widetilde{X}$. Since $(f_n)_n$ is increasing, $h=\sup_n f_n$, but the suprema of sequences in $L_q(\mu)$ and $\widetilde{X}$ coincide, as they are both ideals in $L_1(\mu)$, so $h=f$. We conclude that $\|f\|_{\widetilde{X}}=\lim_n \|f_n\|_{\widetilde{X}}\leq \lim_n \|f_n\|_q= \|f\|_q$.
\end{proof}

\section*{Aknowledgments}
The author wishes to express his gratitude towards Vladimir G. Troitsky for the invitation to teach this course. He also wants to thank all the attendants to the course, especially Vladimir G. Troitsky, Eugene Bilokopytov, Tomasz Szczepanski, and Kevin Abela, for their interest in the topic and their valuable suggestions.\\

The author wants to thank his coauthors Denny H. Leung, Mitchell A. Taylor, and Pedro Tracadete, as this survey would not exist without their fruitful collaboration. He also thanks Emiel Lorist for his questions concerning the renorming results, which led to the improved theorem contained in the second version of this survey.\\ 

The research of the author is partially supported by the grants PID2020-116398GB-I00, PID2024-162214NB-I00, CEX2019-000904-S-21-3 and CEX 2023-001347-S funded by the MICIU/AEI/10.13039/501100011033 and by ``ESF+''.

\end{document}